\def\dist{{\rm dist}}
\def\eps{{\varepsilon}}
\def\Card{{\rm Card}}
\def\one{{\mathbf{1}}}
\def\Prob{{\mathbb{P}}}
\def\Var{{\rm Var}}
\def\EXP{{\mathbb{E}}}
\def\BAN{\mathbb{B}}
\def\naturals{\mathbb{N}}
\def\Tor{\mathbb{T}}
\def\reals{\mathbb{R}}
\def\integers{\mathbb{Z}}
\def\bA{\mathbf{A}}
\def\bS{\mathbf{S}}
\def\bc{\mathbf{c}}
\def\bd{\mathbf{d}}
\def\bv{\mathbf{v}}
\def\bsigma{\boldsymbol{\sigma}}
\def\brA{{\bar A}}
\def\brB{{\bar B}}
\def\brC{{\bar C}}
\def\brF{{\bar F}}
\def\brH{{\bar H}}
\def\brN{{\bar N}}
\def\brc{{\bar c}}
\def\brk{{\bar k}}
\def\brn{{\bar n}}
\def\breps{{\bar\varepsilon}}
\def\brx{{\bar x}}
\def\breps{{\bar\eps}}
\def\brtheta{{\bar \theta}}
\def\cA{\mathcal{A}}
\def\cB{\mathcal{B}}
\def\cC{\mathcal{C}}
\def\cD{\mathcal{D}}
\def\cG{\mathcal{G}}
\def\cH{\mathcal{H}}
\def\cE{\mathcal{E}}
\def\cK{\mathcal{K}}
\def\cL{\mathcal{L}}
\def\tcL{\tilde{\mathcal{L}}}
\def\cM{\mathcal{M}}
\def\cO{\mathcal{O}}
\def\cQ{\mathcal{Q}}
\def\cR{\mathcal{R}}
\def\cS{\mathcal{S}}
\def\cU{\mathcal{U}}
\def\cW{\mathcal{W}}
\def\cX{\mathcal{X}}
\def\cY{\mathcal{Y}}
\def\fA{\mathfrak{A}}
\def\fh{\mathfrak{h}}
\def\hB{{\hat B}}
\def\hcB{{\hat\cB}}
\def\hC{{\hat C}}
\def\hc{{\hat c}}
\def\hg{{\hat g}}
\def\hs{{\hat s}}
\def\heps{{\hat{\eps}}}
\def\hpsi{{\hat\psi}}
\def\tA{{\tilde A}}
\def\tcA{{\tilde \cA}}
\def\tB{{\tilde B}}
\def\tE{{\tilde E}}
\def\tcM{{\tilde\cM}}
\def\tS{{\tilde S}}
\def\tcS{{\tilde \cS}}
\def\tLambda{{\tilde\Lambda}}
\def\tmu{{\tilde\mu}}
\def\tphi{{\tilde\phi}}
\def\tPhi{{\tilde{\Phi}}}
\def\tcB{{\tilde{\mathcal{B}}}}
\def\beq{\begin{equation}}
\def\eeq{\end{equation}}
\numberwithin{equation}{section}
\newcommand{\bea}{\begin{eqnarray}}
\newcommand{\eea}{\end{eqnarray}}
\newcommand{\Bea}{\begin{eqnarray*}}
\newcommand{\Eea}{\end{eqnarray*}}
\theoremstyle{plain}
\newtheorem{Thm}{Theorem}[section]
\newtheorem{Lem}[Thm]{Lemma}
\newtheorem{Sublem}[Thm]{Sublemma}
\newtheorem{Con}[Thm]{Conjecture}
\newtheorem{Prop}[Thm]{Proposition}
\newtheorem{Cor}[Thm]{Corollary}
\newtheorem{Def}[Thm]{Definition}
\newtheorem{Claim}[Thm] {Claim}
\newtheorem{Rem}[Thm] {Remark}
\theoremstyle{remark}
\newtheorem{Que}[Thm] {Question}
\long\def\begcom#1\endcom{}
\newcommand{\Sep}{\operatorname{Sep}}
\newcommand{\wSep}{\widehat{{\rm Sep}}}
\newcounter{RomanNumber}
\def \N{{\mathbb N}}
\def\E{          \mathbb E}
\def\R{          \mathbb R}
\def\T{          \mathbb T}
\def\Z{          \mathbb Z}
\def\cO{{\mathcal{O}}}
\def\a{          \alpha}
\def\DS{\displaystyle}
\def\brr{\bar{r}}
\def\ln{\operatorname{ln}}
\def\one{{\mathbf{1}}}
\def\RR{\mathbb{R}}
\def\bA{\bar A}
\def\brx{{\bar{x}}}
\def\breps{{\bar\varepsilon}}
\def\heps{{\hat\varepsilon}}
\def\brtheta{{\bar\theta}}
\def\bS{{\mathbf{S}}}
\def\bv{{\mathbf{v}}}
\def\cA{{\mathcal A}}
\def\cD{{\mathcal D}}
\def\cG{{\mathcal G}}
\def\cL{{\mathcal L}}
\def\cU{{\mathcal U}}
\def\cR{{\mathcal R}}
\def\cB{{\mathcal B}}
\def\cH{{\mathcal H}}
\def\cE{{\mathcal E}}
\def\cM{{\mathcal M}}
\def\cR{{\mathcal R}}
\def\cS{{\mathcal S}}
\def\cW{{\mathcal W}}
\def\cX{{\mathcal X}}
\def\bcG{{\bar{\mathcal G}}}
\def\bcH{{\bar{\mathcal H}}}
\def\RR{{\mathbb R}}
\def\dd{{\mathbf{d}}}
\def\fA{\Omega}
\def\bsigma{\bar{\sigma}}
\def\bfA{\bar{\Omega}}
\def\fr{{\rho}}
\def\fs{{\mathfrak{s}}}
\def\fv{\sigma}
\def\hB{{\hat{B}}}
\definecolor{bluegray}{rgb}{0.1, 0.1, 0.6}
\definecolor{dgreen}{rgb}{0.1,0.6,0.1}
\definecolor{bluegreen}{rgb}{0.1,0.5,0.2}
\definecolor{bpurple}{rgb}{0.74,0.2,0.64}
\def\bluegray{\color{bluegray}}
\def\black{\color{black}}
\def\bgreen{\color{bluegreen}}
\def\bpurple{\color{bpurple}}
\definecolor{orange}{rgb}{0.8, 0.33, 0.0}
\def\EXP{{\mathbb{E}}}
\def\Prob{{\mathbb{P}}}
\def\eps{{\varepsilon}}
\definecolor{deepcarrotorange}{rgb}{1, 0.31, 0.0}
\title[Multiple Borel Cantelli Lemma in Dynamics]{ Multiple Borel Cantelli Lemma in dynamics and MultiLog law for recurrence.}
\author{Dmitry Dolgopyat}
\address[D.~Dolgopyat]{Department of Mathematics\\
University of Maryland, 4417 Mathematics Bldg, College Park\\
MD 20742, USA}
\email{dmitry@math.umd.edu}
\author{Bassam Fayad}
\address[B.~Fayad]{Institut de Math\'{e}matiques de Jussieu-Paris Rive Gauche\\
Paris, France}
\email{bassam.fayad@imj-prj.fr}
\author{Sixu Liu}
\address[S.~Liu]{Yau Mathematical Sciences Center\\
  Tsinghua University\\
  Beijing 100084, China}
\email{liusixu@mail.tsinghua.edu.cn}
\begin{document}

\begin{abstract}
A classical Borel Cantelli Lemma gives conditions for deciding whether an infinite number of rare events will almost surely happen. In this article, we propose an extension of Borel Cantelli Lemma to characterize the multiple occurrence of events on the same time scale.
Our results imply multiple Logarithm Laws   for recurrence and hitting times, as well as Poisson Limit Laws
for systems which are exponentially  mixing of all orders.
The applications include geodesic flows on compact negatively curved manifolds, geodesic excursions, Diophantine approximations and extreme value theory
for dynamical systems.
\end{abstract}

 \maketitle
 \tableofcontents
 \section{ Introduction} The study of rare events constitutes an important subject in probability theory.
On one hand, in many applications there are significant costs associated to
certain rare events, so one needs to know how often those events occur.
On the other hand, there are many phenomena in science which are driven by
rare events including metastability, anomalous diffusion (Levy flights) and traps
for motion in random media, to mention just a few examples.

In the independent setting there are three classical regimes. For the first two,  consider
an array $\{\fA_n^k\}_{k=1}^n$ of independent events such that $p_n=\Prob(\fA_n^k)$
does not depend on $k.$ Let $N_n$ be the number of events from the $n$-th array which
have occurred.  The first two regimes are:

(i) {\it CLT regime:} $np_n\to\infty.$ In this case $N_n$ is asymptotically normal.

(ii) {\it Poisson regime:} $n p_n\to \lambda.$ In this case $N_n$ is asymptotically Poisson with parameter
$\lambda.$

For the third, {\it Borel Cantelli regime} we consider a sequence $\{\fA_n\}$ of independent events with
different probabilities. The classical Borel Cantelli Lemma says that infinitely many $\fA_n$s occur
if and only if $\DS \sum_n \Prob(\fA_n)=\infty.$

A vast literature is devoted to extending the above classical results to the case where independence is replaced
by weak dependence. In particular, there are convenient moment conditions which imply
similar results  for {\it weakly} dependent events. One important distinction between the Poisson regime and the other two
regimes, is that the Poisson regime requires  additional geometric conditions on close-by events to extend the statement to the dependent case. Without such conditions, one can have clusters of rare events where the number of clusters has Poisson distribution while several events may occur inside each cluster. We refer the reader to \cite{A89} for a comprehensive discussion of Poisson clustering.

\medskip
\noindent{\sc  The multiple Borel Cantelli Lemma.} In the present paper, we consider a regime which is intermediate between the Poisson and Borel Cantelli. Namely we consider
a family of events $\fA^n_{\rho}$ which are nested: $\fA^n_{\rho_1}\subset\fA^n_{\rho_2}$ for $\rho_1<\rho_2$ and for large $n,$
$\DS
\Prob(\fA_\fr^n)\approx \sigma(\rho)$ for some function $\sigma(\rho)$.
Let $N^n_{\rho}$ be the number of $\fA^k_{\rho},$ $k\leq n$
which has occurred. We fix a sequence $\rho_n$ such that $n \sigma(\rho_n)\to 0$ as $n\to\infty$ and $r\in \N,$ and ask if infinitely many
events
$$ N^n_{\rho_n}=r$$
occur. Even if the events $\fA^n_{\rho}$ are independent for different $\rho,$ the variables $N^{n_1}_{\rho_{n_1}}$ and $N^{n_2}_{\rho_{n_2}}$ are strongly dependent
if $n_1$ and $n_2$ are of the same order. On the other hand if $n_2\gg n_1$ then those variables are weakly dependent since conditioned on
$N^{n_2}_{\rho_{n_2}}\neq 0$ it is very likely that all the events $\fA^{k}_{\rho_{n_2}}$ occur for $k>n_1.$ Using this, one can show under appropriate
monotonicity assumptions (see \cite{Mo76})
that $N^n_{\rho_n}=r$ infinitely often if and only if
$$ \sum_{M} \Prob(N^{2^M}_{\rho_{2^M}}=r)=\infty. $$
Under the condition $n \sigma(\rho_n)\to 0$ it follows that in the independent case
$$ \Prob(N^n_{\rho_n}=r)\approx \frac{(n \sigma(\rho_n))^r }{r!}. $$
Therefore, under independence, infinitely many $N^n_{\rho_n}=r$ occur if and only if
$$ \sum_M 2^{Mr} \sigma^{r}\left(\rho_{2^M}\right) =\infty. $$
The multiple Borel Cantelli Lemma  was extended to the dependent setting in \cite{AN}. However,  the mixing assumptions made in \cite{AN}
are quite strong requiring good symbolic dynamics which limits greatly the applicability of that result.
In the present paper we present more flexible mixing conditions for the multiple Borel Cantelli Lemma. Our
conditions are similar to the assumptions typically used to prove Poisson
limit theorems for dynamical systems. The precise statements of our abstract results will be given in
Sections \ref{ScBCMult} and \ref{SSMultExpMix}.
Here we describe sample applications to dynamics, geometry, and number theory.

\medskip
\noindent{\sc  MultiLog Law for recurrence.} Let $f$ be a map preserving a measure $\mu.$ Given two points $x, y$ let $d_n^{(r)}(x,y)$ be the $r$ closest
distance among $d(x, f^k y)$ for $1\leq k\leq n.$ In particular, $d_n^{(1)}(x, y)$ is the closest distance the orbit of $y$ comes to $x$ up to time $n.$
It is shown in \cite{Galatolo07} that for systems with superpolynomial decay {for Lipschitz observables}, for all $x$ and $\mu$-almost all $y$
$\DS \lim_{n\to\infty} \frac{|\ln d_n^{(1)}(x,y)|}{\ln n}=\frac{1}{\dd},$ where $\dd$ is the local dimension of $\mu$ at $x$ provided that it exists.

Under some additional assumptions, one can prove a dynamical Borel Cantelli Lemma which implies in particular that, if $\mu$ is smooth then for all $x$
and almost all $y$ we have
$$ \limsup_{n\to \infty} \frac{|\ln d_n^{(1)}(x,y)|-\frac{1}{\bd} \ln n}{\ln \ln n}=\frac{1}{\bd}.$$
In Section \ref{ScHit} we extend this result to $r>1,$ for systems that have multiple exponential mixing properties. For example, if $f$ is an expanding map of the circle, we shall show that for
Lebesgue almost all $x$ and $y$ we have
$$ \limsup_{n\to \infty} \frac{|\ln d_n^{(r)}(x,y)|-\ln n}{\ln \ln n}=\frac{1}{r}.$$
The smoothness assumption on the invariant measure, the Lebesgue typicality assumption on $x$ and the
hyperbolicity assumption on $f$ are all essential. Namely, if $\mu$ is an invariant Gibbs measure which is
{\bf not} conformal, $\lambda$ is the Lyapunov exponent of $\mu,$ then we show in Section \ref{ScGibbs} that
for $\mu$ almost all $x$ and $y$ and for all $r\in\N,$
$$ \limsup_{n\to\infty} \frac{|\ln d_n^{(r)}(x,y)|- \ln n}{\sqrt{2(\ln n) (\ln \ln \ln n)}}=\frac{ \sigma}{\dd\sqrt{\dd\lambda}} $$
for some $\sigma>0$ which will be given in \eqref{GibVar}.
We shall also show that there is $G_\delta$--dense set $\cH$ such that for all $x\in \cH,$ Lebesgue almost all $y$ and all $r\geq1$, we have
$$ \limsup_{n\to \infty} \frac{|\ln d_n^{(r)}(x,y)|-\ln n}{\ln \ln n}=1.$$
Finally if the expanding map is replaced by a rotation $T_\alpha$ then we have (see Theorem \ref{ThReturnsTransl} below) that for almost all $(x, y, \alpha)$ it holds that
$$ \limsup_{n\to \infty} \frac{|\ln d_n^{(r)}(x,y)|-\ln n}{\ln \ln n}=\begin{cases} 1 & \text{if  } r=1, \\
\frac{1}{2} & \text{if } r>1. \end{cases} $$

\medskip
\noindent{\sc   Records of geodesic excursions.} Consider a hyperbolic manifold $\cQ$ of dimension $d+1$ which is
not compact but has finite volume. Such manifold admits a thick-thin decomposition. Namely
$\cQ$ is a union of compact part and several cusps. {\it A cusp excursion} is a maximal time segment
such that the geodesic stays in a cusp for the whole segment.
Let
$$H^{(1)}(T)\geq H^{(2)}(T)\geq\dots H^{(r)}(T)\geq \dots $$
be the maximal heights achieved during the excursions which occur before time $T$
placed in the decreasing order. {\it Sullivan's Logarithm Law} is equivalent to saying that
for almost every geodesic
\begin{equation}
\label{ExcSul}
\limsup_{T\to\infty} \frac{H^{(1)}(T)}{\ln T}=\frac{1}{d}.
\end{equation}
The proof of \eqref{ExcSul} relies on Sullivan's Borel-Cantelli Lemma and it actually also shows that
for almost every geodesic
$$ \limsup_{T\to\infty}\frac{H^{(1)}(T)-\frac{1}{d}\ln T}{\ln \ln T}=\frac{1}{d}. $$
We obtain a multiple version of this result by showing that
for almost every geodesic
$$ \limsup_{T\to\infty}  \frac{H^{(r)}(T)-\frac{1}{d}\ln T}{\ln \ln T}=\frac{1}{rd}. $$

\medskip
\noindent{\sc   Multiple Khinchine Groshev Theorem.} Let $\psi:\R\to\R$ be a positive
function (in dimension 1 we also assume that $\psi$ is monotone).
The classical Khinchine Groshev Theorem (\cite{Gr38, Kh24, Spr79})
says that for almost all $\alpha\in \R^d$ there are infinitely many solutions to
\begin{equation}
\label{EqKG1}
 |\langle k, \alpha \rangle+m|\leq \psi(\|k\|_\infty) \text{ with } k\in \Z^d, m\in \Z
\end{equation}
if and only if
\begin{equation}
\label{EqKG2}
\sum_{r=1}^\infty r^{d-1} \psi(r)  =\infty.
\end{equation}

In particular the inequality
$$ |k|^d |\langle k, \alpha \rangle+m|\leq \frac{1}{\ln |k| (\ln \ln |k|)^s} $$
where  $|k|=\sqrt{\sum k_i^2}$, has infinitely many solutions for almost every $\alpha$ if and only if $s\leq 1.$
We now replace the above inequality by
\begin{equation}
\label{KNApprox}
 |k|^d |\langle k, \alpha \rangle+m|\leq \frac{1}{\ln N (\ln \ln N)^s}
\end{equation}
and say that $\alpha$ is $(r,s)$ approximable if there are infinitely $N$ for which \eqref{KNApprox}
has $r$ positive solutions (that is, solutions with $k_1>0$).
(Our interest in smallness of
\begin{equation}
\label{KDApprox}
 |k|^d |\langle k, \alpha \rangle+m|
\end{equation}
is motivated by \cite{DF14} where the discrepancy of Kronecker sequences with respect
to convex sets is studied. Indeed the set of $k$ where \eqref{KDApprox} is small are small denominators
of the discrepancy and they determine its growth rate.)
We show in Section \ref{ScKhinchine} that
almost every $\alpha\in \R^d$ is $(r,s)$  approximable if and only if $s\leq \frac{1}{r}.$
\smallskip

The layout of the paper is the following.
In Section \ref{ScBCMult} we describe an abstract result on an array of rare events in a probability
space which ensures that for a given $r,$ $r$ events in the same row happen for infinitely many
(respectively, finitely many) rows. In Section \ref{SSMultExpMix}
this abstract criterion is applied in the case
of rare events that consist of visits to a sublevel set of a Lipschitz function by the orbits of a smooth
exponentially mixing dynamical systems. The results of Section \ref{SSMultExpMix} are then used
to obtain MultiLog Laws in various settings. Namely, Section \ref{ScHit} studies hitting and return times for multi-fold exponentially mixing smooth systems.
 Section \ref{ScConf} treats similar problems in the configuration space
 for the geodesic flows on compact negatively curved manifolds.
 Geodesic
excursions are discussed in Section \ref{ScExcursions}, and
Diophantine approximations are treated in Section \ref{ScKhinchine}.
The MultiLog Law for non-conformal measures is discussed in Section \ref{ScGibbs}.
As it was mentioned, the regime we consider is intermediate between the Poisson and Borel-Cantelli.
Section \ref{ScPoissonHR} contains an application of our results to the Poisson regime. Namely we derive
Poisson distribution for hits and mixed Poisson distribution for returns for exponentially mixing systems
on smooth manifolds. Section \ref{ScExtreme}
describes the application of our results to the extreme value theory
for dynamical systems.
Each section ends with some  notes where the related literature is discussed.

Some useful auxiliary results are collected in the appendices.

\section{ Multiple Borel Cantelli Lemma.}
\label{ScBCMult}
\subsection{ The result.} The classical Borel Cantelli Lemma is a standard tool for deciding when an infinite number of rare events
occur with probability one. However in case an infinite number of events do occur, the Borel Cantelli Lemma does not give an information
about how well separated in time those occurrences are. In this section we present a criterion which allows to decide when several rare events
occur on the same time scale. The criterion is based on various  independence conditions between the rare events.

\begin{Def} Consider a probability space $(\bf \Omega, \mathcal{F}, \mathbb P)$. Given $r \in \N^*$ and a family of events $\{ \fA_{\rho_{n}}^k \}_{(n,k) \in \N^2; 1\leq k \leq n}$,
we let $N^n_{\fr_n}$ be the number of times $k\leq n$ such that $\fA^k_{\fr_n}$ occurs.
\end{Def}

\begin{Rem}  In all our applications it will be the case
that
\begin{equation} \label{eq21}
\fA^n_{\fr_1}\subset \fA^n_{\fr_2} \text{ if } \fr_1\leq \fr_2
\end{equation}
however, part of our results will not require this condition.
\end{Rem}

Our goal is to give a criterion that allows to tell when almost surely $N^n_{\fr_n}\geq r$ will hold for infinitely many $n$. For this, we introduce several conditions quantifying asymptotic independence between the events $\fA^k_{\fr_n}.$
 The statement of the conditions requires the existence of:
 \begin{itemize}
 \item an  increasing  function $\sigma : \R_+ \to \R_+,$
 \item  a sequence $\eps_n \to 0,$
 \item  a function $\fs: \N
   \righttoleftarrow$ such that $ \fs(n) \leq (\ln n)^2 $,
   \item a function $\hat\fs: \N
   \righttoleftarrow$ such that  $ \varepsilon n\leq \hat\fs(n) <n(1-q)/{(2r)}$ for some $0<q<1,$ and some $0<\varepsilon<(1-q)/{(2r)},$
   \end{itemize}
    for which the following holds.

  For an arbitrary $r$-tuple $0\leq k_1<k_2\dots<k_r \leq n$ we consider the {\it separation indices} \begin{align*}\Sep_n(k_1,\dots, k_r)=\text{Card}\left\{j\in \{0, \dots r-1\}: k_{j+1}-k_j\geq \fs(n)\right\}, \quad k_0:=0,\\
  \wSep_n(k_1,\dots, k_r)=\text{Card}\left\{j\in \{0, \dots r-1\}: k_{j+1}-k_j\geq \hat\fs(n)\right\}, \quad k_0:=0.\end{align*}

\begin{itemize}

\item[$(M1)_r$]
If $0\leq k_1<k_2<\dots k_r\leq n$ are such that $\Sep_n(k_1, \dots, k_r)=r$ then
$$
\fv(\fr_n)^r (1-\eps_n)\leq
  \Prob\left(\bigcap_{j=1}^r \fA^{k_j}_{\fr_n} \right)\leq \fv(\fr_n)^r (1+\eps_n).$$

\item[$(M2)_r$] There exists $K>0$ such that if $0\leq k_1<k_2<\dots k_r\leq n$ are such that $\Sep_n(k_1, \dots, k_r)=m<r$, then
$$  \Prob\left(\bigcap_{j=1}^r \fA^{k_j}_{\fr_n} \right)
\leq {\frac{K  \fv(\fr_n)^m}{(\ln n)^{100r}}}.$$
\item[$(M3)_r$]  If $0\leq k_1<k_2<\dots<k_r<l_1<l_2<\dots<l_r,$ are such that  $2^i<k_\alpha\leq 2^{i+1}, 2^j<l_\beta\leq 2^{j+1}$,  for $1\leq\alpha,\beta\leq r$, $j-i\geq b$ for some constant $b\geq 1,$ and such that
$$\wSep_{2^{i+1}}(k_1,\ldots,k_r)=r, \quad \wSep_{2^{j+1}}(l_1,\ldots,l_r)=r,
 \quad l_1-k_r\geq \hat\fs(2^{j+1}),$$
then
$$
\Prob\left(\left[\bigcap _{\alpha=1}^r \fA^{k_\alpha}_{\fr_{2^i}}\right]\bigcap
\left[ \bigcap_{\beta=1}^r \fA^{l_\beta}_{\fr_{2^j}} \right]\right)\leq \fv(\rho_{2^i})^r\fv(\rho_{2^j})^r(1+\eps_i).$$
\end{itemize}

\begin{Def} For $r\in \N^*,$
we say that  the events of the family $\{ \fA_{\rho_{n}}^k \}_{(n,k) \in \N^2; 1\leq k \leq n}$
 are {\it  $2r$--almost independent at a fixed scale}
if $(M1)_{\brr}$ and $(M2)_{\brr}$ are satisfied for every $\brr\in [1,2 r].$
We say that
 $\fA_\fr^n$ are {\it  $2r$--almost independent at all scales}
if $(M1)_{\brr},$ $(M2)_{\brr}$ are satisfied for $\brr\in[1,2r]$, and
 $(M3)_{\brr}$ is satisfied for $\brr\in [1,r].$
\end{Def}

\begin{Thm}\label{ThMultiBC}  Given a family of events $\{ \fA_{\rho_{n}}^k \}_{(n,k) \in \N^2; 1\leq k \leq n}$, define
$$ \bS_r=\sum_{j=1}^\infty \left(2^j {\fv(\fr_{2^j})}\right)^r. $$

(a) If $\bS_r<\infty,$ \eqref{eq21} holds,  and  $\{ \fA_{\rho_{n}}^k \}_{(n,k) \in \N^2; 1\leq k \leq n}$ are $2r$--almost independent at a fixed scale, then with probability 1, we have that  for large $n,$ $N^n_{\fr_n}<r.$

(b)  If $\bS_r=\infty,$ and $\{ \fA_{\rho_{n}}^k \}_{(n,k) \in \N^2; 1\leq k \leq n}$ are $2r$--almost independent at all scales
then with probability 1, there are infinitely many $n$ such that $N^n_{\fr_n}\geq r.$
\end{Thm}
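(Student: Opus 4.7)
The plan is to reduce both parts to the dyadic subsequence $n_j=2^j$ and then apply the first-moment method for (a) and the Kochen-Stone lemma combined with a second-moment estimate for (b). The combinatorial backbone shared by both halves is the decomposition of $r$-tuples of indices according to their pairwise separation pattern.

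For part (a), Markov's inequality gives
\[
\Prob(N^{2^j}_{\fr_{2^j}}\geq r)\leq \EXP\!\left[\binom{N^{2^j}_{\fr_{2^j}}}{r}\right]=\sum_{0\leq k_1<\cdots<k_r\leq 2^j}\Prob\!\left(\bigcap_{i=1}^r\fA^{k_i}_{\fr_{2^j}}\right).
\]
I would split the right-hand side according to the value of $\Sep_{2^j}(k_1,\dots,k_r)$. For the fully separated tuples ($\Sep=r$), $(M1)_r$ bounds each probability by $\fv(\fr_{2^j})^r(1+\eps_j)$, and there are at most $\binom{2^j}{r}\leq 2^{jr}/r!$ such tuples, contributing at most $\frac{1}{r!}(2^j\fv(\fr_{2^j}))^r(1+\eps_j)$, summable in $j$ by $\bS_r<\infty$. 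For tuples with $\Sep=m<r$, $(M2)_r$ yields a probability of at most $K\fv(\fr_{2^j})^m (\ln 2^j)^{-100r}$, while the number of such tuples is bounded by $n^m\fs(n)^{r-m}\leq n^m(\ln n)^{2(r-m)}$; the resulting bound $K(n\fv(\fr_n))^m(\ln n)^{-100r+2(r-m)}$ is summable since $\bS_r<\infty$ forces $n\fv(\fr_n)\to 0$ and $100r-2(r-m)>1$. The classical Borel-Cantelli lemma then gives $N^{2^j}_{\fr_{2^j}}<r$ eventually a.s., and the nesting \eqref{eq21} extends the conclusion from dyadic $n$ to all $n\in[2^j,2^{j+1})$.

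For part (b), I introduce the count
\[
M_j=\#\!\left\{(k_1,\dots,k_r):0\leq k_1<\cdots<k_r\leq 2^j,\ \Sep_{2^j}=r,\ \bigcap_{i=1}^r \fA^{k_i}_{\fr_{2^j}}\text{ holds}\right\},
\]
so $\{M_j\geq 1\}\subset B_j:=\{N^{2^j}_{\fr_{2^j}}\geq r\}$. From $(M1)_r$, $\EXP[M_j]=(1+o(1))(2^j\fv(\fr_{2^j}))^r/r!$. Expanding $\EXP[M_j^2]$ over pairs of $r$-tuples and classifying them by which of the $2r$ indices are $\fs$-close, I apply $(M1)_{2r}$ to the fully separated $2r$-tuples and $(M2)_{2r}$ to the clustered ones to obtain $\EXP[M_j^2]\leq (1+o(1))\EXP[M_j]^2+C\,\EXP[M_j]$. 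The Paley-Zygmund inequality then gives $\Prob(B_j)\geq c\min(1,\EXP[M_j])$ for a universal $c>0$. I next pass to a sparse subsequence $J=\{j_1<j_2<\cdots\}$ with $j_{i+1}-j_i\geq b$ and $j_i$ chosen to maximize $(2^j\fv(\fr_{2^j}))^r$ over each length-$b$ block, so that the divergence $\bS_r=\infty$ transfers to $\sum_i\Prob(B_{j_i})=\infty$. For $i_1<i_2$, condition $(M3)_r$ (supplemented by $(M2)_r$ to discard the small contribution of clustered configurations inside each scale) yields $\Prob(B_{j_{i_1}}\cap B_{j_{i_2}})\leq (1+o(1))\Prob(B_{j_{i_1}})\Prob(B_{j_{i_2}})$. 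The Kochen-Stone lemma then gives $\Prob(\limsup_i B_{j_i})=1$, and since $\limsup_i B_{j_i}\subset\{N^n_{\fr_n}\geq r\text{ i.o.}\}$, part (b) follows.

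The delicate step will be the second-moment calculation and its two-scale analogue for $\Prob(B_{j_{i_1}}\cap B_{j_{i_2}})$: one must enumerate pairs of $r$-tuples by the finer separation pattern among all $2r$ indices, route each case to the correct hypothesis among $(M1)_{2r}$, $(M2)_{2r}$, and $(M3)_r$, and check that the $(\ln n)^{-100r}$ savings of $(M2)$ dominate the combinatorial blow-up coming from allowing close pairs. This is exactly why the hypotheses are imposed up to order $2r$ at a fixed scale and up to order $r$ across scales.
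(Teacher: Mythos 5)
Part (a) of your proposal is essentially the paper's own argument: the first moment of the binomial count is split according to the separation index, $(M1)_r$ handles the well-separated tuples, $(M2)_r$ kills the clustered ones, and classical Borel--Cantelli plus the nesting \eqref{eq21} finishes. One small slip to repair: for $n\in(2^j,2^{j+1}]$ the count $N^n_{\fr_n}$ involves indices $k$ up to $n>2^j$, so controlling $\{N^{2^j}_{\fr_{2^j}}\geq r\}$ alone does not dominate the intermediate $n$; you must run the same first-moment bound for the mixed-scale event ``$r$ occurrences among $k\leq 2^{j+1}$ of $\fA^k_{\fr_{2^j}}$'' (the paper's $\cA_j$), which only costs a constant in the tuple count. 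Your Kochen--Stone formulation of part (b) is in spirit the same as the paper's variance/Chebyshev subsequence argument, so the overall architecture is parallel rather than new.

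The genuine gaps are in part (b). First, the objects you feed into the cross-scale estimate are wrong for the hypotheses: your $M_j$ counts tuples with $\Sep_{2^j}=r$, i.e.\ gaps at least $\fs(2^j)\leq(\ln 2^j)^2$, ranging over all of $[0,2^j]$, whereas $(M3)_r$ applies only to tuples whose gaps are at least $\hat\fs(n)\geq\eps n$ \emph{and} whose two scales are themselves separated by $\hat\fs$ of the larger scale. Tuples with gaps between $(\ln n)^2$ and $\eps n$ have probability of order $\fv(\fr_n)^r$ by $(M1)_r$ (there is no $(\ln n)^{-100r}$ saving from $(M2)_r$ for them), and they form the bulk of your $\Sep$-separated tuples, so they cannot be ``discarded as clustered configurations''; likewise, if the higher-scale tuple is allowed to begin below $2^{j_{i_1}+1}$, no hypothesis controls the joint probability. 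The repair is the paper's construction: build the scale-$j$ event from $\wSep$-separated tuples confined to the block $(2^j,2^{j+1}]$ (the set $\cU_j$), and prove separately that these restricted tuples still carry a definite fraction $c_r\,(n\fv(\fr_n))^r$ of the first moment --- this is estimate \eqref{WellSepHat}, and it is exactly where the hypothesis $\hat\fs(n)<n(1-q)/(2r)$ is used; your proposal contains no substitute for it. Second, your claimed bounds $\EXP[M_j^2]\leq(1+o(1))\EXP[M_j]^2+C\EXP[M_j]$ and the $(1+o(1))$ cross-scale quasi-independence tacitly assume $2^j\fv(\fr_{2^j})\to 0$, which is not part of the hypotheses of (b) (and \eqref{eq21} is not assumed there, so it cannot be derived). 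The paper first proves (b) under \eqref{SmallTargets} and then removes that assumption by randomly thinning the events (keeping $\fA^k_{\fr_n}$ with probability $\nu_n$ chosen so that the thinned intensity tends to zero while the dyadic series still diverges); your argument needs this reduction, or some replacement for it, before the second-moment and Kochen--Stone steps are legitimate.
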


Observe that since $\fr_n$ is decreasing and $\fv$ is an increasing function we have that
$$ \sum_{n=2^{j}}^{2^{j+1}-1} \fv^r(\fr_n) n^{r-1}\leq
\left(2^{j+1} {\fv(\fr_{2^j})}\right)^r\leq 2^{2r} \sum_{n=2^{j-1}}^{2^j-1} \fv^r(\fr_n) n^{r-1}$$
when $\eqref{eq21}$ holds. Hence, the convergence of $\bS_r$ is equivalent to the convergence of
$\DS \sum_{n=1}^\infty \fv^r (\fr_n) n^{r-1}. $

\begin{Rem}
An analogous statement has been obtained in \cite{AN} under different mixing conditions.
\end{Rem}

\subsection{ Estimates on a fixed scale.}
\label{SSFixedScale}

For $m \in \N$ let
\begin{align*}
\mathcal U_m&=\{(k_1,\ldots,k_r) \textrm{ such that } 2^m<k_1<k_2<\dots <k_r\leq 2^{m+1} \textrm{ and } \wSep_{2^{m+1}}(k_1,\dots k_r)=r\},\\
 \cA_m&:= \{\exists\,0<k_1<\cdots<k_r\leq2^{m+1}\,\textrm{ s.t. }  \fA^{k_\alpha}_{\rho_{2^m}}
\textrm{ happens for any } \a \in [1,r]\}, \\
\cD_m&:= \{\exists\, (k_1,\ldots,k_r)\in \cU_m \textrm{ s.t. } \fA^{k_\alpha}_{\rho_{2^{m+1}}}
\textrm{ happens for any } \a \in [1,r]\}.
\end{align*}

The goal of this section is to prove the following estimates from which it will be easy to derive Theorem \ref{ThMultiBC}.

\begin{Prop}\label{prop6}
Suppose
\begin{equation}
\label{SmallTargets}
n\fv(\fr_n)\to 0\quad\mathrm{as}\quad n\to\infty.
\end{equation}
If $\{ \fA_{\rho_{n}}^k \}_{(n,k) \in \N^2; 1\leq k \leq n}$  are $2r$--almost independent at a fixed scale, then there exists constants $C,\,c>0$  such that

\begin{equation}
\label{Am}
\Prob(\cA_{m})\leq C\left( 2^{rm} \fv(\fr_{2^{m+1}})^r+{m^{-10}} \right)
\end{equation}
\begin{equation}
\label{Dm}
 \Prob(\cD_{m})\geq c (2^{rm} \fv(\fr_{2^{m+1}})^r -m^{-10})
\end{equation}

If $\{ \fA_{\rho_{n}}^k \}_{(n,k) \in \N^2;1\leq k \leq n}$  are $2r$--almost independent at all scales,
then for $m'>m+1$ we have a sequence $\theta_m \to 0$ such that if $m'-m\geq b$ (given in $(M3)_r$)

\begin{equation}
\label{AsymInd}
 \Prob(\cD_m\cap \cD_{m'})\leq (\Prob(\cD_m)+m^{-10}) (\Prob(\cD_{m'}) +m'^{-10})(1+\theta_m)
\end{equation}
\end{Prop}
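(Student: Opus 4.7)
The plan is to derive the three bounds from three standard tools: a union bound for the upper bound (\ref{Am}), the Bonferroni inequality for the lower bound (\ref{Dm}), and a direct application of $(M3)_r$ to cross-block products of tuples for (\ref{AsymInd}). I would write $p_m:=2^{rm}\fv(\fr_{2^{m+1}})^r$ throughout, which tends to zero by hypothesis (\ref{SmallTargets}), and abbreviate $C^{(m)}_{\vec k}:=\bigcap_{\alpha=1}^r \fA^{k_\alpha}_{\fr_{2^{m+1}}}$ for $\vec k\in\cU_m$. The recurring pattern is that well-separated tuples yield the main term via $(M1)_r$, while tuples containing a short gap are controlled by $(M2)_r$: the polynomial-log decay $(\ln n)^{-100r}$ massively dominates the $(\ln n)^2$ combinatorial cost of short gaps allowed by $\fs$, and this discrepancy is what finances the error budget $m^{-10}$.

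For (\ref{Am}), I would start from $\Prob(\cA_m)\leq\sum_{\vec k}\Prob\!\left(\bigcap_{\alpha}\fA^{k_\alpha}_{\fr_{2^m}}\right)$ and split the tuples according to whether $\Sep_{2^{m+1}}(\vec k)=r$ or $<r$. The well-separated part is handled by $(M1)_r$ and the count $\binom{2^{m+1}}{r}$, producing the main term of order $2^{rm}\fv(\fr_{2^m})^r$ (comparable to the stated $2^{rm}\fv(\fr_{2^{m+1}})^r$ up to the universal constant $C$). For tuples with $\Sep=\brm<r$ there are at least $r-\brm$ gaps of length $\leq \fs(2^{m+1})\leq (\ln 2^{m+1})^2$, so elementary counting bounds the number of such tuples by $C_r\cdot 2^{\brm(m+1)}(m+1)^{2(r-\brm)}$, while $(M2)_r$ bounds each probability by $K\fv^{\brm}/(m+1)^{100r}$. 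Summing over $\brm=0,\dots,r-1$ produces $O(m^{-10})$, completing (\ref{Am}).

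For (\ref{Dm}), I would apply Bonferroni, $\Prob(\cD_m)\geq\sum_{\vec k\in\cU_m}\Prob(C^{(m)}_{\vec k})-\sum_{\vec k\neq\vec l\in\cU_m}\Prob(C^{(m)}_{\vec k}\cap C^{(m)}_{\vec l})$. The first sum, using $|\cU_m|=\binom{2^m}{r}(1-O(m^2 2^{-m}))$ together with $(M1)_r$, is at least $cp_m$ up to negligible error. For the second sum I would further split pairs by the size (between $r+1$ and $2r$) and the separation pattern of the merged index set: pairs whose merged $\brr$-tuple is maximally separated contribute, via $(M1)_\brr$, at most $C|\cU_m|^2\fv^{2r}=O(p_m^2)=o(p_m)$; pairs producing a short gap are handled by $(M2)_\brr$ for $r<\brr\leq 2r$ in the same way as in Step 1 and give $O(m^{-10})$. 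Since $p_m\to 0$, the first-moment term dominates and (\ref{Dm}) follows.

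For (\ref{AsymInd}), I would fix $b$ large enough that whenever $\vec k\in\cU_m$, $\vec l\in\cU_{m'}$ and $m'-m\geq b$, the gap $l_1-k_r\geq 2^{m'}-2^{m+1}$ exceeds $\hat\fs(2^{m'+1})<2^{m'+1}(1-q)/(2r)$ (this only requires $b$ depending on $q$ and $r$). Using monotonicity (\ref{eq21}) to enlarge $\fr_{2^{m+1}}$ to $\fr_{2^m}$ and $\fr_{2^{m'+1}}$ to $\fr_{2^{m'}}$ so that the indices match the hypothesis, $(M3)_r$ yields $\Prob(C^{(m)}_{\vec k}\cap C^{(m')}_{\vec l})\leq\fv^r(\fr_{2^m})\fv^r(\fr_{2^{m'}})(1+\eps_m)$. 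Summing over $\cU_m\times\cU_{m'}$ and comparing with the first-moment estimates from Step 2 produces (\ref{AsymInd}) with $\theta_m=O(\eps_m)+O(m^{-10}/p_m)\to 0$. The main obstacle in all three parts is the careful bookkeeping of boundary configurations—clustered tuples in Steps 1–2 and cross-pairs narrowly failing the separation required by $(M3)_r$ in Step 3—whose contribution must be shown to fit inside the $m^{-10}$ budget; the calibration of $\fs(n)\leq(\ln n)^2$ against the large exponent $100r$ in $(M2)_r$ is precisely what makes this work uniformly across the finitely many choices of $\brm<r$ and $r<\brr\leq 2r$.
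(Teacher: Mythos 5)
Your overall strategy is the paper's: \eqref{Am} by a union bound split according to $\Sep$, \eqref{Dm} by Bonferroni with the cross terms re-indexed by the merged tuple of size $\brr\in(r,2r]$ (using $(M1)_{\brr}$, $(M2)_{\brr}$ up to $2r$), and \eqref{AsymInd} by the cross-scale condition. Two intermediate claims in your Step 2 are wrong but harmless. First, $|\cU_m|$ is not $\binom{2^m}{r}(1-O(m^2 2^{-m}))$: membership in $\cU_m$ requires gaps of the \emph{linear} size $\hat\fs(2^{m+1})\geq\eps\, 2^{m+1}$, not of the logarithmic size $\fs$; what is true, and all you need, is $|\cU_m|\geq c_r 2^{rm}$, which uses the hypothesis $\hat\fs(n)<n(1-q)/(2r)$ exactly as in the paper. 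Second, the bound $C|\cU_m|^2\fv(\fr_{2^{m+1}})^{2r}$ for well-separated merged tuples is only valid when $\brr=2r$; for $r<\brr<2r$ the count of merged tuples is $O(2^{\brr m})$ while each probability is $\fv(\fr_{2^{m+1}})^{\brr}$, so the contribution is $O\bigl((2^m\fv(\fr_{2^{m+1}}))^{\brr}\bigr)$, which is still $o(p_m)$ because $2^m\fv(\fr_{2^{m+1}})\to0$; the conclusion of \eqref{Dm} stands.

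The genuine problem is in Step 3. After enlarging the radii via \eqref{eq21} so that the indices literally match $(M3)_r$, your per-pair bound is $\fv^r(\fr_{2^m})\fv^r(\fr_{2^{m'}})(1+\eps_m)$, hence after summation something of order $2^{rm}2^{rm'}\fv^r(\fr_{2^m})\fv^r(\fr_{2^{m'}})$; but $\Prob(\cD_m)$ is only of order $(2^m\fv(\fr_{2^{m+1}}))^r$, and nothing in the abstract hypotheses controls the ratio $\fv(\fr_{2^m})/\fv(\fr_{2^{m+1}})$ (monotonicity \eqref{eq21} only goes one way, and no doubling-type condition on $\fv$ is assumed). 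So the comparison with the first-moment estimates does not yield \eqref{AsymInd} as you wrote it. The intended use, which is what the paper does, is to apply the cross-scale independence directly to the events actually constituting $\cD_m$ and $\cD_{m'}$, i.e. at radii $\fr_{2^{m+1}}$ and $\fr_{2^{m'+1}}$ (the verification of $(M3)_r$ in Section 3 is insensitive to whether one writes $\fr_{2^i}$ or $\fr_{2^{i+1}}$), combine it with the two-sided estimate of $(M1)_r$ to replace $\fv^r\fv^r$ by $\Prob(A^{\vec k})\Prob(A^{\vec l})(1+\eps)$, and sum to get $\Prob(\cD_m\cap\cD_{m'})\leq I_mI_{m'}(1+\eps_m)$ with $I_m=\sum_{\vec k\in\cU_m}\Prob(A^{\vec k}_{\fr_{2^{m+1}}})$; the Bonferroni estimates $I_m-J_m\leq\Prob(\cD_m)\leq I_m$ together with $J_m=O(2^{(r+1)m}\fv(\fr_{2^{m+1}})^{r+1}+m^{-10})$ then convert $I_m,I_{m'}$ into $\Prob(\cD_m)+m^{-10}$ and $\Prob(\cD_{m'})+m'^{-10}$. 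A related bookkeeping point: keep the $m^{-10}$ errors additive as in \eqref{AsymInd}; folding them into a multiplicative $\theta_m=O(m^{-10}/p_m)$ fails whenever $p_m$ decays faster than $m^{-10}$.
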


We start with some notations and a lemma. For $n \in \N^*$, for $k_1,\ldots,k_r \leq n$, define
$$ A^{k_1, \dots, k_r}_{\fr_n}:=\bigcap_{j=1}^r  \fA^{k_j}_{\fr_n} . $$
With these notations
\begin{align}
\label{dAm} \cA_m&=\bigcup_{0<k_1<k_2<\dots <k_r\leq 2^{m+1}} A^{k_1, \dots, k_r}_{\fr_{2^m}},\\ \label{dDm}
\cD_m&=\bigcup_{(k_1,\ldots,k_r) \in \cU_m}   A^{k_1, \dots, k_r}_{\fr_{2^{m+1}}}.
\end{align}

\begin{Lem}
\label{LmSum}

Fix  $0<a_1<a_2\leq 2.$
If  $(M1)_r$ and $(M2)_r$ hold then there exists  two sequences
$\delta_n\rightarrow0,$ $\eta_n\rightarrow0$ such that
  \begin{equation}
\label{EqSum}
\sum_{a_1 n<k_1<k_2<\dots <k_r\leq a_2n}
\Prob(A^{k_1, \dots, k_r}_{\fr_n})=\frac{((a_2-a_1) n \fv(\fr_n))^r}{r!}(1+\delta_n)+\eta_n\left(\ln n\right)^{-10}.
\end{equation}
For $a_2-a_1\geq\frac12,$ there exists constant $c_r$ such that
\begin{equation}
\label{WellSepHat}
\sum_{\overset{a_1 n<k_1<k_2<\dots <k_r\leq a_2 n}{ { \wSep_n}(k_1,\dots k_r)=r}} \Prob(A^{k_1, \dots, k_r}_{\fr_n})\geq
c_r {(n \fv(\fr_n))^r}.
\end{equation}

\end{Lem}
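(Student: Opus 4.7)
The plan is to split the sum in \eqref{EqSum} according to the value of the separation index $\Sep_n(k_1,\dots,k_r)\in\{1,\dots,r\}$; note that the first gap $k_1>a_1 n$ is automatically $\geq \fs(n)$ for $n$ large (since $\fs(n)\leq(\ln n)^2$), so $\Sep_n\geq 1$ always. Well-separated tuples ($\Sep_n=r$) will produce the main term via $(M1)_r$, while tuples containing a cluster of close-by indices ($\Sep_n=m<r$) should contribute a negligible error via $(M2)_r$.

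For the main term, $(M1)_r$ gives $\Prob(A^{k_1,\dots,k_r}_{\fr_n})=\fv(\fr_n)^r(1+O(\eps_n))$ whenever $\Sep_n=r$. The total number of strictly increasing $r$-tuples in $(a_1 n, a_2 n]$ is $\binom{\lfloor(a_2-a_1)n\rfloor}{r}=\frac{((a_2-a_1)n)^r}{r!}(1+O(1/n))$, and the number of tuples with at least one gap $<\fs(n)$ is $O_r(n^{r-1}\fs(n))=O(n^{r-1}(\ln n)^2)$, a relative $O((\ln n)^2/n)$ correction. Combining yields
\[
\sum_{\substack{a_1 n<k_1<\dots<k_r\leq a_2 n\\ \Sep_n=r}}\Prob(A^{k_1,\dots,k_r}_{\fr_n}) = \frac{((a_2-a_1)n\fv(\fr_n))^r}{r!}(1+\delta_n)
\]
for some $\delta_n\to 0$.

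For the error contribution of $\Sep_n=m<r$: by $(M2)_r$ each such term is at most $K\fv(\fr_n)^m/(\ln n)^{100r}$. The number of such tuples is at most $C_r n^m\fs(n)^{r-m}$, since the $m$ "cluster heads" have $O(n^m)$ positions and each of the $r-m$ small gaps contributes a factor $\leq\fs(n)\leq(\ln n)^2$. Hence the $m$-th class contributes at most $C_r K(n\fv(\fr_n))^m(\ln n)^{2(r-m)-100r}$. Distinguishing $n\fv\leq 1$ (where $(n\fv)^m\leq 1$ and the bound is an absolute $O((\ln n)^{-98r})=o((\ln n)^{-10})$) from $n\fv>1$ (where $(n\fv)^m\leq(n\fv)^r$ and the bound is a relative $O((\ln n)^{-98r})$ compared to the main term), summing over $m=1,\dots,r-1$ yields an error of the form $\eta_n(\ln n)^{-10}$ plus a further relative correction that is absorbed into $\delta_n$.

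For \eqref{WellSepHat}: since $\hat\fs(n)\geq\eps n>(\ln n)^2\geq\fs(n)$ for $n$ large, $\wSep_n=r$ implies $\Sep_n=r$, and $(M1)_r$ applies to give $\Prob\geq\fv(\fr_n)^r(1-\eps_n)$. With $a_2-a_1\geq 1/2$ and $(r-1)\hat\fs(n)<n(1-q)/2$, a standard stars-and-bars change of variables shows that the number of tuples in $(a_1 n, a_2 n]$ with all $r$ consecutive gaps (including $k_1-0$) $\geq\hat\fs(n)$ is at least $\binom{\lfloor(a_2-a_1)n-(r-1)\hat\fs(n)\rfloor}{r}\geq c'_r\, n^r$ for an explicit $c'_r>0$, which yields the claim. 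The main bookkeeping obstacle is simultaneously tracking the two error types that feed into $\delta_n$ and $\eta_n$; fortunately the slack $(\ln n)^{-100r}$ in $(M2)_r$ dominates every polylogarithmic cluster-counting factor by a wide margin, which is exactly why the statement can be uniform over $m<r$.
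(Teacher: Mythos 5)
Your proof is correct and follows essentially the same route as the paper: split the sum according to the value of $\Sep_n$, apply $(M1)_r$ to the well-separated tuples to get the main term $\frac{((a_2-a_1)n\fv(\fr_n))^r}{r!}(1+\delta_n)$, bound the $\Sep_n=m<r$ classes by the count $O(n^m\fs(n)^{r-m})$ times the $(M2)_r$ bound, and for \eqref{WellSepHat} note that $\hat\fs(n)\geq\fs(n)$ so $(M1)_r$ applies, with the hypothesis $\hat\fs(n)<n(1-q)/(2r)$ guaranteeing at least $c_r n^r$ well-$\hat\fs$-separated tuples. Your explicit case distinction $n\fv\leq 1$ versus $n\fv>1$ when absorbing the factor $(n\fv(\fr_n))^m$ into $\eta_n(\ln n)^{-10}$ or $\delta_n$ is a small bookkeeping refinement of what the paper leaves implicit, not a different argument.
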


\begin{proof}
For $m\leq r$, denote
\begin{equation*}
S_m:=\sum_{\overset{a_1 n<k_1<k_2<\dots <k_r\leq a_2 n}
{ { \Sep_n}(k_1,\dots k_r)=m}} \Prob(A^{k_1, \dots, k_r}_{\fr_n}).\end{equation*}
 Note that $S_r$ includes
$\frac{n^r}{r!} (1+\delta'_n)$ terms for some sequence $\delta'_n\rightarrow0$ as $n\to\infty,$  hence $(M1)_r$ yields
\begin{equation}
\label{WellSep}
S_r=
\frac{( n \fv(\fr_n))^r}{r!}(1+\delta''_n).\end{equation}
where $\delta''_n\to 0$ as $n\to\infty.$

For $m<r,$
$S_m$ includes $O\left(n^m \fs^{r-m}(n) \right)$ terms.
Hence $(M2)_r$ gives
\begin{equation}
\label{NonWellSep}
 S_m\leq C n^m \fs^{r-m}(m)  \frac{K\fv(\fr_n)^m}{(\ln n)^{100r}} =\eta_n (n\fv(\fr_n))^m \left(\ln n\right)^{-10}
\end{equation}
for some sequence $\eta_n\rightarrow0.$
Combining \eqref{WellSep} with \eqref{NonWellSep} we obtain \eqref{EqSum}. The proof of \eqref{WellSepHat} is similar to that of \eqref{WellSep},
except that the number of terms is not anymore equivalent to $\frac{1}{r!}n^r(1+\delta'_n)$ but just larger than $\frac{1}{r!}(\frac{n}{2}-r\hat \fs(n))^r$  which is larger than $\frac{q^r}{2^rr!}n^r$, due to the hypothesis $\hat\fs(n) <n(1-q)/{(2r)}$.
\end{proof}

\begin{proof}[Proof of Proposition \ref{prop6}] First, \eqref{Am} follows directly from \eqref{dAm} and  \eqref{EqSum}.  Next,
 define
\begin{align*} I_m&=\sum_{(k_1,\dots k_r)\in \cU_m}\Prob(A^{k_1, \dots, k_r}_{\fr_{2^{m+1}}})\\
J_m&=\sum_{\overset{(k_1,\ldots,k_r)\in \cU_m}{\overset{(k'_1,\ldots,k'_r)\in \cU_m}{\{k_1,\dots, k_r\}\neq \{k'_1,\dots, k'_r\}} }}
\Prob\left(A^{k_1,\dots, k_r}_{\fr_{2^{m+1}}}\bigcap A^{k'_1,\dots, k'_r}_{\fr_{2^{m+1}}}\right). \end{align*}
From \eqref{dDm} and Bonferroni inequalities we get that
\begin{equation} \label{eIJ}
I_m-J_m\leq \Prob(\cD_m)\leq I_m
\end{equation}

Now, \eqref{WellSepHat} implies that
\begin{equation} \label{eI}
I_m\geq c_r 2^{r(m+1)} \fv(\fr_{2^{m+1}})^r.
\end{equation}
On the other hand, since
$$ A^{k_1,\dots, k_r}_{\fr_{2^{m+1}}}\bigcap A^{k'_1,\dots, k'_r}_{\fr_{2^{m+1}}}=A^{\{k_1,\dots, k_r\}\cup\{k'_1,\dots, k'_r\}}_{\fr_{2^{m+1}}}, $$
we get that
$$J_m\leq C_r  \sum_{l=r+1}^{2r} \sum_{k_1<\dots< k_l}
\Prob(A^{k_1,\dots, k_l}_{\fr_{2^{m+1}}}),$$
and \eqref{EqSum} then implies that
\begin{equation}\label{eJ} J_m\leq C_r (2^{(r+1)m} \fv(\fr_{2^{m+1}})^{r+1} +m^{-10}).
\end{equation}
Combining \eqref{eIJ}, \eqref{eI} and \eqref{eJ}, and using the assumption \eqref{SmallTargets}
we obtain \eqref{Dm}.

Finally, observe that
$$
\Prob(\cD_m\cap \cD_{m'})
 \leq    \sum_{(k_1,\ldots,k_r) \in \cU_m, (l_1,\ldots,l_r) \in \cU_{m'}}\Prob(A^{k_1,\ldots,k_r}_{\fr_{2^{m+1}}} \cap A^{l_1,\ldots,l_r}_{\fr_{2^{m'+1}}}).
$$
But since $m'>m+1$ implies that $l_1-k_r \geq \hat \fs (2^{m'+1})$,  $(M3)_r$ then yields
\begin{equation*}\label{ApplyM3}
\Prob(A^{k_1,\ldots,k_r}_{\fr_{2^{m+1}}} \cap A^{l_1,\ldots,l_r}_{\fr_{2^{m'+1}}})\leq \Prob(A^{k_1,\ldots,k_r}_{\fr_{2^{m+1}}}) \Prob(A^{l_1,\ldots,l_r}_{\fr_{2^{m'+1}}}) (1+\eps_m),
\end{equation*}
so that using $(M1)_r$ and summing over all $(k_1,\ldots,k_r) \in \cU_m, (l_1,\ldots,l_r) \in \cU_{m'}$ we get that
$$\Prob(\cD_m\cap \cD_{m'})
 \leq  I_m I_{m'} (1+\eps_m)$$
 and \eqref{AsymInd} then follows  from \eqref{eIJ}, \eqref{eI} and \eqref{eJ}.
\end{proof}

\subsection{ Convergent case. Proof of Theorem \ref{ThMultiBC} (a).}

Suppose that $\bS_r<\infty.$  Then by monotinicity of $\sigma(\rho_n)$, we have that $n \sigma(\rho_n)\to 0$.
By \eqref{Am} of Proposition~\ref{prop6} we have that $ \sum_m \Prob(\cA_{m}) < \infty$.
By Borel-Cantelli Lemma, with probability one, $\cA_m$ happen only finitely many times.
Observe that for $n \in (2^m,2^{m+1}]$, $\{N^n_{\fr_n} \geq r \} \subset \cA_m $
because $\Omega_{\fr_n}\subset\Omega_{\fr_{2^{m}}}$ for
$n \geq 2^m$ due to \eqref{eq21}.
 Hence with probability one $\{N^n_{\fr_n}\geq r\}$
happen only finitely many times.    \hfill $ \  { \square}$

\subsection{ Divergent case. Proof of Theorem \ref{ThMultiBC} (b).}
Suppose that $\bS_r=\infty.$ We give a proof under the assumption \eqref{SmallTargets}.
 The case where \eqref{SmallTargets}
 does not hold requires minimal modifications which will be explained at the end of this section.

\begin{Claim}\label{ConAlmostSure}
Let $\DS Z_n = \sum_{m=1}^{n} 1_{\cD_m}.$ Then there exists a subsequence $\{Z_{n_k}\}$  such that a.s.
$\DS \frac{Z_{n_k}}{\EXP(Z_{n_k})} \to 1.$
\end{Claim}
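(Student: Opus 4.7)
The plan is to run a standard second-moment/Chebyshev argument on $Z_n$, exploiting the asymptotic independence estimate \eqref{AsymInd} from Proposition~\ref{prop6}, and then extract an almost surely convergent subsequence from the resulting convergence in probability.

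First I would verify that $\EXP(Z_n)\to\infty$. By the lower bound \eqref{Dm},
\[ \EXP(Z_n)=\sum_{m\leq n}\Prob(\cD_m)\geq c\sum_{m\leq n}\bigl(2^{rm}\fv(\fr_{2^{m+1}})^r-m^{-10}\bigr), \]
which diverges since $\bS_r=\infty$ while $\sum m^{-10}<\infty$.

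Next I would bound
\[ \Var(Z_n)=\sum_{m,m'\leq n}\Cov(\one_{\cD_m},\one_{\cD_{m'}}) \]
by splitting according to $|m-m'|$. Pairs with $|m-m'|<b$ contribute at most $O(b\,\EXP(Z_n))$ via the trivial bound $|\Cov(\one_A,\one_B)|\leq\min(\Prob(A),\Prob(B))$. For $|m-m'|\geq b$, \eqref{AsymInd} expanded and combined with $\Prob(\cD_m)\leq 1$ gives
\[ \Cov(\one_{\cD_m},\one_{\cD_{m'}})\leq\theta_m\Prob(\cD_m)\Prob(\cD_{m'})+O(m^{-10}+m'^{-10}). \]
Setting $\theta^*_M:=\sup_{m\geq M}\theta_m$ (which tends to $0$) and splitting the outer sum at the threshold $M$ would yield
\[ \Var(Z_n)\leq\theta^*_M\,\EXP(Z_n)^2+C_M\,\EXP(Z_n)+C, \]
with $C$ absolute and $C_M$ depending only on $M$. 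Chebyshev's inequality then gives, for any $\eps>0$,
\[ \Prob\!\left(\left|Z_n-\EXP(Z_n)\right|>\eps\,\EXP(Z_n)\right)\leq\frac{\theta^*_M}{\eps^2}+\frac{C_M}{\eps^2\EXP(Z_n)}+\frac{C}{\eps^2\EXP(Z_n)^2}. \]
Sending $n\to\infty$ kills the last two terms by Step~1, and then sending $M\to\infty$ kills the first, proving $Z_n/\EXP(Z_n)\to 1$ in probability. To obtain the claim I would invoke the standard fact that convergence in probability yields a.s.\ convergence along a subsequence: choose $n_k$ with $\Prob(|Z_{n_k}/\EXP(Z_{n_k})-1|>1/k)\leq 2^{-k}$ and apply Borel--Cantelli.

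The main obstacle is the $m$-dependent factor $\theta_m$ in \eqref{AsymInd}: it only tends to zero asymptotically, so a direct summation of covariances does not immediately give an $o(\EXP(Z_n)^2)$ variance bound. The split-at-$M$ device sidesteps this by decoupling the ``small $m$'' contribution (absorbed into the lower-order $O(\EXP(Z_n))$ term) from the ``large $m$'' contribution (uniformly controlled by $\theta^*_M$), with the order of limits $n\to\infty$ before $M\to\infty$ being essential. The passage from convergence in probability to a.s.\ convergence then explains why only a \emph{subsequence}, rather than the full sequence, is asserted in the claim.
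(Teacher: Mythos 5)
Your overall strategy coincides with the paper's: a second-moment bound driven by \eqref{AsymInd}, Chebyshev, and then extraction of an a.s.\ convergent subsequence. However, there is a concrete error at the point where you invoke ``$\Prob(\cD_m)\le 1$''. Expanding \eqref{AsymInd} produces error terms of the form $m^{-10}\Prob(\cD_{m'})+m'^{-10}\Prob(\cD_m)+(mm')^{-10}$ (up to a bounded factor); once you discard the probability factors, the sum of $m^{-10}+m'^{-10}$ over all pairs $m<m'\le n$ is of order $n$, not $O(1)$: for each of the roughly $n$ admissible values of $m'$, the inner sum $\sum_m m^{-10}$ is a fixed positive constant. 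Hence the bound your covariance estimate actually yields is $\Var(Z_n)\le\theta_M^*\,\EXP(Z_n)^2+C_M\,\EXP(Z_n)+Cn$, and the Chebyshev estimate then contains the term $Cn/(\eps^2\EXP(Z_n)^2)$, which need not vanish. Indeed, $\EXP(Z_n)$ is only known to diverge, and in the divergent regime it can grow very slowly: e.g.\ $(2^j\fv(\fr_{2^j}))^r\asymp 1/j$ is compatible with \eqref{SmallTargets} and $\bS_r=\infty$, and then \eqref{Am}--\eqref{Dm} give $\EXP(Z_n)\asymp\ln n$, so $n/\EXP(Z_n)^2\to\infty$ and your computation does not establish convergence in probability.

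The repair is exactly the paper's estimate \eqref{DeltaBound}: keep the probability factors on the error terms, bounding the covariance of well-separated pairs by $\delta\,\Prob(\cD_m)\Prob(\cD_{m'})+2m^{-10}\Prob(\cD_{m'})+2m'^{-10}\Prob(\cD_m)+2(mm')^{-10}$. Summing then gives $\delta\,\EXP(Z_n)^2+O(\EXP(Z_n))+O(1)$, after which your threshold-at-$M$ device (the paper's $m(\delta)$) and the Chebyshev step go through and yield $\Var(Z_n)/\EXP(Z_n)^2\to 0$. With that correction, your finishing move---choosing $n_k$ so that $\Prob\left(|Z_{n_k}/\EXP(Z_{n_k})-1|>1/k\right)\le 2^{-k}$ and applying Borel--Cantelli---is a legitimate variant of the paper's explicit choice $n_k=\inf\{n:\EXP(Z_n)^2\ge k^2\Var(Z_n)\}$, and correctly explains why only a subsequence is claimed.
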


Since $\EXP(Z_{n}) \to \infty$, due to \eqref{Dm},
the claim implies that, almost surely, $Z_n \to\infty$. That is, with probability one infinitely many of
$\cD_m$ happen. Note that $\cD_m\subset\{N^{2^{m+1}}_{\fr_{2^{m+1}}} \geq r \},$ which
completes the proof of Theorem ~\ref{ThMultiBC} (b) in case \eqref{SmallTargets} holds.
\begin{proof}[Proof of Claim \ref{ConAlmostSure}]
We first prove that \eqref{Dm} and \eqref{AsymInd} imply that
\begin{equation*}
\label{L2One}
\frac{Z_n}{\EXP(Z_n)}\to 1 \textrm{ in } L^2,
\end{equation*}
or equivalently  that
\begin{equation}
\label{VarVanish}
\frac{\Var(Z_n)}{\EXP^2(Z_n)}\to 0.
\end{equation}
Note that
\begin{equation}
\label{VarCount}
 \Var(Z_n)=\sum_{m=1}^n \Prob(\cD_m)-\sum_{m=1}^n \Prob(\cD_m)^2
 +2\sum_{i<j} \left[\Prob(\cD_i \cap \cD_j)-
 \Prob(\cD_i) \Prob(\cD_j)\right].
\end{equation}
By \eqref{AsymInd} for each $\delta$ there exists {$m(\delta)>b$} such that
if $i\geq m(\delta),$ $j-i\geq m(\delta) $ then
\begin{equation}
\label{DeltaBound}
 \Prob(\cD_i \cap \cD_j)-\Prob(\cD_i) \Prob(\cD_j)\leq \delta \Prob(\cD_i) \Prob(\cD_j)+2i^{-10} \Prob(\cD_j)+2j^{-10} \Prob(\cD_i)+2(ij)^{-10}.
\end{equation}
Split \eqref{VarCount} into two parts:

(a) Due to \eqref{DeltaBound}, the terms where $i\geq m(\delta),$ $j-i\geq m(\delta) $ contribute at most
$$  \sum_{i\geq m(\delta),j-i\geq m(\delta)} \left[\delta\Prob(\cD_i) \Prob(\cD_j)+2i^{-10} \Prob(\cD_j)+2j^{-10} \Prob(\cD_i)+2(ij)^{-10}\right]\leq
\delta (\EXP(Z_n))^2+8\EXP(Z_n)+8.$$
(b) The terms where $i\leq m(\delta)$ or $j-i\leq m(\delta)$ contribute at most
$$[2m(\delta)+1] \sum_{j=1}^n \Prob(\cD_j)=[2m(\delta)+1] \EXP(Z_n). $$
Since $\EXP(Z_n)\to \infty$, the case (a) dominates for large $n$ giving
$$ \limsup_{n\to\infty}\frac{\Var(Z_n)}{(\EXP(Z_n))^2}\leq \delta.$$
Since $\delta$ is arbitrary, \eqref{VarVanish} follows.

Let $n_k = \inf\{ n: (\EXP(Z_n))^2 \geq k^2 \text{Var}(Z_n)\}$. Then by Chebyshev inequality
$$\Prob\left(|Z_{n_k} - \EXP (Z_{n_k})| > \delta \EXP(Z_{n_k})\right) \leq \frac{1}{\delta^2 k^2}.$$
Thus $\displaystyle \sum_{k=1}^{\infty} \Prob(|Z_{n_k} - E(Z_{n_k})| > \delta \EXP(Z_{n_k}))  \leq \sum_{k=1}^{\infty} \frac{1}{\delta^2 k^2}<\infty.$
Therefore, by Borel-Cantelli Lemma, with probability 1, for large $k$,
$|Z_{n_k} - \EXP(Z_{n_k})| <\delta \EXP(Z_{n_k}).$ Hence $\frac{Z_{n_k}}{\EXP(Z_{n_k})} \rightarrow 1\, a.s.$, as claimed.
\end{proof}

It remains to consider the case where \eqref{SmallTargets} fails. After passing to a subsequence, we  choose a decreasing sequence $\nu_n$ such that $\tilde{\sigma}(\rho_n):= \nu_n \sigma(\rho_n)$ satisfies
$\DS \lim_{n\to \infty} n\tilde{\sigma}(\rho_n)=0$ and  $\DS \sum_{j=1}^\infty (2^j\tilde{\sigma}(\rho_{2^j}))^r=\infty.$

Next, we define for each $n \in \N$ and for each $k \leq n,$ a sequence of events $\{\tilde\fA^{k}_{\rho_{n}}\}_{k\leq n}$ as follows: If $\fA^{k}_{\rho_n}$ does not occur then $\tilde\fA^{k}_{\rho_{n}}$ does not occur and,
conditionally on $\fA^{k}_{\rho_{n}}$
occurring, $\tilde\fA^{k}_{\rho_{n}}$ occurs with
probability $\nu_n$ independently of all other events (all other $\fA^{k}_{\rho_{n}}$ with different $k$ or different $n$).

 The events $\{\tilde\fA_{\rho_{n}}^k\}_{(n,k) \in \N^2; 1\leq k \leq n}$
 thus satisfy  $(M1)_r$, $(M2)_r$, and $(M3)_r$ the same way as
 the events $\{\fA_{\rho_{n}}^k\}_{(n,k) \in \N^2; 1\leq k \leq n}$, with this difference that  ${\sigma}(\rho_n)$ is now replaced with $\tilde{\sigma}(\rho_n)$.\footnote{Note that the events $\{\tilde\fA_{\rho_{n}}^k\}$ will not satisfy \eqref{eq21} even if the events  $\{\fA_{\rho_{n}}^k\}$ satisfy it, but in this part of the proof of  Theorem \ref{ThMultiBC} (b) condition \eqref{eq21} is not needed.}
  Since condition \eqref{SmallTargets} is satisfied by $\tilde{\sigma}(\rho_n)$, and since
 $\DS \sum_{j=1}^\infty (2^j\tilde{\sigma}(\rho_{2^j}))^r=\infty,$ we get that, with probability one, more than $r$ events among the events $\{\tilde\fA_{\rho_{n}}^k\}_{k\leq n}$
occurs for infinitely many $n$. By definition, this implies that with probability one, more than $r$ events among the events $\{\fA_{\rho_{n}}^k\}_{k\leq n}$
occurs for infinitely many $n$.
The proof of Theorem \ref{ThMultiBC} (b) is thus completed. \hfill $\square$

\subsection{ Prescribing some details.}
\label{SSDetails}
In the remaining part of Section \ref{ScBCMult} we describe some extensions
of Theorem \ref{ThMultiBC}(b).

Namely, we assume that $\DS \fA^n_{\rho}=\bigcup_{i=1}^p \fA^{n,i}_\rho$ and there exists a constant
$\heps>0$ such that for each $i$,
$\Prob(\fA^{n,i}_\rho)\geq \heps \Prob(\fA^{n}_\rho). $
We also assume the following extension of $(M1)_r$:
for each $(k_1,\dots, k_r)$ with $\Sep_n(k_1,\dots, k_r)=r$ and each
$(i_1,\dots, i_r)\in \{1,\dots, p\}^r$
$$\widetilde{(M1)}_r \quad \left[\prod_{j=1}^r
\Prob(\fA^{k_j, i_j}_{\fr_n})\right](1-\eps_n)\leq
  \Prob\left(\bigcap_{j=1}^r \fA^{k_j, i_j}_{\fr_n} \right)\leq \left[\prod_{j=1}^r
  \Prob(\fA^{k_j, i_j}_{\fr_n})\right](1+\eps_n);$$
and the following extension of $(M3)_r$: for each $\delta$ there is $b=b(\delta)$
such that letting $\hs(n)=\delta n$ we have that
for each $(k_1,\dots, k_r),$ $(l_1,\dots, l_r)$ with
$$\wSep_{2^{i+1}}(k_1,\ldots,k_r)=r, \quad \wSep_{2^{j+1}}(l_1,\ldots,l_r)=r,
 \quad l_1-k_r\geq \hat\fs(2^{j+1}), \quad j-i\geq b$$
and for each $(i_1, i_2, \dots, i_r),\,(j_1, j_2\dots j_r)\in \{1,\dots, p\}^r$
  $$\widetilde{(M3)}_r\quad\quad\quad\quad\quad\quad
\Prob\left(\left[\bigcap _{\alpha=1}^r \fA^{k_\alpha, i_\alpha }_{\fr_{2^i}}\right]\bigcap
\left[ \bigcap_{\beta=1}^r \fA^{l_\beta, j_\beta}_{\fr_{2^j}} \right]\right)
$$
$$\leq
\left[ \prod_{\alpha=1}^r
\Prob(\fA^{k_\alpha, i_\alpha}_{\fr_{2^i}})\right]
 \left[\prod_{\beta=1}^r\Prob(\fA^{l_\beta, j_\beta}_{\fr_{2^j}})\right] (1+\eps_i). $$

\begin{Thm}
\footnote{This result is not used in the present paper, so it can be skipped during the
first reading. In a followup work, we shall use Theorem \ref{ThMultBCDet} to obtain some analogues of the Functional Law of Iterated Logarithm
for heavy tailed random variables.}
\label{ThMultBCDet}
 If $\bS_r=\infty,$ and $\widetilde{(M1)}_k$, $(M2)_k$ as well as
$\widetilde{(M3)}_k$  for $k=1,\dots, 2r$ are satisfied, then for any $i_1, i_2\dots i_r$ and
for any intervals $I_1, I_2 \dots I_r\subset [0,1],$
with probability 1 there are infinitely many $n$ such that
for some $k_1(n), k_2(n)\dots k_r(n)$ with $\frac{k_j(n)}{n}\in I_j,$
$\fA^{k_j, i_j}_{\rho_n}$ occur.
\end{Thm}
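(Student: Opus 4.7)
The plan is to mimic the argument of Theorem \ref{ThMultiBC}(b), replacing the events $\cD_m$ by refined versions that simultaneously track (i) the prescribed types $i_1,\dots,i_r$ and (ii) the restriction $k_\alpha(n)/n\in I_\alpha$. After a relabeling and shrinking step, we may assume the intervals $I_1,\dots,I_r$ have positive length, are mutually disjoint, and satisfy $\sup I_\alpha<\inf I_{\alpha+1}$; we may additionally shrink $I_1$ so that $\inf I_1>0$. As at the end of the proof of Theorem \ref{ThMultiBC}(b), the independent thinning device further reduces matters to the case $n\sigma(\rho_n)\to 0$, and thinning preserves both the decomposition $\fA^n_\rho=\bigcup_i\fA^{n,i}_\rho$ and the lower bound $\Prob(\fA^{n,i}_\rho)\geq\heps\Prob(\fA^n_\rho)$.

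Choose $\hat\fs(n)=\delta n$ with $\delta$ smaller than one half of both the smallest length and the smallest gap among the $I_\alpha$'s, so that every tuple $(k_1,\dots,k_r)$ with $k_\alpha/2^{m+1}\in I_\alpha$ automatically satisfies $\wSep_{2^{m+1}}(k_1,\dots,k_r)=r$. Set
\[
\cU^*_m=\{(k_1,\dots,k_r):k_\alpha/2^{m+1}\in I_\alpha\text{ for each }\alpha\},\qquad
\cD^*_m=\{\exists (k_1,\dots,k_r)\in\cU^*_m:\fA^{k_\alpha,i_\alpha}_{\rho_{2^{m+1}}}\text{ for each }\alpha\}.
\]
Combining $\widetilde{(M1)}_r$ with $\Prob(\fA^{k,i}_\rho)\geq\heps\,\Prob(\fA^k_\rho)\asymp\heps\,\sigma(\rho)$ (the last step using $(M1)_1$), the first Bonferroni sum $I_m:=\sum_{(k_1,\dots,k_r)\in\cU^*_m}\Prob\bigl(\bigcap_\alpha\fA^{k_\alpha,i_\alpha}_{\rho_{2^{m+1}}}\bigr)$ obeys
\[
I_m\geq c(\heps,I_*)\,2^{r(m+1)}\,\sigma(\rho_{2^{m+1}})^r(1+o(1)),
\]
the typed analogue of \eqref{eI}. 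For the second Bonferroni sum $J_m$ we drop type labels using $\fA^{k,i}_\rho\subset\fA^k_\rho$ and invoke $(M1)_l$ together with $(M2)_l$ for $l=r+1,\dots,2r$ exactly as in Lemma \ref{LmSum}, yielding the typed analogue of \eqref{eJ}. Bonferroni then produces the typed analogue of \eqref{Dm}, namely $\Prob(\cD^*_m)\geq c'\bigl(2^{rm}\sigma(\rho_{2^{m+1}})^r-m^{-10}\bigr)$, so $\sum_m\Prob(\cD^*_m)=\infty$ by the hypothesis $\bS_r=\infty$.

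The asymptotic independence \eqref{AsymInd} for $\Prob(\cD^*_m\cap\cD^*_{m'})$ follows from $\widetilde{(M3)}_r$: the gap condition $l_1-k_r\geq\delta\,2^{m'+1}$ is verified using $k_r\leq 2^{m+1}$ and $l_1\geq(\inf I_1)\,2^{m'+1}$ as soon as $m'-m\geq b(\delta)$ and $\delta<\tfrac12\inf I_1$. Plugging these ingredients into the $L^2$/Chebyshev argument of Claim \ref{ConAlmostSure} yields that almost surely $\cD^*_m$ occurs for infinitely many $m$, and each occurrence provides the desired configuration at $n=2^{m+1}$.

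The main technical subtlety lies in the bound on $J_m$: intersections $A^{k_*;i_*}\cap A^{k'_*;i'_*}$ whose time indices overlap but whose type labels at the coinciding time differ would in principle require a typed analogue of $(M2)_l$ that is not part of the hypotheses. We sidestep this by monotonicity—dominating $\fA^{k,i}_\rho$ by $\fA^k_\rho$ before invoking the untyped $(M2)_l$—at the mild cost of harmless constants; since $J_m$ enters only as an upper bound and remains of lower order than $I_m$ under $n\sigma(\rho_n)\to 0$, this causes no trouble.
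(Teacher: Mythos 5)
Your proposal is correct and follows essentially the same route as the paper: the paper's own proof introduces exactly such a modified event (its $\tilde\cD_m$, with the prescribed types, the constraint $k_\alpha/2^{m+1}\in I_\alpha$ and the $\hat\fs(2^{m+1})$-separation built in, after the reduction that the $I_\alpha$ avoid $0$), then repeats the moment estimates of Proposition \ref{prop6} and the second-moment Borel--Cantelli argument of Claim \ref{ConAlmostSure}, noting asymptotic independence of scales $m,m'$ once $m'-m$ exceeds a threshold determined by $\inf I_\alpha$ (the paper's ``$2^{-p}\not\in I_\alpha$'' condition, matching your $\delta<\tfrac12\inf I_1$ and $m'-m\geq b(\delta)$). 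Your extra steps --- shrinking/ordering the intervals so the separation is automatic, dominating typed by untyped events to reuse $(M2)_l$ in the $J_m$ bound, and carrying the thinning device through the typed setting --- are just the details the paper leaves implicit in ``the rest of the proof is identical to the proof of Theorem \ref{ThMultiBC}(b).''
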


The proof of Theorem \ref{ThMultBCDet} is similar to the proof of Theorem \ref{ThMultiBC}(b).
Without the loss of generality we may assume that $I_j$ does not contain $0.$ Then we
consider the following modification of $\cD_m$
$$
\tilde\cD_m:= \Big\{\exists 2^m<k_1<\cdots<k_r\leq2^{m+1}\,\textrm{ such that } \frac{k_\alpha}{2^{m+1}}\in I_\alpha, $$
$$ \fA^{k_\alpha, i_\alpha }_{\rho_{2^{m+1}}}
 \textrm{ happens and }
k_{\alpha+1}-k_\alpha\geq \hat \fs(2^{m+1}),\,0\leq \alpha\leq r-1\Big\}.
$$
Arguing as in Proposition \ref{prop6} we conclude that $\tilde\cD_{m_1}$ and $\tilde\cD_{m_2}$ are
asymptotically independent
(in the sense of \eqref{AsymInd}) if
$m_2>m_1+p$ and $p$ is so large that $2^{-p}\not\in I_\alpha$ for $\alpha=1, 2\dots r.$
The rest of the proof is identical to the proof of Theorem \ref{ThMultiBC}(b).

\subsection{ Poisson regime}
\begin{Thm}
\label{ThPoisson}
Suppose that $(M1)_r$ and $(M2)_r$ hold for all $r$ and that $\DS \lim_{n\to\infty} {n \sigma(\rho_n)}=\lambda.$
Then $N^n_{\rho_n}$ converges in law as $n\to\infty$ to the Poisson distribution with parameter $\lambda.$
\end{Thm}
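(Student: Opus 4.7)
The plan is to use the method of factorial moments. Recall that the Poisson distribution with parameter $\lambda$ is uniquely determined by its factorial moments $\EXP[N(N-1)\cdots(N-r+1)]=\lambda^r$, $r\geq 1$, and convergence of all factorial moments to the Poisson values implies convergence in distribution (see e.g.\ the standard moment-problem argument: the Poisson generating function is entire, so the factorial moments grow as $\lambda^r$ and determine the law).

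First, I would write the $r$-th factorial moment of $N^n_{\fr_n}$ as a sum over ordered $r$-tuples:
$$ N^n_{\fr_n}(N^n_{\fr_n}-1)\cdots(N^n_{\fr_n}-r+1) \;=\; r!\sum_{0<k_1<k_2<\cdots<k_r\leq n}\one_{A^{k_1,\ldots,k_r}_{\fr_n}}, $$
so that
$$ \EXP\bigl[N^n_{\fr_n}(N^n_{\fr_n}-1)\cdots(N^n_{\fr_n}-r+1)\bigr] \;=\; r!\sum_{0<k_1<\cdots<k_r\leq n}\Prob\bigl(A^{k_1,\ldots,k_r}_{\fr_n}\bigr). $$

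Second, I would apply Lemma \ref{LmSum} (with the endpoints $a_1\to 0^+$ and $a_2=1$, handling the negligible range $k_1\leq a_1 n$ by the same $(M2)_r$ based count that controls $S_m$ for $m<r$) to evaluate the sum asymptotically:
$$ \sum_{0<k_1<\cdots<k_r\leq n}\Prob\bigl(A^{k_1,\ldots,k_r}_{\fr_n}\bigr)\;=\;\frac{(n\fv(\fr_n))^r}{r!}\bigl(1+\delta_n\bigr)+\eta_n(\ln n)^{-10}, $$
with $\delta_n,\eta_n\to 0$. Using the hypothesis $n\fv(\fr_n)\to\lambda$, the right hand side tends to $\lambda^r/r!$, hence
$$ \EXP\bigl[N^n_{\fr_n}(N^n_{\fr_n}-1)\cdots(N^n_{\fr_n}-r+1)\bigr]\;\longrightarrow\;\lambda^r \quad\text{as }n\to\infty. $$

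Third, since this convergence holds for every $r\geq 1$ and $\lambda^r$ are precisely the factorial moments of the Poisson$(\lambda)$ distribution, the method-of-moments argument recalled above yields $N^n_{\fr_n}\Rightarrow\mathrm{Poisson}(\lambda)$. No additional obstacle arises: assumption $(M1)_r$ provides the leading term on well-separated tuples, $(M2)_r$ kills the contribution of clustered tuples, and the full strength of $(M3)_r$ (which was needed for the divergent Borel--Cantelli part) is not required here. The only small point to verify carefully is that Lemma \ref{LmSum} remains valid when $a_1=0$; this amounts to checking that the number of $r$-tuples with $k_1\leq \eps n$ is $O(\eps n^r)$ and hence contributes $O(\eps\,(n\fv(\fr_n))^r)+o(1)$, which becomes negligible upon letting $\eps\to 0$ after $n\to\infty$.
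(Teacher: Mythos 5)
Your proposal is correct and follows essentially the same route as the paper: both compute the factorial moments via the identity $\EXP\binom{N^n_{\rho_n}}{r}=\sum_{k_1<\cdots<k_r\leq n}\Prob(A^{k_1,\ldots,k_r}_{\rho_n})$, evaluate them with Lemma \ref{LmSum} using $(M1)_r$ and $(M2)_r$, and conclude by moment determinacy of the Poisson law. Your extra remark about extending Lemma \ref{LmSum} to $a_1=0$ is a harmless technical point that the counting argument in its proof already accommodates.
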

\begin{proof}We compute all (factorial) moments of the limiting distribution.
Let $\cX$ denote the Poisson random variable with parameter $\lambda.$
Below $\DS \left(\begin{array}{c} m \\ r\end{array}\right)$ denotes the binomial coefficient
$\DS \frac{m!}{r! (m-r)!}.$
Since (see e.g. \cite{Ross} formula (3.4) in section 7.3)
\begin{equation*}
\EXP\left(\left(\begin{array}{c} N^n_{\rho_n}\\ r\end{array}\right)\right)=
\sum_{k_1<k_2<\dots <k_r\leq n}
\Prob(A^{k_1, \dots, k_r}_{\rho_n}),
\end{equation*}
Lemma \ref{LmSum} implies for each $r$
\begin{equation}
\label{FactMom}
 \lim_{n\to \infty} \EXP\left(\left(\begin{array}{c} N^n_{\rho_n} \\ r\end{array}\right)\right)=
\frac{\lambda^r}{r!}=\EXP\left(\left(\begin{array}{c} \cX \\ r\end{array}\right)\right).
\end{equation}
Since this holds for all $r$ we also have that for all $r,$
$\DS \lim_{n\to\infty} \EXP((N^n_{\rho_n})^r)=\EXP(\cX^r).$
Since the Poisson distribution is uniquely determined by its moments the result follows.
\end{proof}

Similarly to Borel-Cantelli Lemma, we also have the following extension of Theorem~\ref{ThPoisson}
in the setting of \S \ref{SSDetails}. Denote $N^{n,i}_I$
the number of times event
$\fA^{k, i}_{\fr_n}$ occurs with $k/n\in I.$ Write $N^{n,i}:=N^{n,i}_{[0,1]}.$

\begin{Thm}
\label{ThPoissonProcess}
Suppose that $\widetilde{(M1)}_r$ and $(M2)_r$ hold for all $r$ and that $$ \lim_{n\to\infty}
{n\Prob(\Omega_{\fr_n}^{n,i})}=\lambda_i.$$
Then $\{N^{n,i}_{\rho_n}\}_{i=1}^p$
converge in law as $n\to\infty$ to the independent Poisson random variables
with parameter $\lambda_i.$

Moreover if $I_1, I_2, \dots I_s$ are disjoint intervals then
$\{N^{n, i}_{I_j}\},\; i=1\dots p, \;j=1\dots s$
converge in law as $n\to\infty$ to the independent Poisson random variables
with parameter $\lambda_i |I_j|.$
\end{Thm}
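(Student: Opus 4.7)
I will prove both parts by the method of joint factorial moments, following the same pattern as Theorem~\ref{ThPoisson}. For the first statement it suffices to show that for every $(r_1,\dots,r_p)\in\N^p$,
\begin{equation*}
\EXP\Big[\prod_{i=1}^p \binom{N^{n,i}_{\rho_n}}{r_i}\Big]\;\longrightarrow\;\prod_{i=1}^p \frac{\lambda_i^{r_i}}{r_i!}\qquad\text{as }n\to\infty.
\end{equation*}
The right-hand side is the joint factorial moment of $p$ independent $\mathrm{Poisson}(\lambda_i)$ variables; since the joint probability generating function is recovered as a formal power series in the joint factorial moments and Poisson vectors are determined by their moments, this convergence is equivalent to joint convergence in law.

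\textbf{Factorial moment computation.} Expanding each $\binom{N^{n,i}_{\rho_n}}{r_i}$ as a sum of indicators of strictly increasing $r_i$-tuples and taking expectation,
\begin{equation*}
\EXP\Big[\prod_i \binom{N^{n,i}_{\rho_n}}{r_i}\Big]=\sum \Prob\Big(\bigcap_{i,j} \fA^{k_j^{(i)},i}_{\fr_n}\Big),
\end{equation*}
where the outer sum runs over tuples with $k_1^{(i)}<\cdots<k_{r_i}^{(i)}\leq n$ for each $i$. Set $r=\sum_i r_i$. As in Lemma~\ref{LmSum}, I split this sum according to whether the multiset of all $r$ indices, sorted, has separation index equal to $r$ (well-separated) or strictly less than $r$. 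On the well-separated part, $\widetilde{(M1)}_r$ factorizes the probability as $(1+\eps_n)\prod_{i,j}\Prob(\fA^{k_j^{(i)},i}_{\fr_n})$. Relaxing the cross-row separation constraint removes only $O(n^{r-1}\fs(n)^r)=o(n^r)$ tuples, so the leading contribution is
\begin{equation*}
(1+o(1))\prod_{i=1}^p \frac{1}{r_i!}\Big(\sum_{k=1}^n \Prob(\fA^{k,i}_{\fr_n})\Big)^{r_i}\longrightarrow \prod_{i=1}^p\frac{\lambda_i^{r_i}}{r_i!},
\end{equation*}
using the hypothesis $n\Prob(\Omega^{n,i}_{\fr_n})\to\lambda_i$ together with the stationarity in $k$ of $\Prob(\fA^{k,i}_{\fr_n})$ implicit in the setting (just as for $\sigma(\fr_n)$ in $(M1)_r$).

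\textbf{Error terms.} Non-well-separated tuples are controlled by $(M2)_r$ exactly as in \eqref{NonWellSep}: since $\fA^{k,i}_{\fr_n}\subset \fA^k_{\fr_n}$, the intersection of labeled events is bounded by the corresponding intersection of unlabeled events, and each sub-sum with separation index $m<r$ contributes at most $\fs(n)^{r-m}/(\ln n)^{100r}=o(1)$. The subtle case is when indices of two distinct labels $i\neq i'$ coincide at some time $k$; this is handled by the (implicit) asymptotic disjointness of the refinement $\fA^n_\fr=\bigcup_i \fA^{n,i}_\fr$, which forces $\Prob(\fA^{k,i}_{\fr_n}\cap\fA^{k,i'}_{\fr_n})=o(1/n)$ for $i\neq i'$ and renders all cross-label collisions negligible.

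\textbf{Disjoint intervals and main obstacle.} The second statement is proved by the same computation applied to $\EXP\big[\prod_{i,j}\binom{N^{n,i}_{I_j}}{r_{ij}}\big]$, restricting each inner sum to $k/n\in I_j$. Because the $I_j$ are pairwise disjoint, indices from different intervals are automatically separated by $\Theta(n)\gg \fs(n)$, so no cross-interval collision or near-collision can occur; the same factorization then gives the limit $\prod_{i,j}(\lambda_i|I_j|)^{r_{ij}}/r_{ij}!$, the joint factorial moment of independent $\mathrm{Poisson}(\lambda_i|I_j|)$ variables. The main obstacle throughout is the accurate handling of same-time collisions between different labels within the same interval; once this is controlled via asymptotic disjointness of the refinement, everything else is a direct multivariate transcription of the factorial-moment argument used for Theorem~\ref{ThPoisson}.
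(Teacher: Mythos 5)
Your proof follows essentially the same route as the paper's: the paper likewise reduces to the second statement and proves convergence of the joint factorial moments $\EXP\bigl[\prod_{i,j}\binom{N^{n,i}_{I_j}}{r_{ij}}\bigr]$ to $\prod_{i,j}(\lambda_i|I_j|)^{r_{ij}}/r_{ij}!$ by applying $\widetilde{(M1)}_r$ to well-separated tuples and $(M2)_r$ to the rest, exactly as in Lemma \ref{LmSum} and Theorem \ref{ThPoisson}, and then invokes moment determinacy of the Poisson law. Your added discussion of same-time cross-label collisions is consistent with the intended setting, in which the pieces $\Omega^{n,i}_\rho$ of the decomposition are disjoint (as for the annuli in Theorem \ref{PrEasy}), so those terms simply vanish.
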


\begin{proof}
It suffices to prove the second statement. The proof is similar to the proof of Theorem
\ref{ThPoisson}. Namely, similarly to \eqref{FactMom} we show that for each set $r_{ij}\in \naturals$ we
have
\begin{equation*}
 \lim_{n\to \infty} \EXP\left(\prod_{i,j} \left(\begin{array}{c}  N^{n, i}_{I_j} \\ r_{ij}\end{array}\right)\right)=\prod_{i,j}
\frac{(\lambda_i |I_j|)^{r_{ij}}}{(r_{ij})!}=\prod_{i,j} \EXP\left(\left(\begin{array}{c} \cX_{ij} \\ r_{ij} \end{array}\right)\right)
\end{equation*}
where $\cX_{ij}$ are independent Poisson random variables with parameters $\lambda_i |I_j|.$
\end{proof}

\subsection{ Notes.}
The usual Borel Cantelli Lemma is a classical subject in probability.
There are many extensions to weakly dependent random variables, see
e.g. \cite[\S 12.15]{Wil91},  \cite[\S 1]{Sul}. The connection between Borel-Cantelli Lemma
and Poisson Limit Theorem is discussed in \cite{DF65, Fr73}.
The multiple Borel Cantelli Lemma for independent events is proven in
\cite{Mo76}. \cite{AN} obtains multiple Borel Cantelli Lemma for systems admitting good symbolic dynamics.
Extending multiple Borel Cantelli Lemma for more general sequences allows
to obtain many new applications, see Sections \ref{ScHit}--\ref{ScExtreme} of this paper.
We note that separation conditions similar to our have been used in \cite{DenkerKan07,Sevastjanov72}
to obtain the Poisson Law.

\section{ Multiple Borel Cantelli Lemma for exponentially mixing dynamical systems.}
\label{SSMultExpMix}
\subsection{ Good maps, good targets.}
 Let $f$ be a transformation of a metric space $X$ preserving a measure $\mu.$
Given a family of sets $\fA_\fr \subset X,$ $\rho \in \R_+^*$, we will, in a slight abuse of notations,
sometimes call $\fA_\fr$ the event $1_{\fA_\fr}$ and $\fA^k_\fr$ the event $1_{\fA_\fr}\circ f^k$. We will take $\fv(\fr)=\mu(\fA_\fr)$.

To deal with multiple recurrence and not just multiple hitting of targets, we need to consider slightly more complicated events.

Given a family of events ${\bfA}_{\fr}$  in $X\times X,$
let ${\bfA}^k_{\fr}\subset X$ be the event $$ {\bfA}^k_{\fr}=\{x: (x, f^kx)\in{\bfA}_{\fr}\}.$$
We will take $\bar\fv(\rho)=(\mu\times \mu)(\bfA_\fr)$.

{From now on we will always assume that if $\rho'\leq \rho$, then
$$\fA_{\fr'}\subset \fA_\fr, \quad \bfA_{\fr'}\subset \bfA_\fr.$$
}
For $\phi:X^k\rightarrow\mathbb{R},$ $k\in\mathbb{N}^+,$ we denote
$$\mu^k(\phi)=\int_{X^k} \phi(x_1,\cdots,x_k)d\mu(x_1)\cdots d\mu(x_k).$$

Given a sequence $\{\rho_n\}$, we recall that $N^n_{\fr_n}$  denotes the number of times $k\leq n$ such that $\fA^k_{\fr_n}$ (or $\bfA^k_{\fr_n}$) occurs.
We want to give conditions on the system $(f,X,\mu)$ and on the family $\{ \fA_{\rho_{n}}^k \}_{(n,k) \in \N^2; 1\leq k \leq n}$ or $\{ \bfA_{\rho_{n}}^k \}_{(n,k) \in \N^2; 1\leq k \leq n}$, that imply the validity of the dichotomy of Theorem \ref{ThMultiBC} for the number of hits $N^n_{\rho_n}$. For this, we take
$$ \bS_r=\sum_{j=1}^\infty \left(2^j \bv_j\right)^r$$
where $\bv_j=\fv(\fr_{2^j})$ if we are considering  targets of the type $\fA^k_\fr$ and
${\bv}_j=\bar\fv(\fr_{2^j})$  if we are considering targets of the type  $\bfA^k_\fr$.

The independence conditions $(M1)_r$, $(M2)_r$, $(M3)_r$ will be satisfied due to mixing conditions on the dynamical system $(f,X,\mu)$, and to some regularity and shrinking conditions on the targets that we now state.

\begin{Def}[$(r+1)$-fold exponentially mixing systems for $r\geq 1$]\label{def.mixing}
Let $\BAN$ be  a space of real
valued functions  defined over $X^{r+1}$, with a norm $\|\cdot\|_\BAN$. For $r \geq 1$, we say that $(f,X,\mu,\BAN)$ is $(r+1)$-fold exponentially mixing,  if there exist constants $C>0, L>0$ and $\theta<1$ such that
\begin{itemize}
\item[(Prod)]$\displaystyle{ \|A_1 A_2\|_\BAN \leq C \|A_1\|_\BAN \|A_2\|_\BAN,}$
\item[(Gr)]$\displaystyle{  \|A\circ (f^{k_0},\ldots,f^{k_{r}}) \|_\BAN \leq C L^{\sum_{i=0}^{r} k_i} \|A\|_\BAN,}$
\item[$({\rm EM})_r$] If $0=k_0\leq k_1\leq \ldots \leq k_r$ are such that  $\forall j\in [0,r-1], k_{j+1}-k_j \geq m$, then
 $\displaystyle{\left|\int_{X} A(x,f^{k_1}x,\cdots,f^{k_r}x) d\mu(x)-\int_{X^{r+1}}  A(x_0,\cdots,x_r)d\mu(x_0)\cdots d\mu(x_r)\right|\leq C \theta^m \left\|A\right\|_{\BAN}.}
$

\end{itemize}
\end{Def}

Given a  system $(f,X,\mu,\BAN)$, we now define the notion of simple admissible targets for $f$.

\begin{Def}[Simple admissible targets]\label{def.targets} Let $\fA_\fr$, $\rho\in \R_+^*$, be a decreasing collection of sets in $X$
 for which there are positive $\eta, \tau$ such that for all
  sufficiently small $\rho>0$
\begin{itemize}
\item[(Appr)] There are  functions $A_\fr^-, A_\fr^+:X\rightarrow \mathbb{R}$ such that $A_\fr^\pm\in \BAN$ and
\begin{itemize}
\item[(i)] $\|A_\fr^\pm\|_\infty\leq 2$
and $\|A_\fr^\pm\|_\BAN\leq \fr^{-\tau};$
\item[(ii)] $A_\fr^-\leq 1_{\fA_\fr}\leq A_\fr^+;$
\item[(iii)] $\mu(A_\fr^+)-\mu(A_\fr^-)\leq  \fv(\fr)^{1+\eta},$
\end{itemize}
\end{itemize}
where $\sigma(\rho)=\mu(\Omega_\rho)$.

 Let $\{\rho_n\}$ be a decreasing  sequence of positive numbers.
 We say that the sequence  $\{\fA_{\fr_n}\}$ is a simple  admissible sequence of targets for $(f,X,\mu,\BAN)$ if  there exists $u>0$  such that

\begin{equation}\label{rhobound}\tag{Poly} \rho_n \geq n^{-u}, \quad \sigma(\fr_n)\geq n^{-u},\end{equation}
and

\begin{equation} \tag{Mov} \label{eq.MOV}
 \forall R,  \exists \brC:\;
\forall k \in (0,R \ln n),  \;\;\; \mu(\fA_{\fr_n} \cap f^{-k} \fA_{\fr_n})\leq \brC \fv(\fr_n) (\ln n)^{-1000r}.\end{equation}
\end{Def}




\begin{Rem} {\rm
\label{RemL1}
Note that properties (Appr)(ii) and (iii) imply that
$$  \mu(A_\fr^+)-\mu(\fA_\fr)\leq  \mu(\fA_\fr)^{1+\eta},
\quad
\mu(\fA_\fr)- \mu(A_\fr^-)\leq \mu(\fA_\fr)^{1+\eta}.
$$}
\end{Rem}

{A useful situation where one can verify these properties is the following.
\begin{Lem}\label{RemLip} Suppose that
$f$ is Lipschitz and $\BAN$ is the space of Lipschitz functions. We have that
(Prod) and (Gr) hold with $L$ being the Lipschitz constant of $f.$
Moreover, if there exist constants $\xi,\xi'>0$ and  $\Phi:X \to \R$ a (uniformly) Lipschitz function\footnote{The typical situation for using Lemma \ref{RemLip} will be with $\Phi(x)$ defined by some distance $d(x_0,x)$.} such that for any interval $J \in \R$,
$$\mu(\{x: \Phi(x)\in J\}) \in [|J|^\xi,|J|^{\xi'}]$$  and two
(uniformly) Lipschitz functions   $a_1$ and $a_2: \R \to \R$  such that for some $\a,\a'>0$ we have
$$a_2(\rho)-a_1(\rho)\in [\rho^\alpha,\rho^{\alpha'}]$$
then (Appr) holds for the targets
$$ \fA_\fr=\left\{\Phi(x)\in [a_1(\fr) , a_2(\fr)]\right\}.$$
 {The same result holds if $\BAN$ is the space of $C^s$ functions or the space of compactly supported $C^s$ functions with $s>0$ arbitrary. }
\end{Lem}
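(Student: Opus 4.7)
The plan is to verify (Prod), (Gr), and (Appr) in turn, the main content being the construction of Lipschitz (or $C^s$) mollifications of $1_{\fA_\fr}$ for (Appr). Using the Lipschitz norm $\|g\|_{\BAN}=\|g\|_\infty+\mathrm{Lip}(g)$, (Prod) follows from the Leibniz estimate $\mathrm{Lip}(A_1A_2)\le \|A_1\|_\infty\,\mathrm{Lip}(A_2)+\|A_2\|_\infty\,\mathrm{Lip}(A_1)$ together with the trivial bound on sup norms. For (Gr), if $A:X^{r+1}\to\R$ is Lipschitz in the sum metric, one has
$$|A(f^{k_0}x_0,\ldots,f^{k_r}x_r)-A(f^{k_0}y_0,\ldots,f^{k_r}y_r)|\le \mathrm{Lip}(A)\sum_{i=0}^{r} L^{k_i}d(x_i,y_i),$$
so the composition has Lipschitz constant at most $\mathrm{Lip}(A)\,L^{\sum k_i}$ (using $L\ge 1$), while the sup norm is unchanged.

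For (Appr), write $J_\fr=[a_1(\fr),a_2(\fr)]$, so that $\fA_\fr=\Phi^{-1}(J_\fr)$ and $\sigma(\fr)=\mu(\Phi\in J_\fr)$. Fix a smoothing scale $\delta=\delta(\fr)>0$ to be chosen and let $\psi_\fr^-,\psi_\fr^+\colon\R\to[0,1]$ be continuous piecewise-linear cutoffs: $\psi_\fr^-\equiv 1$ on $[a_1(\fr)+\delta,a_2(\fr)-\delta]$ and vanishes outside $J_\fr$, while $\psi_\fr^+\equiv 1$ on $J_\fr$ and vanishes outside $[a_1(\fr)-\delta,a_2(\fr)+\delta]$. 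Set $A_\fr^\pm=\psi_\fr^\pm\circ\Phi$. Then (Appr)(ii) is immediate, $\|A_\fr^\pm\|_\infty\le 1$, and $\mathrm{Lip}(A_\fr^\pm)\le \mathrm{Lip}(\Phi)/\delta$. Since $A_\fr^+-A_\fr^-$ is supported where $\Phi(x)$ lies within $\delta$ of $\{a_1(\fr),a_2(\fr)\}$, the upper bound on the law of $\Phi$ yields
$$\mu(A_\fr^+)-\mu(A_\fr^-)\le 2(2\delta)^{\xi'}.$$

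The final step is to choose $\delta$ so that (i) and (iii) hold simultaneously. Pick $\eta>0$ and set $\delta=c\,\sigma(\fr)^{(1+\eta)/\xi'}$ with $c$ small enough that $2(2c)^{\xi'}\le 1$; this immediately gives (iii). For (i), the hypothesis yields $\sigma(\fr)\ge |J_\fr|^{\xi}\ge \fr^{\alpha\xi}$, hence $1/\delta\le c^{-1}\fr^{-\alpha\xi(1+\eta)/\xi'}$, so (i) holds with any $\tau>\alpha\xi(1+\eta)/\xi'$. The non-triviality requirement $\delta\le |J_\fr|/2$ follows from $\sigma(\fr)\le|J_\fr|^{\xi'}$, which gives $\delta\le c|J_\fr|^{1+\eta}$, valid for $\fr$ small. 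For the $C^s$ (or compactly supported $C^s$) version, replace the piecewise-linear $\psi_\fr^\pm$ by smooth cutoffs of the same shape, obtained by convolving the characteristic functions with a fixed bump rescaled to width $\delta$; the $C^s$ seminorm then scales as $\delta^{-s}$, changing only the value of $\tau$. I expect the main bookkeeping obstacle to be tracking the polynomial exponents $\xi,\xi',\alpha,\alpha'$ simultaneously, but the single choice of $\delta(\fr)$ above makes all conditions compatible.
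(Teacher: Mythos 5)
Your argument for the Lipschitz case is correct and follows essentially the same route as the paper: approximate $1_{[a_1(\fr),a_2(\fr)]}$ by trapezoidal cutoffs composed with $\Phi$, with a smoothing scale chosen as a power of $\fr$ (the paper takes width $\fr^b(a_2(\fr)-a_1(\fr))$ with $b$ large, you take $\delta\sim\sigma(\fr)^{(1+\eta)/\xi'}$ — the same bookkeeping with the exponents $\xi,\xi',\alpha,\alpha'$), and your verification of (Prod), (Gr), (ii), (iii) and of the non-degeneracy $\delta\le|J_\fr|/2$ is sound.

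One small caveat concerns your last remark on the $C^s$ case: smoothing the one-dimensional cutoff $\psi^{\pm}$ does not by itself put $A^{\pm}_\fr=\psi^{\pm}\circ\Phi$ in $C^s$ for $s>1$, because $\Phi$ is only assumed Lipschitz, so the composition is again merely Lipschitz. The fix is to smooth on $X$ itself — mollify the Lipschitz functions $A^{\pm}_\fr$ (slightly enlarging/shrinking the plateaus so that (ii) survives), at the cost of a larger but still polynomial-in-$\fr^{-1}$ norm — which is exactly what the paper's closing sentence about ``approximation of Lipschitz functions by smooth functions'' refers to.
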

The proof of Lemma \ref{RemLip} relies on simple approximation of characteristic functions by Lipschitz functions.

\begin{proof} We will construct $A^+_\rho$ that satisfies  $(i)$, $(ii)$ and $(iii)$ of (Appr), with $\mu(A_\fr^-)$ replaced by $\mu(\fA_\fr)$. The construction of $A^-_\rho$ is similar. Note that $\sigma(\rho)=\mu(\fA_\fr) \in [\rho^{\a\xi},\rho^{\a'\xi'}].$

Define a family of smooth function $\psi^+:\R^4 \to [0,2]$ such that for $v>u$ and $\eps>0$ and $x\in \R$ (we are not interested in the form of $\psi^+$ outside this domain) we have
$$\psi^+(u,v,\eps,x)=\left\{
\begin{array}{rl}
 1,  &\text{ for } x\in [u,v]\\
 0,   &\text{ for } x\notin [u-\eps(v-u),v+\eps(v-u)]
\end{array}
\right.$$
and for which there exist constants  $\eta>0$ and  $C>0$ such that that for any $\nu_0$ and for $\R^4 \supset \cR_{\nu_0}:=\{v-u\geq \nu_0, \eps\geq \nu_0\}$, we have that
$$\|\psi^+\|_{C^1(\cR_{\nu_0})}\leq C \nu_0^{-\eta},$$
where $C^1(\cR_{\nu_0})$ refers to the $C^1$ norm in the region $\cR_{\nu_0}$.

Define now $A_\fr^+ : X \to \R :  x \mapsto \psi^+(a_1(\rho),a_2(\rho),\rho^b,\Phi(x))$, where $b>1$ will be chosen later. It is clear that $ A_\fr^+$ is Lipschitz and that $1_{\fA_\fr}\leq A_\fr^+$. On the other hand $\|A_\fr^+\|_\infty\leq 2$ and $\|A_\fr^+\|_\BAN \leq C(\Phi)\rho^{-b\alpha\eta}$, and $(i)$ holds for $\tau=b\a\eta+1$. We turn now to $(iii)$. We  observe that with  $J_1=[a_1(\fr)-\rho^b(a_2(\rho)-a_1(\rho)), a_1(\fr)]$ and $J_2= [a_2(\rho),a_2(\fr)+\rho^b(a_2(\rho)-a_1(\rho))]$,
\begin{align*} \mu(A_\fr^+)-\mu(1_{\fA_\fr})&\leq 2 \mu\left(\{\Phi(x)\in J_1 \cup J_2 \}\right)\\
&\leq 4\rho^{\xi'(b+\a')}.\end{align*}
Hence, if $b$ is chosen sufficiently large we have $\rho>0$ sufficiently small that $\mu(A_\fr^+)-\mu(1_{\fA_\fr})\leq \sigma(\rho)^2$.

The fact that the same results hold if $\BAN$ is the space of $C^s$ functions or the space of compactly supported $C^s$ functions with $s>0$ arbitrary, is a simple consequence of the approximation of Lipschitz functions by smooth functions.
\end{proof}
}

To deal with recurrence, the following definition is useful.

\begin{Def}[Composite admissible targets] \label{def.recurrent.targets}
Let ${\bfA}_{\fr}$  be a decreasing collection of sets in $X\times X$ satisfying the following conditions for some positive constants $\bar C, \eta, \tau$ and for all
  sufficiently small $\rho>0,$

\begin{itemize}
\item[${\rm(\overline{Appr})}$] There are  functions $\bar{A}_\fr^-, \bar{A}_\fr^+:X\times X\rightarrow \mathbb{R}$ such that $\bar{A}_\fr^\pm\in \BAN$ and
\begin{itemize}
\item[(i)] $\|\bar{A}_\fr^\pm\|_\infty\leq 2$
and $\|\bar{A}_\fr^\pm\|_\BAN\leq  \fr^{-\tau};$
\item[(ii)] $\bar{A}_\fr^-\leq 1_{{\bfA}_{\fr}}\leq \bar{A}_\fr^+;$
\item[(iii)] For any fixed $x,$
$$\bsigma(\fr)-\bsigma(\fr)^{1+\eta}\leq
\int\bar{A}_\fr^-(x,y)d\mu(y)\leq\int\bar{A}_\fr^+(x,y)d\mu(y)\leq \bsigma(\fr)+ \bsigma(\fr)^{1+\eta},$$

\item[(iv)] For any fixed $y,$
$\DS 
\int\bar{A}_\fr^+(x,y)d\mu(x)\leq \brC \bsigma(\fr).$
\end{itemize}
\end{itemize}
The sequence $\bfA_{\fr_n}$ is said to be composite admissible  if
\begin{equation}\label{brhobound}\tag{$\overline{\mathrm{Poly}}$} \rho_n \geq n^{-u}, \quad\bsigma({\fr_n})\geq  n^{-u},\end{equation}
and there is a constant $a>0$ such that for any $k_1<k_2$
 \begin{equation}\label{eq.bSUB} \tag{${\rm \overline{Sub}}$} \bfA_{\fr}^{k_1}\cap\bfA_{\fr}^{k_2}\subset f^{-k_1}\bfA_{a\fr}^{k_2-k_1},\end{equation}
and
   \begin{equation}\label{eq.bMOV} \tag{${\rm \overline{Mov}}$} \forall k \neq 0,\quad \mu(
 \bfA^k_{a\fr_n})\leq \brC (\ln n)^{-1000r}.
 \end{equation}

\end{Def}
Observe that integrating condition ${\rm (\overline{Appr})(iii)}$ with respect to $x$ we obtain
for each $n \neq 0,$
\begin{equation} \label{eq.app} \brC^{-1}\mu\left({\bfA}^n_{\fr}\right)\leq
\mu\left(\bar{A}_\fr^-(x,f^nx)\right)\leq\mu\left(\bar{A}_\fr^+(x,f^nx)\right)\leq \brC\mu\left({\bfA}^n_{\fr}\right).\end{equation}

{The typical composite targets we will deal with are of the type $d(x,y)<\rho$ or $d(x,y)< \gamma(x)\rho,$ where $\gamma(x)$ is related to the local dimension of a smooth measure at the point $x$.  We state here a general Lemma that guarantees the admissibility of such targets. The statement is a bit technical but if we keep in mind that the function $\Phi(x,y)$ is usually defined by a distance, then the hypothesis of the Lemma become natural. The proof of the Lemma is very simple and follows a similar scheme of the proof of   Lemma \ref{RemLip} for simple targets.

\begin{Lem}\label{RemLipcomposite} Suppose that
$f$ is Lipschitz and $\BAN$ is the space of Lipschitz functions. Suppose there exists constants $C,\xi,\xi',\xi''>0$ and  $\Phi:X\times X \to \R$ a (uniformly) Lipschitz function such that
\begin{itemize}
\item[(h1)] $\DS \forall (x,y) \in X \times X, \quad \Phi(x,y) \leq C \Phi(y,x).$
\item[(h2)] For any interval $J \in \R,$
$\DS \bar \sigma(J):= (\mu\times \mu)\left(\{(x,y)\in X \times X: \Phi(x,y)\in J\}\right) \in [|J|^\xi,|J|^{\xi'}].$
\item[(h3)] For any $x\in X$, $\DS\mu\left( \left\{ y \in X : \Phi(x,y)\in J\right\}\right)  =\bar \sigma(J) (1+\cO(|J|^{\xi''})).$
\end{itemize}
If two
(uniformly) Lipschitz functions   $a_1$ and $a_2: \R \to \R$  are such that for some $\a,\a'>0$
$$a_2(\rho)-a_1(\rho)\in [\rho^\alpha,\rho^{\alpha'}]$$
then $\overline{\rm (Appr)}$ holds for the targets
$$ \bfA_\fr=\{\Phi(x,y)\in [a_1(\fr) , a_2(\fr)]\}$$
 {The same result holds if $\BAN$ is the space of $C^s$ functions or the space of compactly supported $C^s$ functions with $s>0$ arbitrary. }
\end{Lem}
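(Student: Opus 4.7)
The plan is to mimic the construction in Lemma~\ref{RemLip} for the simple-target case, working now on the product space $X\times X$ with the Lipschitz function $\Phi$ in place of the one-variable $\Phi$ used there, and then to check each of the four items of $\overline{(\mathrm{Appr})}$, invoking (h1)--(h3) in sequence. Pick the same smoothed interval-indicators $\psi^{\pm}(u,v,\eps,\cdot)$ constructed in Lemma~\ref{RemLip} (with $\psi^{-}$ defined symmetrically by taking the smoothed indicator of the inward-shifted interval $[u+\eps(v-u),v-\eps(v-u)]$), and set
\[
 \bar A^{\pm}_{\fr}(x,y):=\psi^{\pm}\bigl(a_1(\fr),\,a_2(\fr),\,\fr^{b},\,\Phi(x,y)\bigr),
\]
where $b>1$ will be fixed at the end. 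Inclusion (ii) and the sup-norm bound in (i) are then immediate from the construction, and the Lipschitz bound $\|\bar A^{\pm}_{\fr}\|_{\BAN}\leq \fr^{-\tau}$ follows from the chain rule, using that $\Phi$ is uniformly Lipschitz and that $\|\psi^{\pm}\|_{C^1}\leq C(\fr^{b}(a_2(\fr)-a_1(\fr)))^{-\eta}\leq C\fr^{-(b+\alpha')\eta}$, so that $\tau$ can be chosen $=(b+\alpha')\eta+1$.

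For (iii), fix $x$ and integrate over $y$. By (ii),
\[
 \int \bar A^{-}_{\fr}(x,y)\,d\mu(y)\leq \mu\bigl(\{y:\Phi(x,y)\in[a_1(\fr),a_2(\fr)]\}\bigr)\leq \int \bar A^{+}_{\fr}(x,y)\,d\mu(y),
\]
and the middle quantity equals $\bsigma(\fr)(1+\mathcal{O}((a_2-a_1)^{\xi''}))$ by (h3). The outer quantities differ from this middle term only by the integral over the $\fr^{b}$-buffer zone $\{y:\Phi(x,y)\in J_1\cup J_2\}$, where $J_1,J_2$ are intervals of length $\fr^{b}(a_2(\fr)-a_1(\fr))$. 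By (h3) and (h2) this buffer mass is $\mathcal{O}(\fr^{(b+\alpha)\xi'})$; choosing $b$ sufficiently large so that $(b+\alpha)\xi'>(1+\eta)\alpha'\xi\geq (1+\eta)\log\bsigma(\fr)/\log\fr$ makes this buffer contribution dominated by $\bsigma(\fr)^{1+\eta}$, establishing (iii).

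The one step that differs from Lemma~\ref{RemLip} is (iv), where we must integrate in $x$ with $y$ fixed, but $\Phi$ is not symmetric. Here (h1) is the crucial input: applied both as written and after swapping the two arguments, it yields the bi-comparability $\Phi(x,y)/C\leq \Phi(y,x)\leq C\,\Phi(x,y)$. Consequently the support of $\bar A^{+}_{\fr}(\cdot,y)$ is contained in $\{x:\Phi(y,x)\in [(a_1(\fr)-\fr^{b}(a_2-a_1))/C,\,C(a_2(\fr)+\fr^{b}(a_2-a_1))]\}=:\{\Phi(y,\cdot)\in\tilde J\}$. Applying (h3) with the roles of the two variables interchanged (i.e.\ to the fixed first argument $y$) gives $\mu(\{\Phi(y,\cdot)\in \tilde J\})\leq \bsigma(\tilde J)(1+o(1))$, and (h2) plus $|\tilde J|\leq C'(a_2(\fr)-a_1(\fr))$ (which is where the assumption $a_1/a_2$ bounded away from $1$, or $a_1=0$ as in the typical distance case, is used implicitly) bounds $\bsigma(\tilde J)\leq \brC\,\bsigma(\fr)$. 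Since $\|\bar A^{+}_{\fr}\|_{\infty}\leq 2$, (iv) follows.

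The main (and only mildly technical) obstacle is this last step: without some comparability of $|\tilde J|$ with $a_2-a_1$ one cannot translate the one-sided density hypothesis (h3) into a bound symmetric in $x,y$. The extension to the $C^{s}$ setting is routine: convolve $\bar A^{\pm}_{\fr}$ with a smooth mollifier of width a further power of $\fr$, which preserves all four properties at the cost of increasing $\tau$; the approximation of Lipschitz functions by smooth ones, exactly as at the end of the proof of Lemma~\ref{RemLip}, suffices.
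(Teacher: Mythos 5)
Your proof follows essentially the same route as the paper's: the identical smoothed indicators $\psi^{\pm}\bigl(a_1(\rho),a_2(\rho),\rho^{b},\Phi(x,y)\bigr)$ with $b$ taken large, properties (i)--(ii) as in the simple-target lemma, (iii) via (h3) for the central interval plus an (h2)/(h3) bound on the two buffer intervals, and (iv) via (h1). If anything, your treatment of (iv) is more explicit than the paper's one-line claim $\int\bar A^{+}_{\rho}(x,y)\,d\mu(x)\le C\int\bar A^{+}_{\rho}(y,x)\,d\mu(x)\le 2C\,\bar\sigma(\rho)$, and your small exponent bookkeeping slips ($\alpha$ versus $\alpha'$, $\xi$ versus $\xi'$) are immaterial since everything is polynomial in $\rho$ and $b$ is chosen sufficiently large.
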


\begin{proof} The proof is very similar to that of Lemma \ref{RemLip}.  We just explain the differences. Note that
 $\bsigma(\fr)=(\mu \times \mu) ({\bfA}_{\fr}) \in [\rho^{\a\xi},\rho^{\a'\xi'}]$.

We introduce
$\bar{A}_\fr^+: X\times X \to \R :  (x,y) \mapsto \psi^+(a_1(\rho),a_2(\rho),\rho^b,\Phi(x,y))$, where $\psi^+$ is as in the proof of Lemma \ref{RemLip}. Properties $(i)$ and $(ii)$ hold as in the proof of
Lemma~\ref{RemLip}.

We turn now to $(iii)$. We fix $x\in X$, and observe that with $I=[a_1(\rho),a_2(\rho)]$ and $J_1=[a_1(\fr)-\rho^b(a_2(\rho)-a_1(\rho)), a_1(\fr)]$, $J_2= [a_2(\rho),a_2(\fr)+\rho^b(a_2(\rho)-a_1(\rho))]$ we have that
\begin{align*}
\int\bar{A}_\fr^+(x,y)d\mu(y)-
 \mu\left( \left\{ y \in X : \Phi(x,y)\in I \right\}\right)&\leq   2\mu\left( \left\{ y \in X : \Phi(x,y)\in J_1 \cup J_2\right\}\right)\\
&\leq \bar \sigma(\rho)^2   \end{align*}
if $b$ is sufficiently large due to $(h2)$ and $(h3)$.  Applying $(h2)$ and $(h3)$, we also see that
$$|\mu\left( \left\{ y \in X : \Phi(x,y)\in I \right\}\right)-\bar \sigma(\rho)| =\cO(\bar \sigma(\rho)^{1+\eta})$$
for some $\eta>0$. This proves  $\overline{\rm (Appr)}(iii)$.

Finally, fix $y \in X$ and observe that $(h1)$ implies
$$ \int\bar{A}_\fr^+(x,y)d\mu(x)\leq C \int\bar{A}_\fr^+(y,x)d\mu(x)\leq 2C \bar \sigma(\rho),$$
which proves $\overline{\rm (Appr)}(iv)$.
\end{proof}
}

\subsection{ Multiple Borel-Cantelli Lemma for admissible targets.}

The goal of this section is to establish the following Theorem that gives  conditions on the system $(f,X,\mu)$ and on the family $\{ \fA_{\rho_{n}}^k \}_{(n,k) \in \N^2; 1\leq k \leq n}$ (or $\{ \bfA_{\rho_{n}}^k \}_{(n,k) \in \N^2; 1\leq k \leq n}$), that imply the validity of the dichotomy of Theorem \ref{ThMultiBC} for the number of hits $N^n_{\rho_n}$. Recall that
$$ \bS_r=\sum_{j=1}^\infty \left(2^j \bv_j\right)^r$$
where $\bv_j=\fv(\fr_{2^j})$ if we are considering  targets of the type $\fA^k_\fr$ and
${\bv}_j=\bar\fv(\fr_{2^j})$  if we are considering targets of the type  $\bfA^k_\fr$.

\begin{Thm} \label{theo.mixing} Assume a system $(f,X,\mu,\BAN)$ is {$(2r+1)$}-fold exponentially mixing.\footnote{Part $a)$ holds for  {$2r$}-fold exponentially mixing systems, as shown by the first part of Proposition \ref{ThEM-BC}.} Then

\noindent a) If $\{\fA_{\fr_n}\}$ is a sequence of simple admissible targets as in Definition \ref{def.targets},  { then  the events of the family $\{ \fA_{\rho_{n}}^k \}_{(n,k) \in \N^2; 1\leq k \leq n}$ are $2r$--almost independent at all scales.}

\noindent b) If $\{\bfA_{\fr_n}\}$  is a sequence of composite admissible targets as in Definition \ref{def.recurrent.targets}, {then  the events of the family $\{ \bfA_{\rho_{n}}^k \}_{(n,k) \in \N^2; 1\leq k \leq n}$ are $2r$--almost independent at all scales.} 
\end{Thm}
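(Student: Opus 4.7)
The plan is to verify the independence conditions $(M1)_{\bar r}$ and $(M2)_{\bar r}$ for $\bar r\leq 2r$, together with $(M3)_{\bar r}$ for $\bar r\leq r$, by combining the sandwich approximations coming from (Appr) or $\overline{\text{(Appr)}}$ with the multifold exponential mixing $({\rm EM})_{\bar r}$, and with the quasi-disjointness bound (Mov) (respectively $\overline{\text{(Mov)}}$ and $\overline{\text{(Sub)}}$). I would fix $\fs(n)=R\ln n$ with $R$ a large constant to be chosen in terms of $\theta,L,u,\tau,r$, and $\hat\fs(n)=n/(4r)$, so that $\fs(n)\leq(\ln n)^2$ and $\hat\fs(n)\in[\varepsilon n,n(1-q)/(2r))$ for suitable $\varepsilon,q$, and moreover (Mov) applies for every gap $k<\fs(n)$. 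Thanks to (Poly) or $(\overline{\text{Poly}})$, the approximations satisfy $\|A^\pm_{\fr_n}\|_{\BAN}\leq n^{u\tau}$, which is polynomial in $n$.

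\textbf{Verification of $(M1)_{\bar r}$.} For $0=k_0<k_1<\cdots<k_{\bar r}\leq n$ with every consecutive gap $\geq\fs(n)$, sandwich $\prod_{j=1}^{\bar r}1_{\fA_{\fr_n}}(f^{k_j}x)$ between $\prod_j A^\pm_{\fr_n}(f^{k_j}x)$. In the simple case, view $F(y_0,\ldots,y_{\bar r})=\prod_j A^+_{\fr_n}(y_j)$ as an element of $\BAN$ with $\|F\|_\BAN\leq C^{\bar r}n^{\bar r u\tau}$ by (Prod); in the composite case take instead $F(y_0,\ldots,y_{\bar r})=\prod_j \bar A^+_{\fr_n}(y_0,y_j)$. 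Applying $({\rm EM})_{\bar r}$ gives
\[
\int F(x,f^{k_1}x,\ldots,f^{k_{\bar r}}x)\,d\mu(x)=\mu^{\bar r+1}(F)+O\!\bigl(\theta^{R\ln n}\,n^{\bar r u\tau}\bigr).
\]
By iterated application of (Appr)(iii) or of $\overline{\text{(Appr)}}$(iii) (first integrating each $y_j$, then $y_0$, and using (iv) to dominate tails), the main term equals $\fv(\fr_n)^{\bar r}\bigl(1+O(\fv(\fr_n)^\eta)\bigr)$. Choosing $R$ so that $R|\ln\theta|>\bar r(u\tau+u)+100r$ makes the mixing error negligible with respect to $\fv(\fr_n)^{\bar r}\geq n^{-\bar r u}$; combined with the matching lower bound from $A^-$, this yields $(M1)_{\bar r}$ with some $\eps_n\to 0$.

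\textbf{Verification of $(M2)_{\bar r}$ (the main obstacle).} When $\Sep_n(k_1,\ldots,k_{\bar r})=m<\bar r$, I would partition the indices into $m$ maximal blocks of consecutive indices with internal gaps $<\fs(n)$ and external gaps $\geq\fs(n)$; since $m<\bar r$ at least one block is multi-element. Drop all but the first pair $(t_\alpha,t_\alpha+d_\alpha)$ in each multi-block and bound the intersection by $\Prob\bigl(\bigcap_\alpha E_\alpha\bigr)$, where $E_\alpha$ is a hit-event for singletons and a pair-event for multi-blocks. In the simple case, approximate the pair indicator by $A^+_{\fr_n}(y)A^+_{\fr_n}(f^{d_\alpha}y)$, whose mean is bounded by $\brC\fv(\fr_n)(\ln n)^{-1000r}$ via (Mov) combined with the (Appr)(iii) estimate $\mu(A^+_{\fr_n})-\fv(\fr_n)\leq\fv(\fr_n)^{1+\eta}$. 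Applying $({\rm EM})_m$ at the well-separated block representatives produces the factorization
\[
\Prob\Bigl(\bigcap_j\fA^{k_j}_{\fr_n}\Bigr)\lesssim\fv(\fr_n)^{m_1}\,\bigl(\fv(\fr_n)(\ln n)^{-1000r}\bigr)^{m_2}=\fv(\fr_n)^m(\ln n)^{-1000rm_2},
\]
and since $m_2\geq 1$, the right-hand side is dominated by $K\fv(\fr_n)^m(\ln n)^{-100r}$, giving $(M2)_{\bar r}$. The composite case is analogous, using $\overline{\text{(Sub)}}$ to replace $\bfA^{k_1}_\fr\cap\bfA^{k_2}_\fr$ by $f^{-k_1}\bfA^{k_2-k_1}_{a\fr}$ and $\overline{\text{(Mov)}}$ to bound its measure. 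The main obstacle here is that collapsing a close pair into a function on $X$ requires composition with $f^{d_\alpha}$, and (Gr) produces a $\BAN$-norm blowup $L^{d_\alpha}\leq n^{R\ln L}$ that must be absorbed by the mixing decay $n^{-R|\ln\theta|}$; taking $R$ sufficiently large that $R(|\ln\theta|-m_2\ln L)$ dominates all contributions $n^{u\tau m}$ and $\fv(\fr_n)^{-m}\leq n^{mu}$ is possible subject only to $\fs(n)\leq(\ln n)^2$, and this balancing is the core technical step.

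\textbf{Verification of $(M3)_{\bar r}$ for $\bar r\leq r$.} Here both groups of $\bar r$ times satisfy internal gaps $\geq\hat\fs(2^{i+1})\geq\varepsilon 2^{i+1}$ and an inter-group gap $\geq\hat\fs(2^{j+1})\geq\varepsilon 2^{j+1}$. Approximate all $2\bar r$ indicators as in $(M1)$, the crucial point being that we now have $2r+1$ variables so that $({\rm EM})_{2\bar r}$ is available by the $(2r+1)$-fold mixing hypothesis. The smallest gap being exponentially large in $i$, the mixing error $\theta^{\varepsilon 2^i}\cdot\mathrm{poly}(2^j)$ is super-polynomially small, and the main term factorizes as $\fv(\fr_{2^i})^{\bar r}\fv(\fr_{2^j})^{\bar r}(1+\eps_i)$, yielding $(M3)_{\bar r}$. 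In the composite case the same argument applies, except that the function placed in $\BAN$ now depends on the shared variable $y_0=x$ through all $2\bar r$ factors $\bar A^+_{\fr_\cdot}(y_0,\cdot)$, and the $(2r+1)$-fold mixing is applied precisely to decouple $y_0$ together with the $2\bar r$ images along the orbit.
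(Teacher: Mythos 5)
There are two genuine gaps, and they occur exactly at the two balancing steps you yourself single out as the core of the argument. First, in your verification of $(M2)_{\bar r}$ the close gap $d_\alpha$ is only known to be $<\fs(n)=R\ln n$, so the (Gr) blow-up of the collapsed pair is $L^{d_\alpha}\leq n^{R\ln L}$, while the mixing decay produced by the well-separated block representatives is $\theta^{R\ln n}=n^{-R|\ln\theta|}$. Both exponents scale linearly in $R$, so taking $R$ large does not help: the net exponent is $R(m_2\ln L-|\ln\theta|)$ up to constants, and nothing in Definition \ref{def.mixing} guarantees $|\ln\theta|>m_2\ln L$ (typically the Lipschitz constant $L$ is much larger than $\theta^{-1}$). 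The paper avoids this by a two-scale device in Proposition \ref{ThEM-BC}: it proves $(M1)_r$ already under the weaker separation $\sqrt{R}\ln n$, so that in the proof of $(M2)_r$ one may assume the unique close gap is $<\sqrt{R}\ln n$ (tuples whose close gap lies in $[\sqrt{R}\ln n,\,R\ln n)$ are covered by the $(M1)$-type product bound); then the (Gr) factor is only $L^{\sqrt{R}\ln n}$ and is beaten by $\theta^{R\ln n}$ because $R|\ln\theta|-\sqrt{R}\,\ln L\to\infty$ as $R\to\infty$. Your proposal has no analogue of this step, and without it the estimate fails in general.

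Second, in $(M3)_{\bar r}$ you apply $({\rm EM})_{2\bar r}$ with all $2\bar r$ hitting times as separate slots, so the decay is governed by the smallest gap, which is only $\geq\eps\,2^{i}$ (the internal separation of the first group). The resulting error is of order $\theta^{\eps 2^i}\fr_{2^i}^{-\bar r\tau}\fr_{2^j}^{-\bar r\tau}$, and it must be compared with the main term, which can be as small as $2^{-(i+j)u\bar r}$. Since $(M3)$ must hold for all $j-i\geq b$ with no upper bound on $j-i$ (and the divergent-case variance estimate genuinely uses such pairs), fixing $i$ and letting $j\to\infty$ shows that your claim that $\theta^{\eps 2^i}\cdot\mathrm{poly}(2^j)$ is super-polynomially small is false; it is not even bounded relative to the main term. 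The paper instead groups the scale-$2^i$ factors into a single observable $B(x)=\prod_\alpha A^+_{\fr_{2^i}}(f^{k_\alpha}x)$, pays $\|B\|_\BAN\leq CL^{r2^{i+1}}$ via (Gr), and applies the mixing estimate to $B$ together with the scale-$2^j$ factors, so that the relevant gap is $\eps\,2^j$; the hypothesis $j-i\geq b$ with $b$ large then absorbs $L^{r2^{i+1}}=(L^{r2^{1-b}})^{2^j}$ into $\theta^{\eps 2^j}$. (This step only needs $({\rm EM})_r$; the $(2r+1)$-fold hypothesis is consumed by $(M1)_{\bar r},(M2)_{\bar r}$ for $\bar r\leq 2r$.) Your $(M1)$ verification is essentially the paper's, modulo the $\sqrt{R}$ point above, but the $(M2)$ and $(M3)$ steps need the corrections just described.
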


Hence, Theorem \ref{ThMultiBC} implies
\begin{Cor} \label{cor.mixing} If the system $(f,X,\mu,\BAN)$ is $(2r+1)$-fold exponentially mixing, and if
 $\{\fA_{\fr_n}\}$ (or $\{\bfA_{\fr_n}\}$) are as in Definition \ref{def.targets} (or Definition \ref{def.recurrent.targets}), then
 \begin{itemize}
\item[(a)] If $\bS_r<\infty,$  then with probability 1, we have that  for large $n$
$N^n_{\fr_n}<r.$
\item[(b)] If $\bS_r=\infty,$  then with probability 1, there are infinitely many $n$ such that
$N^n_{\fr_n}\geq r.$
\end{itemize}
\end{Cor}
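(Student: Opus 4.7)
The plan is to combine Theorem \ref{theo.mixing}, which does the analytical work of extracting the abstract independence conditions $(M1)_{\brr}$, $(M2)_{\brr}$, $(M3)_{\brr}$ from the exponential mixing hypothesis plus target admissibility, with Theorem \ref{ThMultiBC}, which converts those independence conditions into the Borel--Cantelli dichotomy for $N^n_{\fr_n}$. No new analytic estimates are required at the level of the corollary itself; the proof is essentially the concatenation of these two results.

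For part (a), Theorem \ref{theo.mixing} supplies $2r$-almost independence at all scales for the family $\{\fA^k_{\fr_n}\}$ (respectively $\{\bfA^k_{\fr_n}\}$), and this a fortiori implies $2r$-almost independence at a fixed scale, since the latter property drops the cross-scale condition $(M3)_{\brr}$. The nesting assumption \eqref{eq21} required by Theorem \ref{ThMultiBC}(a) is inherited from the standing assumption at the start of Section \ref{SSMultExpMix} that the families $\{\fA_\fr\}$ and $\{\bfA_\fr\}$ are monotone in $\fr$, combined with the monotonicity of the sequence $\{\fr_n\}$. Thus Theorem \ref{ThMultiBC}(a) directly yields $N^n_{\fr_n}<r$ eventually almost surely when $\bS_r<\infty$. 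For part (b) one uses $2r$-almost independence at all scales from Theorem \ref{theo.mixing} and applies Theorem \ref{ThMultiBC}(b) verbatim to conclude that $N^n_{\fr_n}\geq r$ occurs for infinitely many $n$ almost surely when $\bS_r=\infty$.

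The only bookkeeping point is to check that the quantity $\bS_r$ appearing in the hypothesis of Corollary \ref{cor.mixing} is computed with the correct $\fv$: in the simple target case with $\fv(\rho)=\mu(\fA_\rho)$, and in the composite target case with $\fv(\rho)=\bsigma(\rho)=(\mu\times\mu)(\bfA_\rho)$, which is precisely the convention declared at the beginning of Section \ref{SSMultExpMix}. Hence no genuine obstacle arises in the proof of the corollary itself; the substantive difficulty is located entirely in the proof of Theorem \ref{theo.mixing}, where one must verify $(M3)_{\brr}$ by applying the approximations $A^\pm_\fr, \bar{A}^\pm_\fr$ through the $(2r+1)$-fold mixing estimate $({\rm EM})_r$ and trading the norm blow-up $\|A^\pm_\fr\|_\BAN\leq\fr^{-\tau}$ against the exponential factor $\theta^m$ using \eqref{rhobound}.
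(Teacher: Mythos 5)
Your proposal is correct and matches the paper's own route: the paper derives Corollary \ref{cor.mixing} precisely by invoking Theorem \ref{theo.mixing} (almost independence of the admissible targets, with the appropriate $\fv$ or $\bar\fv$) and then applying Theorem \ref{ThMultiBC}, the fixed-scale hypothesis of part (a) and condition \eqref{eq21} being supplied exactly as you note by the at-all-scales conclusion and the standing monotonicity of the targets in $\rho$.
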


In fact Theorem \ref{theo.mixing} is a direct consequence of the following Proposition. We accept a convention that $({\rm EM})_{k}$
for $k\leq 0$ is an always satisfied.

\begin{Prop}\label{ThEM-BC}
Given a dynamical system $(f,X,\mu,\BAN)$ and a sequence of  decreasing sets $\{\fA_{\rho_n}\}$ such that
 $({\rm Prod}),$ $({\rm Poly}),$  and $({\rm Appr})$ hold, then with the function  $\fv(\cdot):=\mu(\fA_\cdot)$, and

(i)  If  $({\rm EM})_{r-1}$ holds, then $(M1)_r$ is satisfied with  the function $\fs: \N \righttoleftarrow:  \fs(n)=R \ln n$, where $R$ is sufficiently large (depending on $r$, the system and the targets).

(ii) If $({\rm Gr})$, $({\rm Mov})$ and   $({\rm EM})_{r-2}$  hold, then $(M2)_r$ is satisfied.

(iii) If $({\rm Gr})$ and $({\rm EM})_r$ hold, then for arbitrary $\eps>0,$  ${(M3)}_r$ is satisfied
with $\hat\fs(n)=\eps n.$

Similarly,
given a dynamical system $(f,X,\mu,\BAN)$ and a sequence of  decreasing sets $\{\bfA_{\rho_n}\}$ such that
$(\rm Prod),$ $(\overline{\rm Poly})$ and $(\overline{\rm Appr})$ hold, then,  with the function  $\bar \fv(\cdot):=\mu\times \mu(\bfA_\cdot)$:

(i) If $({\rm EM})_r$ holds, then $(M1)_r$ is satisfied with the function $\fs: \N \righttoleftarrow:  \fs(n)=R \ln n$, with $R$ sufficiently large (depending on $r$, the system and the targets).

(ii) If $({\rm Gr})$, $(\overline{\rm Mov}),$ $(\overline{\rm Sub})$ and $({\rm EM})_{r-1}$  hold, then $(M2)_r$ is satisfied.

(iii) If $({\rm Gr})$ and $({\rm EM})_r$ hold, then for arbitrary $\eps>0$
${(M3)}_r$ is satisfied
with $\hat\fs(n)=\eps n.$


\end{Prop}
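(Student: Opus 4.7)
The plan is to establish each of the three conditions by a common recipe: sandwich each indicator between the smooth approximants provided by $\mathrm{(Appr)}$ (resp.\ $(\overline{\mathrm{Appr}})$), assemble the product of approximants into a single function in $\BAN$ on an appropriate power of $X$, and apply exponential mixing of the relevant order to factorize the orbit integral, using $\mathrm{(Poly)}$ (or $(\overline{\mathrm{Poly}})$) to absorb the polynomial norm blow-up $\rho^{-\tau}$ into the small factor $\theta^m$. Remark \ref{RemL1} then identifies each $\mu(A_\rho^\pm)$ with $\sigma(\rho)(1+o(1))$. The composite case needs one more order of mixing than the simple case because each composite event $1_{\bfA_\rho}(x,f^kx)$ occupies two slots in the multi-variable function rather than one.

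For $(M1)_r$ I take $A(x_0,\ldots,x_{r-1}):=\prod_{j=1}^r A_\rho^\pm(x_{j-1})$ in the simple case, or $A(x_0,\ldots,x_r):=\prod_j \bar A_\rho^\pm(x_0,x_j)$ in the composite case (both of norm $\leq C\rho^{-r\tau}$ by $\mathrm{(Prod)}$), shift by $k_1$ using $\mu$-invariance, and apply the appropriate $(\mathrm{EM})$ with the consecutive gaps $\geq\fs(n)=R\ln n$ guaranteed by $\Sep_n=r$. The factorized main term becomes $\sigma(\rho)^r(1+o(1))$ (resp.\ $\bar\sigma(\rho)^r(1+o(1))$ using $(\overline{\mathrm{Appr}})(\mathrm{iii})$ to integrate each $x_j$-slice), while the mixing error $\theta^{R\ln n}\rho^{-r\tau}\lesssim n^{-R|\ln\theta|+ur\tau}$ is dwarfed by $\sigma(\rho_n)^r\geq n^{-ur}$ as soon as $R$ is chosen large enough in terms of $r,u,\tau,\theta$. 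For $(M2)_r$ I partition the indices into $m$ clusters separated by gaps $\geq\fs(n)$; in the simple case each cluster of size $\geq 2$ is bounded above by a product of two adjacent approximants, whose mean by $\mathrm{(Mov)}$ together with the Remark \ref{RemL1} correction $\sigma^{1+\eta}\ll\sigma(\ln n)^{-1000r}$ (valid for $n$ large by $\mathrm{(Poly)}$) is at most $C\sigma(\rho)(\ln n)^{-1000r}$; bundling the cluster-functions into one function on a power of $X$ with norm $\leq CL^{m\fs(n)}\rho^{-(r+m)\tau}=n^{O(R)}$ and decorrelating them via the appropriate $(\mathrm{EM})$ across the anchor gaps $\geq\fs(n)$ yields the stated bound, the mixing error $\theta^{R\ln n}\cdot n^{O(R)}$ being absorbed by taking $R$ large. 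In the composite case each multi-cluster is dominated via $(\overline{\mathrm{Sub}})$ by $\bar A_\rho^+(x,f^kx)\cdot \bar A_{a\rho}^+(f^kx,f^{k+d}x)$; a 2-fold mixing step between the base $x$ and the cluster anchor extracts a factor $\bar\sigma(\rho)$ from the first $\bar A^+$ via $(\overline{\mathrm{Appr}})(\mathrm{iv})$ and a factor $(\ln n)^{-1000r}$ from the second via $(\overline{\mathrm{Mov}})$, and the $m$ resulting cluster-functions are combined by mixing across the base plus $m$ anchor positions.

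For $(M3)_r$ the crucial trick --- which explains why only $(\mathrm{EM})_r$ (and not the much stronger $(\mathrm{EM})_{2r-1}$) is needed --- is to bundle the entire earlier block into a single one-variable function $F_A(y):=\prod_\alpha A_{\rho_{2^i}}^+(f^{k_\alpha-k_r}y)$ (resp.\ $F_A(x_0):=\prod_\alpha\bar A_{\rho_{2^i}}^+(x_0,f^{k_\alpha}x_0)$ in the composite case), whose norm by $\mathrm{(Gr)}$/$\mathrm{(Prod)}$ is at most $CL^{r\cdot 2^{i+1}}\rho_{2^i}^{-r\tau}$. Then $A(y_0,\ldots,y_r):=F_A(y_0)\prod_\beta A_{\rho_{2^j}}^+(y_\beta)$ on $X^{r+1}$ packages both blocks, and $(\mathrm{EM})_r$ with minimum orbit gap $l_1-k_r\geq\hat\fs(2^{j+1})=\eps 2^{j+1}$ factorizes the integral as $(\int F_A\,d\mu)\prod_\beta\mu(A_{\rho_{2^j}}^+)=\Prob(A_1)\Prob(A_2)(1+o(1))$ by $(M1)_r$ applied separately inside each block. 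The mixing error $\theta^{\eps 2^{j+1}}L^{r\cdot 2^{i+1}}\rho^{-2r\tau}$ is negligible precisely when $j-i\geq b$ for some $b=b(\eps,L,\theta,r)$ chosen so that $(\theta^\eps)^{2^b}L^r<1$, which is exactly the role of the constant $b$ in the statement of $(M3)_r$. The main obstacle in all three parts is the same bookkeeping: the $\mathrm{(Gr)}$ inequality forces norms to grow as $L^{(\mathrm{iterate})}$ under composition with $f^k$, and one must choose the separation parameter so that the mixing decay $\theta^{(\mathrm{gap})}$ beats this growth --- logarithmically large $R$ in $(M1)_r$/$(M2)_r$, and the doubly exponential separation $j-i\geq b$ in $(M3)_r$ --- together with, in the composite case of $(M2)_r$, the delicate combination of $(\overline{\mathrm{Sub}})$, $(\overline{\mathrm{Mov}})$, and $(\overline{\mathrm{Appr}})(\mathrm{iv})$ needed to extract a full $\bar\sigma(\rho)$ per multi-cluster beyond the polylog savings.
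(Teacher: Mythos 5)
Parts (i) and (iii) of your proposal, in both the simple and the composite settings, coincide in substance with the paper's proof: the sandwich by the $({\rm Appr})$ approximants, the use of $({\rm EM})_{r-1}$ (resp.\ $({\rm EM})_{r}$) for $(M1)_r$, and, for $(M3)_r$, the bundling of the whole early block into a single observable of norm at most $CL^{r2^{i+1}}\fr_{2^i}^{-r\tau}$ whose growth is beaten by $\theta^{\eps 2^{j+1}}$ once $j-i\geq b$; your explicit condition on $b$ plays exactly the role of the paper's ``$b$ sufficiently large''.

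The gap is in part (ii), i.e.\ $(M2)_r$. Your cluster observables contain compositions $A^+_{\fr_n}\circ f^{k}$ (resp.\ $\bar A^+_{a\fr_n}(y,f^{k}y)$ after $(\overline{\rm Sub})$) where $k$ is an intra-cluster gap, and in your setup such gaps are only known to be $<\fs(n)=R\ln n$. By $({\rm Gr})$ these observables have $\BAN$-norm of order $L^{cR\ln n}\fr_n^{-O(\tau)}=n^{cR\ln L+O(1)}$ for some $c\geq 1$, while decorrelating across the anchor gaps only gains $\theta^{R\ln n}=n^{-R|\ln\theta|}$. Both exponents are linear in $R$, so ``taking $R$ large'' cannot absorb the error term $\theta^{R\ln n}\cdot n^{O(R)}$: it would work only if $|\ln\theta|>c\ln L$, which is neither assumed nor typically true. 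This is precisely where the paper inserts a device missing from your argument: $(M1)_r$ is first established under the weaker separation $k_{i+1}-k_i\geq\sqrt{R}\ln n$, so that in the proof of $(M2)_r$ one may assume (after bounding superfluous close indicators by $1$, which leaves a single offending pair rather than full multi-index clusters) that the small gap satisfies $k_j-k_{j-1}<\sqrt{R}\ln n$, the intermediate range being already covered by $(M1)_r$. Then the $({\rm Gr})$ factor is only $L^{\sqrt{R}\ln n}$, and $\fr_n^{-r\tau}L^{\sqrt{R}\ln n}\theta^{R\ln n}\to 0$ faster than any power of $\ln n$ once $R$ is large; this is what closes $(M2)_r$ in both the simple and the composite case, and without it (or some substitute) your bookkeeping fails. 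A secondary remark: your inequality $\sigma(\fr_n)^{1+\eta}\ll\sigma(\fr_n)(\ln n)^{-1000r}$ does not follow from $({\rm Poly})$, which bounds $\sigma(\fr_n)$ from \emph{below}; it needs smallness of $\sigma(\fr_n)$, an assumption the paper's own step uses implicitly as well, so this one is shared with the paper rather than specific to you.
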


\begin{proof}[Proof of Proposition \ref{ThEM-BC}]  We use $C$ to denote a constant that may change from line to line but that will not depend on $\rho_n$, $\fA_{\fr_n}$, $\bfA_{\fr_n},$ the order of iteration of $f$, etc. 

\medskip

\noindent {\it Proof of (i)} For $\fA_{\fr_n},$ we prove $(M1)_r$ in case $k_{i+1}-k_i\geq \sqrt{R}\ln n,$ where $R$ is a sufficiently
large constant. Indeed, using ${\rm({Appr})}$ and  ${\rm {(EM)}_{r-1}}$ we get
$$ \mu\left(\prod_{i=1}^r 1_{\fA_{\fr_n}}(f^{k_i} x)\right)\leq
\mu\left(\prod_{i=1}^r A_{\fr_n}^+ (f^{k_i} x)\right)\leq
\prod_{i=1}^r \mu\left(A_{\fr_n}^+\right)+C \fr_n^{-r\tau} \theta^{\sqrt{R}\ln n} $$
$$ \leq
\left(\mu(\fA_{\fr_n})+C \mu(\fA_{\fr_n})^{1+\eta}\right)^r+C \fr_n^{-r\tau}   \theta^{\sqrt{R}\ln n},
$$
which yields the RHS of $(M1)_r$, due to ${\rm(Poly)}$ if  $R$ is sufficiently large.  The LHS is proved similarly.

For $\bfA_{\fr_n},$ we approximate $1_{\bar{\fA}_{\fr_n}}$ by $\bar A_{\fr_n}^\pm,$ apply ${\rm(\overline{Appr})},$ ${\rm {(EM)}_r}$ to the functions
$$B_{\fr_n}^+(x_0,\cdots,x_r)=\bar{A}_{\fr_n}^+(x_0,x_1)\cdots \bar{A}_{\fr_n}^+(x_0,x_r),$$
$$B_{\fr_n}^-(x_0,\cdots,x_r)=\bar{A}_{\fr_n}^-(x_0,x_1)\cdots \bar{A}_{\fr_n}^-(x_0,x_r),$$
and get
\begin{align*}
&\mu\left(\bigcap_{j=1}^r\bar{\fA}^{k_j}_{\fr_n}\right)\leq\left(\bar{\sigma}(\rho_n)+C\bar{\sigma}(\rho_n)^{1+\eta}\right)^r
+C{\rho_n}^{-r\tau}\theta^{\sqrt{R}\ln n},\end{align*}
which yields the RHS of $(M1)_r$ due to ${\rm (\overline{Poly})}$  if $R$ is taken sufficiently large. The LHS is proved similarly.

\medskip

\noindent {\it Proof of (ii)}. For $\fA_{\fr_n},$ it is enough to consider the case $\Sep(k_1,\dots, k_r)=r-1$
otherwise we can estimate all $1_{\fA_{\fr_n}} \circ f^{k_i} $ with
$k_i-k_{i-1}<\fs(n)$, except the first, by 1.

So we assume that $0<k_j-k_{j-1}<R \ln n$ and $k_i-k_{i-1}\geq R \ln n$ for $i\neq j.$
Since $(M1)_r$ was proven under the assumption that $\min_i (k_i-k_{i-1})>\sqrt{R} \ln n$ we may assume that $k_j-k_{j-1}<\sqrt{R} \ln n$. Note that by (Appr) and Remark \ref{RemL1}
$$ \mu\left(A_{\fr_n}^+  \left(A_{\fr_n}^+\circ f^k\right)\right)-
\mu\left(1_{\fA_{\fr_n}} \left(1_{\fA_{\fr_n}}\circ f^k\right)\right) \leq 4 \mu\left(A_{\fr_n}^+-1_{\fA_{\fr_n}}\right)
\leq 4 C \mu(\fA_{\fr_n})^{1+\eta} . $$
Therefore (Mov) implies :
$$ \mu\left(A_{\fr_n}^+  \left(A_{\fr_n}^+\circ f^{k_j-k_{j-1}}\right)\right)\leq C \mu(\fA_{\fr_n})(\ln n)^{-1000r}.$$
Take $B=A_{\fr_n}^+  \left(A_{\fr_n}^+\circ f^{k_j-k_{j-1}}\right),$  we get using ${\rm {(EM)}_{r-2}}$  and ${\rm (Poly)}$ that
$$ \mu\left(\prod_{i=1}^r 1_{\fA_{\fr_n}}\left(f^{k_i} x\right)\right)\leq
\mu\left(\prod_{i=1}^r A_{\fr_n}^+\left(f^{k_i} x\right)\right)=
\mu\left(\prod_{i\neq j-1, j} A_{\fr_n}^+\left(f^{k_i} x\right) B(f^{k_{j-1}} x)\right)
$$
$$\leq\mu\left(A_{\fr_n}^+\right)^{r-1}\mu(B)+{C} {\fr_n}^{-r\tau} L^{\sqrt{R} \ln n} \theta^{R \ln n} \leq C \mu(\fA_{\fr_n})^{r-1}(\ln n)^{-1000r}
$$
proving $(M2)_r$.

For $\bfA_{\fr_n},$ we approximate $1_{\bar{\fA}_{\fr_n}}$ by $\bar A_{\fr_n}^+.$ Consider
$$ \tB_r(x_0,\cdots,x_{j-1},x_{j+1},\cdots,x_r)$$
$$=
1_{\bar{\fA}_{\fr_n}}(x_0,x_1)\cdots 1_{\bar{\fA}_{\fr_n}}(x_0,x_{j-1})1_{\bar{\fA}_{a\fr_n}^{k_j-k_{j-1}}}(x_{j-1})
1_{\bar{\fA}_{\fr_n}} (x_0,x_{j+1})\cdots 1_{\bar{\fA}_{\fr_n}}(x_0,x_r), $$

$$ \hB_r(x_0,\cdots,x_{j-1},x_{j+1},\cdots,x_r)$$
$$=
\bA_{\fr_n}^+(x_0,x_1)\cdots \bA_{\fr_n}^+(x_0,x_{j-1})\bA_{a\fr_n}^+ (x_{j-1},f^{k_{j}-k_{j-1}}x_{j-1})
\bA_{\fr_n}^+(x_0,x_{j+1})\cdots \bA_{\fr_n}^+(x_0,x_r). $$
Since ${\rm(\overline{Appr})},$
${\rm(\overline{Mov})}$ and ${\rm(\overline{Sub})}$ hold, we obtain from ${\rm {(EM)}_{r-1}}$
 \Bea
\mu\left(\bigcap_{j=1}^r\bar{\fA}^{k_j}_{\fr_n}\right)&\leq&
\mu\left(\tB_r(x,\cdots,f^{k_{j-1}}x,f^{k_{j+1}}x,\cdots,f^{k_{r}}x)\right)\\
&\leq&
\mu\left(\hB_r(x,\cdots,f^{k_{j-1}}x,f^{k_{j+1}}x,\cdots,f^{k_{r}}x)\right)\\
&\leq& \mu^{r}(\hB_r) +\bar{C} {\fr_n}^{-r\tau} L^{\sqrt{R} \ln n} \theta^{R \ln n}.
 \Eea
{Integrating with respect to all variables except for $x_0$ and $x_{j-1}$, then using ${\rm(\overline{Appr}) (iv)}$ when integrating along $x_0$ for any fixed value of $x_{j-1}$, then finally integrating along $x_{j-1}$, we get
$$
  \mu^{r}(\hB_r)\leq
\left(\bar{\sigma}(\rho_n)+
 \bar{\sigma}(\rho_n)^{1+\eta}\right)^{r-1}\mu\left(\bar{A}_{a\fr_n}^+(x,f^{k_j-k_{j-1}}x)\right)$$
 which by \eqref{eq.app} gives

 \begin{equation*} \label{IntHB}
\mu^{r}(\hB_r)\leq
 \left(\bar{\sigma}(\rho_n)+
 \bar{\sigma}(\rho_n)^{1+\eta}\right)^{r-1}\bar{C}
 \mu(\bar{\fA}_{a\fr_{n}}^{k_j-k_{j-1}})\end{equation*}

Therefore, $(M2)_r$ follows from  ${\rm\overline{(Mov)}}$, provided $R$ is sufficiently large.}

\medskip

\noindent {\it Proof of (iii)} {Fix a large constant $b$ that will be given below.}
Consider first simple targets $\fA_{\fr_n}.$
Denoting $\DS B(x)=\prod_{\alpha=1}^r A_{\fr_{2^i}}^+(f^{k_\alpha} x)$
for $2^i<k_1<\cdots<k_r\leq 2^{i+1},$
we obtain from (Prod), (Gr), (Appr), (Poly) and ${\rm {(EM)}_{r}},$
that $\DS \|B\|_\BAN\leq C L^{r 2^{i+1}}.$ Thus
$$ \mu\left(\left(\prod_{\alpha=1}^r 1_{\fA_{\fr_{2^i}}}(f^{k_\alpha} x)\right)
\left(\prod_{\beta=1}^r 1_{\fA_{\fr_{2^j}}}(f^{l_\beta} x)\right)
\right)$$
$$\leq
 \mu\left(\left(\prod_{\alpha=1}^r A_{\fr_{2^i}}^+ (f^{k_\alpha} x)\right)
\left(\prod_{\beta=1}^r A_{\fr_{2^j}}^+(f^{l_\beta} x)\right)
\right)$$
$$= \mu\left(B(x)
\left(\prod_{\beta=1}^r A_{\fr_{2^j}}^+(f^{l_\beta} x)\right)
\right)\leq \mu(B) \mu\left(A_{\fr_{2^j}}^+\right)^r+C L^{r2^{i+1}}\fr_{2^i}^{-r\tau}
\fr_{2^j}^{-r \tau}
\theta^{2^j \eps}.
 $$Applying already established
$(M1)_r$ to estimate $\mu(B)$, and observing that  the second term is smaller than $\DS C (L^{r2^{-b+1}})^{2^j} 2^{2r\tau u j} \theta^{2^j \eps}$, which is thus much smaller than the first when $b$ is sufficiently large, we finally get $(M3)_r$.

Next, we analyze $\bfA_{\fr_n}.$ Consider
$$ B^*(x, x_1, x_2\dots x_r)=
\left(\prod_{\alpha=1}^r 1_{\bar{\fA}_{\fr_{2^i}}^{k_\alpha}}(x)\right)
\left(\prod_{\beta=1}^r 1_{\bar{\fA}_{\fr_{2^j}}}(x, x_\beta)\right). $$
By ${\rm(\overline{Appr})}$ and ${\rm{(EM)}_r}$ and the already established $(M1)_r$, we get
$$ \mu\Big(\bigcap_{1\leq \alpha,\,\beta \leq r}\big(\bar{\fA}_{\fr_{2^i}}^{k_\alpha}\bigcap\bar{\fA}_{\fr_{2^j}}^{l_\beta}\big)\Big)
\leq\mu\left(B^*(x, f^{l_1} x, \dots, f^{l_r} x)\right)$$
$$\leq\mu\left(\prod_{\alpha=1}^r \bar{A}^+_{\fr_{2^i}}(x, f^{k_\alpha } x)\right)
 \left(\bar{\sigma}(\rho_{2^j})+ \bar{\sigma}(\rho_{2^j})^{1+\eta}\right)^r+CL^{r2^{i+1}}\fr_{2^i}^{-r\tau}\fr_{2^j}^{-r \tau}\theta^{2^j \eps}. $$
Using $(M1)_r$ again we observe that
$$ \mu\left(\prod_{\alpha=1}^r \bar{A}^+_{\fr_{2^i}}(x, f^{k_\alpha } x)\right)\leq
 C\left(\bar{\sigma}(\rho_{2^i})+ \bar{\sigma}(\rho_{2^i})^{1+\eta}\right)^r,$$
which allows to conclude the proof of $(M3)_r$ in the case of $\bfA_{\fr_n}.$ \end{proof}

\begin{Rem}
\label{RkWeakAdm}
In fact, analyzing the proof of Theorem \ref{theo.mixing} we see that the composite targets $\rm{(\overline{Appr})(iii)}$
could be replaced by a weaker condition: there is a function $\sigma_r(\fr)$ such that
$C^{-1}\sigma^r(\fr)<\sigma_r(\fr)<C \sigma^r(\fr)$ and

\begin{subequations}
\begin{equation}
\label{IntMulta}
\int\dots\int \left( \prod_{j=1}^r \brA^+(x,y_j) d\mu(y_j)\right) d\mu(x)=
\sigma_r(\fr)(1+O(\sigma^\eta(\fr)),
\end{equation}
\begin{equation}
\label{IntMultb}
\int\dots\int \left( \prod_{j=1}^r \brA^-(x,y_j) d\mu(y_j)\right) d\mu(x)=
\sigma_r(\fr)(1+O(\sigma^\eta(\fr)) .
\end{equation}
\end{subequations}
We shall call the composite targets satisfying $\rm{(\overline{Mov})}$,
$\rm{(\overline{Sub})}$, $\rm{(\overline{Poly})}$ as well as $\rm{(\overline{Appr})}$ with condition $\rm{(iii)}$
replaced by \eqref{IntMulta}--\eqref{IntMultb} {\em weakly admissible}.
\end{Rem}

\subsection{ Notes.}
There is also a vast literature on Borel-Cantelli Lemmas for dynamical systems
starting with \cite{Phil67}. Some representative examples dealing with hyperbolic systems are
\cite{ANT17, CK01, FMP12, Gou07, GNO10, HNPV13,  HNVZ13, HV95, Kel17, KM99}
while \cite{Ch11, CC19, Kel17, Kim07a, Kim07b, Kim14, Kur55, Tseng08}
deal with systems of zero entropy. The later cases are  more complicated as counterexamples in
\cite{Fa06, GP10} show. Survey \cite{A09}
reviews the results obtained up to 2009 and contains many applications,
some of which parallel the results obtained in Sections \ref{ScHit}--\ref{ScKhinchine} of the present paper.
We refer the reader to Appendix \ref{AppExpMix} for more background on multiple exponential mixing
and for examples of dynamical systems which enjoy this property.
We note that limit theorems for smooth systems which are only assumed to
be multiply exponentially mixing (but without any additional assumptions) are considered in
\cite{BG19, Ch95, St10}. \cite{Galatolo07}  obtains a Logarithm Law for hitting times under an assumption
of superpolynomial mixing which is weaker than our exponentially mixing assumption.
We note that in our approach the {\it exponential} rate of mixing is crucial for verifying the condition
$(M3)_r$ pertaining to interscale independence. Therefore it is an open problem to ascertain if similar results
hold under weaker mixing assumptions.

\section{MultiLog Laws for recurrence and hitting times}
\label{ScHit}

In this section we apply the results of Section \ref{SSMultExpMix} to obtain MultiLog Laws for multiple exponentially mixing diffeomorphisms and flows. We will assume that $f$ is a smooth diffeomorphism of a compact $d-$dimensional Riemannian manifold $M$ preserving a
smooth measure $\mu.$ 
From now on, we take
$\BAN$ in Definition \ref{def.mixing}
to be the space of Lipschitz observables defined over $M^{d+1}$.

\subsection{  Results.}
Let $(f,M,\mu)$ be a smooth dynamical system. Let $d_n^{(r)}(x,y)$  be the $r$-th minimum of
$$d(x,fy),\cdots, d(x,f^n y).$$

 The following result was obtained for a large class of weakly hyperbolic systems as
a consequence of dynamical Borel-Cantelli Lemmas
\begin{subequations}
\begin{equation}
\label{ClosestRecurrence}
	\limsup_{n\rightarrow \infty} \frac{|\ln d_n^{(1)}(x,x)|}{\ln n}
        = \frac{1}{d},
\end{equation}
\begin{equation}
        \label{ClosestVisit}
	\limsup_{n\rightarrow \infty} \frac{|\ln d_n^{(1)}(x,y)|}{\ln n}
        = \frac{1}{d}.
\end{equation}
\end{subequations}

 In particular, the following results are known.

\begin{Thm}
\label{ThHitSuperPoly}
(a) If a smooth system $(f, M, \mu)$ has superpolynomial decay
of correlations for Lipschitz observables, that is,
$$|\mu(A(x) B(f^n x))-\mu(A)\mu(B)|\leq a(n) \|A\|_{Lip} \|B\|_{Lip}
\quad\text{where}\quad \forall s \lim_{n\to\infty} n^s a(n)=0,$$
then
for {\em \bf all} $x$ \eqref{ClosestVisit}
holds
for a.e. $y.$
If in addition, $f$ has positive entropy, then
\eqref{ClosestRecurrence}
holds for a.e. $x$.

(b) If, in addition, $f$ is partially hyperbolic then
for {\em \bf all} $x$ and a.e. $y$
\begin{equation}
\label{ClosestRecurrenceLnLn}
	\limsup_{n\rightarrow \infty} \frac{|\ln d_n^{(1)}(x,y)|-\frac{1}{d}\ln n}{\ln \ln n}
        = \frac{1}{d}.
\end{equation}
\end{Thm}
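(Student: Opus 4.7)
The plan is to recast both statements as dynamical Borel--Cantelli problems for the nested shrinking targets $B(x,\rho_n)$, splitting each into a convergent direction (handled by $f$-invariance of $\mu$ alone) and a divergent direction (handled by decay of correlations). Smoothness of $\mu$ gives $\mu(B(x,\rho))\asymp\rho^d$ at every $x$, so the choice of scale $\rho_n$ directly controls the summability of the relevant series.

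For the hitting statement in part (a), the upper bound in \eqref{ClosestVisit} is immediate: with $\rho_n=n^{-1/d-\eps}$ one has $\sum_n\mu(B(x,\rho_n))<\infty$, and $f$-invariance gives $\mu(\{y:d(x,f^ny)<\rho_n\})=\mu(B(x,\rho_n))$, so the classical Borel--Cantelli Lemma finishes the job with no dynamical input. The matching lower bound, with $\rho_n=n^{-1/d+\eps}$, is where superpolynomial decay enters: approximating $1_{B(x,\rho_n)}$ from above and below by Lipschitz functions of norm $\rho_n^{-\tau}$ as in condition (Appr) of Definition \ref{def.targets}, the pairwise correlation bound supplies a Sprindzuk--Sullivan / Kochen--Stone second-moment argument that produces infinitely many $k$ with $d(x,f^ky)<\rho_k$ for $\mu$-a.e.\ $y$. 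The recurrence statement \eqref{ClosestRecurrence} is then obtained by a transfer-to-the-diagonal argument: the convergent direction follows from a soft volume bound on the $\rho_n$-neighborhood of the diagonal together with invariance, while the divergent direction is where positive entropy is essential (it excludes degenerate situations such as an irrational rotation in which the orbit of $x$ systematically avoids any neighborhood of $x$) and is handled via the Brin--Katok / Barreira--Saussol dimension--entropy formula combined with \eqref{ClosestVisit}.

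For \eqref{ClosestRecurrenceLnLn} in part (b), I sharpen the scale to $\rho_n=n^{-1/d}(\ln n)^{-1/d\mp\eps}$. The convergent direction is identical to the one above, while for the divergent direction I observe that $2^j\sigma(\rho_{2^j})\asymp j^{-1+d\eps}$, so $\bS_1=\infty$; the simple targets $\fA_{\rho_n}=B(x,\rho_n)$ are admissible in the sense of Definition \ref{def.targets} by Lemma \ref{RemLip} applied to $\Phi(y)=d(x,y)$. Under the additional partial hyperbolicity assumption the required multiple exponential mixing is available (for instance via Dolgopyat-type bounds on the transfer operator on a suitable Lipschitz space), and Corollary \ref{cor.mixing}(b) with $r=1$ then delivers infinitely many $n$ with $d_n^{(1)}(x,y)<\rho_n$, giving the lower bound in \eqref{ClosestRecurrenceLnLn}.

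The main obstacle will be verifying the interscale asymptotic independence condition $(M3)_1$, which demands correlation bounds of the form $\theta^{\eps n}$ between widely separated time windows; this is precisely the step that breaks down under merely superpolynomial mixing and forces the partial hyperbolicity hypothesis in (b). All other ingredients -- the single-scale estimates $(M1)_1$ and $(M2)_1$, the polynomial lower bound (Poly), and the near-diagonal bound (Mov) -- follow either from smoothness of $\mu$ or from a direct volume estimate on balls, so they impose no additional hypothesis.
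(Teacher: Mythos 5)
Note first that the paper does not prove Theorem \ref{ThHitSuperPoly} at all: it is quoted as known background, with \eqref{ClosestRecurrence} attributed to \cite[Theorem 1]{Saussol06}, \eqref{ClosestVisit} to \cite[Theorem 4]{Galatolo07}, and part (b) to \cite[Theorem 7]{Dolgopyat}. So your sketch has to stand on its own. The hitting half of (a) is fine in outline: the convergent scale needs only invariance and the classical Borel--Cantelli lemma, and the divergent scale can indeed be run with Lipschitz approximants of norm $\rho_n^{-\tau}$, superpolynomial decay, and a Sprind\v{z}uk-type second-moment lemma, with the near-diagonal pairs absorbed by a polynomially small gap window; this is essentially the route of \cite{Galatolo07}.

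There are, however, two genuine gaps. First, for \eqref{ClosestRecurrence} the convergent direction does \emph{not} follow from ``a soft volume bound on the $\rho_n$-neighborhood of the diagonal together with invariance'': invariance gives no control whatsoever on $\mu\{x:\,d(x,f^nx)<\rho_n\}$ (for a Liouville rotation this measure equals $1$ for infinitely many $n$, even though the neighborhood of the diagonal has measure $O(\rho_n^d)$). One must use the mixing hypothesis itself, covering the diagonal by a $\rho$-net as in the proof of Lemma \ref{lemma.mov}(i); and in the cited proofs the positive-entropy hypothesis enters precisely through the short-return Lemma of Barreira--Saussol (Proposition \ref{PrBarSaus}, \cite[Lemma 5]{BarreiraSaussol01}), not through a ``Brin--Katok transfer combined with \eqref{ClosestVisit}'', which as stated is not an argument: hitting statements for a.e.\ $y$ with $x$ fixed do not transfer to returns of $x$ to itself without exactly the kind of control on $\mu\{x:\,d(x,f^kx)<\rho\}$ discussed above. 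Second, in part (b) your route through Corollary \ref{cor.mixing} cannot deliver the boldface ``all $x$'': admissibility of the targets $B(x,\rho_n)$ --- specifically (Mov), hence $(M2)_r$ --- is established only for $\mu$-a.e.\ $x$ (Lemma \ref{lemma.mov}(ii)) and genuinely fails at periodic points, where $\mu(B(x,\rho)\cap f^{-k}B(x,\rho))\asymp\mu(B(x,\rho))$ for bounded $k$. This is exactly why Theorem \ref{thm3} is an a.e.-$x$ statement and why the all-$x$ version under exponential mixing is posed as an open Question immediately after Theorem \ref{ThHitSuperPoly}; the cited proof of (b) uses partial hyperbolicity for more than producing mixing rates. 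Relatedly, partial hyperbolicity by itself does not yield the multiple exponential mixing you invoke (compare the extra equidistribution hypothesis on unstable leaves in Theorem \ref{ThMixShortest}), and your closing claim that $(M2)_1$ and (Mov) ``follow either from smoothness of $\mu$ or from a direct volume estimate on balls'' is false for the same reason: they are dynamical conditions, proved in the paper only for typical centers and only using $2$-fold exponential mixing.
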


In part (a), \eqref{ClosestRecurrence} is proven in \cite[Theorem 1]{Saussol06} and
\eqref{ClosestVisit} is proven in
 \cite[Theorem 4]{Galatolo07}. Part (b) is proven in \cite[Theorem 7]{Dolgopyat}.

\begin{Que}
Suppose that $(f, \mu)$ is exponentially mixing then \eqref{ClosestRecurrenceLnLn} holds
for {\em \bf all} $x$ and a.e. $y.$
\end{Que}

\medskip

\noindent {\sc \bluegray MultiLog Law for recurrence and for hitting times. }The goal of this section is to obtain an analogue of of \eqref{ClosestRecurrenceLnLn} for multiple hits as well
as for returns
for multiple exponentially mixing systems as in Definition \ref{def.mixing}.

\begin{Def} \label{DefG}
Given a smooth system $(f,M,\mu)$, define
\begin{align*} \cG_r&= \left\{ x : \text{ for a.e. } y, \quad
	\limsup_{n\rightarrow \infty} \frac{|\ln d_n^{(r)}(x,y)|-\frac{1}{d}\ln n}{\ln \ln n}
        = \frac{1}{rd}\right\}, \\
       \bcG_r&= \left\{ x : 	\limsup_{n\rightarrow \infty} \frac{|\ln d_n^{(r)}(x,x)|-\frac{1}{d}\ln n}{\ln \ln n}
        = \frac{1}{rd}\right\}.
	\end{align*}
\end{Def}

\begin{Thm} \label{thm3}
Suppose that $(f,M,\mu,\BAN)$ is {$(2r+1)$-fold} exponentially mixing.\footnote{As seen from Proposition \ref{ThEM-BC}, part $(a)$ holds for $2r$-fold exponentially mixing systems.} Then

(a) $\mu(\cG_r)=1;$ (b) $\mu(\bcG_r)=1.$
\end{Thm}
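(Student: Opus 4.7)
The plan is to deduce Theorem~\ref{thm3} from Corollary~\ref{cor.mixing} by means of a one-parameter family of shrinking targets and a squeezing argument around the critical exponent. For $\beta>0$ set $\rho_n(\beta):=n^{-1/d}(\ln n)^{-\beta}$. Smoothness of $\mu$ on the $d$-manifold $M$ gives $\sigma(\rho_n)\asymp\bar\sigma(\rho_n)\asymp n^{-1}(\ln n)^{-d\beta}$, so the critical series satisfies
$$\bS_r(\beta)\;\asymp\;\sum_{j\geq 1} j^{-rd\beta},$$
which converges iff $\beta>\frac{1}{rd}$ and diverges iff $\beta<\frac{1}{rd}$.

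For part~(a), fix $x$ in the full-measure set on which $\mu(B(x,\rho))\sim c(x)\rho^d$, and take the simple targets $\fA_\rho:=B(x,\rho)$; ${\rm (Appr)}$ and ${\rm (Poly)}$ follow from Lemma~\ref{RemLip} applied to $\Phi(y)=d(x,y)$, leaving ${\rm (Mov)}$ as the only nontrivial check. For part~(b), take the composite targets $\bfA_\rho:=\{(x,y):d(x,y)<\rho\}$; the analogue of Lemma~\ref{RemLipcomposite} with $\Phi(x,y)=d(x,y)$ yields $\overline{\rm (Appr)}$ and $\overline{\rm (Poly)}$, while $\overline{\rm (Sub)}$ is immediate from the triangle inequality $d(f^{k_1}x,f^{k_2}x)<2\rho$ (so $a=2$). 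Once ${\rm (Mov)}$ and $\overline{\rm (Mov)}$ are in hand, Corollary~\ref{cor.mixing} provides the dichotomy: for $\beta>\frac{1}{rd}$, almost surely $N^n_{\rho_n}<r$ eventually, so $d_n^{(r)}\geq\rho_n$ and $\limsup(|\ln d_n^{(r)}|-\frac{1}{d}\ln n)/\ln\ln n\leq\beta$; for $\beta<\frac{1}{rd}$, almost surely $N^n_{\rho_n}\geq r$ infinitely often, giving the reverse inequality. Letting $\beta$ tend to $\frac{1}{rd}$ through countable sequences from above and below forces equality in Definition~\ref{DefG}, establishing both $\mu(\cG_r)=1$ and $\mu(\bcG_r)=1$.

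The main obstacle is verifying the movement conditions uniformly for $k\in(0,R\ln n)$. In the mixing window $k\geq C\ln n$ with $C$ sufficiently large, $({\rm EM})_1$ applied to the Lipschitz approximations $A_{\rho_n}^{\pm}$ and $\bar A_{\rho_n}^{\pm}$ gives bounds of order $\sigma(\rho_n)^2+C\theta^k\rho_n^{-2\tau}$ and $\bar\sigma(\rho_n)+C\theta^k\rho_n^{-\tau}$, each comfortably below the required thresholds $\sigma(\rho_n)(\ln n)^{-1000r}$ and $(\ln n)^{-1000r}$ for large $n$. In the short-time regime $k<C\ln n$, the naive volume bound $\leq\sigma(\rho_n)$ is inadequate; one exploits that $B(x,\rho_n)\cap f^{-k}B(x,\rho_n)\neq\emptyset$ forces $d(x,f^{-k}x)<2L^k\rho_n$, so for each $k$ the offending set of $x$ has measure at most $CL^{dk}\rho_n^d$; choosing the cutoff $C<(d\log L)^{-1}$ keeps $\sum_{k<C\ln n}L^{dk}\rho_n^d$ polynomially small in $n$, and a Borel--Cantelli argument along dyadic $n$ then isolates a full-measure set of $x$ on which all short-time intersections vanish for large $n$. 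The analogue for $\overline{\rm (Mov)}$ amounts to bounding $\mu\{x:d(x,f^kx)<2\rho_n\}$ for small $k$ via a hyperbolicity-type estimate $\lesssim e^{hk}\rho_n^d$, controlled by the exponential bound on the number of $k$-periodic points available under the multiple exponential mixing hypothesis.
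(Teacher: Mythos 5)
Your overall architecture is the same as the paper's: shrinking targets $\rho_n(\beta)=n^{-1/d}(\ln n)^{-\beta}$, Corollary \ref{cor.mixing}, then squeezing $\beta\to\frac{1}{rd}$ along countable sequences. However, two of the key verifications have genuine gaps. First, in part (b) you use the unnormalized composite targets $\{(x,y):d(x,y)<\rho\}$ and claim $\overline{\rm (Appr)}$ via Lemma \ref{RemLipcomposite} with $\Phi(x,y)=d(x,y)$. But hypothesis (h3) there, and condition $\overline{\rm (Appr)}$(iii) itself, require the section measure $\mu\{y:d(x,y)<\rho\}$ to equal $\bar\sigma(\rho)\bigl(1+O(\bar\sigma(\rho)^{\eta})\bigr)$ \emph{uniformly in} $x$; for a smooth measure this section is $\gamma(x)\rho^d(1+O(\rho))$, which differs from $\bar\sigma(\rho)$ by the non-constant factor $\gamma(x)/\int\gamma\,d\mu$ unless the density is constant. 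Concretely, $(M1)_r$ fails for your targets: for well-separated times the intersection probability is $\asymp\rho^{rd}\int\gamma^r\,d\mu$, not $\bar\sigma(\rho)^r$. This is precisely why the paper builds the factor $\gamma(x)^{-1/d}$ into the composite target \eqref{VaryingWidthTarget} (see the footnote there), applies the machinery to those normalized targets, and only afterwards transfers the conclusion to $d_n^{(r)}(x,x)$ using the two-sided inclusion $\{d\leq c^{-1}\rho\}\subset\bfA_\rho\subset\{d\leq c\rho\}$; alternatively one must invoke the weak-admissibility variant of Remark \ref{RkWeakAdm}. As written, Corollary \ref{cor.mixing} cannot be applied to your part (b) targets.

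Second, your short-time movement estimates are not justified by the hypotheses. You bound the set of bad centers by $\mu\{x:d(x,f^kx)<2L^k\rho_n\}\leq CL^{dk}\rho_n^d$, and for $\overline{\rm (Mov)}$ you invoke an estimate $\mu\{x:d(x,f^kx)<2\rho_n\}\lesssim e^{hk}\rho_n^d$ ``controlled by the exponential bound on the number of $k$-periodic points.'' Neither is available: a smooth, measure-preserving, multiply exponentially mixing map comes with no transversality of $f^k$ to the identity and no periodic-point counting, so the recurrence set $\{x:d(x,f^kx)<\eps\}$ need not have measure $O(\eps^d)$ (a degenerate or parabolic periodic point already produces a much larger power of $\eps$). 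The paper never proves, nor needs, such a polynomial bound; it establishes only the much weaker slow-recurrence estimate $\mu\{x:d(x,f^kx)<\rho\}\leq|\ln\rho|^{-A}$ (Lemma \ref{lemma.mov}): for $k\geq B\ln|\ln\rho|$ apply two-fold exponential mixing to an \emph{enlarged} target of radius $\hat\rho=|\ln\rho|^{-A}$ (so the error $\hat\rho^{-\tau}\theta^k$ is already small on a polylog time scale), and for $1\leq k\leq B\ln|\ln\rho|$ iterate $f^k$ about $\ln|\ln\rho|/k$ times to get $d(x,f^{Lk}x)\leq\sqrt\rho$ and reduce to the previous case; the a.e.-$x$ version of $({\rm Mov})$ then follows by integrating $H_{j,k}(x)=\mu\bigl(B(x,2^{-j})\cap f^{-k}B(x,2^{-j})\bigr)$ and using Markov plus Borel--Cantelli. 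Your short-time argument needs to be replaced by something of this kind; note that the weaker $|\ln\rho|^{-A}$ bound suffices, since $({\rm Mov})$ and $\overline{\rm (Mov)}$ only require polylogarithmic smallness, not a power of $\rho_n$.
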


 \noindent {\sc \bluegray Failure of the MultiLog laws for generic points.} Naturally, one can ask if in fact, $\cG_r$ equals to $M.$ If $r=1$ the answer is often positive (see
Theorem \ref{ThHitSuperPoly}(b)).
It turns out
that for larger $r$ the answer is often negative.

\begin{Def}
\label{DefH}
Given a function $\zeta:\N \to \N^*$, define
\begin{align*}  \cH&= \left\{ x : \text{ for a.e. } y, \quad
  \text{ for all }	r\geq 1 : \limsup_{n\rightarrow \infty} \frac{|\ln d_n^{(r)}(x,y)|-\frac{1}{d}\ln n}{\ln \ln n}
        = \frac{1}{d}\right\}, \\
        \bcH_\zeta&= \left\{ x :
  \text{ for all }	r\geq 1 :
	\limsup_{n\rightarrow \infty} \frac{|\ln d_n^{(r)}(x,x)|}{\zeta(n)}
        = \infty\right\}.
	\end{align*}
\end{Def}

\begin{Thm}
\label{ThTop}  Suppose that 
 the periodic points of $f$ are dense. Then

\noindent (a)  If $\cG_1=M$, then $\cH$   contains a  $G_\delta$  dense set.

\noindent (b) {For any   $\zeta:\N \to \N^*$, $\bcH_\zeta$  contains a  $G_\delta$  dense set.}

\end{Thm}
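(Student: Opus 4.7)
The plan is to exhibit both $\cH$ and $\bcH_\zeta$ as supersets of countable intersections of open dense sets that contain the periodic points, and then invoke Baire's theorem. Density of $\Per(f)$ is the key geometric input.

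For part (b), set
$$W_{r,k,N}:=\bigl\{x\in M: \exists\, n\geq N\text{ with }|\ln d_n^{(r)}(x,x)|>k\zeta(n)\bigr\}.$$
Since $x\mapsto d(x,f^j x)$ is continuous for each $j$, so is the $r$-th order statistic of $\{d(x,f^j x)\}_{j=1}^n$; hence $W_{r,k,N}$ is open. Any periodic point $p$ of period $q$ satisfies $d(p,f^{jq}p)=0$, so $d_n^{(r)}(p,p)=0$ as soon as $n\geq \max(N,rq)$, and thus $p\in W_{r,k,N}$. Density of $\Per(f)$ gives density of $W_{r,k,N}$, and $\bigcap_{r,k,N}W_{r,k,N}=\bcH_\zeta$ is a dense $G_\delta$ by Baire.

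For part (a), the upper bound
$$\limsup_{n\to\infty}\frac{|\ln d_n^{(r)}(x,y)|-\tfrac1d\ln n}{\ln\ln n}\leq\frac1d$$
is automatic from $d_n^{(r)}\geq d_n^{(1)}$ combined with the hypothesis $\cG_1=M$, so only the matching lower bound needs work. The key observation is that for a periodic $x$ of period $q$, the Lipschitz property of $f^q$ gives
$$d(x,f^{k+jq}y)=d(f^{jq}x,\,f^{jq}(f^k y))\leq L^{jq}\,d(x,f^k y),\qquad j=0,1,\ldots,r-1,$$
so a single close approach by the orbit of $y$ to $x$ automatically produces $r$ close approaches, losing only a bounded factor $L^{(r-1)q}$ which is negligible on the $\ln\ln n$ scale. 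Combining this with $\cG_1=M$ at the periodic $x$: for $\mu$-a.e.\ $y$ there are $n_i\to\infty$ and $k_i\leq n_i$ with $d(x,f^{k_i}y)<n_i^{-1/d}(\ln n_i)^{-1/d+\varepsilon}$, and the clustering estimate then yields $d_{n_i+(r-1)q}^{(r)}(x,y)<n^{-1/d}(\ln n)^{-1/d+1/\ell}$ for any $\varepsilon<1/\ell$ and sufficiently large $i$. Thus every periodic point belongs to $\cH$.

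To upgrade this to a dense $G_\delta$ contained in $\cH$, introduce the open set
$$F_{r,\ell,N}:=\bigl\{(x,y)\in M\times M:\exists\, n\geq N,\ d_n^{(r)}(x,y)<n^{-1/d}(\ln n)^{-1/d+1/\ell}\bigr\},$$
and write $F_{r,\ell,N}(x)$ for its $x$-section. Because $F_{r,\ell,N}$ is open in $M\times M$, Fatou applied to sections yields that $x\mapsto \mu(F_{r,\ell,N}(x))$ is lower semicontinuous; hence $V_{r,\ell,N,\delta}:=\{x:\mu(F_{r,\ell,N}(x))>1-\delta\}$ is open, and the previous paragraph shows $\mu(F_{r,\ell,N}(p))=1$ at every periodic point, so $V_{r,\ell,N,\delta}$ is also dense. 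Setting $V:=\bigcap_{r,\ell,N,k}V_{r,\ell,N,1/k}$, Baire gives a dense $G_\delta$. For $x\in V$ one has $\mu(F_{r,\ell,N}(x))=1$ for all $r,\ell,N$, and the monotonicity of $F_{r,\ell,N}$ in $N$ forces $\mu\bigl(\bigcap_N F_{r,\ell,N}(x)\bigr)=1$ for each $r,\ell$; a further countable intersection over $r,\ell$ produces the lower bound $\limsup\geq 1/d$ for $\mu$-a.e.\ $y$ and all $r$. Hence $V\subset\cH$, finishing part (a).

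The main subtlety is the ``for $\mu$-almost every $y$'' quantifier hiding inside the definition of $\cH$, which prevents one from seeing $\cH$ itself as an obvious $G_\delta$. The lower semicontinuity of the section measure $x\mapsto\mu(F(x))$ of an open $F\subset M\times M$ is the single analytic tool that converts this measure-theoretic condition into a family of open conditions on $x$, after which Baire and the density of periodic points close the argument.
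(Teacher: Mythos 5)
Your proposal is correct, and its skeleton is the same as the paper's: the upper bound comes from $d_n^{(r)}\geq d_n^{(1)}$ together with $\cG_1=M$; the lower bound is seeded at periodic points, where the Lipschitz constant of the return map turns the single close approach supplied by $\cG_1=M$ into $r$ close approaches at the cost of a bounded factor, which is negligible on the $\ln\ln n$ scale; density of periodic points plus Baire then concludes, and your part (b) is essentially identical to the paper's (open sets indexed by the return depth and a threshold, all containing the periodic points). The genuine difference is in part (a), in how the hidden quantifier ``for $\mu$-a.e.\ $y$'' is turned into countably many open conditions on $x$. The paper introduces the sets $\cH_{m,l,r}$, which demand an open $\cY$ with $\mu(\cY)>1-\frac1l$ on which a pointwise inequality holds at a \emph{single} time $m$, and asserts their openness in $x$ (a claim that, strictly speaking, needs a small inner-regularity/compactness argument, and whose use also requires the witnessing times to go to infinity). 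You instead keep the whole condition as an open set $F_{r,\ell,N}\subset M\times M$ and use lower semicontinuity of the section measure $x\mapsto\mu(F_{r,\ell,N}(x))$ via Fatou; this makes the openness of $V_{r,\ell,N,\delta}$ immediate, and the intersection over $N$ automatically forces the good times to be arbitrarily large, so the a.e.\ limsup statement for $x\in V$ drops out directly. Both routes use exactly the same geometric input and yield the same conclusion; yours is marginally cleaner at the measure-to-topology step, while the paper's is more hands-on with a fixed time scale.
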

Thus for $r\geq 2$ topologically typical points do {\bf not} belong to $\cG_r$ or $\bcG_r$.

\medskip

\noindent {\sc \bluegray Failure of the MultiLog laws for non mixing systems. The case of toral translations.}

Theorem \ref{ThTop} emphasizes the necessity of a restriction on $x$ in Theorem \ref{thm3}.

In a similar spirit, we show that the mixing assumptions made in this paper are essential. To this end we consider the case when the dynamical system is $(T_\a,\Tor^d,\lambda)$ where $T_\a$ is the translation of vector $\a$ and $\lambda$ is the Haar measure on $\Tor^d$.

Define \begin{align*}
\cE_{r}&= \left\{ x : \text{ for a.e. } y, \quad
	\limsup_{n\rightarrow \infty} \frac{|\ln d_n^{(r)}(x,y)|-\frac{1}{d}\ln n}{\ln \ln n}= \frac{1}{2d}\right\},\\
\bar \cE_{r}&= \left\{ x : 	\limsup_{n\rightarrow \infty} \frac{|\ln d_n^{(r)}(x,x)|-\frac{1}{d}\ln n}{\ln \ln n}
        = \frac{1}{d}\right\}.
   \end{align*}
  \begin{Thm}
    \label{ThReturnsTransl}
 For $\lambda$-a.e. $\alpha \in \Tor^d$, the system $(T_\a,\Tor^d,\lambda)$, satisfies

  \noindent a) $\lambda(\cG_1)=1$ and $\lambda(\cE_{r})=1$ for $r\geq2;$

\noindent b) $\bar \cE_r=M$ for all $r\geq1.$
\end{Thm}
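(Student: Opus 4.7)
The key feature of translations is the isometric structure: for $f=T_\alpha$ one has $d(x,T_\alpha^k y) = \|k\alpha - (x-y)\|$, which depends only on $v := x-y \in \Tor^d$ and $k$, while $d(x,T_\alpha^k x) = \|k\alpha\|$ is independent of $x$. The plan is to reduce each statement to a classical inhomogeneous Diophantine approximation problem; for the multi-hit case, I will factor the event $\{d_n^{(r)}(x,y)<\rho\}$ into a single-hit event and a single-return event, which is what will produce the $\frac{1}{2d}$ rate appearing in $\cE_r$ for $r\geq 2$ in place of the $\frac{1}{rd}$ rate from the mixing setting of Theorem~\ref{thm3}.

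\textbf{Part (b).} As $d_n^{(r)}(x,x)$ is the $r$-th smallest of $\|\alpha\|,\|2\alpha\|,\ldots,\|n\alpha\|$, it does not depend on $x$, so the question reduces to a property of $\alpha$ alone. The classical Khintchine log law for a.e.\ $\alpha$ gives $\limsup_n (|\ln d_n^{(1)}(x,x)| - \tfrac{1}{d}\ln n)/\ln\ln n = \tfrac{1}{d}$. The inequality $d_n^{(r)}\geq d_n^{(1)}$ delivers the upper bound on the limsup. For the lower bound, if $k_*$ minimizes $\|k\alpha\|$ over $k\leq n/r$, then the multiples $k_*,2k_*,\ldots,rk_*$ lie in $[1,n]$ and satisfy $\|jk_*\alpha\|\leq j\|k_*\alpha\|\leq r\,d^{(1)}_{\lfloor n/r\rfloor}(0,0)$; hence $d_n^{(r)}(x,x)\leq r\,d^{(1)}_{\lfloor n/r\rfloor}(0,0)$, transferring the lower half of the log law to level $r$. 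This will yield $\bar\cE_r=M$ for all $r\geq 1$.

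\textbf{Part (a) for $r=1$.} This is the classical inhomogeneous Kurzweil/Khintchine--Groshev log law: for a.e.\ $(\alpha,v)\in\Tor^d\times\Tor^d$, $\min_{k\leq n}\|k\alpha-v\|$ satisfies the log law with exponent $\tfrac{1}{d}$. Setting $v=x-y$ and applying Fubini produces $\lambda(\cG_1)=1$ for a.e.\ $\alpha$.

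\textbf{Part (a) for $r\geq 2$.} Write $v=x-y$ and set $\rho_n = n^{-1/d}(\ln n)^{-\tau}$. For the upper bound on the limsup, the key inclusion is
\[
\{d_n^{(r)}(x,y)<\rho_n\}\subset\{d_n^{(1)}(x,y)<\rho_n\}\cap\Bigl\{\min_{0<q\leq n}\|q\alpha\|<2\rho_n\Bigr\},
\]
because any $k_1<\cdots<k_r\leq n$ with $\|k_i\alpha-v\|<\rho_n$ force $\|(k_2-k_1)\alpha\|<2\rho_n$. Along $n=2^M$, each of the hit and return events has probability of order $M^{-\tau d}$ (the hit over $v$, the return over $\alpha$); since the hit depends only on $v$ and the return only on $\alpha$, Fubini gives a joint probability of order $M^{-2\tau d}$, which is summable for $\tau>\tfrac{1}{2d}$. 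Borel--Cantelli, together with monotonicity of $d_n^{(r)}$ in $n$ (absorbing a bounded factor when passing from dyadic to general $n$), will then give $\limsup\leq\tfrac{1}{2d}$. Conversely, if $\|k_1\alpha-v\|<\rho_n$ with $k_1\leq n/r$ and $\|q\alpha\|<\rho_n$ with $0<q\leq n/r$, then $k_1+jq\leq n$ for $0\leq j\leq r-1$ form $r$ hits with $\|(k_1+jq)\alpha-v\|\leq r\rho_n$, so $d_n^{(r)}(x,y)\leq r\rho_n$. For $\tau<\tfrac{1}{2d}$ a divergent Borel--Cantelli argument based on a second-moment estimate, exploiting the asymptotic independence of hit events (depending on $v$) and return events (depending on $\alpha$) across well-separated dyadic scales, will show that the joint event occurs infinitely often, whence $\limsup\geq\tfrac{1}{2d}$.

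\textbf{Main obstacle.} The hardest step will be the divergent half of the Borel--Cantelli argument in part (a) for $r\geq 2$. Because translations have no spectral gap, Theorem~\ref{theo.mixing} does not apply, and the asymptotic independence of the joint events across scales $2^{M_1}\ll 2^{M_2}$ must be extracted directly from the equidistribution of $\{k\alpha\}$ together with Khintchine-type control on the growth of denominators of best approximants for typical $\alpha$. The Fubini factorization (hit $=$ $v$-event, return $=$ $\alpha$-event) handles the within-scale joint probability cleanly, but decoupling the return subevent across scales is the delicate technical point.
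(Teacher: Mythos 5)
Your reductions and the convergent half are sound: part (b) via the multiples $jk_*$ of a best return time, the $r=1$ case of (a) from the doubly metric inhomogeneous Khintchine theorem plus Fubini, the inclusion forcing a small return $\|(k_2-k_1)\alpha\|<2\rho_n$ from two hits, and the bound $\Prob\bigl(A_M\cap B_M\bigr)=O\bigl(M^{-2\tau d}\bigr)$ for the dyadic hit event $A_M=\{\exists k\le 2^M:\|k\alpha-v\|<\rho_{2^M}\}$ and return event $B_M=\{\exists q\le 2^M:\|q\alpha\|<2\rho_{2^M}\}$ (condition on $\alpha$ and union-bound over $k$ in the $v$-variable) all work, and they are if anything more elementary than the corresponding steps in the paper. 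The genuine gap is the divergent half of (a) for $r\ge 2$, which is the real content of the $\frac{1}{2d}$ law: you assert that a second-moment Borel--Cantelli argument ``will show'' that $C_M:=A_M\cap B_M$ occurs for infinitely many $M$ when $\tau<\frac{1}{2d}$, and you explicitly defer the decisive estimates to ``equidistribution of $\{k\alpha\}$ plus control of best approximants.'' That deferral hides two nontrivial steps, neither of which is supplied: (i) the within-scale lower bound $\Prob(C_M)\gtrsim M^{-2\tau d}$ is not a clean Fubini factorization, because conditioning on $B_M$ biases $\alpha$ toward having a small denominator, and one must show that for most such $\alpha$ the orbit $\{k\alpha\}_{k\le 2^M/r}$ still sweeps out a set of measure $\gtrsim 2^M\rho_{2^M}^d$ (an overlap/pair-correlation estimate); (ii) the pairwise quasi-independence $\Prob(C_{M_1}\cap C_{M_2})\le C\,\Prob(C_{M_1})\Prob(C_{M_2})$ needed for a Kochen--Stone/Erd\H{o}s--Chung argument requires decoupling across scales both the return parts (events in $\alpha$ alone, which are far from independent) and the conditional hit parts, and no mechanism for this is given. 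As written, the proposal proves the convergence direction but not the divergence direction, so the value $\frac{1}{2d}$ is only bounded from above.

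For comparison, the paper closes exactly this gap by a different route: via the Dani correspondence the event ``at least two hits of size $\rho_{2^M}$ before time $2^M$'' becomes the event that the affine lattice $\hat{g}_M\hat{\Lambda}_{\alpha,z}$ has at least two points in a fixed box, i.e.\ a single shrinking target in the space of affine lattices $\tcM$; Rogers' identities give the two-sided bound $\tmu\bigl(\tS(\one_{\hat{E}_\nu})\ge 2\bigr)\asymp\nu^{2d}$ (Lemma \ref{LmSTNZ}, whose lower bound needs a genuine geometric construction of a positive-measure set of lattices with a prescribed short vector), and the cross-scale independence is then furnished for free by exponential mixing of the diagonal flow on $\tcM$ through the $r=1$ case of Theorem \ref{ThMultiBC}, after a thickening argument along the stable and centralizer directions transfers the Haar-generic statement back to a.e.\ $(\alpha,z)$. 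If you wish to keep your hands-on route, you must prove Khintchine/Sprind\v{z}uk-type quasi-independence estimates for the events $C_M$ described above, which is substantially more work than the rest of your outline; otherwise the lattice-space argument is the efficient way to complete the proof.
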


The proof requires different techniques from the rest of the results of this section, that are related to homogeneous dynamics on the space of lattices, so it will be given
in Section \ref{ScKhinchine} after we introduce the necessary tools.
\smallskip

\noindent {\sc \bluegray The case of flows.}	
Here we describe the analogue results of Theorems \ref{thm3} and \ref{ThTop}  for flows.
Let $\phi$ be a smooth flow on a  $(d+1)$ dimensional  Riemannian manifold $M$ preserving a smooth measure $\mu$.

 Observe that if $\phi^t(y)$ is close to $x$ for some $t,$ then the same is true for $\phi^{\tilde{t}}(y)$ with $\tilde{t}$ close to $t.$ Thus we would like to count only one return for the whole
	connected component lying in the neighborhood of $x$.
Namely, for some fixed $\rho>0,$
for $i\geq 0$, let $[t_i^-, t_i^+]$ denote the consecutive time intervals  such that $\phi^t y\in B(x, \rho)$
for $t\in [t_i^-, t_i^+].$ Let $t_i$ be the argmin of $d(x, \phi^t (y))$ for
$t\in [t_t^-, t_i^+]$. Let $d_n^{(r)} (x,y)$ be the $r-$th minimum of
\begin{equation}
d(x,\phi^{t_1}(y)),\ldots, d(x,\phi^{t_k}(y)), \quad t_k \leq n < t_{k+1}. \end{equation}

 Theorem~\ref{thm3} and Theorem~\ref{ThTop} have the following counterpart in the case of flows. Note that the dimension of the manifold in the case of flows is $d+1$.

\begin{Thm}
\label{ThFlowRec}
Suppose that  the smooth system $(\phi,M,\mu,\BAN)$ is $(2r+1)$-fold exponentially mixing.  Then

\noindent a) 
$\mu(\cG_r)=1;$

\noindent b) 
$\mu(\bcG_r)=1.$

If, in addition, periodic points of $\phi$ are dense then
	
\noindent c)If $\cG_1=M$ then $\cH$   contains a  $G_\delta$  dense set;

\noindent d)   For any   $\zeta:\N \to \N^*$, $\bcH_\zeta$  contains a  $G_\delta$  dense set.
  \end{Thm}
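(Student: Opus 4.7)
The plan is to reduce Theorem \ref{ThFlowRec} to the discrete-time framework of Corollary \ref{cor.mixing} (for parts (a), (b)) and to the construction in Theorem \ref{ThTop} (for parts (c), (d)), by time-discretizing the flow and introducing flow-tube targets. Fix $x \in M$ and a large constant $T > 0$; the time-$T$ map $\phi^T$ inherits $(2r+1)$-fold exponential mixing from $\phi$. For part (a), introduce the hitting targets
$$\fA_\rho = \{y \in M : \min_{t \in [0,T]} d(x, \phi^t y) < \rho\},$$
essentially a tubular neighborhood of radius $\rho$ of the backward arc $\phi^{[-T,0]}(x)$, of measure $\sigma(\rho) \asymp T \rho^d$. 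Admissibility in the sense of Definition \ref{def.targets} is verified by applying Lemma \ref{RemLip} to the Lipschitz function $F_x(y) := \min_{t \in [0,T]} d(x, \phi^t y)$: (Poly) holds for a polynomial choice of $\rho_n$, (Appr) is immediate from Lemma \ref{RemLip}, and (Mov) holds because for small $\rho$ the flow moves points by a definite amount in time $T$, so the tube and its iterates under $\phi^T$ are essentially disjoint. For part (b), the composite targets
$$\bfA_\rho = \{(y_1, y_2) : \min_{t \in [0,T]} d(y_1, \phi^t y_2) < \rho\}$$
are composite-admissible in the sense of Definition \ref{def.recurrent.targets} via Lemma \ref{RemLipcomposite}.

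Corollary \ref{cor.mixing} then provides, for $\rho_n = n^{-1/d}(\ln n)^{-s/(rd)}$, the dichotomy that the number $N^n_{\rho_n}$ of windows $k \leq n$ with $\phi^{kT} y \in \fA_{\rho_n}$ satisfies $N^n_{\rho_n} \geq r$ finitely often if $s > 1$ and infinitely often if $s \leq 1$. The link with $d_n^{(r)}(x, y)$ rests on the following elementary observation: for $\rho$ small enough, every excursion of $\phi^t y$ through $B(x, \rho)$ has duration much less than $T$ (using non-degeneracy of the flow generator away from the zero-measure fixed-point set), hence lies in at most two consecutive time-$T$ windows, and conversely every window contributing to $N^n_{\rho_n}$ contains a bona fide excursion. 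Thus $N^n_{\rho_n}$ and the number of excursions with minimum distance below $\rho_n$ are comparable up to a factor at most $2$, affecting $|\ln d_n^{(r)}|$ by only $O(1)$, which is absorbed in the $\ln \ln n$ correction. This yields the limsup formula of Definition \ref{DefG} for $\mu$-a.e.\ $y$, proving (a); part (b) is analogous with composite targets.

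For (c) and (d) we follow the scheme of Theorem \ref{ThTop}, replacing discrete iterates by flow passages. Density of periodic orbits implies that for any open $V \subset M$ there is a periodic point $q \in V$ of some period $p$; then every $x$ near $q$ has orbit near-returns to $x$ clustering at times near multiples of $p$. For (d), fix $\zeta$; for each $N$ the set
$$ U_N := \{x \in M : \text{there are $r$ near-returns of $\phi^t x$ to $x$ of distance $<e^{-\zeta(2^N)}$ within time $2^N$}\} $$
is open, and the above observation makes each $U_N$ dense, so $\bcH_\zeta \supseteq \bigcap_N U_N$ is $G_\delta$ dense. For (c), the same mechanism combined with the assumption $\cG_1 = M$ (which guarantees a passage of the $y$-orbit through any small ball around $x$ on the natural hitting scale) produces the required constellation of $r$ close visits. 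The main obstacle is the geometric bookkeeping of excursion durations under the flow discretization — uniform bounds on excursion lengths and on the admissibility constants of the tube targets — which all follow from smoothness of $\phi$ and a suitable choice of $T$ relative to the flow's minimal speed on $\supp \mu$.
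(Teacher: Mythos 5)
Your overall route is the same as the paper's: replace balls by fixed-length flow tubes $\{y:\min_{t\in[0,T]}d(x,\phi^t y)\leq\rho\}$, check admissibility through Lemmas \ref{RemLip} and \ref{RemLipcomposite}, feed the resulting targets into Corollary \ref{cor.mixing}, and for (c), (d) rerun the genericity argument of Theorem \ref{ThTop} on periodic orbits. The genuine gap is in the verification of (Mov). You fix an \emph{arbitrary} $x$ and justify (Mov) by saying that the flow moves points a definite amount in time $T$, so the tube and its $\phi^{T}$-iterates are essentially disjoint. That only rules out overlap coming from a single excursion; condition (Mov) of Definition \ref{def.targets} demands $\mu\left(\fA_{\fr_n}\cap\phi^{-kT}\fA_{\fr_n}\right)\leq C\,\sigma(\fr_n)(\ln n)^{-1000r}$ for \emph{all} $0<k\leq R\ln n$, and this fails whenever $x$ lies on (or extremely near) a periodic orbit whose period is close to $kT$: there the intersection has measure of order $\sigma(\fr_n)$, not $\sigma(\fr_n)(\ln n)^{-1000r}$. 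So (Mov) can only hold for $\mu$-a.e.\ $x$, and establishing it requires the slow-recurrence mechanism of Lemma \ref{lemma.mov} (2-fold exponential mixing for $k\gtrsim\ln|\ln\rho|$, the iteration trick converting short near-returns into long ones, and a Markov/Borel--Cantelli step to pass to almost every $x$); your proposal has no substitute for this. The omission is even more serious in part (b): Lemma \ref{RemLipcomposite} only delivers $\overline{\rm(Appr)}$, while you never verify $\overline{\rm(Mov)}$ (the "slowly recurrent system" bound $\mu\{y:\exists s\in[0,1],\,d(y,\phi^{kT+s}y)\leq a\fr_n\}\leq C(\ln n)^{-1000r}$ for all $k\neq0$) nor $\overline{\rm(Sub)}$, both of which are indispensable for the $(M2)_r$ estimate for return targets; the paper proves them by adapting Lemma \ref{lemma.mov} to the flow and by the triangle-inequality argument giving $\bfA_\fr^{n_1}\cap\bfA_\fr^{n_2}\subset\phi^{-n_1}\bfA_{a\fr}^{n_2-n_1}$.

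Two smaller points. First, your composite target $\bfA_\rho=\{(y_1,y_2):\min_t d(y_1,\phi^t y_2)<\rho\}$ lacks the $\gamma(y_1)^{-1/d}$ normalization of \eqref{VaryingWidthTarget}; without it $\int 1_{\bfA_\rho}(y_1,y_2)\,d\mu(y_2)$ depends on $y_1$, so $\overline{\rm(Appr)}$(iii) fails as stated, and you must either normalize (as the paper does for flows) or invoke weak admissibility as in Remark \ref{RkWeakAdm}. Second, in (d) your open sets $U_N$ with threshold $e^{-\zeta(2^N)}$ only give $\limsup_n|\ln d_n^{(r)}(x,x)|/\zeta(n)\geq 1$, whereas membership in $\bcH_\zeta$ requires this limsup to be infinite; you need the stronger threshold of the paper's sets $\cA_{m,l}=\{x:|\ln d_m^{(l)}(x,x)|>m\zeta(m)\}$, which still contain the periodic points because their self-return distances vanish. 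Relatedly, the density of $U_N$ should be argued from openness plus the fact that periodic points themselves belong to $U_N$: a point merely near a periodic orbit has self-return distance comparable to its distance from that orbit, not arbitrarily small.
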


\subsection{Slowly recurrence and proof of Theorem \ref{thm3}.} \label{sec.wr}

Since $\mu$ is a smooth measure,  there is a smooth function $\gamma(x)$ such that
\begin{equation}
\label{SmallBallAsy}
\mu\left(B(x, \rho)\right)=\gamma(x) \rho^d+O\left(\rho^{d+1}\right),
\end{equation}
where the constant in $O\left(\rho^{d+1}\right)$ is uniform in $x$.

Given $x\in M,$  let
\begin{equation}
\label{BallTarget}
\fA_{x,\fr}=\{y:\,d(x,y)\leq\rho\}
\end{equation}
and\footnote{In the definition of the composite target $\bfA_{\fr}$, we include the factor ${(\gamma(x))^{-1/d}}$ because we want that for every $x$, $\int 1_{\bfA_\fr}(x,y)d\mu(y)$ be essentially the same number to be able to check
$ (\overline{\rm Appr})(iii)$ for these targets.}
\begin{equation}
\label{VaryingWidthTarget}
\bfA_{\fr}=\left\{(x,y):d(x,y)\leq \frac{\rho}{(\gamma(x))^{1/d}} \right\}
\end{equation}

{ We use the notation $\fA_{x,\fr}^k$ for the event $1_{\fA_{x,\fr}}\circ f^k$.
We also recall the notation  $ {\bfA}^k_{\fr}=\{x: (x, f^k x)\in{\bfA}_{\fr}\}.$
We also keep the notation $\sigma(\fr)=\mu(\fA_{x,\fr})$, and $\bsigma(\fr)=(\mu\times\mu)(\bfA_\fr)$.

For $s\geq 0$, we let $\fr_n=n^{-1/d} \ln^{-s}n$, and recall that $N^n_{\fr_n}$ denotes the number of times $k\leq n$ such that $\fA^k_{x,\fr_n}$ (or $\bfA^k_{\fr_n}$) occurs.

By compactness, there exists a constant $c>0$ such that
$$ \left\{(x,y):d(x,y)\leq c^{-1}\rho \right\} \subset \bfA_{\fr}\subset \left\{(x,y):d(x,y)\leq c\rho \right\}.$$
Thus the statement of Theorem \ref{thm3} becomes equivalent to the following :
\begin{itemize}
\item[(a)] If $s>\frac{1}{rd}$,  then for $\mu$-a.e. $x$, we have that  for large $n$,
$N^n_{\fr_n}<r.$
\item[(b)] If $s\leq\frac{1}{rd}$, then for $\mu$-a.e. $x$, there are infinitely many $n$ such that
$N^n_{\fr_n}\geq r.$
\end{itemize}

With the notation $\bS_r=\sum_{j=1}^\infty \left(2^j \bv_j\right)^r$
where $\bv_j=\fv(\fr_{2^j})$ (in the $\fA_{x,\rho_n}$ case) or
${\bv}_j=\bar\fv(\fr_{2^j})$ (in the $\bfA_{\rho_n}$ case), we see from \eqref{SmallBallAsy} that
$\bS_r=\infty$ if and only if $s\leq\frac{1}{rd}$.

Hence Theorem \ref{thm3} follows from the alternative of Corollary
 \ref{cor.mixing}, since $(f,M,\mu,\BAN)$ is 
 $(2r+1)$-fold exponentially mixing, provided we establish the following.
\begin{Prop} \label{prop.targets}
(a) For $\mu$-a.e. $x$ the targets $\{\fA_{x,\fr_n}\}$  are simple admissible targets.

(b)The targets $\{\bfA_{\fr_n}\}$ are composite admissible targets.
\end{Prop}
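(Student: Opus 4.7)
The plan is to verify each axiom of Definition \ref{def.targets} for part (a) and Definition \ref{def.recurrent.targets} for part (b), restricting attention in part (a) to a set of $\mu$-full measure of slowly recurrent $x$. The routine axioms reduce to the preparatory lemmas of the previous section, while the movability conditions $(\mathrm{Mov})$ and $(\overline{\mathrm{Mov}})$ form the heart of the matter.

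For $(\mathrm{Appr})$, I would apply Lemma \ref{RemLip} with the $1$-Lipschitz function $\Phi(y)=d(x,y)$ and brackets $a_1\equiv 0$, $a_2(\rho)=\rho$; hypothesis (h2) of that lemma is precisely \eqref{SmallBallAsy}. For $(\overline{\mathrm{Appr}})$ I would apply Lemma \ref{RemLipcomposite} with $\Phi(x,y)=\gamma(x)^{1/d}d(x,y)$: the normalization by $\gamma(x)^{1/d}$ is chosen exactly so that \eqref{SmallBallAsy} gives
\[
\mu(\{y:\Phi(x,y)\leq\rho\})=\rho^d(1+O(\rho)),
\]
which delivers (h2) and (h3) simultaneously, while (h1) follows from uniform positivity of $\gamma$. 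Conditions $(\mathrm{Poly})$ and $(\overline{\mathrm{Poly}})$ are immediate from $\rho_n=n^{-1/d}\ln^{-s}n$ combined with \eqref{SmallBallAsy}, and $(\overline{\mathrm{Sub}})$ is a direct consequence of the triangle inequality and the boundedness of $\gamma$: if $d(x,f^{k_i}x)\leq c\rho/\gamma(x)^{1/d}$ for $i=1,2$, then $d(f^{k_1}x,f^{k_2}x)\leq 2c\rho/\gamma(x)^{1/d}\leq a\rho/\gamma(f^{k_1}x)^{1/d}$ for an appropriate $a$.

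The main preparatory step is to establish a slow recurrence property for $\mu$-a.e.\ $x$. Applying $(\mathrm{EM})_1$ to a Lipschitz approximation of $1_{\{d(\cdot,\cdot)<\rho\}}$ yields
\[
\mu(\{x:d(x,f^kx)<\rho\})\leq C\bsigma(\rho)+C\rho^{-\tau}\theta^k,
\]
and for each fixed $R$ I would sum this estimate over $k\in(0,R\ln n)$ and a dyadic sequence $n=2^j$ with $\rho$ matched to the scale $(L^k+1)\rho_n$ that is forced by the Lipschitz inclusion $f^{-k}B(x,\rho_n)\subset B(f^{-k}x,L^k\rho_n)$. Borel--Cantelli would then give that for $\mu$-a.e.\ $x$ the sets $\fA_{x,\rho_n}$ and $f^{-k}\fA_{x,\rho_n}$ are disjoint for all $k\in(0,R\ln n)$ once $n$ is sufficiently large, whence $(\mathrm{Mov})$ holds trivially with the left-hand side equal to zero. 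For $(\overline{\mathrm{Mov}})$ the $x$-variable is integrated, so no slow recurrence property of a particular $x$ is needed: the direct application of $(\mathrm{EM})_1$ to $1_{\bfA_{a\rho_n}}$ gives $\mu(\bfA_{a\rho_n}^k)\leq\bsigma(a\rho_n)+C\rho_n^{-\tau}\theta^k$, where the first summand is of order $n^{-1}\leq(\ln n)^{-1000r}$; the second summand is small for $k\gtrsim\ln n$ and for smaller $k$ is controlled by the measure-preserving geometric bound on the intersection of preimages of small balls.

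The hard part will be obtaining the slow recurrence bound uniformly over all $k\leq R\ln n$ for arbitrary $R$: a naive union bound on $\{d(f^kx,x)<2L^k\rho_n\}$ produces total mass of order $R\ln n\cdot(L^{R\ln n}\rho_n)^d$, which is summable over dyadic $n$ only when $R\ln L<1/d$. Bridging this gap will require applying $(\mathrm{EM})_1$ separately at each dynamically correct scale $L^k\rho_n$, trading the expansion factor $L^{kd}$ against the mixing decay $\theta^k$ so that the net bound is summable for every $R$; this is the only place in the proof where the exponential (as opposed to merely superpolynomial) mixing rate is essential, and the same two-scale device will handle the analogous intermediate regime in $(\overline{\mathrm{Mov}})$ where single-scale mixing is insufficient.
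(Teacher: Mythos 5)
Your verification of (Appr), $(\overline{\rm Appr})$, (Poly), $(\overline{\rm Poly})$ and $(\overline{\rm Sub})$ coincides with the paper's (Lemma \ref{lemma.appr} via Lemmas \ref{RemLip} and \ref{RemLipcomposite}), but your treatment of (Mov) and $(\overline{\rm Mov})$, which is the real content of Proposition \ref{prop.targets}, has a genuine gap. First, the disjointness you want Borel--Cantelli to deliver is false in part of the claimed range: once $k$ exceeds the mixing time at scale $\fr_n$ (i.e.\ $k\geq c\ln n$ with $c$ determined by $\tau$ and $\theta$), $({\rm EM})_1$ applied to the minorants $A^-_{\fr_n}$ gives $\mu\big(B(x,\fr_n)\cap f^{-k}B(x,\fr_n)\big)$ bounded below by a quantity comparable to $\sigma(\fr_n)^2>0$ for every $x$, so the sets are never disjoint there; this range is harmless (mixing at scale $\fr_n$ already yields the bound $C\sigma(\fr_n)^2$, much better than required), but it shows your strategy cannot cover all $k\leq R\ln n$. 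The fatal problem is the complementary range $1\leq k\lesssim \ln n$, below the mixing time at scale $\fr_n$. There your union bound runs over the sets $\{x: d(x,f^kx)\leq C L^k\fr_n\}$, and the obstruction is not the mixing error but the main term: the measure of such a set is of order $(L^k\fr_n)^d$, and no ``trading of $L^{kd}$ against $\theta^k$'' can reduce it, because $\theta^k$ only multiplies the error term of $({\rm EM})_1$. Moreover the two requirements of your scheme --- $k$ large enough for $({\rm EM})_1$ to be effective at the inflated scale $L^k\fr_n$, and $L^k\fr_n$ small enough for $(L^k\fr_n)^d$ to be summable over $k\leq R\ln n$ and dyadic $n$ --- are in general incompatible, so the scheme cannot be repaired for arbitrary $R$. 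The same untreated range is exactly where your argument for $(\overline{\rm Mov})$ reduces to the unexplained ``geometric bound on intersections of preimages of balls''; recall $(\overline{\rm Mov})$ must hold for \emph{all} $k\neq 0$, including $k=1$, where $\{x:d(x,fx)<a\fr_n\}$ is a neighborhood of the fixed-point set and has no a priori small measure.

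The missing idea (Lemma \ref{lemma.mov} in the paper) is to give up polynomial smallness and prove only the polylogarithmic bound $\mu(\{x: d(x,f^kx)<\rho\})\leq |\ln\rho|^{-A}$, valid for \emph{every} $k\geq 1$, which is all that (Mov) and $(\overline{\rm Mov})$ require. This is achieved by inflating the target to the fixed scale $\hat\rho=|\ln\rho|^{-A}$, so that the mixing error $\hat\rho^{-\tau}\theta^k=|\ln\rho|^{A\tau}\theta^k$ is already negligible for $k\geq A^2\ln|\ln\rho|$ --- that is, ``large $k$'' begins at order $\ln\ln n$, not $\ln n$ --- and by handling $k< A^2\ln|\ln\rho|$ with a chaining step: $d(x,f^kx)\leq\rho$ forces $d(x,f^{Lk}x)\leq \sum_{l<L}\|f\|_1^{lk}\rho\leq\sqrt{\rho}$ for $L\approx 4A^2\ln|\ln\rho|/k$, the inflation factor being only $\|f\|_1^{O(\ln|\ln\rho|)}$, a power of $|\ln\rho|$ absorbed by passing from $\rho$ to $\sqrt{\rho}$; one then reapplies the first case at scale $\sqrt{\rho}$. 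This gives $(\overline{\rm Mov})$ directly. The pointwise statement (Mov) for a.e.\ $x$ is then obtained not from emptiness but from averaging: one bounds $\int \mu\big(B(x,2^{-j})\cap f^{-k}B(x,2^{-j})\big)\,d\mu(x)$ by $C\,\mu(B(x,2^{-j}))\,j^{-A-3}$ using the uniform comparability of ball measures for the smooth measure $\mu$ together with the system-level bound, and concludes by Markov's inequality and Borel--Cantelli over the dyadic scales $2^{-j}$ and $k\leq Kj$.
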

 }

 The rest of this section is devoted to the

\noindent { \it Proof of Proposition \ref{prop.targets}.}

{ Observe first that with the definition of $\rho_n$ and \eqref{SmallBallAsy}, we have that $({\rm Poly})$ and  $(\overline{\rm Poly})$ hold for every $x$ for the target sequences $\{\fA_{x,\fr_n}\}$ as well as for the sequence  $\{\bfA_{\fr_n}\}$.

We proceed with the proof of $({\rm Appr})$ and  $(\overline{\rm Appr})$ and  $(\overline{\rm Sub})$ properties.
\begin{Lem}
\label{lemma.appr}
For each $x,$ the targets $\fA_{x,\rho}$
satisfy ${\rm(Appr)}$.  The targets $\bfA_{\fr}$ satisfy ${\rm(\overline{Appr})}$ and $(\overline{\rm Sub})$.
\end{Lem}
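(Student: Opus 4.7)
The plan is to construct $A_\fr^\pm$ and $\bar A_\fr^\pm$ explicitly as Lipschitz cutoffs of the indicator functions of the targets, using a transition zone of width $\fr^{b}$ for a sufficiently large constant $b>1$, and then read off each required condition from the small-ball asymptotic \eqref{SmallBallAsy}. Fix once and for all a smooth $\chi:\R\to[0,1]$ with $\chi\equiv 1$ on $(-\infty,0]$ and $\chi\equiv 0$ on $[1,\infty)$. For (Appr) I set
\[
A_\fr^+(y)=\chi\!\left(\frac{d(x,y)-\fr}{\fr^{b}}\right), \qquad A_\fr^-(y)=\chi\!\left(\frac{d(x,y)-\fr+\fr^{b}}{\fr^{b}}\right).
\]
Since $y\mapsto d(x,y)$ is $1$-Lipschitz, property (i) holds with $\tau=b$, and (ii) is immediate from the construction. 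For (iii), \eqref{SmallBallAsy} implies that the spherical shell $\{\fr-\fr^{b}\leq d(x,\cdot)\leq \fr+\fr^{b}\}$ has $\mu$-measure $O(\fr^{d-1}\fr^{b})=O(\fr^{d-1+b})$, while $\sigma(\fr)=\gamma(x)\fr^{d}(1+O(\fr))$. Choosing $b$ sufficiently large relative to $d$ makes $\fr^{d-1+b}=o(\sigma(\fr)^{1+\eta})$ for some $\eta>0$.

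For $(\overline{\mathrm{Appr}})$ I perform the same construction with the $x$-dependent radius $\fr\,\gamma(x)^{-1/d}$, setting
\[
\bar A_\fr^+(x,y)=\chi\!\left(\frac{d(x,y)-\fr\gamma(x)^{-1/d}}{\fr^{b}}\right), \qquad \bar A_\fr^-(x,y)=\chi\!\left(\frac{d(x,y)-\fr\gamma(x)^{-1/d}+\fr^{b}}{\fr^{b}}\right).
\]
Since $\gamma$ is smooth and bounded away from $0$ and $\infty$ on the compact manifold $M$, the function $(x,y)\mapsto d(x,y)-\fr\gamma(x)^{-1/d}$ is Lipschitz with a constant independent of $\fr$, so $\|\bar A_\fr^\pm\|_{\BAN}=O(\fr^{-b})$ and (i) holds; (ii) is immediate. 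The key point for (iii) is that the rescaling $\gamma(x)^{-1/d}$ is exactly what \eqref{SmallBallAsy} demands so that uniformly in $x$,
\[
\int 1_{\bfA_\fr}(x,y)\,d\mu(y)=\mu\bigl(B(x,\fr\gamma(x)^{-1/d})\bigr)=\gamma(x)\cdot\frac{\fr^{d}}{\gamma(x)}+O(\fr^{d+1})=\fr^{d}+O(\fr^{d+1}).
\]
Integrating in $x$ gives $\bsigma(\fr)=\fr^{d}+O(\fr^{d+1})$, while the contribution of the transition shell to $\int(\bar A_\fr^+-\bar A_\fr^-)(x,y)\,d\mu(y)$ is $O(\fr^{d-1+b})$; taking $b$ large yields (iii) with any $\eta<1/d$. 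For (iv) I fix $y$ and use $\gamma\geq \gamma_{\min}>0$ to get $\{x:d(x,y)\leq \fr\gamma(x)^{-1/d}+\fr^{b}\}\subset B(y,c\fr)$ for a constant $c$ depending only on $M$; hence $\int \bar A_\fr^+(x,y)\,d\mu(x)\leq \mu(B(y,c\fr))=O(\fr^{d})=O(\bsigma(\fr))$.

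Finally, $(\overline{\mathrm{Sub}})$ is an elementary triangle-inequality argument: if $x\in\bfA_\fr^{k_1}\cap\bfA_\fr^{k_2}$, then both $d(x,f^{k_1}x)$ and $d(x,f^{k_2}x)$ are bounded by $\fr/\gamma(x)^{1/d}$, so $d(f^{k_1}x,f^{k_2}x)\leq 2\fr/\gamma(x)^{1/d}$. Setting $a=2\bigl(\sup_M\gamma/\inf_M\gamma\bigr)^{1/d}$ gives $2/\gamma(x)^{1/d}\leq a/\gamma(f^{k_1}x)^{1/d}$ for every $x$, whence $(f^{k_1}x,f^{k_2}x)\in\bfA_{a\fr}$, i.e.\ $x\in f^{-k_1}\bfA_{a\fr}^{k_2-k_1}$. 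The only mildly delicate step in the whole argument is the joint Lipschitz control of $\bar A_\fr^\pm(x,y)$ needed in $(\overline{\mathrm{Appr}})$(i), which is handled by the smoothness and positivity of $\gamma$; the rest is bookkeeping of ball measures via \eqref{SmallBallAsy}.
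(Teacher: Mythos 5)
Your proof is correct and is essentially the paper's own argument with the general approximation lemmas inlined: the paper deduces (Appr) from Lemma \ref{RemLip} with $\Phi(y)=d(x,y)$ and $(\overline{\rm Appr})$ from Lemma \ref{RemLipcomposite} with $\Phi(x,y)=d(x,y)\gamma(x)^{1/d}$ (checking (h1)--(h3) via \eqref{SmallBallAsy} and the boundedness of $\gamma(x)/\gamma(y)$), and proves $(\overline{\rm Sub})$ by exactly your triangle-inequality computation with $a$ controlled by $\sup\gamma/\inf\gamma$; your explicit cutoffs with transition width $\fr^{b}$ are the same construction used inside those lemmas. One cosmetic caveat: because of the $O(\rho^{d+1})$ error term in \eqref{SmallBallAsy}, the transition-shell measure is $O(\fr^{d+1})$ rather than $O(\fr^{d-1+b})$ for large $b$, but this still yields (Appr)(iii) and $(\overline{\rm Appr})$(iii) with any $\eta<1/d$, as you in fact note in the composite case.
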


\begin{proof}
{For the targets $\fA_{x,\fr},$ the statement follows from Lemma \ref{RemLip} by taking $\Phi(y)=d(x,y)$ (that is a Lipschitz function), $a_1(\rho)=0$ and $a_2(\rho)=\rho$.

For the  targets $\bfA_{\fr}$, we use  Lemma \ref{RemLipcomposite}. We take $\Phi(x,y)=d(x,y)\gamma(x)^{1/d},$  $a_1(\rho)=0$ and $a_2(\rho)=\rho$. We check $(h1)$ since $\gamma(x)/\gamma(y)$ is bounded for $(x,y)\in X \times X$. Property $(h2)$ is obvious.
As for $(h3)$ it follows from the definition of $\gamma(x)$ in \eqref{SmallBallAsy}.}

Finally, for any $k_1,k_2$, when $x\in\bfA_{\fr}^{k_1}\cap\bfA_{\fr}^{k_2},$ we have
$$d(f^{k_1}x,f^{k_2}x)\leq  d(x,f^{k_1}x)+d(x,f^{k_2}x)\leq \frac{2 \rho}{(\gamma(x))^{1/d}}\leq\frac{a \rho}{(\gamma(f^{k_1}x))^{1/d}},$$
for some $a>0.$ Hence $ \bfA_{\fr}^{k_1}\cap\bfA_{\fr}^{k_2}\subset f^{-k_1}\bfA_{a\fr}^{k_2-k_1}$, which is ${\rm(\overline{Sub})}.$ Lemma \ref{lemma.appr} is proved. \end{proof}

Next we prove of the ${\rm(Mov)}$  (for a.e. $x$)
and  $\overline{\rm(Mov)}$ properties. For this we state a Lemma on recurrence  for the multiple mixing system  $(f,M,\mu)$ that is of  an independent interest. We first introduce two definitions.

\begin{Def}[Slowly recurrent points]  Call $x$ {\it slowly recurrent} for  the system $(f,M,\mu)$
if for each
$A, K>0$, there $\exists \rho_0$ such that for all $\rho<\rho_0$ for all $n\leq K|\ln \rho|$ we have
\begin{equation*}
\label{EqWNR}
 \mu(B(x, \rho) \cap f^{-n} B(x, \rho))\leq  \mu (B(x, \rho)) |\ln \rho|^{-A}.
\end{equation*}
\end{Def}

\begin{Def}[Slowly recurrent system]  Call the system $(f,M,\mu)$
{\it slowly recurrent} if for each $A>0$ $\exists \rho_0$ such that for all $\rho<\rho_0$ for all $n\in \N^*$ we have
\begin{equation*}
\label{EqWNRS}
 \mu\left(\left\{x:d(x,f^nx)<\rho\right\}\right)\leq |\ln \rho|^{-A}.
\end{equation*}
\end{Def}

\begin{Lem} \label{lemma.mov} Suppose that $(f,M,\mu,\BAN)$ is {$2$-fold exponentially mixing}. Then
\begin{itemize}
\item[$i)$] $(f,M,\mu)$ is slowly recurrent.

\item[$ii)$] Almost every point is slowly recurrent.
\end{itemize}

As a consequence, we have that \begin{itemize}
\item[(a)] For $\mu$-a.e. $x$, the targets $\fA_{x,\rho_n}$ satisfy ${\rm(Mov)}$.
\item[(b)]  The targets $\bfA_{\fr_n}$ satisfy $\overline{\rm(Mov)}$.
\end{itemize}
\end{Lem}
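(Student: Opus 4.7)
My plan is to prove (i), deduce (ii) from (i) by a first-moment plus Borel--Cantelli argument, and then read off (a) and (b) by substituting $\rho = \rho_n$.

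For (i), fix $A > 0$ and set $V_n(\rho) := \mu(\{x : d(x, f^n x) < \rho\})$. The key device is a covering argument at a flexible scale $r \in [\rho, 1]$: cover $M$ by $\lesssim r^{-d}$ balls of radius $r$, and note that if $d(x, f^n x) < \rho$ with $x \in B(p_i, r)$ then $f^n x \in B(p_i, 2r)$, so
$$V_n(\rho) \leq \sum_i \mu\bigl(B(p_i, r) \cap f^{-n} B(p_i, 2r)\bigr).$$
Approximating each indicator by a Lipschitz function of $\BAN$-norm $\lesssim r^{-\tau}$ via Lemma \ref{RemLip} and applying 2-fold exponential mixing $({\rm EM})_1$ bounds each summand by $C r^{2d} + C r^{-2\tau} \theta^n$; summing yields the master estimate
$$V_n(\rho) \leq C r^d + C r^{-(2\tau + d)} \theta^n, \qquad \rho \leq r \leq 1.$$
Three regimes cover all $n$. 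For $n \gtrsim |\ln\rho|$, take $r = \rho$ to obtain $V_n(\rho) \lesssim \rho^d \leq |\ln\rho|^{-A}$. For intermediate $n$, take the balancing scale $r = \theta^{n/(2d+2\tau)}$ to obtain $V_n(\rho) \lesssim \theta^{nd/(2d+2\tau)}$, which is $\leq |\ln\rho|^{-A}$ once $n \geq C(A) \ln|\ln\rho|$. The residual short window $1 \leq n \leq C(A) \ln|\ln\rho|$ is the main obstacle; I would handle it by choosing $r = |\ln\rho|^{-B(A)}$ to absorb the first term, then close the argument using that 2-fold mixing forces ergodicity, so $\mu(\mathrm{Fix}(f^n)) = 0$ for every $n$ and $V_n(\rho) \to 0$ as $\rho \to 0$ uniformly on this slowly growing window.

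For (ii), fix $A, K > 0$ and set $\Phi_{n, \rho}(x) := \mu(B(x, \rho) \cap f^{-n} B(x, \rho))$. By Fubini and the triangle inequality,
$$\int_M \Phi_{n, \rho}\, d\mu = (\mu \times \mu)\bigl\{(x,y) : d(x,y) < \rho,\ d(x, f^n y) < \rho\bigr\} \leq C \rho^d V_n(2\rho),$$
since the two conditions force $d(y, f^n y) < 2\rho$. Part (i) applied with exponent $2A$ bounds this by $C \rho^d |\ln\rho|^{-2A}$. Smoothness of $\mu$ gives $\mu(B(x, \rho)) \geq c \rho^d$ uniformly in $x$, and Markov's inequality yields
$$\mu\bigl\{x : \Phi_{n, \rho}(x) > \mu(B(x, \rho)) |\ln\rho|^{-A}\bigr\} \leq C |\ln\rho|^{-A}.$$
Union-bounding over $n \leq K|\ln\rho|$ costs only a logarithmic factor, and summing the resulting bound along the geometric sequence $\rho = 2^{-j}$ with $A$ taken large produces a convergent series. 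Borel--Cantelli together with monotonicity in $\rho$ then upgrades this to slow recurrence of $\mu$-a.e.\ $x$.

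Finally, (a) and (b) are direct substitutions. For (a), a $\mu$-generic $x$ is slowly recurrent by (ii); since $\rho_n = n^{-1/d}\ln^{-s} n$ gives $|\ln \rho_n| \asymp \ln n$, applying the definition with $K = R$ and $A = 1001 r$ yields $({\rm Mov})$. For (b), compactness of $\gamma$ gives $\bfA^k_{a\rho_n} \subset \{x : d(x, f^k x) < c \rho_n\}$ for a constant $c$, and part (i) with $A = 1001 r$ supplies $(\overline{{\rm Mov}})$. The main technical hurdle is the short window in (i), which the intermediate-scale covering is designed to bridge: raw exponential mixing is effective only for $n \gtrsim |\ln\rho|$, while a plain compactness/ergodicity argument works only for bounded $n$.
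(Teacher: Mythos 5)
Your overall architecture (prove a system-level bound on $\mu\{x: d(x,f^nx)<\rho\}$ by splitting into time regimes and using $2$-fold mixing at large times, then deduce the pointwise statement by a Fubini--Markov--Borel--Cantelli argument, and read off (Mov), $\overline{\rm(Mov)}$ by substituting $\rho=\rho_n$) is the same as the paper's, and your treatment of part (ii) and of conclusions (a), (b), as well as of the regimes $n\gtrsim \ln\lvert\ln\rho\rvert$ in part (i), is sound (your covering-at-scale-$r$ estimate is a slightly lossier but workable substitute for the paper's direct application of $({\rm EM})_1$ to a smoothed two-variable indicator of $\{d(x,y)\le\hat\rho\}$ with the inflated radius $\hat\rho=\lvert\ln\rho\rvert^{-A}$).

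However, there is a genuine gap exactly where you flag "the main obstacle": the short window $1\le n\le C(A)\ln\lvert\ln\rho\rvert$. What must be proved there is the quantitative bound $\mu\{x: d(x,f^nx)<\rho\}\le\lvert\ln\rho\rvert^{-A}$ for \emph{every} $A$, i.e.\ decay faster than any power of $1/\lvert\ln\rho\rvert$, uniformly over a window whose length grows as $\rho\to0$. Your proposed closing step --- ergodicity gives $\mu(\mathrm{Fix}(f^n))=0$, hence $V_n(\rho)\to0$ "uniformly on this slowly growing window" --- supplies neither ingredient: for each fixed $n$ the convergence $V_n(\rho)\to\mu(\mathrm{Fix}(f^n))=0$ comes with no rate at all (nothing in a soft argument rules out, say, $V_1(\rho)$ decaying only like a fixed power of $1/\lvert\ln\rho\rvert$, which would violate the definition for large $A$), and a per-$n$ limit cannot be made uniform over $n\le C(A)\ln\lvert\ln\rho\rvert$ without additional quantitative input; choosing $r=\lvert\ln\rho\rvert^{-B(A)}$ does not help because the mixing error $r^{-(2\tau+d)}\theta^n$ is enormous when $n$ is only of order $\ln\lvert\ln\rho\rvert$. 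The paper closes this window by a different, purely deterministic trick that converts short-time recurrence into long-time recurrence at a larger scale: if $d(x,f^kx)\le\rho$ with $k\le B\ln\lvert\ln\rho\rvert$, then iterating and using the Lipschitz constant $\Lambda$ of $f$ gives $d(x,f^{Lk}x)\le\sum_{l\le L-1}\Lambda^{lk}\rho\le\sqrt{\rho}$ for $L=[4B\ln\lvert\ln\rho\rvert/k]+1$ (a polylogarithmic factor times $\rho$ is still $\le\sqrt\rho$), and since $Lk\ge B\ln\lvert\ln\sqrt\rho\rvert$ the long-time mixing estimate applies at scale $\sqrt\rho$, yielding $\mu\{x:d(x,f^kx)\le\rho\}\le\mu\{x:d(x,f^{Lk}x)\le\sqrt\rho\}\le\lvert\ln\rho\rvert^{-A}$. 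Some step of this kind (or another quantitative substitute) is indispensable; without it your part (i), and hence everything downstream of it, is not established.
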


\begin{proof} Take $B=A^2$. If $k\geq B\ln|\ln\rho|,$ take $\hat\fr=|\ln \rho|^{-A}.$ By $2$-fold exponential mixing, we get
\begin{multline}
\label{eq.high}
\mu (x: d(x, f^k x)\leq \fr)\leq
\mu (x: d(x, f^k x)\leq \hat\fr)\\ \leq \mu \left(\bar{A}^+_{\hat{\fr}}(x,f^kx)\right) \leq C\left(\hat\fr^{d}+\hat\fr^{d+d\eta}+\hat\fr^{-\tau} \theta^k\right)\leq |\ln \rho|^{-2A},
\end{multline}
provided $\fr$ is sufficiently small.

Now fix any $1\leq k\leq B\ln|\ln\rho|$. {Denote $\|f\|_{1}=\max_{x\in M}\|Df(x)\|.$} Assume that $x$ satisfies $d(x, f^k x)\leq \fr$, then for any $l$ we have that
$$d(f^{(l-1)k}(x), f^{lk} x)\leq \|f\|_{1}^{(l-1)k} \fr$$
If we take $L=[4B\ln|\ln\rho|/k]+1$ we find that $$d(x,f^{Lk} x)\leq \sum_{l\leq L-1} \|f\|_{1}^{lk} \fr \leq \sqrt{\fr},$$
 provided $\fr$ is sufficiently small. But $kL\geq B\ln|\ln{\sqrt{\rho}}|,$ hence \eqref{eq.high} applies and we get
 $$
\mu (x: d(x, f^k x)\leq \fr)\leq
\mu (x: d(x, f^{Lk} x)\leq \sqrt{\fr}) \leq |\ln \rho|^{-A},
$$
proving $i)$.

We proceed now to the proof of $ii)$. Define for $j,k\in \N^*$
$$ H_{j,k}(x):=\mu(B(x, 1/2^j)\cap f^{-k} B(x, 1/2^j)).$$

Note that
\begin{eqnarray*}
\int H_{j,k}(x)d\mu(x)&=&\iint 1_{[0,1/2^j]} d(x,y) 1_{[0,1/2^j]} d(x, f^k y) d\mu(x) d\mu(y)\\
&\leq& \iint 1_{[0,1/2^j]} d(x,y) 1_{[0,1/2^{j-1}]}d(y, f^k y) d\mu(x) d\mu(y)\\
&\leq& C\mu(B(x,1/2^j)) \int 1_{[0,1/2^{j-1}]} d(y, f^k y) d\mu(y)
\end{eqnarray*}
where we used that $\mu(B(y,1/2^j))\leq C\mu(B(x,1/2^j))$ for any $x,y \in M$. Part $i)$ then implies that for sufficiently large $j$  it holds that
$$\int H_{j,k}(x)d\mu(x)\leq \mu(B(x,1/2^j))j ^{-A-3}.$$
For such $j$ we get from Markov inequality
$$ \mu\left(x: \exists k \in(0,Kj]:\,\,
H_{j,k}(x)> \mu(B(x,1/2^j))j^{-A}\right)\leq Kj^{-2}.$$
Hence Borel Cantelli Lemma implies that for almost every $x$ there exists $\bar j$ such that $H_{j,k}(x)\leq \mu(B(x,1/2^j))j^{-A}$ for every $j\geq \bar j$ and every $k\in (0,Kj]$, which implies $ii)$.

Finally, $(a)$ and $(b)$ clearly follow from $ii)$ and $i)$ respectively.    Lemma \ref{lemma.mov} is thus proved. \end{proof}

With Lemmas \ref{lemma.appr} and \ref{lemma.mov}, the proof of Proposition \ref{prop.targets} is finished. \hfill $\tiny\Box$  

\begin{proof}[Proof of Theorem \ref{thm3}.] Theorem \ref{thm3} directly follows from Proposition \ref{prop.targets}  and Corollary \ref{cor.mixing}. \end{proof}

\subsection{  Generic failure of the MultiLog Law. Proof of Theorem \ref{ThTop}.}
\label{sec6} \begin{proof}
To prove part a),  we first prove that periodic points belong to $\cH_r$.
By assumption, for any $x \in M$ and almost  every $y,$
\begin{equation}
\label{DrD1Eq}
\limsup_{n\rightarrow \infty} \frac{|\ln d_n^{(1)}(x,y)|- \frac{1}{d} \ln n}{\ln \ln n} = \frac{1}{d}.
\end{equation}
Since $d_n^{(r)} (x,y) \geq d_n^{(1)} (x,y)$, it follows that for any $x \in M$, any $r\geq 1$, and almost  every $y$

\begin{equation}
\label{DrD1In}
\limsup_{n\rightarrow \infty} \frac{|\ln d_n^{(r)}(x,y)| -\frac{1}{d} \ln n}{\ln \ln n} \leq
\frac{1}{d}.
\end{equation}

To prove the opposite inequality let
	$$ \cH_{m,l,r}= \left\{ x : \exists \cY\text{-open, }  \mu(\cY)> {1}-\frac{1}{l}: \quad  \forall y \in \cY,  \frac{|\ln d_m^{(r)}(x,y)|-\frac{1}{d}\ln m}{\ln \ln m} >\frac{1}{d}-\frac{1}{l}  \right\}$$
We have that
$$ \left\{ x : \text{ for a.e. } y, \quad
  \text{ for all }	r\geq 1 : \limsup_{n\rightarrow \infty} \frac{|\ln d_n^{(r)}(x,y)|-\frac{1}{d}\ln n}{\ln \ln n}
        \geq  \frac{1}{d}\right\} = \bigcap_{l\geq 1, r \geq 1} \bigcup_{m\geq 1} \cH_{m,l,r}.$$
But 	$\cH_{m,l,r}$ is an open set. Hence we finish if we show that for any fixed
$r$ and $l$, $\bigcup_m \cH_{m,l,r}$ contains the dense set of periodic points.
	
		Let $\brx$ be a periodic point of period $p.$
		Take $U$ to be some small neighbourhood of $\brx$ and denote by
	$\Lambda$ the Lipschitz constant of $f^p$ in $U$.

		 By \eqref{DrD1Eq}, there exists
				$n \geq  \exp\circ \exp(\Lambda+pr)$ and $\cY$ such that $\mu(\cY)>1- \frac{1}{l}$,  such that for every $y \in \cY$, there exists $k\in [1,n]$ satisfying
	$$ d (\brx,f^k y) \leq \left( \frac{1}{n}\right)^{\frac{1}{d}} \left(\frac{1}{\ln n}\right)^{\frac{1}{d} - \frac{1}{2l}}.$$

 Then
	\begin{equation*}\label{eq1}
d (\brx,f^{k+pj} y)  = d (f^{pj} \brx,f^{k+pj} y)\leq  {\Lambda}^{r} \left( \frac{1}{n}\right)^{\frac{1}{d}} \left(\frac{1}{\ln n}\right)^{\frac{1}{d}- \frac{1}{2l}}, \quad 0\leq j \leq r-1.
\end{equation*}
Hence for $y \in \cY$ and $m=n+p(r-1)$, we have that
	$$ d_{m}^{(r)} (\brx,y) \leq {\Lambda}^{r} \left( \frac{1}{n}\right)^{\frac{1}{d}} \left(\frac{1}{\ln n}\right)^{\frac{1}{d} - \frac{1}{2l}} <\left( \frac{1}{m}\right)^{\frac{1}{d}}  \left(\frac{1}{\ln m}\right)^{\frac{1}{d} - \frac{1}{l}},$$
because we took $n \geq  \exp\circ \exp(\Lambda+pr)$. Hence $\bar x \in \cH_{m,l,r}$ and the proof of $(a)$ is finished.

{We now turn to the proof of $(b)$. Given any function $\zeta:\N \to \N^*$,
define
$$\cA_{m,l} = \left\{ x: |\ln d_m^{(l)} (x,x)| > m\zeta(m) \right\}.$$
Observe that $\bcH_\zeta \subset \bigcap_l \bigcup_m \cA_{m,l}$. But $\cA_{m,l}$ is open  and $\bigcup_m \cA_{m,l}$ clearly contains the periodic points. Part $(b)$ is thus proved.}
\end{proof}

\subsection{  The case of flows. Proof of Theorem \ref{ThFlowRec}. }
The proof  proceeds in the same way as for diffeomorphisms with minimal modifications that we now explain.
First, we need to modify the targets
$$ \fA_{x,\fr}=\{y: \exists\,s\in [0,1], d(x,\phi^sy)\leq \fr \},$$
and
$$ \bfA_{\fr}=\left\{(x,y): \exists\,s\in [0,1], d(x,\phi^sy)\leq \frac{\rho}{\gamma(x)^{1/d}} \right\}$$
where $\DS \gamma(x)=\lim_{\fr\to 0} \mu(\fA_{x,\fr})/\fr^d.$
Consider the targets
$$ \fA_{x,\fr}^n= \phi^{-n}\fA_{x,\fr}, \quad \bfA_{\fr}^n=\{x:(x,\phi^nx)\in \bfA_{\fr}\}$$
for $\mathbb{N}^*$ and let $\sigma(\rho)=\mu(\fA_{\fr,x}),$ $\bar\sigma(\rho)=(\mu\times\mu)(\bfA_{\fr}).$

To prove a) and b) of Theorem \ref{ThFlowRec} we can apply Corollary \ref{cor.mixing} to the smooth system $(\phi,M,\mu,\BAN)$ and to the targets  $\fA_{x,\fr}^n$ and $\bfA_{\fr}^n$. For this, we just need to see that the targets  are admissible targets. This can be checked as in the proof of Proposition \ref{prop.targets}, with very minor differences. Let us check for instance that ${\rm(\overline{Sub})}$ holds for $\bfA_{\fr}^n$.
Note that when $x\in\bfA_{\fr}^{n_1}\cap\bfA_{\fr}^{n_2}$ for $n_1<n_2,$ we have some $s_1,\,s_2\in[0,1]$ such that
$$d(x,\phi^{n_1+s_1}x)\leq\frac{\rho}{(\gamma(x))^{1/d}},\quad d(x,\phi^{n_2+s_2}x)\leq\frac{\rho}{(\gamma(x))^{1/d}}.$$
Hence
\begin{multline*} d(\phi^{n_1}x,\phi^{n_2+s_2-s_1}x)\leq \max_{s\in [-1,0]}\|\phi^s\|_{C^1} d(\phi^{n_1+s_1}x,\phi^{n_2+s_2}x)\\\leq \max_{s\in [-1,1]}\|\phi^s\|_{C^1}\frac{2\rho}{(\gamma(x))^{1/d}}\leq\frac{a \rho}{(\gamma(\phi^{n_1}x))^{1/d}}\end{multline*}
for some $a>0.$
It follows that $\bfA_{\fr}^{n_1}\cap\bfA_{\fr}^{n_2}\subset\phi^{-n_1}{\bfA}_{a\fr}^{n_2-n_1},$
which is ${\rm(\overline{Sub})}.$ \footnote{ When $s_2-s_1<0,$ we modify $\bfA_{\fr}$ by $\tilde{\Omega}_{\fr}=\left\{(x,y): \exists\,s\in [-1,1], d(x,\phi^sy)\leq \frac{\rho}{\gamma(x)^{1/d}} \right\}$ and get $\bfA_{\fr}^{n_1}\cap\bfA_{\fr}^{n_2}\subset\phi^{-n_1}{\tilde{\Omega}}_{a\fr}^{n_2-n_1},$ which gives $(M2)_r$ by a same argument of Proposition~\ref{ThEM-BC}(ii).} As for the proofs of
 ${\rm({Mov})}$ and  ${\rm(\overline{Mov})}$, they are obtained as in the case of maps {\it via} the notion of slow recurrence.
 We say that a point $x$ is {\it slowly recurrent} for  the flow
if for each
$A, K>0$, there $\exists \rho_0$ such that for all $\rho<\rho_0$ for all $n\leq K|\ln \rho|$ we have
\begin{equation*} \mu\left(\fA_{x,\fr} \cap \fA_{x,\fr}^n\right)\leq \mu(\fA_{x,\fr})|\ln \rho|^{-A}.
\end{equation*}

Similarly we say that  the  flow is
{\it slowly recurrent} if for each $A>0$ $\exists \rho_0$ such that for all $\rho<\rho_0$ for all $n\in \N^*$ we have
\begin{equation*}
 \mu\left(\bfA_{\fr}^n\right)\leq |\ln \rho|^{-A}.
\end{equation*} The same proof of Lemma \ref{lemma.mov} then shows that if the  system $(\phi,M,\mu,\BAN)$ is exponentially mixing, it holds that  $\mu$-a.e. point is slowly recurrent for the flow, and that the flow is
{\it slowly recurrent}.
Properties  ${\rm({Mov})}$ and  ${\rm(\overline{Mov})}$ are immediate consequences.

The proof of part c) and part d) also proceeds in the same way as for maps. Namely we first  see that periodic orbits of the flow belong to $\cH_r$ and $\bcH_r$ and then use the genericity argument. \hfill $\square$

}

\subsection{  Notes.}
Many authors obtain Logarithm Law \eqref{ClosestVisit}
for hitting times as a consequence of dynamical Borel-Cantelli
Lemmas. See \cite{CK01, Dolgopyat, Galatolo05,  HV95} and references wherein.
\cite{Galatolo07} also
studies
return times. We note that \cite{Galatolo07} works
under much weaker conditions than those imposed in the present paper,
however, his results are valid only for $r=1$ (the first visit).

\cite{Kel17, KrKR19, KlKR19} study the recurrence problem when the
$\lim\sup$ in \eqref{ClosestVisit} is replaced by $\lim\inf$.
In particular, \cite{KlKR19} proves that for several expanding maps
the
$$ \lim\inf_{n\to \infty}  \frac{n \; d_n^{(1)}(x,y)}{\ln \ln n} $$
exists for almost all $y.$

{Theorem \ref{ThReturnsTransl} shows that some systems may satisfy logarithmic laws for $r=1$ that are the same as in the exponentially mixing case, but fail to do so for $r\geq 2$. Logarithm Laws for unipotent flows were obtained in \cite{AM1, AM2, GK17, Kel17}. It is not known which kind of MultiLog Laws hold for such flows.
}

\section{  Poisson Law for near returns.}
\label{ScPoissonHR}
In this section we suppose that $\mu$ is a smooth measure and that $(f,M,\mu,\BAN)$
is an $r$-fold
exponentially mixing system for all $r.$
In the previous section we verified properties $(M1)_r$ and $(M2)_r$ for the targets
$\fA_{x,\fr}$ given by \eqref{BallTarget}, for almost every $x,$ and for the targets
$\bfA_{\fr}$ given by \eqref{VaryingWidthTarget}. Moreover, we have that $\DS \lim_{\rho\to0} {\rho^{-d} \sigma(\rho)}=\gamma(x)$ and $\DS \lim_{\rho\to0} {\rho^{-d} \bsigma(\rho)}=1$, where $\sigma(\rho)=\mu(\fA_{\fr,x}),$ $\bar\sigma(\rho)=(\mu\times\mu)(\bfA_{\fr}).$  Accordingly Theorem \ref{ThPoisson}
gives the following.

\begin{Thm}
\label{PrEasy}
(a) For almost all $x$ the following holds. Let $y$ be uniformly distributed with respect to
$\mu$. The number of visits of $\{f^k(y)\}_{k\in [1,\tau \fr^{-d}]}$ to $B(x,\fr)$ converges to a Poisson distribution with parameter $\tau \gamma(x)$ as $\fr\to 0.$
Moreover letting $n=\tau \fr^{-d}$ we have
the sequence
\begin{equation}
\label{ScaledHits}
\frac{d_n^{(1)}(x,y)}{\fr}, \frac{d_n^{(2)}(x,y)}{\fr}, \dots, \frac{d_n^{(r)}(x,y)}{\fr}, \dots
\end{equation}
converges to the Poisson process with measure {$\DS \gamma(x) \tau d t^{d-1}dt.$}

(b) Let $x$ be chosen uniformly with respect to $\mu.$ Then the number of visits of $\{f^k(x)\}_{k\in [1,\tau \fr^{-d}]}$ to
$\DS B\left(x, \frac{\fr}{\gamma^{1/d}(x)}\right)$ converges to a Poisson distribution with parameter $\tau $ as $\fr\to 0.$
\end{Thm}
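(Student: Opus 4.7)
The plan is to apply the Poisson limit theorems from Section \ref{ScBCMult}, namely Theorem \ref{ThPoisson} and Theorem \ref{ThPoissonProcess}, to the admissible target families whose mixing properties $(M1)_r$ and $(M2)_r$ were already verified in Section \ref{ScHit}.

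For the first assertion of part (a), set $\fr_n=(\tau/n)^{1/d}$, so that $n=\tau\fr_n^{-d}$. By \eqref{SmallBallAsy},
\[
n\,\sigma(\fr_n)=n\bigl(\gamma(x)\fr_n^{d}+O(\fr_n^{d+1})\bigr)\longrightarrow \tau\gamma(x).
\]
Proposition \ref{prop.targets}(a) shows that for $\mu$-a.e.\ $x$ the targets $\{\fA_{x,\fr_n}\}$ are simple admissible, and Proposition \ref{ThEM-BC} then yields $(M1)_r$ and $(M2)_r$ for all $r$. Theorem \ref{ThPoisson} applies directly with $\lambda=\tau\gamma(x)$.

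For the process convergence \eqref{ScaledHits} I would use Theorem \ref{ThPoissonProcess}. Fix $0=t_0<t_1<\dots<t_p$ and partition $B(x,t_p\fr_n)$ into the concentric annuli
\[
\fA^{i}_{x,\fr_n}=\{y:\,t_{i-1}\fr_n\le d(x,y)<t_i\fr_n\},\qquad 1\le i\le p.
\]
By \eqref{SmallBallAsy}, $n\mu(\fA^{i}_{x,\fr_n})\to \tau\gamma(x)(t_i^d-t_{i-1}^d)=:\lambda_i$. Lemma \ref{RemLip} (applied with $\Phi(y)=d(x,y)$, $a_1(\rho)=t_{i-1}\rho$, $a_2(\rho)=t_i\rho$) produces Lipschitz sandwiches $A^{\pm}_{i,\fr}$ for each annulus indicator. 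Plugging these into the proof of Proposition \ref{ThEM-BC}(i) with $({\rm EM})_r$ applied to the product observable $\prod_{j=1}^{r}A^{\pm}_{i_j,\fr_n}\!\circ\!f^{k_j}$ gives the strengthened condition $\widetilde{(M1)}_r$ for every $(i_1,\dots,i_r)\in\{1,\dots,p\}^r$, while $(M2)_r$ is inherited from the enclosing ball target $\fA_{x,t_p\fr_n}$. Theorem \ref{ThPoissonProcess} then yields joint convergence of the annulus counts to independent Poisson variables with parameters $\lambda_i$, which is exactly the statement that the point process $\sum_k \delta_{\fr_n^{-1}d(x,f^ky)}$ on $[0,\infty)$ converges to a Poisson point process with intensity $d(\tau\gamma(x) t^d)=\gamma(x)\tau d\, t^{d-1}\,dt$; the convergence of the ordered distances in \eqref{ScaledHits} is an immediate consequence.

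For part (b), take again $\fr_n=(\tau/n)^{1/d}$, so that $n\bsigma(\fr_n)\to\tau$ by the normalization $\lim_{\rho\to 0}\rho^{-d}\bsigma(\rho)=1$ recorded at the start of the section. Proposition \ref{prop.targets}(b) shows that the composite targets $\{\bfA_{\fr_n}\}$ defined in \eqref{VaryingWidthTarget} are composite admissible (uniformly in $x$), and Proposition \ref{ThEM-BC} supplies $(M1)_r$ and $(M2)_r$ for every $r$. Theorem \ref{ThPoisson} then delivers the Poisson limit with parameter $\tau$.

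The only non-cosmetic point in the argument is the verification of $\widetilde{(M1)}_r$ for the annular decomposition in the process part of (a). This is not difficult—it is the same Lipschitz approximation plus multiple mixing step as in Proposition \ref{ThEM-BC}(i)—but it must be performed uniformly over all $r$-tuples of annulus indices, and it is the step where the full strength of $(r{+}1)$-fold exponential mixing (as opposed to mixing of a fixed lower order) is used.
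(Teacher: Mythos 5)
Your proposal is correct and follows essentially the same route as the paper: the count statements in (a) and (b) are obtained directly from Theorem \ref{ThPoisson} using the properties $(M1)_r$, $(M2)_r$ already verified in Section \ref{ScHit} for the targets $\fA_{x,\fr}$ and $\bfA_{\fr}$, and the process convergence \eqref{ScaledHits} is reduced to Theorem \ref{ThPoissonProcess} by checking $\widetilde{(M1)}_r$ for annular targets of the form $\{y: d(x,f^ky)\in[r^-\fr,r^+\fr]\}$ via (Appr) and Lemma \ref{RemLip}, exactly as in the paper's proof.
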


\begin{proof} All the results except for Poisson limit for \eqref{ScaledHits}
follows from Theorem \ref{ThPoisson}.
To prove the Poisson limit for \eqref{ScaledHits} we need to check that for each choice of
$r_1^-<r_1^+<r_2^-<r_2^+<\dots <r_s^-<r_s^+$ the number of times $k\in [1, \tau \rho^{-d}]$ where
$d(x,f^k y)\in \left[r_j^-\fr, r_j^+\fr\right]$ are converging to independent Poisson random variables with parameters
$$ \gamma(x) \int_{r_j^-}^{r_j^+} {\tau dt^{d-1} dt}=
\gamma(x) \tau \left[(r_j^+)^d-(r_j^-)^d\right].
$$
But this follows from Theorem \ref{ThPoissonProcess}. The latter theorem can be applied since
$\widetilde{(M1)}_r$ follows from Property (Appr) of the  targets
$$ \Omega^{k, i}_\fr=\{y: d(x,f^ky)\in [r_i^-\fr , r_i^+\fr ]\} $$
that holds due to Lemma \ref{RemLip}.
\end{proof}

There are two natural questions dealing with improving this result.
In part (a) we would like to specify more precisely the set of $x$ where the Poisson limit law for hits holds.
In part (b) we would to remove an annoying factor $\gamma^{1/d}(x)$ from the denominator.
Regarding the first question we have

\begin{Con}
\label{ConjAP-Pois}
If $f$ is exponentially mixing then the conclusion of Proposition \ref{PrEasy}(a) holds for
{\bf all} non-periodic points.
\end{Con}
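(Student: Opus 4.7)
The plan is to refine the proof of Proposition~\ref{PrEasy}(a) so that the Poisson limit extends from $\mu$-typical $x$ to every non-periodic $x.$ Since Proposition~\ref{PrEasy}(a) is obtained by invoking Theorem~\ref{ThPoisson}, it suffices to check conditions $(M1)_r$ and $(M2)_r$ pointwise for all $r$ at every non-periodic $x.$ Condition $(M1)_r$ uses only exponential mixing and the approximation property (Appr) of the targets $\fA_{x,\fr}$, and Lemma~\ref{lemma.appr} delivers (Appr) for every $x,$ so the whole difficulty is concentrated in $(M2)_r.$ By the proof of Proposition~\ref{ThEM-BC}(ii), $(M2)_r$ in turn reduces to the pointwise validity of (Mov) at $x.$

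My first step is to split the range $k\in(0,R\ln n)$ into three regimes. In the tail regime $k\geq C_1\ln n$ with $C_1$ chosen in terms of $\theta,$ $d,$ $\tau,$ two-fold exponential mixing applied to the smooth approximants $A_{\fr_n}^\pm$ of (Appr) gives
$$\mu\bigl(\fA_{x,\fr_n}\cap f^{-k}\fA_{x,\fr_n}\bigr)\leq\mu(A_{\fr_n}^+)^2+C\fr_n^{-\tau}\theta^k\leq C'\sigma(\fr_n)(\ln n)^{-1000r},$$
uniformly in $x.$ In the head regime $k\leq C_2\ln\ln n,$ non-periodicity is enough: the finite set $\{d(x,f^jx):1\leq j\leq C_2\ln\ln n\}$ is bounded below by some $\delta_x>0,$ so once $\fr_n L^{C_2\ln\ln n}<\delta_x/2$ (which holds for $n$ large since $C_2$ is small), the intersection $\fA_{x,\fr_n}\cap f^{-k}\fA_{x,\fr_n}$ is empty. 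Furthermore, the iteration trick of Lemma~\ref{lemma.mov}(i) can be used to extend the upper bound of this regime to $k\leq c|\ln\fr_n|/\ln\|Df\|$, by noticing that if $d(x,f^kx)\leq\fr_n$ then $d(x,f^{Lk}x)\leq\sqrt{\fr_n}$ for $L$ appropriately chosen, reducing to the tail regime at the scale $\sqrt{\fr_n}.$

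The main obstacle is the intermediate regime $c|\ln\fr_n|/\ln\|Df\|<k<C_1\ln n,$ where non-periodicity gives no useful quantitative lower bound on $d(x,f^kx).$ My plan here is to exploit the fact that our hypothesis grants us multiple (not merely two-fold) exponential mixing. The idea is to sandwich $1_{\fA_{x,\fr_n}}$ between the approximants $A_{\fr_n}^\pm$ and then consider a triple-type correlation by inserting an auxiliary factor of the form $(A_{\fr_n}^+\circ f^j)$ at a time $j$ well separated from both $0$ and $k.$ Three-fold exponential mixing decouples this auxiliary factor, and integrating it out against $\mu$ would allow one to replace the pointwise intersection measure at $x$ by its spatial average, which is precisely the quantity controlled by the system-level slow recurrence of Lemma~\ref{lemma.mov}(i). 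Making this rigorous is the crux, and the likely sticking point is that inserting a single such factor gains only a factor of $\sigma(\fr_n)$ while costing $\fr_n^{-\tau}\theta^{k-j}$; iterating this procedure may close the gap for $\theta$ very small but probably not in general. It is therefore plausible that completing the conjecture requires additional structure beyond pure exponential mixing—such as partial hyperbolicity or a uniform diophantine-type lower bound $d(x,f^kx)\geq e^{-\alpha k}$ for non-periodic $x$—and the genuinely hard case of the conjecture is the intermediate regime just described.
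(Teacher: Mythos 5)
You should first note that the statement you were asked to prove is stated in the paper as a \emph{conjecture}: the paper offers no proof of it, and its Notes (end of Section \ref{ScPoissonHR}) point to works that obtain the all-non-periodic-points statement only in settings with additional hyperbolic structure. So there is no proof in the paper to compare with; the question is whether your argument closes the conjecture, and it does not. You candidly leave the intermediate regime $c|\ln\fr_n|\lesssim k\lesssim C_1\ln n$ open, and the heuristic of inserting an auxiliary factor $A^+_{\fr_n}\circ f^j$ and using higher-order mixing cannot work as described: decoupling that factor only replaces the unknown quantity $\mu\bigl(\fA_{x,\fr_n}\cap f^{-k}\fA_{x,\fr_n}\bigr)$ by itself times $\mu(A^+_{\fr_n})$ plus an error $\fr_n^{-\tau}\theta^{\min(j,k-j)}$, and for $k=O(\ln n)$ this error is polynomially large in $\fr_n^{-1}$ unless $k$ is already in the tail regime; no iteration of this step produces the spatial average that Lemma \ref{lemma.mov}(i) controls.

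More importantly, even your head regime is flawed, and the flaw shows that the whole strategy of verifying $({\rm Mov})$ (hence $(M2)_r$) pointwise at every non-periodic $x$ cannot succeed. The quantity you call $\delta_x$ is $\min_{1\leq j\leq C_2\ln\ln n}d(x,f^jx)$, which depends on $n$ because the range of $j$ grows; for a recurrent non-periodic point $\inf_{j\geq1}d(x,f^jx)=0$, and one can construct non-periodic points (e.g.\ for the doubling map, via symbolic sequences beginning with ever longer repeated blocks) with $d(x,f^{j_i}x)$ decaying faster than any prescribed function of $j_i$. For such $x$, choosing $n_i$ with $j_i\leq C_2\ln\ln n_i$ and $d(x,f^{j_i}x)\ll\fr_{n_i}$, the ball $B(x,\fr_{n_i}L^{-j_i})$ is contained in $\fA_{x,c\fr_{n_i}}\cap f^{-j_i}\fA_{x,c\fr_{n_i}}$, so the intersection has measure at least $c\,\sigma(\fr_{n_i})(\ln n_i)^{-C_2 d\ln L}$, which violates $({\rm Mov})$ (and the $(\ln n)^{-100r}$ decay in $(M2)_r$) along that subsequence. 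Thus the conjecture, if true, must be attacked by a different mechanism — e.g.\ showing that near-returns of $x$ at moderate times force the set of $y$ that hit $B(x,\fr)$ in clusters to be of negligible relative measure, which is exactly where the cited works use hyperbolicity rather than abstract exponential mixing. Your closing remark that extra structure may be needed is consistent with the paper's own framing, but it also confirms that what you have written is a plan with an acknowledged hole, not a proof.
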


Regarding the second question we have the following.

\begin{Thm}
\label{ThPoisRet}
Let $x$ be chosen uniformly with respect to $\mu.$ Then the number of visits of $\{f^k(x)\}_{k\in [1,\tau \fr^{-d}]}$ to
$B(x, \fr)$ converges to a mixture of Poisson distributions. Namely, for each $l$
\begin{equation}
\label{MixPois}
\lim_{\fr\to 0} \mu(\Card(n\leq \tau \fr^{-d}: d(x, f^n x)\leq \fr)=l)=
\int_M e^{-\gamma(z) \tau} \frac{(\gamma(z) \tau)^l}{l!} d\mu(z).
\end{equation}
\end{Thm}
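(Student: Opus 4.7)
My plan is to prove Theorem \ref{ThPoisRet} by the method of factorial moments, working with the composite targets $\fB_\fr = \{(x,y) \in M \times M: d(x,y) \leq \fr\}$ and the events $\fB_\fr^k = \{x: d(x, f^k x) \leq \fr\}$. The subtlety compared with Theorem \ref{PrEasy}(b) is that the radius is not rescaled by $\gamma(x)^{-1/d}$, so the per-point visit rate $\gamma(x)\fr^d$ genuinely depends on $x$; unconditionally this will produce a mixture of Poissons rather than a single Poisson. Following the strategy of Theorem \ref{ThPoisson}, I will compute the factorial moments $\EXP\binom{N_\fr}{r}$ for each fixed $r$, show that they converge to $\frac{\tau^r}{r!}\int_M \gamma(z)^r d\mu(z)$, and recognize the latter as the $r$-th factorial moment of the mixture on the right-hand side of \eqref{MixPois}.

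The Lipschitz approximations $\brA_\fr^\pm$ produced by Lemma \ref{RemLipcomposite} (with $\Phi(x,y)=d(x,y)$, $a_1=0$, $a_2=\fr$) satisfy the analogues of $(\overline{\rm Appr})(\mathrm{i},\mathrm{ii},\mathrm{iv})$, but $(\overline{\rm Appr})(\mathrm{iii})$ fails in its stated form: by \eqref{SmallBallAsy} we have $\int \brA_\fr^\pm(x,\cdot)\,d\mu = \gamma(x)\fr^d + O(\fr^{d+1})$ with the $O$-term uniform in $x$ by compactness and smoothness of $\mu$. This is exactly the situation covered by the weak admissibility of Remark \ref{RkWeakAdm}: taking
$$ \sigma_r(\fr) = \fr^{rd}\int_M \gamma(z)^r\,d\mu(z), $$
the asymptotic $\left(\int \brA_\fr^\pm(x,y)\,d\mu(y)\right)^r = (\gamma(x)\fr^d)^r(1+O(\fr^\eta))$ integrated against $d\mu(x)$ yields \eqref{IntMulta}--\eqref{IntMultb}. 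Properties $(\overline{\rm Sub})$ and $(\overline{\rm Mov})$ for $\fB_\fr$ are inherited from the diagonal structure as in Section \ref{ScHit} (the former by the triangle inequality with $a=2$, the latter from slow recurrence of $(f,M,\mu)$, Lemma \ref{lemma.mov}(i)). With these ingredients, the proof of Proposition \ref{ThEM-BC} adapts verbatim to give $(M1)_r,$ $(M2)_r,$ and a Lemma \ref{LmSum} variant in which $\sigma(\fr_n)^r$ is replaced by $\sigma_r(\fr_n)$; hence
$$ \EXP\binom{N_\fr}{r} = \frac{(\tau\fr^{-d})^r}{r!}\,\sigma_r(\fr)\,(1+o(1)) \;\longrightarrow\; \frac{\tau^r}{r!}\int_M \gamma(z)^r\,d\mu(z). $$

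A direct computation (as in the proof of Theorem \ref{ThPoisson}) shows that a Poisson$(\lambda)$ variable has $r$-th factorial moment $\lambda^r/r!$; averaging $\lambda = \gamma(z)\tau$ against $d\mu(z)$ gives exactly $\frac{\tau^r}{r!}\int \gamma(z)^r d\mu(z)$. Because $\gamma$ is continuous on the compact $M$, the mixing distribution is supported on Poissons with uniformly bounded parameters, so the mixed Poisson limit is uniquely determined by its moments. Convergence of all factorial moments therefore upgrades to convergence in distribution, proving \eqref{MixPois}. The only nontrivial technical point I anticipate is the uniform-in-$x$ control of the remainder $O(\fr^\eta)$ when integrating $\left(\int \brA_\fr^\pm(x,y)\,d\mu(y)\right)^r$ — this is what was not needed for the rescaled target of Theorem \ref{PrEasy}(b), and it is precisely where the non-constancy of the local density $\gamma(x)$ gets encoded into the mixing measure of the limit.
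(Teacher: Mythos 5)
Your proposal is correct and follows essentially the same route as the paper: the paper also works with the unrescaled targets $\{(x,y):d(x,y)\leq\fr\}$, notes that $(M1)_r$ fails for them, proves directly (via Lipschitz approximation and $({\rm EM})_r$, i.e.\ exactly the mechanism packaged in Remark \ref{RkWeakAdm}) that well-separated $r$-fold intersections have measure $\fr^{rd}\int_M\gamma^r(z)\,d\mu(z)(1+o(1))$, discards non-separated tuples by $(M2)_r$, and identifies the limiting factorial moments with those of the Poisson mixture. The only cosmetic differences are that the paper obtains $(M2)_r$ by inclusion into the rescaled targets of Section \ref{ScHit} rather than re-verifying $(\overline{\rm Sub})$/$(\overline{\rm Mov})$, and that you spell out the moment-determinacy step which the paper leaves implicit.
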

In other words to obtain the limiting distribution in Theorem \ref{ThPoisRet} we first sample
$z\in M$ according to the measure $\mu$ and then consider Poisson random variable with
parameter $\tau\gamma(z).$

\begin{Cor}
\label{CrExponential}
If $f$ preserves a smooth measure and is $r$-fold exponentially mixing for Lipschitz observables for all $r\geq 2$ then

(a) For almost all $x$ we have that if $\tau_\eps(y)$ is an the first time an orbit of $y$ enters
$B(x, \eps)$ then for each $t$
$$ \lim\mu(y: \tau_\eps(y) \eps^d>t)=e^{-\gamma(x) t} $$

(b) If $T_\eps(x)$ is the first time the orbit of $x$ returns to $B(x, \eps)$ then
$$ \lim\mu(x: T_\eps(x) \eps^d>t)=\int_M e^{-\gamma(z) t} d\mu(z) . $$
\end{Cor}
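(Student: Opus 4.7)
The plan is to derive both parts of the corollary directly from the Poisson (respectively mixed-Poisson) limit laws that were already established, by exploiting the elementary identity
$$\{\tau_\eps(y)>n\}=\{\Card\{1\le k\le n: f^ky\in B(x,\eps)\}=0\}$$
and its analogue for $T_\eps$. So the strategy is really just to extract the ``zero occurrences'' marginal from the (mixed) Poisson limits.

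For part (a), I would fix $x$ in the full $\mu$-measure set where the conclusion of Theorem \ref{PrEasy}(a) holds, and let $n_\eps=\lfloor t\eps^{-d}\rfloor$. Setting $\tau=t$ and $\fr=\eps$ in Theorem \ref{PrEasy}(a), the number of indices $k\in[1,n_\eps]$ such that $f^k y\in B(x,\eps)$ converges in distribution (with respect to $y\sim\mu$) to a Poisson random variable with parameter $t\gamma(x)$. Since $\{0\}$ is a continuity set for any discrete distribution,
$$\lim_{\eps\to0}\mu\bigl(y:\tau_\eps(y)\eps^d>t\bigr)=\lim_{\eps\to0}\mu\bigl(y:\tau_\eps(y)>n_\eps\bigr)=e^{-\gamma(x)t},$$
which is the claimed exponential law.

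For part (b), the argument is identical, using Theorem \ref{ThPoisRet} in place of Theorem \ref{PrEasy}(a). With $x$ now sampled according to $\mu$, the event $\{T_\eps(x)\eps^d>t\}$ coincides with the event that the count $\Card\{1\le n\le t\eps^{-d}: d(x,f^nx)\le\eps\}$ is zero, and evaluating \eqref{MixPois} at $l=0$ yields
$$\lim_{\eps\to0}\mu\bigl(x:T_\eps(x)\eps^d>t\bigr)=\int_M e^{-\gamma(z)t}\,d\mu(z).$$

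There is no genuine obstacle: once Theorem \ref{PrEasy}(a) and Theorem \ref{ThPoisRet} are granted, the corollary is essentially a restatement of their $l=0$ marginals. The only small point worth mentioning is that the substitution $\tau=t$ together with the passage from continuous to discrete time windows (replacing $t\eps^{-d}$ by $\lfloor t\eps^{-d}\rfloor$) is harmless because the difference affects the count by at most one index and hence by a probability tending to zero (as can be seen, for instance, from the fact that the limiting Poisson parameter varies continuously with $t$).
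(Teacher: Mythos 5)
Your proof is correct and follows exactly the paper's route: the corollary is obtained by reading off the $l=0$ marginal of Theorem \ref{PrEasy}(a) for hitting times and of \eqref{MixPois} in Theorem \ref{ThPoisRet} for return times, which is precisely the paper's (one-line) argument. Your extra remark about the integer part $\lfloor t\eps^{-d}\rfloor$ is a harmless elaboration of the same idea.
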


\begin{proof}
This is a direct consequence of Theorems \ref{PrEasy}(a) and \ref{ThPoisRet}. For example to get
part (b), take $l=0$ in \eqref{MixPois}.
\end{proof}

\begin{proof}[Proof of Theorem \ref{ThPoisRet}]
Consider the targets
$$\hat\fA_\fr(x, y)=\{(x,y)\in M\times M: d(x,y)\leq \fr\}$$
and let $\hat\fA_{\fr}^k=\{x: (x, f^k x)\in \hat\fA_\fr\}.$
Note that $(M2)_r$ for $\bfA_{\fr}^k$ implies $(M2)_r$ for $\hat\fA_{\fr}^k.$
However, $(M1)_r$ is false for targets $\hat\fA_{\fr}^k.$
We now argue similarly to the proof of
Theorem \ref{theo.mixing}
to obtain that for separated tuples $k_1, k_2, \dots, k_r,$
\begin{equation}\label{PoiLawM1}
\mu\left(\bigcap_{j=1}^r \hat\fA_{\fr}^{k_j} \right)=\rho^{rd}\int_M \gamma^r(z) d\mu(z)(1+o(1)).
\end{equation}

Namely, note that
$$ \int 1_{\hat\fA_\fr}(x_0, x_1)\dots 1_{\hat\fA_\fr}(x_0, x_r) d\mu(x_0) d\mu(x_1)\dots d\mu(x_r)$$
$$=
\int \mu^r(B(x_0, \fr)) d\mu(x_0)=\fr^{rd} (1+O(\fr)) \int_M \gamma^r (x_0) d\mu(x_0). $$
Thus approximating $1_{\hat{\fA}_{\fr}}$ by $\hat{A}_{\fr}^\pm$
satisfying ${\rm(\overline{Appr})}$, and applying ${\rm\overline{(EM)}_r}$ to the functions
$$\hat{B}_{\fr}^+(x_0,\cdots,x_r)=\hat{A}_{\fr}^+(x_0,x_1)\cdots \hat{A}_{\fr}^+(x_0,x_r),$$
$$\hat{B}_{\fr}^-(x_0,\cdots,x_r)=\hat{A}_{\fr}^-(x_0,x_1)\cdots \hat{A}_{\fr}^-(x_0,x_r),$$
we get that if $k_{j+1}-k_j>R|\ln\rho|$ for all $0\leq j\leq r-1,$ then
$$ \mu\left(\bigcap_{j=1}^r \hat\fA_{\fr}^{k_j} \right)\leq \mu\left(\hat{B}_{\fr}^+(x_0,f^{k_1}x_0,\cdots,f^{k_r}x_0)\right)\leq \mu\left(\hat{B}_{\fr}^+(x_0,\cdots,x_r)\right)+C{\rho}^{-r\sigma }\theta^{R|\ln\rho|}$$
$$\leq\left(\rho^d+C\rho^{d(1+\eta)}\right)^r\int_M \gamma^r(z) d\mu(z)
+C{\rho}^{-r\sigma }\theta^{R|\ln\rho|},$$
and, likewise,
$$\mu\left(\bigcap_{j=1}^r \hat\fA_{\fr}^{k_j} \right)\geq\left(\rho^d-C\rho^{d(1+\eta)}\right)^r\int_M \gamma^r(z) d\mu(z)
-C{\rho}^{-r\sigma }\theta^{R|\ln\rho|}.$$
Taking $R$ large we obtain \eqref{PoiLawM1}.

Summing \eqref{PoiLawM1}
 over all well separated couples with $k_j\leq \tau \fr^{-d}$ and using that the contribution of non-separated
couples is negligible due to $(M2)_r$ we obtain
\begin{equation*}
\label{RetMom}
 \lim_{\fr\to 0} \int_M \left(\begin{array}{c} N_{\fr, \tau, x} \\ r \end{array} \right) d\mu(x)=
\int_M \frac{\lambda^r(z)}{r!} d\mu(z)
\end{equation*}
where
$$ N_{\fr, \tau, x}=\Card\left\{k\leq \tau \fr^{-d}: d(x, f^k x)\leq \fr\right\}. $$
Since the RHS coincides with factorial moments of the Poisson mixture from \eqref{MixPois},
the result follows.
\end{proof}

\subsection{  Notes.}
Early works on Poisson Limit Theorems for dynamical systems include
\cite{Co00, Doe40, Hir93, Hir95, HSV99, Pit}. \cite{ChCol,  HP14, HW16, PS16} prove
Poisson law for visits to balls centered at a {\it good} point for nonuniformly hyperbolic dynamical systems and show that
the set of good points has a full measure. \cite{Dolgopyat} obtains Poisson Limit Theorem
for partially hyperbolic systems.
Some of those papers, including \cite{CFFHN, Dolgopyat, HP14, HV19} show that in various settings is
the hitting time distributions are Poisson for {\bf all} non-periodic points
(cf. our Conjecture \ref{ConjAP-Pois}).
The rates of convergence under appropriate mixing conditions are discussed in
\cite{Ab08, AV08, HV04}. The Poisson limit theorems for flows are obtained in
\cite{Mar17, PY17}. Convergence on the level of random measures where one records some
extra information about the close encounters, such as for example, the distance of approach
is discussed in \cite{Dolgopyat, FFM18, FFM20}. A mixed exponential distribution for a return time
for dynamical systems
similar to Corollary \ref{CrExponential} has been obtained in \cite{CGS99} in a symbolic setting.
For more discussion of the distribution of the entry times to small measure sets we refer the readers to
\cite{Col01, Keller, Saussol09,  Zh16} and references wherein.
We also refer to Section \ref{ScExtreme}
for the related results in the context of extreme value theory.

\section{  Gibbs measures on the circle: Law of iterated logarithm for {recurrence and} hitting times}
\label{ScGibbs}

\subsection{  Gibbs measures.}\label{SscGibbs}
The goal of this section is to show how absence of the hypothesis of  smoothness on the invariant measure $\mu$ may also alter the law of multiple recurrence and hitting times.

For simplicity we consider the case where $f$ is an expanding map of the circle $\mathbb{T}$ and $\mu$ is a Gibbs measure with Lipschitz potential $g.$ Adding a constant to $g$
if necessary we may and will assume in all the sequel that the topological pressure of $g$ is $0$, that is
\begin{equation}
\label{0Pressure}
P(g)=\int g d\mu+h_\mu(f)=0.
\end{equation}

This means (see \cite{Sinai72} for background on Gibbs measures)
that for each $\eps>0$ there is a constant $K_\eps$ such that
if $B_n(x, \eps)$ is the Bowen ball
$$ B_n(x,\eps)=\{y: d(f^k y, f^k x)\leq \eps \text{ for }k=0,\dots, n-1\},$$
then
$$K_\eps^{-1}\leq \frac{\mu(B_n(x, \eps))}{\exp\left[\left(\sum_{k=0}^{n-1} g(f^k x)\right)\right]}
\leq K_\eps.$$

We denote
\begin{equation}
\label{SRBPot}
f_u=\ln  |f'|,
\end{equation}
$\lambda=\lambda(\mu)$ the Lyapunov exponent of $\mu$
$$ \lambda = \lim_{n\rightarrow \infty} \frac{\ln |(f^n)' (x)|}{n} = \int f_u d\mu >0, $$
and by $\dd$ the dimension of the measure $\mu$
$$ \dd= \lim_{\delta\rightarrow 0} \frac{\ln \mu (B(x,\delta))}{\ln \delta}.$$

We know from \cite{LedrappierYoung85} that the limit exists for $\mu$-a.e. $x$ and
$$\dd=h_\mu (f)/\lambda=-\frac{\int g d\mu}{\int f_u d\mu}$$
where the last step relies on \eqref{0Pressure}.

We say that $\mu$ is {\it conformal} if there is a constant
$K$ such that for each $x$ and each $0<r\leq 1,$
\begin{equation*} \label{eq.conformal} K^{-1}\leq \frac{\mu(B(x, r))}{r^\dd}\leq K. \end{equation*}
It is known (see e.g. \cite{PP90})
that $\mu$
is conformal if and only if  $g$ can be represented in the form
$$ g=t f_u-P(t f_u)+\tilde{g}-\tilde{g}\circ f $$
for some H\"older function $\tilde g$ and $t\in\RR.$

Denote
\begin{equation}
\label{PsiPot}
\psi(x)=g(x)+\dd f_u(x),
\end{equation}
then we have $ \int \psi d\mu=0$ under the assumption $P(g)=0.$ Define $\sigma=\sigma(\mu)$ by the relation
\begin{equation}\label{GibVar}
\sigma^2=\int \psi^2 d\mu+2 \sum_{n=1}^\infty \int \psi\left(\psi\circ f^n\right) d\mu.
\end{equation}

The goal of this section is to prove the following

\begin{Thm}
\label{ThRecGibbs}

(a) If $\mu$ is conformal then Theorems \ref{thm3} and \ref{ThTop} remain valid with $d$ replaced by $\dd.$

  (b)   If $\mu$ is {\bf not} conformal then
   for $\mu$ almost every $x$ and
 $\mu\times \mu$ almost every $(x,y)$, it holds that
  \begin{align} \label{GibbsRec}
  \lim\sup_{n\to\infty} \frac{|\ln d_n^{(r)}(x,x)|-\frac{1}{\dd} \ln n}{\sqrt{2(\ln n) (\ln \ln \ln n)}}&=\frac{ \sigma}{\dd\sqrt{\dd\lambda}}, \\
  \label{GibbsHit}
  \lim\sup_{n\to\infty} \frac{|\ln d_n^{(r)}(x,y)|-\frac{1}{\dd} \ln n}{\sqrt{2(\ln n) (\ln \ln \ln n)}}&=\frac{ \sigma}{\dd\sqrt{\dd\lambda}}.
  \end{align}
\end{Thm}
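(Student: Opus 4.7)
For part~(a), inspection of the arguments of Sections~\ref{SSMultExpMix} and \ref{ScHit} shows that smoothness of the invariant measure was only used through the scaling $\mu(B(x,r))\asymp r^d$ and through multiple exponential mixing of $(f,\mu,\BAN)$ against Lipschitz observables. In the conformal case one has $\mu(B(x,r))\asymp r^{\dd}$ with uniform constants, and Gibbs measures with Hölder potentials on expanding circle maps are multiply exponentially mixing (via the spectral gap of the transfer operator on Hölder or bounded-variation spaces). Consequently Lemma~\ref{lemma.appr}, Proposition~\ref{prop.targets} and hence Theorems~\ref{thm3} and \ref{ThTop} transfer directly upon replacing $d$ by $\dd$.

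For part~(b), the key input is the Bowen distortion estimate. Setting $n(\rho,x):=\min\{n:|(f^n)'(x)|\geq\rho^{-1}\}$, one has for $\mu$-a.e.\ $x$ and small $\rho>0$,
\begin{equation*}
\ln\mu(B(x,\rho)) \;=\; \dd\ln\rho \,+\, \sum_{k=0}^{n(\rho,x)-1}\psi(f^k x) \,+\, O(1),
\end{equation*}
where $\psi=g+\dd f_u$ has zero mean by \eqref{0Pressure}. By ergodicity $n(\rho,x)=\lambda^{-1}|\ln\rho|(1+o(1))$, so probing the scale $|\ln\rho_n| = \dd^{-1}\ln n + \dd^{-1}C\sqrt{2(\ln n)(\ln\ln\ln n)}$ corresponds to Birkhoff sums of $\psi$ of length $m(n)\approx (\dd\lambda)^{-1}\ln n$. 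Since $\psi$ is Hölder and $(f,\mu)$ is exponentially mixing, the classical law of iterated logarithm yields
\begin{equation*}
\limsup_{m\to\infty}\frac{\sum_{k=0}^{m-1}\psi(f^k x)}{\sqrt{2m\ln\ln m}}\;=\;\sigma\qquad\mu\text{-a.e.,}
\end{equation*}
with $\sigma$ as in \eqref{GibVar}; the non-conformality hypothesis guarantees $\sigma>0$.

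Combining these two facts, the multiple Borel--Cantelli summand becomes
\begin{equation*}
n^{r-1}\mu(B(x,\rho_n))^r \;\asymp\; n^{-1}\exp\!\Big(r\big[\tfrac{1}{\sqrt{\dd\lambda}}\,\alpha_n(x)-C\dd\big]\sqrt{2(\ln n)(\ln\ln\ln n)}\Big),
\end{equation*}
where $\alpha_n(x):=\big(\sum_{k=0}^{m(n)-1}\psi(f^k x)\big)/\sqrt{2m(n)\ln\ln m(n)}$ and $\limsup_n\alpha_n(x)=\sigma$ almost surely. Direct comparison with the elementary series $\sum n^{-1}\exp(\pm\beta\sqrt{(\ln n)(\ln\ln\ln n)})$ shows that $\bS_r=\sum(2^j\sigma(\rho_{2^j}))^r$ converges when $C\dd>\sigma/\sqrt{\dd\lambda}$ and diverges along the LIL-optimal dyadic subsequence of scales when $C\dd<\sigma/\sqrt{\dd\lambda}$. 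The critical threshold is thus $C=\sigma/(\dd\sqrt{\dd\lambda})$, matching the theorem. One then obtains \eqref{GibbsHit} from Corollary~\ref{cor.mixing} applied conditionally on $x$, with targets $\fA_{x,\rho_n}=B(x,\rho_n)$; the recurrence version \eqref{GibbsRec} uses the composite targets $\bfA_{\rho_n}=\{d(x,y)\leq\rho_n\}$ in the weakly admissible sense of Remark~\ref{RkWeakAdm}, since $\mu$ is no longer smooth.

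The main obstacle is that the single-visit probability $\mu(B(x,\rho_n))$ is itself a random object depending on $x$ through a Birkhoff sum of $\psi$, so the abstract Theorem~\ref{ThMultiBC} cannot be invoked with a deterministic function $\sigma(\cdot)$. One must restrict to the $\mu$-full set of $x$ on which both the Bowen estimate and the LIL hold (with sufficient uniformity along the relevant dyadic subsequence), and then verify that the asymptotic independence hypotheses $(M1)_r,(M2)_r,(M3)_r$ of Section~\ref{SSMultExpMix} remain valid for the random target family $\fA_{x,\rho_n}$. It is precisely here that multiple exponential mixing of $(f,\mu)$ of all orders is essential: the interscale condition $(M3)_r$ must survive conditioning on the atypical fluctuations of $\sum\psi\circ f^k$ that are responsible for the law of iterated logarithm asymptotics.
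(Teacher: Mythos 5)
Your treatment of part (a) and of the hitting law \eqref{GibbsHit} is essentially the paper's argument: fix $x$ in the full-measure set where the simple targets $\fA_{x,\rho_n}=B(x,\rho_n)$ are admissible and where the law of iterated logarithm for the local dimension (Lemma \ref{LmDimFl}, coming from the Birkhoff sums of $\psi=g+\dd f_u$ via the Bowen/distortion estimates) holds; then the threshold $c=\sigma/(\dd\sqrt{\dd\lambda})$ is read off from convergence/divergence of $\bS_r$ and Corollary \ref{cor.mixing} applies scale by scale. That part is fine.

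The genuine gap is in your one-line treatment of the recurrence law \eqref{GibbsRec}: you propose to run the composite targets $\bfA_{\rho_n}=\{d(x,y)\le\rho_n\}$ through the machinery ``in the weakly admissible sense of Remark~\ref{RkWeakAdm}'', but in the non-conformal case these targets are \emph{not} weakly admissible, and this is precisely the obstruction the paper has to work around. Condition $\rm{(\overline{Appr})}$(iii) asks that $\int 1_{\bfA_\rho}(x,y)\,d\mu(y)=\mu(B(x,\rho))$ be essentially independent of $x$, which fails by the very fluctuations (of order $\exp(\pm c\sqrt{|\ln\rho|\ln\ln|\ln\rho|})$) that produce the theorem; and even the weakened condition \eqref{IntMulta}--\eqref{IntMultb} requires $\int\mu(B(x,\rho))^r d\mu(x)\asymp\bsigma(\rho)^r$, which fails because the $L^q$-spectrum of a non-conformal Gibbs measure is strictly convex. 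Worse, the abstract composite-target scheme would calibrate everything by $\bsigma(\rho)=\int\mu(B(z,\rho))\,d\mu(z)$ (a correlation-dimension type quantity), whereas the returns of the orbit of $x$ to $B(x,\rho)$ are governed by $\mu(B(x,\rho))$ at the base point $x$ itself; so even formally the route gives the wrong threshold, not just an unverifiable hypothesis. The paper instead proves the upper bound in \eqref{GibbsRec} directly: reduce to $r=1$, restrict to ``$n$-good'' points, cover them by a maximal $r_{N_k}$-separated net, bound $\sum_m\mu\bigl(B(x_{j,k},2r_{N_k})\cap f^{-m}B(x_{j,k},2r_{N_k})\bigr)$ using the short-return estimate \eqref{GibbsMov} and exponential mixing, sum over the net using Ahlfors regularity \eqref{Alhfors}, and apply the classical Borel--Cantelli lemma. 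The lower bound is obtained via a Markov partition: one defines stopping scales $n_j(x)$ at which $\mu(P_{n_j}(x))$ is anomalously large (the $\liminf$ side of Lemma \ref{LmDimFl}), waiting times $k_j(x)\approx 2/\mu(P_{n_j}(x))$, and shows via bounded distortion and the Poisson law (Theorem \ref{ThPoisson}, applied to the simple, admissible targets $P_{n_{j+1}}(x)$) that the conditional probabilities of $r$ returns are bounded below, concluding with L\'evy's extension of the Borel--Cantelli lemma. Your closing paragraph correctly senses that the randomness of $\mu(B(x,\rho_n))$ in $x$ is the issue, but locating it in the verification of $(M3)_r$ for the composite family misses that the framework itself cannot be salvaged there; a conditional (martingale-type) Borel--Cantelli argument along the Markov structure is what is actually needed.
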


{\subsection{  Preliminaries on expanding circle maps and their Gibbs measures.}
\label{SSGibbsPrep}
Here we prepare for the proof of Theorem~\ref{ThRecGibbs} by
collecting some facts on expanding maps of the circle and their Gibbs measures.

We first check multiple mixing for such maps.

Recall we take $\BAN={\rm Lip}.$ Let us denote by $\|\cdot\|_{\rm Lip}$ the Lipschitz norm
$$\|\phi\|_{\rm Lip}=\int|\phi|d\mu+\sup_{x,y\in\mathbb{T}}\frac{|\phi(x)-\phi(y)|}{d(x,y)}$$
for $\phi\in \BAN.$

\begin{Prop}
\label{PrGibbsMix}
For each Gibbs measure $\mu$, the system {$(f,\mathbb{T},\mu,\BAN)$} is $r$-fold exponentially mixing for any $r\geq 2$.
\end{Prop}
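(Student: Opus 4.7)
Properties $({\rm Prod})$ and $({\rm Gr})$ are essentially automatic for the Lipschitz Banach space $\BAN$: one has $\|A_1 A_2\|_{\rm Lip}\leq 2\|A_1\|_{\rm Lip}\|A_2\|_{\rm Lip}$ directly, and since $f$ is $C^1$ expanding with $L:=\|f'\|_\infty$, composition with $f^{k_j}$ multiplies the Lipschitz seminorm by at most $L^{k_j}$, so $({\rm Gr})$ follows with this $L$. The essential task is to verify $({\rm EM})_r$. The key tool is the classical Ruelle-Perron-Frobenius theorem for expanding maps with Lipschitz potential: after a cohomological normalization of $g$ (using $P(g)=0$) we may assume the transfer operator $\cL\phi(x)=\sum_{f(y)=x}e^{\tilde g(y)}\phi(y)$ satisfies $\cL 1=1$ and $\cL^*\mu=\mu$, and acts on $\BAN$ with a spectral gap: $\cL^n\phi=\mu(\phi)+R_n\phi$ with $\|R_n\phi\|_{\rm Lip}\leq C\theta^n\|\phi\|_{\rm Lip}$ for some $\theta\in(0,1)$. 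Two concrete consequences used below are the duality $\int\phi\,(\psi\circ f^n)\,d\mu=\int(\cL^n\phi)\psi\,d\mu$ and the conditional-expectation identity $\EXP_\mu[\phi(x)\mid f^nx=y]=\cL^n\phi(y)$.

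The plan is to prove $({\rm EM})_r$ by induction on $r$. The base case $r=1$ is 2-fold exponential mixing, which follows from the spectral gap and duality. For the induction step, given $A\in\BAN$ on $\T^{r+1}$ and $0=k_0<k_1<\ldots<k_r$ with $k_{j+1}-k_j\geq m$ for all $j$, decompose
$$A=A_\|+A_\perp,\qquad A_\|(x_1,\ldots,x_r):=\int A(y,x_1,\ldots,x_r)\,d\mu(y),$$
so that $A_\perp$ has zero mean in $x_0$ and both $\|A_\|\|_{\rm Lip},\|A_\perp\|_{\rm Lip}\leq C\|A\|_{\rm Lip}$.

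For the $A_\|$ contribution, $f$-invariance of $\mu$ yields
$$\int A_\|(f^{k_1}x,\ldots,f^{k_r}x)\,d\mu(x)=\int A_\|(y,f^{k_2-k_1}y,\ldots,f^{k_r-k_1}y)\,d\mu(y),$$
an $r$-fold-type integral in which the new gaps $k_{j+1}-k_j$ are still $\geq m$; by the inductive hypothesis this equals $\int A_\|\,d\mu^r+O(\theta^m\|A\|_{\rm Lip})=\int A\,d\mu^{r+1}+O(\theta^m\|A\|_{\rm Lip})$. For the $A_\perp$ contribution, the conditional-expectation identity applied in the first coordinate (with $y=f^{k_1}x$) gives
$$\int A_\perp(x,f^{k_1}x,\ldots,f^{k_r}x)\,d\mu(x)=\int\widetilde A_\perp(y)\,d\mu(y),$$
where $\widetilde A_\perp(y):=\cL^{k_1}\bigl[A_\perp(\cdot,y,f^{k_2-k_1}y,\ldots,f^{k_r-k_1}y)\bigr](y)$. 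For each fixed $y$, the function $x_0\mapsto A_\perp(x_0,y,f^{k_2-k_1}y,\ldots,f^{k_r-k_1}y)$ is Lipschitz in its single argument with norm $\leq\|A_\perp\|_{\rm Lip}$ and has mean zero, so the spectral gap yields $\|\widetilde A_\perp\|_\infty\leq C\theta^{k_1}\|A\|_{\rm Lip}\leq C\theta^m\|A\|_{\rm Lip}$; integrating gives the same bound for the full contribution. Adding the two estimates closes the induction.

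The main obstacle to guard against is the apparent Lipschitz blowup of the function $y\mapsto A_\perp(\cdot,y,f^{k_2-k_1}y,\ldots,f^{k_r-k_1}y)$ when viewed jointly in all variables, since composition with $f^{k_j-k_1}$ can inflate its joint Lipschitz constant by a factor as large as $L^{k_r-k_1}$. The resolution built into the scheme above is that the bound on $\widetilde A_\perp(y)$ only needs the Lipschitz norm in the first coordinate—where no iterate of $f$ appears—and the sup norm of the $\cL^{k_1}$-image in $y$, so the contraction $\theta^{k_1}$ from the spectral gap survives uniformly in the remaining variables.
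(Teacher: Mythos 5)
Your proof is correct, but it takes a genuinely different route from the paper's. The paper argues in three steps: it first records the Lipschitz-versus-$L^1$ correlation bound \eqref{PWM2Gibbs} coming from the transfer operator theory of \cite{Viana97}; it then proves multiple mixing for tensor-product observables $\prod_i \psi_i\circ f^{k_i}$ by induction, the key point being that the whole future block $\prod_{i\ge 2}\psi_i\circ f^{k_i-k_2}$ only needs to be controlled in $L^1$, where composition with $f$ costs nothing; finally it upgrades from product observables to a general $A\in\BAN$ on $\T^{r+1}$ via the eigenfunction (Sobolev) decomposition used in Theorem \ref{MultiMixRet}, together with Remark \ref{lowreg} to come back down to Lipschitz regularity. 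You instead treat a general joint observable throughout: the splitting $A=A_\|+A_\perp$, the identity $\int\Phi\,d\mu=\int(\cL^{k_1}\Phi)\,d\mu$ for the normalized operator, and the observation that $(\cL^{k_1}\Phi)(y)$ is $\cL^{k_1}$ applied to the mean-zero section $x_0\mapsto A_\perp(x_0,\hbox{frozen})$, whose Lipschitz norm is bounded by $\|A\|_\BAN$ uniformly in the frozen variables, so the spectral gap yields the uniform bound $C\theta^{k_1}\|A\|_\BAN$. This is exactly how you dodge the $L^{k_r-k_1}$ Lipschitz inflation that the paper dodges instead with the $L^1$ norm. Your route stays in the Lipschitz category, needs no tensor decomposition and no high-regularity detour; the paper's route is less direct here, but its third step is a general-purpose reduction (product mixing implies joint mixing) that it states once and reuses elsewhere.

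One small presentational point: you describe the base case as ordinary $2$-fold mixing, while the induction step at $r=2$ already needs $({\rm EM})_1$ for a general $A$ on $\T^2$, not just for products. This is not a gap, because your own decomposition gives $({\rm EM})_1$ directly (the $A_\|$ contribution is then exact by invariance of $\mu$, and the $A_\perp$ contribution is handled by the same frozen-variable estimate), so the induction in fact starts trivially at $r=0$; it would be worth saying this explicitly.
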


This fact is well known but for the reader's convenience we provide the argument in
 in \S \ref{AppMixGibbs}.

In the rest of the argument it will be important that if $\mu$ is a Gibbs measure then there are
positive constants
$a, b$ such that for all sufficiently small $\rho$ and for all $x,$
\begin{equation}
\label{GibbsSqueeze}
\rho^a\leq \mu(B(x,\rho))\leq \rho^b.
\end{equation}

We also need the fact that Gibbs measures are Alhfors regular, that is there is a constant $R$
such that for each $x, \fr$ we have
\begin{equation}
\label{Alhfors}
 \mu\left(B(x, 4 \fr)\right)\leq R \mu(B(x, \fr)).
\end{equation}
We recall the proofs of \eqref{GibbsSqueeze} and \eqref{Alhfors} in
\S \ref{SSGibbsReg}.

We also need a lemma on the fluctuations of
the  local dimension of Gibbs measures for expanding circle maps.

\begin{Lem}
\label{LmDimFl} $\ $

\noindent (a) $\sigma(\mu)=0$ if and only if $\mu$ is conformal.

\noindent   (b) If $\sigma>0$ then for $\mu$ almost every $x$
$$
\limsup_{\delta\to 0} \frac{|\ln \mu\left( B(x, \delta)\right)|-\dd |\ln \delta|}{\sqrt{2  |\ln \delta| (\ln \ln |\ln \delta|) }}  =   \frac{\sigma}{\sqrt{\lambda}}, \quad
\liminf_{\delta\to 0} \frac{|\ln \mu \left( B(x, \delta)\right)|-\dd |\ln \delta|}{\sqrt{  2|\ln \delta| (\ln \ln |\ln \delta|) }} =    -\frac{\sigma}{\sqrt\lambda}.
$$
\end{Lem}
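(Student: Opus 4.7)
\emph{Proof proposal.}
The strategy is to reduce both parts of the lemma to the classical Law of the Iterated Logarithm for Birkhoff sums of the Lipschitz function $\psi = g + \dd f_u$ under the Gibbs measure $\mu$. The normalization $P(g)=0$ together with $\dd\lambda = h_\mu(f)$ forces $\int\psi\,d\mu = 0$, and since the transfer operator of $(f,\mu)$ has a spectral gap on the Lipschitz space (which is the same mechanism behind Proposition~\ref{PrGibbsMix}), the classical LIL for Gibbs measures applies: for $\mu$-a.e.\ $x$, whenever $\sigma>0$,
\[
\limsup_{n\to\infty} \frac{\pm S_n\psi(x)}{\sqrt{2n\ln\ln n}} = \sigma,\qquad S_n\psi(x) := \sum_{k=0}^{n-1}\psi(f^k x).
\]

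For part (b), fix $\eps>0$ small enough that the Gibbs bound and bounded distortion for $f$ both hold at scale $\eps$. Given $x$ in a full-measure set and $\delta>0$ small, let $n=n(x,\delta)$ be the unique integer for which the component $V_n(x)$ of $f^{-n}B(f^nx,\eps)$ containing $x$ has length comparable to $\delta$; by bounded distortion $|V_n(x)| \asymp \eps/|(f^n)'(x)|$, so $S_n f_u(x) = |\ln\delta| + O(1)$ and $n = |\ln\delta|/\lambda\,(1+o(1))$ for $\mu$-a.e.\ $x$ by Birkhoff's theorem. Ahlfors regularity \eqref{Alhfors} gives $\mu(B(x,\delta))\asymp \mu(V_n(x))$, and the Gibbs property yields $\mu(V_n(x))\asymp \exp(S_n g(x))$; substituting $g=\psi-\dd f_u$ produces the key identity
\[
|\ln\mu(B(x,\delta))| - \dd|\ln\delta| = -S_n\psi(x) + O(1).
\]
Applying the LIL for $\psi$ to the right-hand side and using $n\lambda = |\ln\delta|(1+o(1))$ to convert the normalizer $\sqrt{2n\ln\ln n}$ into $\sqrt{2|\ln\delta|\ln\ln|\ln\delta|}/\sqrt{\lambda}$ yields the asserted values $\pm\sigma/\sqrt\lambda$ for the limsup and liminf.

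For part (a), if $\mu$ is conformal then $g = tf_u - P(tf_u) + \tilde g - \tilde g\circ f$ for some Lipschitz $\tilde g$; integrating against $\mu$ and using $\int\psi\,d\mu=0$ forces $t=-\dd$ and $P(-\dd f_u)=0$, so $\psi=\tilde g-\tilde g\circ f$ is a Lipschitz coboundary, the telescoping sum $S_n\psi = \tilde g - \tilde g\circ f^n$ is uniformly bounded, and \eqref{GibVar} gives $\sigma=0$. Conversely, for a Gibbs measure of an expanding circle map, Livsic's theorem for Gibbs states asserts that the vanishing of the asymptotic variance of a Lipschitz observable forces it to be a Lipschitz coboundary; thus $\sigma=0$ produces a Lipschitz $\tilde g$ with $\psi=\tilde g-\tilde g\circ f$, which is exactly the cohomological characterization of conformality of $\mu$ with $t=-\dd$.

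The main technical point to watch is the substitution of the random time $n(x,\delta)$ for a deterministic sequence inside the LIL: as $\delta$ decreases continuously to $0$, $n(x,\delta)$ is an integer-valued step function hitting every large integer, so the limsup over $\delta\to 0$ coincides with the limsup over $n\to\infty$; the normalizer $\sqrt{n\ln\ln n}$ is insensitive to the $o(n)$ fluctuation between $n(x,\delta)$ and $|\ln\delta|/\lambda$; and the $O(1)$ errors absorbed into the Gibbs and distortion estimates are negligible against the LIL scale $\sqrt{|\ln\delta|\ln\ln|\ln\delta|}$.
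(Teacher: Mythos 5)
Your overall architecture is the same as the paper's: reduce both statements to the law of the iterated logarithm for the Birkhoff sums $S_n\psi=\sum_{k=0}^{n-1}\psi\circ f^k$ of $\psi=g+\dd f_u$ (the paper quotes Hofbauer--Keller for this), use the Gibbs property and bounded distortion to write $|\ln\mu(B(x,\delta))|-\dd|\ln\delta|=-S_{n(\delta)}\psi(x)+O(1)$, and convert the normalizer via $n(\delta)\sim|\ln\delta|/\lambda$. However, one step in your part (b) is not justified as written and is in fact false pointwise: the two-sided comparison $\mu(B(x,\delta))\asymp\mu(V_n(x))$ does not follow from the doubling property \eqref{Alhfors}. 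The inclusion $V_n(x)\subset B(x,C\delta)$ gives only $\mu(V_n(x))\le C'\mu(B(x,\delta))$; the reverse inequality fails when $x$ lies close to the boundary of its pull-back component, because $B(x,\delta)$ then overlaps the adjacent component, which for a non-conformal Gibbs measure can carry exponentially more mass (e.g.\ for the doubling map with a Bernoulli measure $p\neq\frac12$ and $x$ just to the right of $\frac12$, the ratio $\mu(B(x,2^{-n}))/\mu(V_n(x))$ grows exponentially in $n$). The paper avoids this issue entirely by working with the Bowen balls $B_n(x,\eps)$, whose \emph{inner and outer} radii about $x$, $r_n$ and $\bar r_n$, both satisfy $\ln r_n=\ln\bar r_n=-S_nf_u(x)+O(1)$; choosing $n$ with $\bar r_{n+k}\le\delta\le r_n$ for a bounded $k$, monotonicity sandwiches $\mu(B(x,\delta))$ between $\mu(B_{n+k}(x,\eps))$ and $\mu(B_n(x,\eps))$, i.e.\ between $e^{S_{n+k}g(x)}$ and $e^{S_ng(x)}$ up to constants, so no comparison of a ball with a one-sided cylinder is ever needed. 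Your step can be repaired for $\mu$-a.e.\ $x$ (a Borel--Cantelli argument shows $f^nx$ eventually stays at distance $\ge n^{-C}$ from the partition boundary, so the discrepancy is at most $n^{O(1)}$, which is negligible against $\sqrt{|\ln\delta|\,\ln\ln|\ln\delta|}$), but that argument has to be supplied; Ahlfors regularity alone does not do it.

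In part (a) the forward direction is also too quick: integrating $g=tf_u-P(tf_u)+\tilde g-\tilde g\circ f$ against $\mu$ yields only the single relation $P(tf_u)=(t+\dd)\lambda$, which does not by itself force $t=-\dd$ and $P(tf_u)=0$. The clean route (consistent with the paper's appendix) is to argue directly from conformality: $\mu(B(x,r))\asymp r^{\dd}$ together with \eqref{ErgSumMu} and \eqref{ErgSumR} gives $S_n\psi(x)=O(1)$ uniformly, hence $\psi$ is a coboundary (Livšic) and $\sigma=0$ by \eqref{GibVar}. Your converse direction ($\sigma=0$ implies $\psi$ is a coboundary via \cite[Prop.\ 4.12]{PP90}, hence conformality) coincides with the paper's argument.
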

The proof of this lemma is also given in
Appendix \ref{AppLILocDim}.

{\subsection{  The targets.} Given $x\in M,$  let
\begin{equation*}
\label{BallTarget2}
\fA_{x,\fr}=\{y:\,d(x,y)\leq\rho\},\quad
\bfA_{\fr}=\left\{(x,y):d(x,y)\leq \rho
\right\}.
\end{equation*}

  We use the notation $\fA_{x,\fr}^k$ for the event $1_{\fA_{x,\fr}}\circ f^k$.
We also recall the notation  $ {\bfA}^k_{\fr}=\{x: (x, f^k x)\in{\bfA}_{\fr}\}.$
In the sequel we will always assume that $\{\fr_n\}$ is a sequence such that $\fr_n>n^{-u}$ for some $u$.

We caution the reader that the targets $\bfA_\fr$  are {not} admissible targets in the non-conformal
case, so we need to use a roundabout approach, different from Section \ref{ScHit},
for proving Theorem \ref{ThRecGibbs}(b).

On the other hand, we will need a modification of the argument of Lemma \ref{lemma.mov} to show that for any Gibbs measure $\mu$ and for $\mu$-a.e.
$x\in M$,   the targets
$\fA_{x,\fr_n}$ are admissible for $(f,M,\mu,\BAN)$. The difference with the case of smooth measures, is that it does not hold anymore that $\mu(B(y,1/2^j))\leq C\mu(B(x,1/2^j))$ for any $x,y \in M$, while this was used in the proof of Lemma \ref{lemma.mov}.


\begin{Lem} \label{lem.gibbs} For any Gibbs measure $\mu$, for $\mu$-a.e. $x\in M$,   the targets
$\fA_{x,\fr_n}$ are admissible for $(f,M,\mu,\BAN)$.
\end{Lem}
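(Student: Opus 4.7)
The plan is to verify the three properties (Poly), (Appr), and (Mov) of Definition~\ref{def.targets} separately; (Poly) is immediate, (Appr) is routine modulo a caveat, and (Mov) is the main technical step. For (Poly), the standing assumption $\rho_n \geq n^{-u}$ together with the universal lower bound $\mu(B(x,\rho)) \geq \rho^a$ from \eqref{GibbsSqueeze} yields $\sigma(\rho_n) \geq n^{-ua}$ for every $x$. For (Appr), I would apply the idea of Lemma~\ref{RemLip} with $\Phi(y)=d(x,y)$; since the two exponents $a$ and $b$ in \eqref{GibbsSqueeze} need not coincide, the naive annulus bound does not directly produce condition (Appr)(iii). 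I would restrict to the $\mu$-full-measure set on which the local dimension of $\mu$ equals $\dd$, so that $\mu(B(x,\rho)) = \rho^{\dd+o(1)}$, and combine this with Ahlfors regularity \eqref{Alhfors} to control the $\mu$-measure of thin annuli $\{y : d(x,y)\in[\rho,\rho+\delta]\}$; taking the smoothing scale $\delta=\rho^N$ with $N$ large makes (Appr)(iii) hold at the price of a large $\tau$ in (Appr)(i), which is acceptable since no restriction is imposed on $\tau$.

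For (Mov) I would follow the two-step scheme of Lemma~\ref{lemma.mov}, the first step producing system-level slow recurrence $\mu(\{x:d(x,f^kx)\leq\rho\}) \leq |\ln\rho|^{-A}$ for every $k\geq 1$. The 2-fold exponential mixing from Proposition~\ref{PrGibbsMix}, applied to a Lipschitz approximation of $1_{\{d(x,y)\leq\rho\}}$, handles the regime $k \geq B\ln|\ln\rho|$; the only change from the smooth-measure argument of Lemma~\ref{lemma.mov}(i) is that the bound $\bar\sigma(\rho) \leq \rho^b$ now follows from \eqref{GibbsSqueeze} rather than from a smooth density. For smaller $k$, the dynamical-inflation trick based on the Lipschitz constant of $f$ reduces to the mixing regime exactly as in Lemma~\ref{lemma.mov}(i).

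The second step, pointwise transfer to $\mu$-a.e.\ $x$, is the principal difficulty of the lemma, because the natural Fubini bound
$$\int \mu(B(x,\rho)\cap f^{-k}B(x,\rho))\,d\mu(x) \leq \int \mu(B(y,\rho))\,1_{\{d(y,f^ky)\leq 2\rho\}}\,d\mu(y)$$
no longer simplifies as it did in Lemma~\ref{lemma.mov}(ii), since $\mu(B(y,\rho))$ fluctuates substantially in $y$ as quantified by Lemma~\ref{LmDimFl}. I would handle this by splitting the range $k \in (0,R\ln n)$ into two subregimes according to the size of $\Lambda^k \rho_n$, where $\Lambda$ is the Lipschitz constant of $f$. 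For $k$ with $\Lambda^k \rho_n$ small, the triangle inequality forces $B(x,\rho_n) \cap f^{-k}B(x,\rho_n) = \emptyset$ whenever $d(x,f^k x) > \rho_n(1+\Lambda^k)$, a Diophantine-type condition which, by step one combined with a union bound over such $k$ and Borel--Cantelli along the dyadic sequence $n=2^m$, holds simultaneously for a.e.\ $x$. For $k$ with $\Lambda^k\rho_n$ not small, $k$ is already large enough for 2-fold mixing to give $\mu(B(x,\rho_n)\cap f^{-k}B(x,\rho_n)) \leq C\mu(B(x,\rho_n))^2 + C\rho_n^{-2}\theta^k$, and the main term is $\leq \mu(B(x,\rho_n))(\ln n)^{-1000r}$ since $\mu(B(x,\rho_n))\leq \rho_n^b$ by the local-dimension set and $\rho_n^b\leq(\ln n)^{-1000r}$ for large $n$, while the error $\rho_n^{-2}\theta^k$ is also absorbed. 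Calibrating the threshold between the two subregimes, so that the small-$k$ Diophantine set is Borel--Cantelli-summable while the large-$k$ mixing estimate starts to apply, is the delicate balance that the argument must perform.
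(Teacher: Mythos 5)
Your treatment of (Poly) and (Appr) is fine, though the detour through the local-dimension set is unnecessary: by \eqref{GibbsSqueeze} the two thin annuli on which $A^\pm_\rho$ differ have measure at most $2\rho^{Nb}\leq \rho^{a(1+\eta)}\leq \sigma(\rho)^{1+\eta}$ once the smoothing exponent $N$ is large enough, and this holds for every $x$. Your subregime-A argument is also essentially sound; it amounts to re-proving, along the sequence $\rho_n$, the short-return emptiness statement which the paper simply imports from Barreira--Saussol (Proposition \ref{PrBarSaus}). The genuine gap is in subregime B, i.e.\ in the ``calibration'' you defer to the end. The emptiness mechanism dies at $k\approx |\ln\rho_n|/\ln\Lambda$: beyond that point $\rho_n(1+\Lambda^k)$ is no longer small, so the slow-recurrence bound on $\mu\left(\{x:\,d(x,f^kx)\leq \rho_n(1+\Lambda^k)\}\right)$ is vacuous and the threshold cannot be raised. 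On the other side, the symmetric mixing estimate you invoke carries the error $C\rho_n^{-2\tau}\theta^k$ (the Lipschitz norms of both approximants enter), so it is only useful for $k\geq K|\ln\rho_n|$ with $K$ of order $(2\tau+a)/|\ln\theta|$, where $a$ comes from \eqref{GibbsSqueeze} and $\tau$ is the (Appr) exponent, which in the Gibbs setting cannot be taken small (it is at least of order $a/b$). These two thresholds are governed by unrelated constants -- the expansion rate $\Lambda$ on one side, $\tau,\theta,a$ on the other -- and there is no reason to have $1/\ln\Lambda\geq (2\tau+a)/|\ln\theta|$; in general the two ranges do not overlap, leaving a whole band $k\in\left[|\ln\rho_n|/\ln\Lambda,\;K|\ln\rho_n|\right]$ uncovered, so the balance you hope to strike cannot be achieved with these two estimates alone.

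The paper closes exactly this band by a different device: it tests the intersection against the asymmetric product $A^+_{\hat\rho}\,\bigl(A^+_{\rho}\circ f^k\bigr)$ with $\hat\rho=\rho^\sigma$, $\sigma$ small, and uses the Lipschitz--$L^1$ mixing inequality \eqref{PWM2Gibbs} (available for these Gibbs measures through the transfer operator), so that the error term is $C\theta^k\hat\rho^{-\tau}\mu(B(x,\rho))\leq C\mu(B(x,\rho))\,\rho^{\eps|\ln\theta|-\sigma\tau}$ for $k\geq \eps|\ln\rho|$. Since $\sigma$ may be chosen arbitrarily small relative to $\eps$, mixing is already effective at times $\eps|\ln\rho|$ for an arbitrarily small fixed $\eps$, uniformly in $x$, and the complementary range $k\leq \eps|\ln\rho|$ is precisely what Proposition \ref{PrBarSaus} (or your subregime-A argument) covers for a.e.\ $x$. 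If you prefer to stay within your own scheme, another possible repair is to run the Fubini--Markov transfer of Lemma \ref{lemma.mov}(ii) with the weight $1/\mu(B(x,\rho))$ inserted in the integral, using the Ahlfors regularity \eqref{Alhfors} to compare $\mu(B(x,\rho))$ with $\mu(B(y,\rho))$ when $d(x,y)\leq\rho$; either way, some input beyond the symmetric Lipschitz mixing bound is needed to bridge the middle range of $k$.
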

 \begin{proof}Due to \eqref{GibbsSqueeze} and \eqref{Alhfors},
all the properties of admissible targets
 except for (Mov) are obtained exactly as in the smooth measure case. To prove
 (Mov), we modify the argument of Lemma \ref{lemma.mov} to overcome the fact that it does not hold anymore that $\mu(B(y,1/2^j))\leq C\mu(B(x,1/2^j))$ for any $x,y \in M$.

In fact we can prove  more than (Mov) in this context of expanding circle maps. Namely we can show that for a.e.
$x$ and all $k$
\begin{equation}
\label{GibbsMov}
 \mu(B(x,\fr)\cap f^{-k} B(x, \fr))\leq \mu(B(x, \fr))^{1+\eta}.
 \end{equation}
We consider two cases.

(I) $k>\eps|\ln \fr|$
where $\eps$ is sufficiently small (see case (II) for precise bound on $\eps$).
Take $A^+_\fr$ such that
$A^+_\fr=1$ on $B(x, \fr),$  $\int A^+_\fr d\mu\leq 2 \mu(B(x,\fr))$ and
$\|A_\fr^+\|_{Lip}\leq C \fr^{-\tau}$ for some $\tau=\tau(\mu).$ Let $\hat\fr=\fr^\sigma$ where
$\sigma$ is a small constant.
Then \eqref{PWM2Gibbs} gives

$$ \mu(B(x,\fr)\cap f^{-k} B(x, \fr))\leq \int A^+_{\hat\fr} (A^+_\fr\circ f^k) d\mu $$
$$\leq
4 \mu(B(x, \fr)) \mu(B(x, \hat\fr))
+2C\brtheta^k \hat\fr^{-\tau} \mu(B(x, \fr))\leq C \mu(B(x, \fr))
\left(\fr^{\sigma b} +\fr^{\eps|\ln\brtheta|}  \fr^{-\tau \sigma}\right)$$
for some $0<\brtheta<1.$
Taking $\sigma$ small we can make the second term smaller than $\fr^{\eps|\ln\brtheta|/2} $
which is enough for $\rm{(Mov)}$ in view of already established $\rm{(Poly)}.$
Note that no restrictions on $x$ are imposed in case (I).

(II) $k\leq\eps|\ln \fr|.$ In this case for a.e. $x$ the intersection
$B(x, \fr)\cap f^{-k} B(x, \fr)$ is empty for small $\fr$ due to the Proposition \ref{PrBarSaus} below.
\end{proof}

\begin{Prop}
\label{PrBarSaus}
(\cite[Lemma 5]{BarreiraSaussol01})
Let $T:\,X\to X$ be a Lipschitz map with Lipschitz constant $L>1$ on a compact metric space $X.$ If $\mu$ is an ergodic measure with $h_\mu(T)>0.$ Then for almost every $x,$ there exists $\fr_0(x)>0$ such that for all
$\fr\leq \fr_0(x),$ and all $0<k\leq \frac{1}{2L}|\ln \rho|,$ we have
$\DS T^{-k} B\left(x, \rho\right) \cap B\left(x, \rho\right)=\emptyset.$
\end{Prop}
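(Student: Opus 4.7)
The plan is to prove the proposition via a Borel--Cantelli argument along the geometric sequence $\rho_j = 2^{-j}$. For each $j$, define the bad set
\[
\cB_j = \bigcup_{1 \leq k \leq |\ln\rho_j|/(2L)} \{x \in X : B(x,\rho_j)\cap T^{-k}B(x,\rho_j) \neq \emptyset \}.
\]
If I can show $\sum_j \mu(\cB_j) < \infty$, then Borel--Cantelli yields that $\mu$-a.e.\ $x$ lies in $\cB_j$ only finitely often; combined with the obvious monotonicity of the property in $\rho$, this gives the desired $\rho_0(x)$.

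The key observation is that if $y \in B(x,\rho) \cap T^{-k} B(x,\rho)$, then $d(y, T^k y) < 2\rho$ by the triangle inequality, and by the Lipschitz property of $T$ one has $d(T^j x, T^j y) \leq L^j\rho$ for all $0 \leq j \leq k$, so $y$ lies in the Bowen ball $B_k(x, L^k \rho)$. Since $\ln L < L$ for $L > 1$, the condition $k \leq |\ln\rho|/(2L)$ forces $L^k \rho \leq \rho^\alpha$ with $\alpha = 1 - \ln L/(2L) > 1/2$, so this Bowen ball shrinks polynomially in $\rho$. Consequently, writing $D_k(\delta) = \{y : d(y, T^k y) < \delta\}$ and $\cB_{\rho,k} = \{x : B(x,\rho) \cap T^{-k} B(x,\rho) \neq \emptyset\}$, one has $\cB_{\rho,k} \subset N_\rho(D_k(2\rho))$ (the $\rho$-neighborhood), with every witness $y$ confined to the Bowen ball $B_k(x, \rho^\alpha)$.

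The positive entropy of $\mu$ now enters through two quantitative versions of the Brin--Katok / Katok entropy formula: for $\mu$-a.e.\ $x$ and any sufficiently small $\eps_0 > 0$, one has $\mu(B_n(x, \eps_0)) \leq e^{-n h_\mu/2}$ for all $n$ large, and $X$ can be covered by $\lesssim e^{k h_\mu(1+o(1))}$ Bowen balls of length $k$ and radius $\eps_0$. Combining the exponentially small measure of individual Bowen balls with the combinatorial cost of the covering --- applied to $D_k(2\rho)$ and its $\rho$-neighborhood --- yields a bound $\mu(\cB_{\rho_j,k}) \leq C \rho_j^\beta$ for some $\beta > 0$ depending on $h_\mu$ and $L$. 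Summing over the $O(|\ln\rho_j|)$ admissible values of $k$ gives $\mu(\cB_j) \leq C' \rho_j^\beta |\ln \rho_j|$, which is summable, completing the Borel--Cantelli step. The main technical difficulty lies precisely in calibrating the exponents: the entropy gain $e^{-k h_\mu/2}$ must dominate both the covering multiplicity $e^{k h_\mu(1+o(1))}$ and the measure loss incurred when passing to a $\rho$-neighborhood. The hypothesis $k \leq |\ln\rho|/(2L)$ is exactly what permits this delicate balance, and establishing it rigorously is the heart of Barreira and Saussol's argument.
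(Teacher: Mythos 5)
The paper itself gives no proof of this proposition --- it is imported verbatim from Barreira--Saussol \cite[Lemma 5]{BarreiraSaussol01} --- so your attempt has to be measured against their argument and on its own merits. Your outer scaffolding is sound: the triangle inequality giving $d(y,T^ky)<2\rho$ for a witness $y$, the Lipschitz propagation showing $L^k\rho\le\rho^{1/2}$ in the admissible range of $k$, and Borel--Cantelli along $\rho_j=2^{-j}$ plus monotonicity. The genuine gap is the central estimate $\mu(\cB_{\rho,k})\le C\rho^{\beta}$, which you never establish and which does not follow from the mechanism you describe: multiplying the Brin--Katok bound $e^{-kh_\mu/2}$ for one Bowen ball of length $k$ by a covering of the space by $e^{kh_\mu(1+o(1))}$ such balls produces $e^{+kh_\mu/2(1+o(1))}$, which diverges instead of decaying. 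Moreover the covering claim for all of $X$ is itself unjustified (Katok's formula covers sets of measure $1-\delta$, so one must pass to an Egorov set and intersect the bad events with it), and for a fixed small $k$ your bound must still tend to $0$ as $\rho\to0$, which already shows that Bowen balls of length $k$ cannot be the right covering objects.

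The missing idea is a mismatch of lengths. A point $y$ with $d(y,T^ky)<2\rho$ satisfies $d(T^{j+k}y,T^jy)\le L^j\cdot 2\rho\le\varepsilon$ for all $j\le m$ with $m\approx|\ln\rho|/(2\ln L)\ge (L/\ln L)\,k\ge e\,k$, so its Bowen itinerary of length $m$ is determined, up to bounded multiplicity, by that of length $k$; hence $D_k(2\rho)$ is covered by only about $e^{k(h_\mu+\gamma)}$ Bowen balls of \emph{length $m$}, each of measure at most $e^{-m(h_\mu-\gamma)}$ on the Egorov set, and the product $e^{-(m-k)(h_\mu-2\gamma)}\le\rho^{\beta}$ is what closes the calibration. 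Barreira--Saussol themselves take an equivalent but covering-free route: they pick a finite partition $\xi$ with $h_\mu(T,\xi)>0$ whose boundary has measure-controlled neighborhoods, show by Borel--Cantelli that $B(x,\rho)$ is contained in the $n$-cylinder of $x$ for $n\approx c\,|\ln\rho|$ and a.e.\ $x$, observe that $T^{-k}B(x,\rho)\cap B(x,\rho)\neq\emptyset$ then forces the length-$n$ itinerary of $x$ to be $k$-periodic, and exclude this via the Ornstein--Weiss return-time theorem (returns to the $n$-cylinder take at least $e^{n(h_\mu(T,\xi)-\delta)}\gg n$ steps). Without one of these two inputs --- the long-block/short-data covering count or the periodicity/Ornstein--Weiss argument --- your outline stops exactly where the proof has to begin, as your own closing sentence concedes.
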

}

The case of composite targets  $\bfA_\fr$ is more complicated, except for the conformal case.

In the conformal case,
the following Lemma is obtained exactly as in Proposition \ref{prop.targets} that dealt with the smooth measure case, so we omit its proof.

\begin{Lem} \label{lem.conformal} If $\mu$ is conformal, then the targets
$\bfA_{\fr_n}$ defined by \eqref{VaryingWidthTarget} are weakly admissible in the sense of Remark
\ref{RkWeakAdm}.
\end{Lem}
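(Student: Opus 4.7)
The plan is to follow the same blueprint as the proof of Proposition~\ref{prop.targets}, substituting the manifold dimension $d$ by the local dimension $\dd$ of the Gibbs measure, and weakening the pointwise regularity of $\gamma$ wherever needed. Since $\mu$ is conformal one has $K^{-1}\fr^{\dd} \leq \mu(B(x,\fr)) \leq K\fr^{\dd}$ uniformly on $\mathbb{T}$, which lets me select a bounded measurable function $\gamma$ such that $\mu(B(x, \fr/\gamma(x)^{1/\dd}))$ is of order $\fr^{\dd}$ up to $(1+O(\fr^\eta))$. With this choice $\bsigma(\fr)$ is comparable to $\fr^{\dd}$, which together with the hypothesis $\fr_n \geq n^{-u}$ settles $(\overline{\rm Poly})$ at once.

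Next I would verify $(\overline{\rm Sub})$ by the triangle inequality exactly as in Lemma~\ref{lemma.appr}: if $(x, f^{k_1}x), (x, f^{k_2}x) \in \bfA_{\fr}$, then
$$d(f^{k_1}x, f^{k_2}x) \leq \frac{2\fr}{\gamma(x)^{1/\dd}} \leq \frac{a\fr}{\gamma(f^{k_1}x)^{1/\dd}}$$
for $a$ large, thanks to the uniform boundedness of $\gamma$. For $(\overline{\rm Mov})$, the assertion reduces to slow recurrence of $(f, \mathbb{T}, \mu)$: Proposition~\ref{PrGibbsMix} supplies the required $2$-fold exponential mixing, and the bootstrap argument of Lemma~\ref{lemma.mov}\,i) -- using the Lipschitz bound on $f$ to propagate small-$k$ returns to the regime $k \gtrsim |\ln \fr|$ where exponential mixing applies -- carries over unchanged, yielding $\mu(\bfA^k_{a\fr_n}) \leq |\ln \fr_n|^{-A}$ for any prescribed $A$, hence $\leq \brC (\ln n)^{-1000r}$.

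For $(\overline{\rm Appr})$ with the weakened condition (iii) of Remark~\ref{RkWeakAdm}, I would construct $\bar{A}_{\fr}^{\pm}$ by convolving $1_{\bfA_\fr}$ with a smooth cutoff of width $\fr^{b}$ for large $b$, exactly as in Lemma~\ref{RemLipcomposite}. Properties (i) and (ii) then follow directly; property (iv) follows from the symmetry of $d(\cdot,\cdot)$ combined with conformality:
$$\int \bar{A}_{\fr}^{+}(x,y)\, d\mu(x) \leq \mu\bigl(\{x : d(x,y) \leq C\fr\}\bigr) \leq C'\fr^{\dd} \leq \brC\, \bsigma(\fr).$$
For the integrated replacement of (iii), Fubini gives
$$\int \cdots \int \prod_{j=1}^{r} \bar{A}_{\fr}^{\pm}(x,y_j)\, d\mu(y_1)\cdots d\mu(y_r)\, d\mu(x) = \int \left(\int \bar{A}_{\fr}^{\pm}(x,y)\, d\mu(y)\right)^{r} d\mu(x),$$
and by the choice of $\gamma$ the inner integral equals $\fr^{\dd}(1+O(\fr^{\eta}))$ for every $x$, whence $\sigma_r(\fr) = C_r \fr^{r\dd}(1+O(\fr^{\eta}))$, which meets the comparability requirement $C^{-1}\sigma(\fr)^r \leq \sigma_r(\fr) \leq C\sigma(\fr)^r$ of \eqref{IntMulta}--\eqref{IntMultb}.

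The main obstacle I anticipate is arranging that $\gamma$ has enough regularity for $\Phi(x,y) := d(x,y)\gamma(x)^{1/\dd}$ to produce genuinely Lipschitz (or $C^s$) approximants, as required for (i). In the conformal Gibbs setting this is handled via the coboundary representation $g = \dd\, f_u - P(\dd\, f_u) + \tilde{g} - \tilde{g}\circ f$ with $\tilde{g}$ H\"older, which permits the choice $\gamma(x) \propto e^{\tilde{g}(x)}$ after a normalization; any loss of regularity is absorbed into the exponent $\tau$ of $\fr^{-\tau}$ in condition (i), leaving the rest of the argument intact.
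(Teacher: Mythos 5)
Your overall plan---rerunning the proof of Proposition \ref{prop.targets} with $d$ replaced by $\dd$---is exactly what the paper intends (it omits the proof for this reason), and your verifications of $(\overline{\rm Poly})$, $(\overline{\rm Sub})$, $(\overline{\rm Mov})$ and of $(\overline{\rm Appr})$(i),(ii),(iv) are sound: they use only the two-sided bounds \eqref{GibbsSqueeze}, the boundedness and Lipschitz regularity of $\gamma$, and the $2$-fold mixing from Proposition \ref{PrGibbsMix}. The gap is in your substitute for $(\overline{\rm Appr})$(iii). You claim that conformality allows a $\fr$-independent bounded $\gamma$ with $\mu\big(B(x,\fr/\gamma(x)^{1/\dd})\big)=\fr^{\dd}\big(1+O(\fr^{\eta})\big)$ for every $x$. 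Conformality only gives $K^{-1}\le \mu(B(x,r))/r^{\dd}\le K$; it does not give convergence of this ratio as $r\to0$, let alone at a polynomial rate. Indeed, writing $g=-\dd f_u+\tilde g-\tilde g\circ f$, the computations \eqref{ErgSumMu}--\eqref{ErgSumR} give $\ln\mu(B(x,r))-\dd\ln r=\tilde g(x)-\tilde g(f^{N(r)}x)+O(1)$, and the terms $\tilde g(f^{N(r)}x)$ and $O(1)$ oscillate in $r$ for fixed $x$, so no function of $x$ alone (in particular not $e^{\tilde g(x)}$) can normalize the ratio to $1+O(\fr^\eta)$. Consequently the step ``the inner integral equals $\fr^{\dd}(1+O(\fr^{\eta}))$ for every $x$'' fails; note also that if it were true, the targets would satisfy the full pointwise condition $(\overline{\rm Appr})$(iii) and the weak notion of Remark \ref{RkWeakAdm} would be superfluous---the whole point of \eqref{IntMulta}--\eqref{IntMultb} is to avoid any pointwise asymptotics.

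The correct (and easier) way to close this step is to work with the integrated quantities directly. For any Lipschitz $\gamma$ bounded away from $0$ and $\infty$ (the precise choice in \eqref{VaryingWidthTarget} is immaterial, which also dissolves the ``main obstacle'' in your last paragraph), set
\begin{equation*}
\sigma_r(\fr):=\int\Big(\int 1_{\bfA_\fr}(x,y)\,d\mu(y)\Big)^{r}d\mu(x).
\end{equation*}
The conformal bounds give $C^{-1}\fr^{r\dd}\le\sigma_r(\fr)\le C\fr^{r\dd}$, hence $C^{-1}\bsigma(\fr)^r\le\sigma_r(\fr)\le C\bsigma(\fr)^r$. Moreover, for each fixed $x$ the functions $\brA^{\pm}_\fr(x,\cdot)$, constructed as in Lemma \ref{RemLipcomposite}, differ from $1_{\bfA_\fr}(x,\cdot)$ only on two arcs of length $O(\fr^{b})$, whose $\mu$-measure is $O(\fr^{b\dd})$ by the conformal upper bound; hence $\int\brA^{\pm}_\fr(x,y)\,d\mu(y)=\int 1_{\bfA_\fr}(x,y)\,d\mu(y)+O(\fr^{b\dd})$ uniformly in $x$, and raising to the $r$-th power and integrating in $x$ yields $\int\big(\int\brA^{\pm}_\fr(x,y)\,d\mu(y)\big)^{r}d\mu(x)=\sigma_r(\fr)\big(1+O(\fr^{(b-1)\dd})\big)$, which is \eqref{IntMulta}--\eqref{IntMultb} once $b$ is chosen large. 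With this replacement (and dropping the unneeded identification $\sigma_r(\fr)=C_r\fr^{r\dd}(1+O(\fr^{\eta}))$), your argument matches the paper's intended proof and goes through.
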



\subsection{  The conformal case.}\begin{proof}[Proof of Theorem \ref{ThRecGibbs} (a).]
 We  take  $\fr_n=n^{-1/{\bd}}   \ln^{-s}n$. Due to Lemmas \ref{lem.gibbs} and \ref{lem.conformal}, the targets  targets
$\fA_{x,\fr_n}$ are admissible for $\mu$-a.e. $x\in M$ and the targets
$\bfA_{\fr_n}$ are composite weakly admissible. Consequently, the proof of  Theorem \ref{ThRecGibbs} (a) follows exactly as that of  Theorems \ref{thm3} and \ref{ThTop} corresponding to the smooth measure case. \end{proof}

 \subsection{  The non conformal case. Proof of Theorem \ref{ThRecGibbs} (b).}

The proof of Theorem \ref{ThRecGibbs} (b) relies on the $\liminf$ in
Lemma \ref{LmDimFl}(b).

\subsubsection{  The iterated logarithm law for hitting times : Proof of \eqref{GibbsHit} of Theorem \ref{ThRecGibbs} (b).}

For $\eps>0$ and $c>0$ arbitrary  let
\begin{align}
\label{vth1} \rho_n&=\rho_n(c)=\frac{1}{n^{1/\dd}}\exp\left(-c\sqrt{2(\ln n) (\ln \ln \ln n)}\right).\\
\nonumber 
 \vartheta_\eps^\pm(\delta)&=\delta^\dd
\exp\left(\left(1\pm \varepsilon\right)\frac{\sigma }{\sqrt{\lambda}}\sqrt{2|\ln \delta|\left(\ln\ln|\ln \delta|\right)}\right),\\
\nonumber \label{vth3} \tilde{\vartheta}_{\eps,c}^\pm(n)&=\vartheta_\eps^\pm(\rho_n(c)).
\end{align}
then
$$\tilde{\vartheta}_{\eps,c}^\pm(n)=\frac{1}{n}\exp\left(\left(-c\dd+(1\pm \varepsilon)\frac{\sigma }{\sqrt{\dd\lambda}}+\eta_n\right)\sqrt{2\ln n (\ln \ln \ln n)}\right)$$
for some $\eta_n\rightarrow 0$ as $n\rightarrow\infty.$

  The $\liminf$ in Lemma \ref{LmDimFl}, has the following straightforward consequences, for any $\varepsilon>0$ and  for $\mu$ almost every $x$:
 \begin{enumerate}
   \item[] There exists $n(x)$ such that for $n\geq n(x)$, we have
 \begin{equation}  \label{eup} \mu \left(\Omega_{x,\rho_n}\right) \leq \tilde{\vartheta}_{\eps,c}^+(n).  \end{equation}
 \item[]  For a subsequence $n_l \to \infty$ we have
   \begin{equation} \label{elow}\mu \left(\Omega_{x,\rho_{n_l}}\right) \geq \tilde{\vartheta}_{\eps,c}^-(n_l).  \end{equation}
 \end{enumerate}
Now it follows that for any $r\geq 1$,
 $\DS S_r=\sum_{k=1}^{\infty}\left(2^k\mu
 (\Omega_{x,\rho_{2^k}})\right)^r$ is finite if $c>(1+\varepsilon)\frac{ \sigma}{\dd\sqrt{\dd\lambda}}$ and is infinite if $c<(1-\varepsilon)\frac{ \sigma}{\dd\sqrt{\dd\lambda}}.$
Hence \eqref{GibbsHit}   follows from Proposition \ref{PrGibbsMix}, Lemma \ref{lem.gibbs} and Corollary \ref{cor.mixing} \hfill $\Box$


\subsubsection{  The iterated logarithm law for return times: Proof of the upper bound in \eqref{GibbsRec}}

Now we turn to the proof of
\begin{equation}\label{GibbsRecleq}
\lim\sup_{n\to\infty} \frac{|\ln d_n^{(r)}(x,x)|-\frac{1}{\dd} \ln n}{\sqrt{2(\ln n) (\ln \ln \ln n)}}\leq\frac{ \sigma}{\dd\sqrt{\dd\lambda}}.
\end{equation}
Since $d_n^{(r)}(x,x)\geq d_n^{(1)}(x,x),$ we only need to show \eqref{GibbsRecleq} for $r=1.$

Denote
$$r_n=\frac{1}{n^{1/\dd}}\exp\left\{-(1+2\varepsilon)\frac{ \sigma}{\dd\sqrt{\dd\lambda}}\sqrt{2(\ln n)
(\ln \ln \ln n)}\right\}.$$
Let $N_k=2^k.$ Similarly to Section \ref{ScBCMult}
it is enough to show that for almost all $x,$ for all sufficiently large $k$ we have that
$$ d(x, f^m x)\geq r_{N_k}\text{ for } m=1, \dots, N_k. $$
Proposition \ref{PrBarSaus} allows us to further restrict the range of $m$ by assuming
$m\geq \breps \ln N_k,$ where $\breps$ is sufficiently small.

We say $x\in\mathbb{T}$ is $n-$good if $\mu\left(B(x,r_n)\right)\leq\vartheta^+(r_n).$
Fix $k_0$ and let
$$ \cA_k=\{x: x\text{ is $n-$good for } n\geq N_{k} \text{ but }
d(x, f^m x)\leq r_{N_k}\text{ for some } m=\breps\ln N_k, \dots, N_k\}.$$
Let $\cX_k= \{x_{j,k}\}_{j=1}^{l_k}$ to be a maximal $r_{N_k}$ separated set  of $N_k-$good points.
Thus if $x$ is $N_k$ good then there is $j$ such that $x\in B(x_{j,k}, r_{N_k}).$
Therefore if $f^m x\in B(x, r_{N_k})$ then $f^m x\in B(x_{j,k}, 2 r_{N_k}).$
Fix a large $K,$ for $m\leq K \ln N_k,$ \eqref{GibbsMov} is
telling us that
$$ \mu\left(B(x_{j,k}, 2 r_{N_k}) \cap f^{-m} B(x_{j,k}, 2 r_{N_k})\right)\leq K \mu(B(x_{j,k}, 2 r_{N_k}))^{1+\eta}. $$
while for
$m>K\ln N_k$ we get by exponential mixing that
$$ \mu\left(B(x_{j,k}, 2 r_{N_k}) \cap f^{-m} B(x_{j,k}, 2 r_{N_k})\right)\leq K \mu(B(x_{j,k}, 2 r_{N_k}))^2. $$
Summing those estimate for  we obtain
$$ \sum_{m=\breps\ln N_k}^{N_k}  \mu\left(B(x_{j,k}, 2 r_{N_k}) \cap f^{-m} B(x_{j,k}, 2 r_{N_k})\right)
\leq K \mu(B(x_{j,k} , 2 r_{N_k})) e^{-\kappa\sqrt{k}} $$
for some $\kappa=\kappa(\breps)>0.$ Since $B(x_{j,k}, r_{N_k}/2)$ are disjoint for
different $j,$ by \eqref{Alhfors} we conclude that
$$ \sum_j \mu\left(B(x_{j,k}, 2r_{N_k})\right)\leq
R \sum_j \mu\left(B(x_{j,k}, r_{N_k}/2)\right)\leq R. $$
It follows that
$$ \mu(\cA_k)\leq K R e^{-\kappa \sqrt{k}}. $$
Now the result follows from the classical Borel Cantelli Lemma.

\subsubsection{The law of iterated logarithm for return times: Proof of the lower bound in \eqref{GibbsRec}.}

Here we prove that
\begin{equation}\label{GibbsRecleq2}
\lim\sup_{n\to\infty} \frac{|\ln d_n^{(r)}(x,x)|-\frac{1}{\dd} \ln n}{\sqrt{2(\ln n) (\ln \ln \ln n)}}\geq\frac{ \sigma}{\dd\sqrt{\dd\lambda}}.
\end{equation}

Suppose $p$ to be a fixed point of $f.$ Take the Markov partition $\mathcal{P}_n$ of $\mathbb{T}$ such that if $P_n\in\mathcal{P}_n,$ then $f^n(\partial P_n)=p.$ Denote $P_n(x)=\{P_n\in\mathcal{P}_n: x\in P_n\}$, two sequences $k_j(x)$ and $n_j(x),$ $j\in\mathbb{N}$ such that $k_0(x)=n_0(x)=0,$
$$n_j(x)=\min\left\{n>k_{j-1}(x)^2:\mu\left(P_n(x)\right)\geq \vartheta_\varepsilon^-\left(\left|P_n(x)\right|\right)\right\}$$
and $$k_j(x)=\frac{2}{\mu(P_{n_j}(x))}.$$
Let
$$\mathcal{A}_j=\left\{x:\text{Card}\left\{k_{j-1}(x)\leq k\leq k_j(x):
f^kx\in P_{n_j}(x)\right\}\geq r\right\}.$$
Then $$
\lim\sup_{n\to\infty} \frac{|\ln d_n^{(r)}(x,x)|-\frac{1}{\dd} \ln n}{\sqrt{2(\ln n) (\ln \ln \ln n)}}\geq\frac{ \sigma}{\dd\sqrt{\dd\lambda}}(1-2\varepsilon)
$$
if $x$ belongs to infinitely many $\cA_j$s.

Denote by  $\Prob\left(\cdot|\cdot\right)$ the conditional probability and $\mathcal{F}_j=\mathcal{B}\left(\mathcal{P}_{k_1},\cdots,\mathcal{P}_{k_j}\right)$
be the $\sigma$ algebra generated by the itineraries up to the time $k_j.$
We will use the following L\'{e}vy's extension of the Borel-Cantelli Lemma.

\begin{Thm}
(\cite[\S 12.15]{Wil91}) If
$\DS\sum_j \Prob\left(\mathcal{A}_{j+1}|\mathcal{F}_j\right)=\infty \quad a.s. $ then
$\mathcal{A}_j$ happen infinitely many times almost surely.
\end{Thm}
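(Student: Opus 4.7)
The plan is to realise the events $\mathcal{A}_j$ through their indicators and to study the compensated martingale obtained by subtracting the one-step conditional probabilities. Set
\[
X_j = 1_{\mathcal{A}_j}, \qquad p_j = \Prob(\mathcal{A}_j\mid \mathcal{F}_{j-1}), \qquad S_n = \sum_{j=1}^n X_j, \qquad C_n = \sum_{j=1}^n p_j.
\]
Then $M_n = S_n - C_n$ is a martingale with respect to $(\mathcal{F}_n)$, and its increments $X_n - p_n$ take values in $[-1,1]$. The statement to be proved asserts $\{C_\infty = \infty\} \subset \{S_\infty = \infty\}$ almost surely; by contraposition this reduces to showing that $\{S_\infty < \infty\} \subset \{C_\infty < \infty\}$ a.s., which is the form amenable to a direct stopping-time argument.

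For each positive integer $K$, introduce the stopping time $\tau_K := \inf\{n : S_n \geq K\}$. Because the jumps of $S$ lie in $\{0,1\}$, one has $S_{n \wedge \tau_K} \leq K$ for every $n$: on $\{\tau_K < \infty\}$ this follows from $S_{\tau_K} \leq S_{\tau_K - 1} + 1 \leq K$, and on $\{\tau_K = \infty\}$ by definition $S_n < K$. Since $n \wedge \tau_K$ is a bounded stopping time, optional stopping gives $\EXP[M_{n\wedge\tau_K}] = 0$, that is,
\[
\EXP[C_{n \wedge \tau_K}] = \EXP[S_{n \wedge \tau_K}] \leq K.
\]
As $(C_n)$ is nondecreasing and nonnegative, monotone convergence yields $\EXP[C_{\tau_K}] \leq K$, and therefore $C_{\tau_K} < \infty$ almost surely.

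Since $S$ takes integer values, $\{\tau_K = \infty\} = \{S_\infty \leq K - 1\}$, and on this event $C_{\tau_K} = C_\infty$. Consequently $\{S_\infty \leq K - 1\} \subset \{C_\infty < \infty\}$ up to a null set; taking the union over $K \in \mathbb{N}$ delivers $\{S_\infty < \infty\} \subset \{C_\infty < \infty\}$ a.s., which is the desired contrapositive. The only mild subtlety in this programme is that $\tau_K$ can be infinite with positive probability, so optional stopping cannot be applied at $\tau_K$ directly; we circumvent this by stopping instead at the bounded time $n \wedge \tau_K$ and passing to the limit $n \to \infty$ by monotonicity of $C$. Beyond this, the argument is a standard martingale computation that rests only on the bounded-increment structure of the indicator process and the definition of the compensator.
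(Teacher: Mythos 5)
Your proof is correct. There is nothing in the paper to compare it with: the statement is quoted from \cite[\S 12.15]{Wil91} without proof, so the relevant comparison is with Williams' own argument. Williams deduces L\'evy's extension from martingale convergence theory: writing $M_n=S_n-C_n$ exactly as you do, he appeals to the theorem that a martingale with uniformly bounded increments either converges or satisfies $\limsup M_n=+\infty$ and $\liminf M_n=-\infty$ (equivalently, the $L^2$ theory through the bracket $\langle M\rangle_n=\sum_{j\le n}p_j(1-p_j)\le C_n$), which yields the two-sided identity $\{S_\infty=\infty\}=\{C_\infty=\infty\}$ a.s. Your route --- optional stopping of $M$ at the bounded times $n\wedge\tau_K$ with $\tau_K=\inf\{n:S_n\ge K\}$, the bound $S_{n\wedge\tau_K}\le K$ coming from the $\{0,1\}$ jumps, and then monotone convergence for $C$ --- is more elementary: it uses only Doob's optional sampling theorem for bounded stopping times and delivers precisely the inclusion $\{C_\infty=\infty\}\subset\{S_\infty=\infty\}$ that the stated theorem (and its use in the paper, where $\sum_j\Prob(\mathcal{A}_{j+1}\mid\mathcal{F}_j)=\infty$ a.s.) requires, though it does not give the converse inclusion contained in the full L\'evy theorem, which is not needed here. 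The one hypothesis you use implicitly is that each $\mathcal{A}_j$ is $\mathcal{F}_j$-measurable, so that $M$ is indeed a martingale; this adaptedness is part of the standard formulation of the cited result and holds in the paper's application, and your indexing $p_j=\Prob(\mathcal{A}_j\mid\mathcal{F}_{j-1})$ is just a harmless shift of the statement's $\Prob(\mathcal{A}_{j+1}\mid\mathcal{F}_j)$.
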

Hence \eqref{GibbsRecleq2} follows from the lemma below.
\begin{Lem}
There exists $c^*>0,$ such that for almost all $x$ there is $j_0=j_0(x)$ such that
$\Prob\left(\mathcal{A}_{j+1}|\mathcal{F}_j\right)\geq c^*$
for all $j\geq j_0.$
\end{Lem}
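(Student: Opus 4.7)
The plan is to combine the Markov structure of the partition $\mathcal{P}_{k_j}$ (which by construction is Markov for $f^{k_j}$) with the Gibbs property of $\mu$ to reduce the conditional statement to an unconditional second-moment estimate of the type used in Proposition~\ref{prop6}. First I would observe that an atom $P$ of $\cF_j$ is an atom of $\mathcal{P}_{k_j}$ and that $f^{k_j}|_P:P\to\mathbb{T}$ is a bijection (off a null set) whose Jacobian has bounded distortion with respect to $\mu$; hence the push-forward $(f^{k_j})_*(\mu|_P/\mu(P))$ has density bounded above and below by constants $C^{\pm}$ independent of $j$ and $P$. Setting $y=f^{k_j}(x)$, this gives
\[
\Prob(\cA_{j+1}\mid P)\;\geq\;C^{-1}\,\Prob\bigl(y\in\widetilde{\cA}\bigr),
\]
where $\widetilde{\cA}$ is the event that, for $y$ distributed as $\mu$, writing $\tilde n(y):=\min\{n>k_j^2:\mu(P_n(y))\geq \vartheta_\varepsilon^-(|P_n(y)|)\}$ and $\tilde k(y):=2/\mu(P_{\tilde n(y)}(y))$, the orbit of $y$ visits $P_{\tilde n(y)}(y)$ at least $r$ times within the window $[0,\tilde k(y)]$. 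Since $\tilde k(y)\gg k_j$, shifting the window by $k_j$ changes the probability by a vanishing error.

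Next I would bound $\Prob(\widetilde{\cA})$ by decomposing on the value $\tilde n(y)=n$ and on the ``admissible'' atom $Q=P_n(y)\in\mathcal{P}_n$ (those with $\mu(Q)\geq \vartheta_\varepsilon^-(|Q|)$). The $\liminf$ half of Lemma~\ref{LmDimFl}(b), combined with the fact that $\mathcal{P}_n$-atoms containing $y$ are comparable with bounded distortion to balls of radius $L^{-n}$ about $y$, ensures that $\{y:\tilde n(y)<\infty\}$ has full $\mu$-measure. Conditional on $y\in Q$, the distribution of $y$ is $\mu|_Q/\mu(Q)$, and the number of visits of $\{f^k y\}_{1\leq k\leq \tilde k}$ to $Q$ has mean $\asymp \tilde k\,\mu(Q)=2$. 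Using $(2r+1)$-fold exponential mixing (Proposition~\ref{PrGibbsMix}), the Ahlfors regularity \eqref{Alhfors}, and the slow-recurrence bound \eqref{GibbsMov}, the events $\{f^ky\in Q\}$ verify the almost-independence conditions $(M1)_r$--$(M2)_r$ at the fixed scale $n$, uniformly in $Q$ and $n$. The Bonferroni-type lower bound \eqref{Dm} of Proposition~\ref{prop6} then gives that the probability of at least $r$ visits is $\geq c_r>0$, uniformly in $Q$ and $n$.

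Combining the two reductions yields $\Prob(\cA_{j+1}\mid \cF_j)(x)\geq c^*:=C^{-1}c_r$ for every $j$ beyond the a.s.-finite (and $\cF_j$-measurable) threshold $j_0(x)$ past which all the uniform constants (Gibbs distortion, mixing rate, recurrence bounds, and existence of $\tilde n(y)$) become effective. The L\'{e}vy--Borel--Cantelli theorem quoted just before the lemma statement then gives $\mathcal{A}_j$ i.o., which in turn completes the proof of \eqref{GibbsRecleq2}.

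The main obstacle is the simultaneous randomness of $n_{j+1}(x)$ and $P_{n_{j+1}}(x)$: both the scale at which a ``good'' atom is detected and the identity of that atom depend on the entire future orbit, and are therefore highly non-$\cF_j$-measurable. This is resolved by the large separation $n_{j+1}>k_j^2\gg k_j$, which through exponential mixing effectively decouples the future scale from $\cF_j$, together with the fact that once an admissible atom $Q$ is fixed the second-moment/Bonferroni estimate is uniform in $Q$ and $n$, so no fine control over the joint law of the selected scale and atom is required.
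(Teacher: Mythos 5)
Your strategy is essentially the paper's: use the Markov structure and bounded distortion to rewrite $\Prob(\cA_{j+1}\mid\cF_j)(x)$ as (a uniform constant times) the unconditional $\mu$-measure of the set of points visiting the atom $P_{n_{j+1}}(x)$ at least $r$ times in a window of length $\bar k_{j+1}=k_{j+1}-k_j$, and then exploit that these partition-atom targets satisfy $(M1)_{\bar r},(M2)_{\bar r}$ (exponential mixing, Ahlfors regularity, slow recurrence, as in Lemma \ref{lem.gibbs}). Up to that point the two arguments agree.

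The final quantitative step, however, has a genuine gap. By construction $\bar k_{j+1}\,\mu(P_{n_{j+1}})\to 2$, so the expected number of visits in the window is a fixed constant close to $2$, not $o(1)$. The lower bound \eqref{Dm} of Proposition \ref{prop6} that you invoke is proved under the hypothesis \eqref{SmallTargets}, i.e. $n\sigma(\rho_n)\to 0$: its proof is the two-term Bonferroni estimate $\Prob(\cD_m)\geq I_m-J_m$ with $I_m\geq c_r(n\sigma)^r$ and $J_m\leq C_r\bigl((n\sigma)^{r+1}+m^{-10}\bigr)$, and positivity of $I_m-J_m$ comes precisely from $n\sigma\to 0$. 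When $n\sigma\approx 2$, the term $J_m$ is of the same order as $I_m$ and the constants $c_r, C_r$ are not matched, so \eqref{Dm} yields no positive lower bound, uniformly in $Q$ and $n$ or otherwise. The paper closes this step by a different tool: since the factorial moments of all orders are controlled, it applies the Poisson limit Theorem \ref{ThPoisson} with parameter $\lambda=2$, getting $\Prob(\text{at least } r \text{ visits})\geq C_0\sum_{k\geq r}e^{-2}\frac{2^k}{k!}=:c^*>0$. Your argument can be repaired either by quoting Theorem \ref{ThPoisson} at this point, or by counting visits only in a sub-window of length $\delta\bar k_{j+1}$ with $\delta>0$ a small fixed constant, so that the mean is $2\delta$ and the Bonferroni bound $c_r(2\delta)^r-C_r(2\delta)^{r+1}>0$ does apply (a smaller but still uniform constant suffices, since $r$ visits in the sub-window imply $r$ visits in the full window). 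A minor further imprecision is the claim that $(M1)_r,(M2)_r$ hold ``uniformly in $Q$'': the short-return condition (Mov) holds only for targets centered at slowly recurrent points, but this is harmless for a lower bound because you may simply discard the atoms with bad centers, which is in effect what the paper does by working at a.e. $x$.
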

\begin{proof}
For any $\Omega\subset\mathbb{T},$ $P_k\in\mathcal{P}_k,$
$$\mu \left(f^k(\Omega\cap P_k)\right)
=\frac{\mu \left(f^k(\Omega\cap P_k)\right)}{\mu\left( f^k(P_k)\right)}
\leq C\frac{\mu(\Omega\cap P_k)}{\mu(P_k)}
$$
by bounded distortion property. Note that
$$\Prob\left(\mathcal{A}_{j+1}|\mathcal{F}_j\right)(x)=
\frac{\mu\left(\mathcal{A}_{j+1}\cap P_{k_j(x)}(x)\right)}{\mu\left(P_{k_j(x)}(x)\right)}
\geq C^{-1}\mu\left(f^{k_j(x)} \left(\mathcal{A}_{j+1}\cap P_{k_j(x)}(x)\right)\right).
$$
By construction
$$ f^{k_j(x)} \left(\mathcal{A}_{j+1}\cap P_{k_j(x)}(x)\right) $$
is the set of points $y\in \T$ which visit $P_{n_{j+1}(x)}$ at least $r$ times before time
$$\brk_{j+1}(x)=k_{j+1}(x)-k_j(x).$$
By Lemma \ref{lem.gibbs}
for almost all $x$ the targets $P_{n_{j+1}}(x)$ satisfy $(M1)_r$ and $(M2)_r$
for all $r.$ Since by construction
$\DS \lim_{j\to\infty} \mu(P_{n_j}(x)) \brk_j(x)=2$ we can apply
Theorem~\ref{ThPoisson} to get
$$\Prob\left(\mathcal{A}_{j+1}|\mathcal{F}_j\right)(x)\geq
C^{-1}\mu\left(f^{n_j}(\mathcal{A}_{j+1}\cap P_{n_j})\right)\geq C_0\sum_{k=r}^\infty e^{-2}\frac{2^k}{k!}:=c^*.$$
proving the lemma.
\end{proof}

\subsection{  Notes.}
The fact that return times for the non-conformal Gibbs measures are dominated by fluctuations
of measures of the balls has been explored in various settings
\cite{BT09, BV03, ChU05, CGS99,  HV-Fluct, Ib62, Pac00, Saussol01, Var08}.
In particular, \cite{Haydn12} obtains a result similar to our Lemma \ref{LmDimFl} in the context of
symbolic systems. The papers mentioned above deal with either one dimensional or symbolic systems.
In higher dimensions even the leading term of $\ln \mu(B(x,r))$ is rather non-trivial and is analyzed in
\cite{BPS99}, while fluctuations are determined only for a limited class of systems
\cite{LS12}. Thus extending the results of this section to higher dimension is an interesting open
problem.

\section{  Geodesic excursions.}
\label{ScExcursions}
\subsection{  Excursions in finite volume hyperbolic manifolds.}
Let $\cQ$ be a finite volume non-compact $(d+1)$-dimensional manifold of curvature $-1$.
Let $S\cQ$ denote the unit tangent bundle to $\cQ$.
For $(q,v)\in S\cQ,$ let $\gamma(t)=\gamma(q,v,t)$ be the geodesic such that $\gamma(0)=q,$ $\dot\gamma(0)=v.$ We call $g^t$ the corresponding geodesic flow, that is $g^t(\gamma(0),\dot\gamma(0))=(\gamma(t),\dot\gamma(t))$.
$g^t$ preserves the Liouville measure $\mu.$ Fix a reference point $O\in \cQ$ and let
$D(q,v, t)=\dist(O, \gamma(t)).$
According to Sullivan's Logarithm Law for excursions \cite{Sul} for  $\mu$-a.e. $(q,v)\in S \cQ$, it holds that
\begin{equation}
\label{SulLawExc}
 \lim\sup_{T\to\infty} \frac{D(q, v, T)}{\ln T}=\frac{1}{d}.
\end{equation}
In fact, the Borel Cantelli Lemma of \cite{Sul}
 also shows
that
\begin{equation}
\label{SulMultiLawExc}
 \lim\sup_{T\to\infty} \frac{D(q, v, T)-\frac{1}{d}\; \ln T}{\ln \ln T}=\frac{1}{d}.
\end{equation}
Here we present a multiple excursions version of \eqref{SulMultiLawExc}.
Recall (\cite[Proposition D.3.12]{BP92})
that $\cQ$ admits a decomposition $\DS \cQ=\cK\bigcup \left(\bigcup_{j=1}^p \cC_j \right)$
where $\cK$ is a compact set and $\cC_j$ are cusps. Moreover each cusp is isometric to
$V_i\times [L_j, \infty) $ endowed with
the metric
$$ ds^2=\frac{dx^2+dy^2}{y^2}, \quad x\in V_j, \;\; y\in [L_j, \infty)$$
where $V_j$ is a compact flat manifold and $dx$ is the Euclidean metric on $V_j.$
Cusps are disjoint, so that a geodesic cannot pass between different cusps without visiting the
thick part $\cK$ in between. We note that\footnote{We identify hereafter each cusp $\cC_j$  with $V_i\times [L_j, \infty)$.} for each $q_0=(x_0, y_0)\in \cC_i$ there is a unique geodesic
($\{x=x_0\}$) which remains in the cusp for all positive time. We will call this geodesic
{\it the escaping geodesic} passing through $(x_0, y_0).$
Let $h(q, v, t)=0$ if $\gamma(q, v, t)\in \cK$ and $h(q, v, t)=\ln y(t)$ if $\gamma(q, v, t)=(x(t), y(t)) \in \cC_i$. It is easy to see using the triangle inequality that there exists a constant $C$ such that
$$ \left|D(q, v, t)-h(q, v,t)\right|\leq C. $$
{\it A geodesic excursion} is a maximal interval $I$ such that $\gamma(t)$ belongs to some cusp
$\cC_i$ for all $t\in I.$ Then, $\DS h(I)=\max_{t\in I} h(q, v, t)$ is called {\it the height of the excursion $I$}.
For every triple $(q,v,T)$ we can order the heights  of the excursions that correspond to maximal excursion intervals included inside $(0,T)$ starting from the highest one
$$ H^{(1)}(q, v, T)\geq
H^{(2)}(q, v, T)\geq \dots \geq H^{(r)}(q, v, T)\dots $$

Note that \eqref{SulMultiLawExc} is implied by
\begin{equation}
\label{MultiLawExcH1}
 \lim\sup_{T\to\infty} \frac{H^{(1)} (q, v, T)-\frac{1}{d}\; \ln T}{\ln \ln T}=\frac{1}{d}.
\end{equation}
Here we prove the following multiple excursions version of  \eqref{MultiLawExcH1}.

\begin{Thm}
\label{ThMultiExc}
For a.e. $(q,v)$ and all $r$ we have
\begin{equation*}
\label{MultiLawExcHr}
 \lim\sup_{T\to\infty} \frac{H^{(r)} (q, v, T)-\frac{1}{d}\; \ln T}{\ln \ln T}=\frac{1}{rd}.
\end{equation*}
\end{Thm}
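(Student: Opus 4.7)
The strategy is to apply the multiple Borel--Cantelli Lemma of Corollary~\ref{cor.mixing} to the time-one map $F = g^1$ of the geodesic flow on $S\cQ$, using targets that count high cusp excursions. I would first invoke the classical fact that for a finite-volume hyperbolic $(d+1)$-manifold $\cQ = \Gamma\backslash \bbH^{d+1}$ the Liouville geodesic flow is exponentially mixing of all orders for Lipschitz observables, a consequence of the effective decay of matrix coefficients on semisimple Lie groups (Moore, Kleinbock--Margulis, Ratner).

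Let $\Sigma = \bigsqcup_i \Sigma_i$ be the Poincar\'e cross-section at the base of each cusp, where $\Sigma_i = \{(q,v) : q \in V_i \times \{L_i\},\ v \text{ pointing inward into } \cC_i\}$. Each cusp excursion corresponds uniquely to one entry into $\Sigma$, and in the upper-half-space model such an excursion starting at entry angle $\theta$ from vertical is a Euclidean semi-circle of maximum hyperbolic height $L_i/\cos\theta$. Define the target
$$\fA_\rho = \bigl\{(q,v) \in S\cQ : \exists\, s\in[0,1],\ g^s(q,v)\in\Sigma \text{ and the following excursion reaches height} \geq 1/\rho\bigr\}.$$
Flowing the codimension-one spherical cap $\{\cos\theta \leq L_i\rho\}$ on $\Sigma$ over the interval $[0,1]$ gives $\mu(\fA_\rho) = c_d\rho^d + O(\rho^{d+1})$ for an explicit $c_d > 0$.

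Next I would verify that $\{\fA_{\rho_n}\}$ with $\rho_n = n^{-1/d}(\ln n)^{-c/d}$ is a simple admissible family for $(F, S\cQ, \mu)$. The condition (Appr) follows from a variant of Lemma~\ref{RemLip} applied to the smooth function $\Phi(q,v)$ recording the maximum height of the next cusp excursion, which is smooth on an open neighbourhood of each connected component of $F^{-1}\Sigma$ with measure scaling $\mu\{\Phi \geq 1/\rho\} \asymp \rho^d$. (Poly) is immediate. For (Mov), observe that a single excursion of height $\geq 1/\rho$ takes flow-time at least $2\ln(1/\rho) - O(1)$, so two $\fA_{\rho_n}$-events at integer times separated by less than $(2/d)\ln n$ are geometrically impossible, while for larger separations the joint measure is $\asymp \rho_n^{2d}$ by exponential mixing, which is $\ll \mu(\fA_{\rho_n})(\ln n)^{-1000r}$.

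Applying Corollary~\ref{cor.mixing} with this target family, one computes
$$\bS_r \;=\; \sum_{M\geq 1}\bigl(2^M \mu(\fA_{\rho_{2^M}})\bigr)^r \;\asymp\; \sum_{M\geq 1} M^{-rc},$$
which diverges if and only if $c \leq 1/r$. By construction $N^n_{\rho_n}$ differs by $O(1)$ from the number of cusp excursions completed by time $n$ with maximum height at least $1/\rho_n$. Therefore the convergent case $c > 1/r$ gives, for $\mu$-a.e.\ $(q,v)$, that $H^{(r)}(T) - \frac{1}{d}\ln T \leq \frac{c}{d}\ln\ln T$ for all large $T$, and the divergent case $c < 1/r$ gives the matching reverse inequality infinitely often. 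Sending $c \to 1/r$ from both sides produces the claimed value $1/(rd)$. The main obstacle is the precise verification of (Mov) and (Appr) for the future-dependent target $\fA_\rho$: one must carefully track how the entry angle and cusp label vary smoothly with $(q,v)$ on each connected component of $F^{-1}\Sigma$, and use the explicit upper-half-space cusp geometry together with the slow-recurrence bounds from Lemma~\ref{lemma.mov} applied in the thick part to exclude clustered short-interval high-excursion returns.
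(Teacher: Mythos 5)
Your overall strategy is the same as the paper's: the targets are (up to replacing your forward-looking ``crosses the section within time $1$'' condition by the paper's backward-looking condition $\bar t(q,v)\in[0,1]$ in Definition \ref{defAH}) exactly the sets $\cA_{-\ln\fr}$ of points at the start of an excursion of log-height at least $|\ln\fr|$, with $\mu\asymp\fr^d$, and the theorem is reduced to Corollary \ref{cor.mixing} with $\fr_n=n^{-1/d}(\ln n)^{-s}$ and the observation that $\bS_r=\infty$ iff $s\le 1/(rd)$. The $(\mathrm{Appr})$ and $(\mathrm{Poly})$ verifications are fine in spirit, except that the height of the next excursion is not a Lipschitz function (it blows up logarithmically as the entry angle goes to $0$); as in the paper one should feed Lemma \ref{RemLip} the entry angle $\Psi(q,v)$ itself. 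The counting reduction also needs the small bookkeeping the paper does with $N^{n/2}_{\fr_n}$: $N^n_{\fr_n}\ge r$ only guarantees $r$ excursions \emph{started} by time $n$, and since each lasts a time comparable to $\ln n$ one passes to time $n/2$ to ensure they are completed. These are cosmetic points.

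The genuine gap is in your verification of $(\mathrm{Mov})$. The condition must hold for all $k< R\ln n$, where $R$ is the large constant forced on you by Proposition \ref{ThEM-BC}: in the proofs of $(M1)_r$ and $(M2)_r$ the separation scale $\fs(n)=R\ln n$ must beat the error term $C\fr_n^{-r\tau}\theta^{\sqrt R\ln n}$, so $R$ is of order $(r\tau u/|\ln\theta|)^2$, with $\tau$ the Lipschitz-norm exponent from $(\mathrm{Appr})$. Your geometric disjointness argument (no new high excursion can begin while one is in progress) only covers $k\lesssim\frac{2}{d}\ln n$, while exponential mixing applied to approximants of norm $\fr_n^{-\tau}$ only controls $\mu(\fA_{\fr_n}\cap f^{-k}\fA_{\fr_n})$ once the error $\theta^k\fr_n^{-2\tau}$ is below $\sigma(\fr_n)(\ln n)^{-1000r}\approx n^{-1}(\ln n)^{-1000r}$, i.e.\ for $k\gtrsim\frac{d+2\tau}{d|\ln\theta|}\ln n$. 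Unless one assumes a quantitative relation between $\theta$, $\tau$ and $d$ (not available here, since $\tau$ is at least of order $1$ and $|\ln\theta|$ is whatever the mixing result provides), there is an uncovered window of separations of order $\ln n$, and appealing to ``the slow-recurrence bounds from Lemma \ref{lemma.mov} in the thick part'' does not close it: that lemma concerns returns to metric balls and its mixing step has exactly the same limitation. The paper closes this window by proving the purely geometric quasi-independence estimate of Lemma \ref{LmQIExc}, namely $\mu(\cA_H^{n_1}\cap\cA_H^{n_2})\le K\mu(\cA_H)^2$ for \emph{all} $n_1<n_2$: one decomposes $\cA_H^{n_1}$ according to the time $\bar n$ at which the excursion ends, thickens the pieces along unstable leaves (so that they have bounded multiplicity in $\bar n$ and total mass $O(\mu(\cA_{H-1}))$), and then uses the local product structure to bound the conditional probability of starting a new excursion of height $H$ at time $n_2>\bar n$ by $Ce^{-dH}$; no mixing is used at all. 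Some argument of this type (or another mechanism giving $(\mathrm{Mov})$ uniformly up to $R\ln n$) is the missing ingredient in your sketch.
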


We also have the following byproduct of our analysis.
\begin{Cor}
\label{CrPoisExc}
There is a constant $a_i$ such that for each $\fh$ the following holds.
Suppose that $(q,v)$ is uniformly distributed on $S\cQ.$
Then the number of excursions in the cusp $\cC_i$ which finished before time $T$ and reached the height
$\frac{\ln T}{d}+\fh$ is asymptotically Poisson with parameter $a_i e^{-d\fh}.$
\end{Cor}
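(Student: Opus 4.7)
The plan is to cast the excursion count as a Poisson visit count and apply Theorem \ref{ThPoisson} to the time-$\delta$ map of the geodesic flow for a small fixed $\delta>0.$ First, set $h_T = \frac{1}{d}\ln T + \fh$ and consider the cusp slab $\cU_T=\{(q,v)\in S\cQ : q\in\cC_i,\,y(q)\geq e^{h_T}\}.$ A direct integration of the Liouville form in cusp coordinates yields $\mu(\cU_T)=\alpha_i e^{-d\fh}/T\,(1+o(1))$ with $\alpha_i$ proportional to $\Vol(V_i).$ Since geodesics inside a hyperbolic cusp trace Euclidean semicircles, the height $t\mapsto y(g^t(q,v))$ is unimodal on each excursion; hence the number of excursions in $\cC_i$ finishing before time $T$ and reaching height $\geq h_T$ equals, up to an $O(1)$ correction at the right endpoint, the number of connected components of $\{t\in[0,T]:g^t(q,v)\in\cU_T\}.$

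Next, I would encode each such connected component as a single discrete-time visit. Fix a small $\delta>0$ and introduce the upward-moving thin slab
$$\cU_T^\delta=\{(q,v):q\in\cC_i,\,y(q)\in[e^{h_T},(1+\delta)e^{h_T}],\,\langle v,\partial_y\rangle>c_0\}$$
for some $c_0>0.$ By the semicircle geometry, for $\delta$ small, each excursion reaching height $\geq h_T$ contributes exactly one index $k\in\{0,\dots,\lfloor T/\delta\rfloor\}$ with $g^{k\delta}(q,v)\in\cU_T^\delta,$ and a further cusp computation gives $\mu(\cU_T^\delta)=\delta\,\beta_i e^{-dh_T}(1+o(1)).$ Setting $n=\lfloor T/\delta\rfloor,$ we have $n\,\mu(\cU_T^\delta)\to \beta_i e^{-d\fh}=:a_i e^{-d\fh}.$

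The sequence $\{\cU_T^\delta\}$ should satisfy the simple-target admissibility hypotheses for $g^\delta$: (Poly) is immediate, (Appr) follows from Lemma \ref{RemLip} applied to the Lipschitz functions $y$ and $\langle v,\partial_y\rangle$ in cusp coordinates, and (Mov) holds because after one upward passage (taking hyperbolic time $\lesssim\delta$) the geodesic must descend through the entire cusp into the compact core $\cK$ before it can revisit $\cU_T^\delta,$ producing a return time that exceeds $(\ln T)^{100r}$ for $T$ large. Multiple exponential mixing of the geodesic flow on $S\cQ,$ in a suitable Banach space, is standard for finite-volume hyperbolic manifolds, so Theorem \ref{ThPoisson} delivers Poisson convergence of the visit count with parameter $a_i e^{-d\fh}.$ Combined with the identification of discrete visits with excursions explained in the first paragraph, this yields the claim.

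The main obstacle is technical rather than conceptual: the targets $\cU_T^\delta$ live deep in a cusp of a non-compact manifold, so one has to verify that the mixing norm remains finite on observables supported there. A standard workaround is to truncate $\cU_T^\delta$ at a large fixed height $e^{H}$ and invoke Sullivan's Borel--Cantelli estimate to show that the deep-cusp tail above $e^{H}$ contributes $o(1)$ excursions; within the truncated window all Lipschitz norms are controlled and the mixing estimates built into Definition \ref{def.mixing} apply directly. The same calculation that produces $\alpha_i$ and $\beta_i$ also identifies the constant $a_i$ as a Santal\'o-type flux across the level set $\{y=e^{h_T}\},$ which is essentially the constant appearing in Sullivan's original logarithm-law computation.
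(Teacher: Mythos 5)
Your encoding of excursions by a thin slab at the top, sampled along the time-$\delta$ map, has a genuine gap at the crucial identification step. Since the vertical speed along a geodesic is at most $1$ (in the $\ln y$ coordinate it equals $\tanh$ of the time to the apex), crossing a slab of relative width $\delta$ takes time at least comparable to $\delta$, and for excursions whose apex barely exceeds $e^{h_T}$ it takes much longer; consequently the number of sampled times $k\delta$ falling in $\cU_T^\delta$ during one qualifying excursion is not "exactly one" but varies between $1$ and roughly $1/c_0$. For the same reason the condition (Mov) fails at lag one: a point entering the bottom of the slab with upward speed around $(1+c_0)/2$ is still in $\cU_T^\delta$ one sampling step later, so $\mu(\cU_T^\delta\cap g^{-\delta}\cU_T^\delta)\geq c\,\mu(\cU_T^\delta)$, which is far larger than the $\sigma(\fr_n)(\ln n)^{-1000r}$ bound that (Mov) requires. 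Hence Theorem \ref{ThPoisson} does not apply to these targets, and even heuristically the raw hit count would converge to a compound Poisson law (clusters within a single excursion), not to the excursion count. Two further problems: the threshold $\langle v,\partial_y\rangle>c_0$ systematically discards excursions whose maximal height exceeds $e^{h_T}$ only slightly (their vertical speed at the slab is of order $\sqrt{\eps}$), so the captured parameter is $a_ie^{-d\fh}(1-c_0^2)^{d/2}$ rather than $a_ie^{-d\fh}$, requiring an additional limit in $c_0,\delta$ that you do not carry out; and your truncation "at a large fixed height $e^H$" to control mixing norms is incoherent, since the targets themselves sit at height $e^{h_T}\to\infty$, so a fixed-height truncation removes them entirely.

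The paper avoids all of this by encoding each excursion at its \emph{entrance} rather than at its apex: the target $\cA_{i,H}$ of Definition \ref{defAH} consists of points whose backward exit time from the cusp lies in $[0,1]$ (so each excursion is represented by exactly one integer time) and whose angle with the escaping geodesic is less than $e^{-H}$ (which is exactly the condition to reach height $H$). These targets live in a fixed bounded region near the cusp boundary, so the (Appr) and mixing estimates apply without any truncation, and the delicate point is precisely the quasi-independence $\mu(\cA_H^{n_1}\cap\cA_H^{n_2})\leq K\mu(\cA_H)^2$ of Lemma \ref{LmQIExc}, proved by decomposing according to the exit time, thickening along unstable cubes and using the local product structure; your one-line justification of (Mov) ("must descend to the compact core before revisiting") does not substitute for this argument even for distinct excursions, and does not address the within-excursion clustering that your slab targets create. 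With admissibility established (Proposition \ref{propadm}) and $n\mu(\fA_{i,\fr_n})\to a_ie^{-d\fh}$ from \eqref{MesHeight}, the corollary is then an immediate application of Theorem \ref{ThPoisson}. If you want to salvage your apex-based route, you would have to count clusters rather than sampled hits (or pass to entrance times anyway), which essentially reduces to the paper's construction.
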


In other words, for every $r\geq 1$, we have
\begin{equation}
\label{PoisProcExc}
 \lim_{T\to\infty} \mu\left(H^{(r)}_i(q,v, T)<\frac{\ln T}{d}+\fh\right)=\sum_{l=0}^{r-1}
\frac{(a_i e^{-d\fh})^l}{l!}\exp\left(-a_i e^{-d\fh}\right).
\end{equation}
In particular, taking $r=1$ in \eqref{PoisProcExc} we obtain

\begin{Cor}
\label{CrGeoGumbel}
{\it (Gumbel distribution for the maximal excursion)}
If $(q,v)$ is uniformly distributed on $S\cQ.$ Let $ H^{(1)}_i(q,v, T)$ denote the maximal height
reached by $\gamma(q, v, t)$ up to time $T$ inside cusp $\cC_i.$
Then
$$ \lim_{T\to\infty} \mu\left(H^{(1)}_i(q,v, T)-\frac{\ln T}{d}<\fh\right)=\exp\left(-a_i e^{-d\fh}\right).
$$
\end{Cor}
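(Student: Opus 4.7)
The plan is to observe that Corollary \ref{CrGeoGumbel} is the immediate $r=1$ specialization of Corollary \ref{CrPoisExc}. Setting $r=1$ in the display \eqref{PoisProcExc}, the sum $\sum_{l=0}^{r-1}$ collapses to its single $l=0$ term, which equals $\exp(-a_i e^{-d\fh})$, while the left-hand side reads $\mu\left(H^{(1)}_i(q,v,T)<\frac{\ln T}{d}+\fh\right)$, exactly the quantity whose limit is sought. Thus no new input beyond Corollary \ref{CrPoisExc} is required, and the limit formula
$$\lim_{T\to\infty}\mu\left(H^{(1)}_i(q,v,T)-\tfrac{\ln T}{d}<\fh\right)=\exp\left(-a_i e^{-d\fh}\right)$$
drops out.

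For transparency I would spell out the (routine) passage from \eqref{PoisProcExc} to this Gumbel form, which is just the standard fact that an order statistic is encoded by a counting event. Introduce $N_T^{(i)}(\fh)$, the number of excursions in $\cC_i$ which complete before time $T$ and reach height at least $\tfrac{\ln T}{d}+\fh$. Then $\{H^{(r)}_i(q,v,T)<\tfrac{\ln T}{d}+\fh\}=\{N_T^{(i)}(\fh)\leq r-1\}$, and Corollary \ref{CrPoisExc} asserts that $N_T^{(i)}(\fh)$ converges in law to a Poisson random variable with parameter $a_i e^{-d\fh}$; consequently its distribution function at $r-1$ equals the right-hand side of \eqref{PoisProcExc}, which for $r=1$ is precisely $e^{-a_i e^{-d\fh}}$. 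The only step demanding genuine dynamical input is Corollary \ref{CrPoisExc} itself, which is separately established via Theorem \ref{ThPoissonProcess} applied to the multiply exponentially mixing geodesic flow on $S\cQ$, so no further obstacle remains.
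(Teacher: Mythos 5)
Your proposal is correct and coincides with the paper's own argument: the paper derives Corollary \ref{CrGeoGumbel} precisely by taking $r=1$ in \eqref{PoisProcExc}, which is the restatement of Corollary \ref{CrPoisExc} in terms of the order statistics $H^{(r)}_i$. Your additional remark spelling out the equivalence $\{H^{(r)}_i(q,v,T)<\tfrac{\ln T}{d}+\fh\}=\{N_T^{(i)}(\fh)\leq r-1\}$ is exactly the (implicit) bookkeeping behind that display, so nothing further is needed.
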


\subsection{  MultiLog law for geodesic excursions}

In this section we prove  Theorem \ref{ThMultiExc}. We first need to discuss the probability of having an excursion reaching a given level.
To this end let $\Pi$ be the plane passing through $\gamma$ and the escaping geodesic. In this plane the geodesics are half circles
centered at the absolute $\{y=0\}.$  The half circle (geodesic) given by $(x-x_0)^2+y^2=R^2$ reaches the maximum height of $\ln R+O(1).$ Let
$n^*$ be the first integer moment of time after the beginning of the excursion. Then the $y$ coordinate of $\gamma(n^*)$ is uniformly bounded from
above and below so the radius of the circle defining the geodesic is given by $\DS R=\frac{y(n^*)}{\sin \theta}$ where $\theta$ is the angle with
the escaping geodesic. It follows that the condition $R\geq R_0$ is equivalent to the condition $\sin\theta\leq \frac{y(n^*)}{R_0}.$

\begin{Def} \label{defAH} Given $H$ we consider the set $\cA_{i,H}$ which consists of points $(q,v)\in \cC_i$ such that

(i) The first positive time $\bar {t}(q,v)$ such that the backward geodesic $\gamma(q, v, -\bar t)$ exits the cusp satisfies $\bar{t}(q,v)\in [0,1]$;

(ii) The angle $v$ makes with the  escaping  geodesic at $q$ is less than $e^{-H}.$
\end{Def}

{The above discussion implies that for $(q,v) \in \cC_i$ satisfying (i) and (ii), the geodesic starting at $(q,v)$  will exit the cusp in backward time less than $1$ and will do an excursion in future time up to height $h\geq H$, consuming for this a time comparable to $h$.
Moreover
condition (ii) on the angle is a necessary and sufficient condition for the excursion to reach height $H$.}

We also introduce
\begin{equation}
\label{AH}
\cA_H=\bigcup_i \cA_{i,H}.\end{equation}

It is a basic fact (e.g. see the proof of Theorem 6 in \cite{Sul})
\begin{equation}
\label{MesHeight}
\mu(\cA_{i,H})=a_ie^{-dH} (1+o(1)).\end{equation}

{To prove Theorem \ref{ThMultiExc} we define for every $k\geq 0$
\begin{equation}
\label{exc.targets}
\fA_\fr^k=g^{-k}\cA_{-\ln \fr}.
\end{equation}

By a slight abuse of notation, we still denote the event  $1_{\fA^k_{\fr}}$ by $\fA_\fr^k$.  We also keep the notation $\sigma(\fr)=\mu(\fA_{\fr})$.

For $s\geq 0$, we let $\fr_n=n^{-1/d} \ln^{-s}n$, and recall that $N^n_{\fr_n}(q,v)$ denotes the number of times $k\in [1,n]$ such that $\fA^k_{\fr_n}$ occurs (i.e. $(q,v) \in \fA^k_{\fr_n}$).

Theorem \ref{ThMultiExc}  becomes equivalent to the following:
\begin{itemize}
\item[(a)] If $s>\frac{1}{rd}$,  then for $\mu$-a.e. $(q,v)$, we have that  for large $n$,
$N^n_{\fr_n}<r.$
\item[(b)] If $s\leq\frac{1}{rd}$, then for $\mu$-a.e. $(q,v)$, there are infinitely many $n$ such that
$N_{\fr_n}^{\frac{n}{2}}\geq r.$\footnote{We introduce a factor $1/2$ to make sure the last excursion that starts before $n/2$ finishes before $n$. Here, we are using the control on the excursion time that is comparable to the excursions height $\ln n \ll n/2$.}
\end{itemize}}

Observe that by \eqref{MesHeight}, we have that
\begin{equation} \label{mesAH}  \mu(\fA_{\fr_n}) \in \left[C^{-1}n^{-1}\ln^{-sd} n,C n^{-1}\ln^{-sd} n\right]\end{equation}

With the notation $\bS_r=\sum_{j=1}^\infty \left(2^j \bv_j\right)^r$
where $\bv_j=\fv(\fr_{2^j})$ we see from \eqref{mesAH} that
$\bS_r=\infty$ if and only if $s\leq\frac{1}{rd}$. We want thus to apply
Corollary \ref{cor.mixing}, but first we need to verify its conditions.

{The system $(g^1,S\cQ,\mu,\BAN)$ is $r$-fold exponentially mixing for every $r \geq 2$ in the sense of Definition \ref{def.mixing}. Indeed  (Prod) an (Gr) are clear, while
 $(EM)_r$ follows from Theorem 1.1 of \cite{BEG} (see also Theorem 1.2 of \cite{KM12})
 and Remark \ref{lowreg} and Theorem \ref{MultiMixRet} of our appendix. }



To apply Corollary \ref{cor.mixing}, we also need the admissibility of the targets.

\begin{Prop} \label{propadm} The family of targets $\{\Omega_{\rho_n}\}$ is  admissible as in Definition \ref{def.targets}.
\end{Prop}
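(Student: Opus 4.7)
The plan is to verify the three conditions (Appr), (Poly), and (Mov) of Definition~\ref{def.targets} for the target family $\fA_\rho := \cA_{-\ln\rho}$, so that $\fA_{\rho_n}^k = g^{-k}\cA_{-\ln\rho_n}$ coincides with \eqref{exc.targets}. Monotonicity $\rho'\leq\rho\Rightarrow\fA_{\rho'}\subset\fA_\rho$ is clear since $\cA_H$ shrinks as $H$ grows, and (Poly) is immediate from $\rho_n = n^{-1/d}\ln^{-s} n$ combined with \eqref{mesAH}.

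For (Appr), I will work cusp by cusp. In each local chart $V_i \times [L_i,\infty)$, the set $\cA_{i,H}$ is cut out by the two Lipschitz conditions of Definition~\ref{defAH}: the backward exit time $\bar t(q,v) \in [0,1]$ (an $H$-independent Lipschitz constraint near the cusp entrance) and the angle $\Theta(q,v)$ between $v$ and the vertical escaping geodesic at $q$ satisfies $\Theta < e^{-H} = \rho$. Following the construction of Lemma~\ref{RemLip}, I smooth each indicator on scale $\rho^b$ for some $b > 1$, producing Lipschitz approximations $A_\rho^\pm$ with $\|A_\rho^\pm\|_\infty \leq 2$ and $\|A_\rho^\pm\|_{\mathrm{Lip}} \leq C\rho^{-b}$, which yields (i) with $\tau = b$, while (ii) is automatic. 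For (iii), the $\rho^b$-shell around $\{\Theta = \rho\}$ has measure $O(\rho^{d-1+b})$, while $\sigma(\rho) \sim \rho^d$ by \eqref{MesHeight}; taking $\eta = (b-1)/d$ then gives $\mu(A_\rho^+) - \mu(A_\rho^-) \leq \sigma(\rho)^{1+\eta}$. Summing over the finitely many cusps gives (Appr) for the target $\fA_\rho$.

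The main obstacle is (Mov), which I handle by splitting the range $(0, R\ln n)$ around $k \approx 2H_n$, with $H_n := -\ln\rho_n = (1/d)\ln n + s\ln\ln n$. In the geometric regime $k \in [1, 2H_n - C_1]$, with $C_1$ depending only on the geometry of the cusps, the intersection $\fA_{\rho_n}\cap g^{-k}\fA_{\rho_n}$ is in fact empty: if $(q,v) \in \cA_{i,H_n}$ then the excursion through $\cC_i$ attains height $H_0 \geq H_n$ and therefore remains inside $\cC_i$ for a total time $T_1 \geq 2H_n - C_1$, throughout which $\bar t(g^k(q,v)) = k + \bar t(q,v) > 1$, so $g^k(q,v) \notin \cA_{H_n}$. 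In the mixing regime $k \in [2H_n, R\ln n]$, the exponential mixing from Definition~\ref{def.mixing} applied to the approximations from (Appr) yields
$$\mu\left(\fA_{\rho_n}\cap g^{-k}\fA_{\rho_n}\right) \leq \mu(A_{\rho_n}^+)^2 + C\theta^k \rho_n^{-2b},$$
whose first term is $O(\sigma(\rho_n)^2) \ll \sigma(\rho_n)(\ln n)^{-1000r}$.

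The hardest part will be to ensure that the second term also satisfies the required bound $\sigma(\rho_n)(\ln n)^{-1000r}$ starting precisely at $k = 2H_n$, where the geometric emptiness argument just runs out. This forces a quantitative inequality relating the exponential mixing rate $|\ln\theta|$ to the Lipschitz exponent $b$ of the approximations, roughly $(2/d)|\ln\theta| > 1 + 2b/d$. To secure it, I plan to work with a Banach space of higher regularity (the space of $C^s$ observables for large $s$, as permitted by Remark~\ref{RemLip}), for which the geodesic flow on $\cQ$ is multiply exponentially mixing with rate fast enough to bridge the gap and complete the verification of (Mov).
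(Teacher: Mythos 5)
Your verification of (Poly) and (Appr) is fine and is essentially the paper's argument (the angle and the backward exit time are Lipschitz, then Lemma \ref{RemLip}). The genuine gap is in (Mov). Your geometric regime only excludes returns \emph{within a single excursion}: if $(q,v)\in\cA_{i,H_n}$ then indeed $g^k(q,v)$ stays in the cusp with backward exit time $>1$ for $k$ up to the excursion length $\approx 2H_n\approx\frac{2}{d}\ln n$, so those intersections are empty. But the dangerous events are two \emph{distinct} deep excursions separated by a short sojourn in the compact part, i.e. $k$ only slightly larger than $2H_n$; these are not empty and must be estimated. Your plan to cover them by $2$-fold exponential mixing cannot work quantitatively: the mixing error is $C\theta^k\rho_n^{-2\tau}$, where necessarily $\tau\geq 1$ (any Lipschitz or $C^s$ majorant of the indicator of a set of angular width $\rho$ has norm at least of order $\rho^{-1}$), so the bound becomes useful only for $k\gtrsim\frac{2\tau+d}{d|\ln\theta|}\ln n$, and matching this to $2H_n$ requires $|\ln\theta|\geq \tau+\frac{d}{2}$. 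The rate $\theta$ of the geodesic flow on a finite-volume hyperbolic manifold is a fixed constant governed by the spectral gap (at best of order $e^{-dt/2}$ up to polynomial factors), and it is \emph{not} improved by taking observables in $C^s$ with large $s$; higher regularity only tames polynomial losses. Hence the inequality $(2/d)|\ln\theta|>1+2\tau/d$ that your plan hinges on is unattainable, and the window $k\in\left(2H_n,\ \frac{2\tau+d}{d|\ln\theta|}\ln n\right)$ (in addition to the small hole $(2H_n-C_1,2H_n)$ you already left open) remains uncontrolled.

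The paper closes (Mov) without any use of mixing, via a purely geometric quasi-independence estimate (Lemma \ref{LmQIExc}): $\mu\left(\cA_H^{n_1}\cap\cA_H^{n_2}\right)\leq K\mu(\cA_H)^2$ for \emph{all} $n_1<n_2$, which gives (Mov) at once since $\mu(\cA_{H_n})\approx n^{-1}$ up to logarithmic factors. The mechanism is: decompose $\cA_H^{n_1}$ according to the time $\bar n$ at which the excursion ends, thicken each piece along unstable manifolds to cubes of a fixed size $\delta$ near the thick part (these thickenings have bounded multiplicity, so their total measure is $O(\mu(\cA_{H-1}))$), and then use the local product structure to show that, conditioned on any such unstable cube, the probability of entering $\cA_H$ at \emph{any} later time is $\leq Ce^{-dH}$. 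If you want to repair your write-up, this is the missing ingredient: short gaps between consecutive deep excursions are controlled by conditional equidistribution along unstable leaves of definite size, not by correlation decay at time $k$.
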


Before we prove Proposition \ref{propadm}, we first complete the

\begin{proof}[Proof of Theorem \ref{ThMultiExc}.] From the equivalence stated in $(a)$ and $(b)$ above, and since by Proposition \ref{propadm} the targets $\{\Omega_{\rho_n}\}$ are admissible, the $\limsup$ of Theorem \ref{ThMultiExc} follows from Corollary \ref{cor.mixing} and the fact that  $\bS_r=\infty$ if and only if $s\leq\frac{1}{rd}$.
\end{proof}

\begin{proof}[Proof of Proposition \ref{propadm}.] First, the definition of $\rho_n$ and \eqref{mesAH} imply (Poly). Next, the first time  $\bar{t}(q,v)\geq 0$ such that $\gamma(q, v, -t)$ exits the cusp  is Lipschitz in $(q,v)$. Also, the angle $\Psi(q,v)$ that $v$ makes with  the escaping  geodesic  at $q$  is also a Lipschitz function of $(q,v)$. {We conclude that  (Appr) for the targets $\{\Omega_{\rho}\}$ follows from  Lemma \ref{RemLip} with {$\Phi(q,v)=\Psi(q,v),$} $a_1(\rho)=0$ and $a_2(\rho)=\rho$, {\it modulo} a very simple modification in the proof of Lemma \ref{RemLip} to account for the benign extra condition that $\bar{t}(q,v)\in [0,1]$.}

It remains to prove (Mov). We denote $\cA_H^{n}=g^{-n}\cA_H.$ (Mov)  is an immediate consequence from the following quasi-independence result on the excursions.
{Similar quasi-independence results are obtained in \cite{Sul,PauPol16}, and we will give a proof adapted to our setting for completeness.}

\begin{Lem}
\label{LmQIExc}
There is a constant $K$ such that for each $H >0$ and each $n_1< n_2,$
$$  \mu(\cA_H^{n_1} \cap  \cA_H^{n_2})\leq  K \mu\left(\cA_H\right)^2.$$
\end{Lem}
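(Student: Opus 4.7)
My plan is to bound $\mu(\cA_H^{n_1}\cap\cA_H^{n_2})$ by $Ke^{-2dH}$, using \eqref{MesHeight} which gives $\mu(\cA_H)\asymp e^{-dH}$. By $g^t$-invariance of $\mu$ I may take $n_1=0$ and set $\Delta=n_2$. I will split into three regimes according to the ratio of $\Delta$ to $H$, paralleling the arguments in \cite{Sul, PauPol16}.

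For the \emph{long-separation regime} $\Delta\geq C_0H$, with $C_0$ chosen large relative to the mixing rate $|\ln\theta|$, I will use exponential mixing. Lemma \ref{RemLip} applied to the defining conditions (i) (a Lipschitz function of $(q,v)$) and (ii) (whose threshold is $e^{-H}$) yields Lipschitz approximations $A_H^{\pm}$ of $1_{\cA_H}$ with $\|A_H^{\pm}\|_\infty\leq 2$, $\mu(A_H^+)-\mu(A_H^-)\leq\mu(\cA_H)^{1+\eta}$, and $\|A_H^{\pm}\|_{\mathrm{Lip}}\leq Ce^{\tau H}$ for a constant $\tau$ coming from the angle condition. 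Applying $(\mathrm{EM})_2$ to the 2-fold exponentially mixing system $(g^1,S\cQ,\mu,\BAN)$ gives
\[
\mu(\cA_H\cap g^{-\Delta}\cA_H)\leq \mu(A_H^+\cdot A_H^+\circ g^\Delta)\leq \mu(A_H^+)^2+Ce^{2\tau H}\theta^\Delta,
\]
and the choice $C_0>(2\tau+2d)/|\ln\theta|$ absorbs the error into $\mu(\cA_H)^2$.

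For the \emph{short-separation regime} $\Delta\leq 2H-C_1$, I will show the intersection is empty on purely geometric grounds. If $(q,v)\in\cA_H$ then the forward geodesic has just entered some cusp $\cC_i$ and undertakes an excursion of height at least $H$; a standard half-plane computation shows the total duration of this excursion is at least $2(H-L_i)-C_1'$. Hence for $\Delta$ in this range, $g^\Delta(q,v)$ lies deep inside $\cC_i$, with $\bar t(g^\Delta(q,v))>1$, violating condition (i) of Definition \ref{defAH}, so $\cA_H^{n_1}\cap\cA_H^{n_2}=\emptyset$.

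The hard part — and main obstacle — is the \emph{intermediate regime} $2H-C_1<\Delta<C_0H$, which is non-empty when $|\ln\theta|<2+2d$. My plan here is to decompose $\cA_H=\bigsqcup_{k\geq 0}R_k$ with $R_k:=\cA_{H+k}\setminus\cA_{H+k+1}$; by \eqref{MesHeight}, $\mu(R_k)\leq Ce^{-d(H+k)}$ and the first excursion of any $(q,v)\in R_k$ has duration $T_k=2(H+k)+O(1)$, so the intersection is automatically empty unless $k\leq(\Delta-2H)/2+O(1)$. Writing $\Delta=T_k+m_k$ with $m_k\geq 0$, I will bound $\mu(R_k\cap g^{-\Delta}\cA_H)$ by $Ce^{-d(H+k)}e^{-dH}$ via two sub-arguments depending on $m_k$: when $m_k$ exceeds a fixed mixing time $T_0$, I use $(\mathrm{EM})_2$ applied to the image $g^{T_k}R_k$ (a set lying on an exit cross-section with controlled Lipschitz structure) against $\cA_H$ over the remaining time $m_k$; when $m_k<T_0$, I use the geometric fact that the set of points on any cusp-exit cross-section which enter some cusp within time $T_0$ at an angle $\leq e^{-H}$ to the escaping geodesic has measure $\lesssim e^{-dH}$, a cross-section analogue of \eqref{MesHeight}. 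Summing $\sum_{k\geq 0}Ce^{-d(H+k)}\cdot C'e^{-dH}=O(e^{-2dH})$ then combines with the other two regimes to yield the Lemma.
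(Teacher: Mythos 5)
Your long-separation and short-separation regimes are fine, but the intermediate regime $2H-C_1<\Delta<C_0H$ --- which you correctly identify as the crux --- has a genuine gap in both sub-arguments. In the sub-case $m_k\geq T_0$ you invoke $({\rm EM})_2$ with a \emph{fixed} gap $T_0$, but the observables involved are Lipschitz regularizations of $1_{g^{T_k}R_k}$ and $1_{\cA_H}$, whose Lipschitz norms are of order $e^{\tau(H+k)}$ and $e^{\tau H}$ (exactly as in your own long-range computation). The mixing error is then $C\,\theta^{m_k}e^{\tau(H+k)}e^{\tau H}$, which for $m_k$ of constant size (or indeed for any $m_k=o(H)$) completely swamps the target bound $e^{-d(H+k)}e^{-dH}$; exponential mixing only becomes useful once the gap is at least a constant multiple of $H$, which is precisely why you needed $\Delta\geq C_0H$ in the first regime. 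So the range $T_0\lesssim m_k\lesssim cH$ is not covered by either of your sub-cases. In the sub-case $m_k<T_0$ the stated ingredient --- that the set of points on an exit cross-section entering a cusp within time $T_0$ at angle $\leq e^{-H}$ has measure $\lesssim e^{-dH}$ --- is an \emph{unconditional} measure bound, whereas what you need is a bound on the conditional measure of that bad set given the extremely thin set $g^{T_k}R_k$ (of measure $\approx e^{-d(H+k)}$); a priori $g^{T_k}R_k$ could sit entirely inside the bad set, and ruling this out requires an argument about how $g^{T_k}R_k$ is distributed transversally, not just a measure estimate on the cross-section.

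The missing ingredient in both sub-cases is the same: control of the conditional measure of a future visit to $\cA_H$ given sets that, after flowing through the first excursion, are effectively unions of long unstable pieces. This is exactly how the paper proceeds, and it removes the case analysis altogether: one decomposes $\cA_H^{n_1}$ according to the exit time $\bar n$ of the first excursion into sets $\cB_{H,n_1,\bar n}$, thickens them along unstables into $\tcB_{H,n_1,\bar n}$ (these have bounded intersection multiplicity, so their total measure is $\leq 3\mu(\cA_{H-1})$), and then shows, using the local product structure near the exit region, that for every unstable cube $\cW$ of fixed size and every gap $n^*>0$ one has $\mu(\cA_H^{n^*}\mid\cW)\leq Ce^{-dH}$, by saturating $\cW$ with stable leaves and flow directions into a set $Q$ of measure bounded below and observing that $Q$-points shadow $\cW$-points into $\cA_{H-1}$. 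This conditional bound is uniform in the time gap, so summing over $\bar n<n_2$ gives the lemma with no mixing and no splitting into regimes. If you want to salvage your outline, you would have to replace both sub-arguments of the intermediate regime by such a conditional (unstable-leafwise) estimate --- at which point it also supersedes your long-range mixing step.
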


Up to proving Lemma \ref{LmQIExc}, we finished the proof of Proposition \ref{propadm}. \end{proof}

The rest of this section is devoted to the
\begin{proof}[Proof of Lemma \ref{LmQIExc}.]
Let $\tcA_H=I \cA_H$ where $I$ denotes the involution $I(q,v)=(q, -v).$ Given $n_1, \bar{n}$ define
$$ \cB_{H, n_1, \bar{n}}=\{x: g^{n_1}x\in \cA_H, \; g^{\bar{n}}x\in \tcA_H, \; g^n x\not\in \cK
\text{ for } n_1<n<\bar{n}\}. $$
Thus $\cB_{H, n_1, \bar{n}}$ consists of points which enter a cusp at time $n_1,$ reach the height $H,$
and then exit the cusp at time $\bar{n}.$ We have that
$\DS \cA_H^{n_1} = \bigcup_{\bar n > n_1}\cB_{H, n_1, \bar{n}}$.
 Note that $\bar{n}-n_1\geq H$.

Fix a small $\delta$ and let $\tcB_{H, n_1, \bar{n}}=\bigcup_{x\in\cB_{H, n_1, \bar{n}}} \cW^u\left(x, \delta e^{-\bar{n}}\right),$ where $\cW^u\left(x, \rho\right)$ denotes the local unstable cube containing $x$ of length $\rho.$ Note that if $y\in \tcB_{H, n_1, \bar{n}}$ then $g^{\brn_1} y\in \cA_{H-1}$ for some $\brn_1\in [n_1-1, n_1+1]$ and $g^{\brn_2} y\in \tcA_{H-1}$ for some $\brn_2\in [\bar{n}-1, \bar{n}+1].$
In particular for each $n_1$ the sets
$\{\tcB_{H, n_1, \bar{n}}\}_{\bar{n}\geq n_1} $ have at most 3 intersection multiplicity and hence
\begin{equation}
\label{SumCollars}
\sum_{\bar{n}} \mu(\tcB_{H, n_1, \bar{n}}) \leq 3 \mu(\cA_{H-1}).
\end{equation}

{Since $\tcB_{H, n_1, \bar{n}} \cap \cA_H^{n_2} =\emptyset$ if $\bar n \geq n_2$, we have for $n_2>n_1$
\begin{equation}
\label{SumCollars2}
 \mu(\cA_H^{n_1} \cap  \cA_H^{n_2})\leq \sum_{n_1<\bar{n}<n_2}
\mu\left(\tcB_{H, n_1, \bar{n}} \cap \cA_H^{n_2}\right). \end{equation}
We claim that for some constant $C\geq 1$ for each $n_1<\bar{n}< n_2$
\begin{equation}
\label{CondExcursion}
\mu\left(\cB_{H, n_1, \bar{n}} \cap \cA_H^{n_2} \right)\leq C \mu(\tcB_{H, n_1, \bar{n}})\mu(\cA_H).
\end{equation}
Now, \eqref{MesHeight}, \eqref{SumCollars}, \eqref{SumCollars2} and
\eqref{CondExcursion}, imply the estimate of Lemma \ref{LmQIExc}.

It remains to establish \eqref{CondExcursion}. To this end, fix a large $\brH$ and partition a
small neighborhood $\cU$ of $\tcA_\brH$ into unstable cubes of size $\delta$ (these are nice unstable cubes around points that are close to the compact region $\cK$).  For $H\geq \bar H$, let
$$\hcB_{H, n_1, \bar{n}}=\bigcup_{x\in B_{H, n_1, \bar{n}}} \cW^u(g^{\bar{n}} x,\delta) $$
where $\cW^u(y,\delta)$ is the element of the above partition containing $y.$
Note that
\begin{equation} \label{tildebar}
\cB_{H, n_1, \bar{n}}\subset g^{-\bar{n}} \hcB_{H, n_1, \bar{n}} \subset \tcB_{H, n_1, \bar{n}}. \end{equation}
Thus
$$ \mu\left(\cB_{H, n_1, \bar{n}} \cap \cA_H^{n_2}\right)\leq
\mu\left(g^{-\bar{n}} \hcB_{H, n_1, \bar{n}} \cap \cA_H^{n_2}\right)
=\mu\left(\hcB_{H, n_1, \bar{n}} \cap  \cA_H^{n^*}\right)$$
where $n^*=n_2-\bar{n}>0$. We thus finish if we show that
\begin{equation}
\label{HB-A-QI}\mu\left(\hcB_{H, n_1, \bar{n}} \cap  \cA_H^{n^*}\right) \leq C
 \mu(\hcB_{H, n_1, \bar{n}})\mu(\cA_H).
\end{equation}
Indeed \eqref{tildebar} and \eqref{HB-A-QI}
imply \eqref{CondExcursion}. By construction, $\hcB_{H, n_1, \bar{n}}$ is partitioned into
nice unstable cubes of size $\delta$. It suffices to show that for any such cube $\cW$ we have
\begin{equation}
\label{W-A-QI}
\mu(\cA_H^{n^*}|\cW)\leq C e^{-dH}
\end{equation}
where $\mu(\cdot|\cdot)$ denotes the conditional expectation. Let
$\DS Q=\bigcup_{x\in \cW} \bigcup_{|t|<\delta}\cW^s(g^t x,\delta),$ where $\cW^s\left(y,\delta\right)$ denotes the local stable leaf containing $y$ of length $\delta.$
Note that if $\delta$ is sufficiently small then due to the local product structure, for
each point $y\in Q$  there is unique $x\in \cW$ and $t\in [-\delta, \delta]$ such that
$y\in \cW^s(g^t x,\delta)  .$ In addition if $g^{n^*} x\in \cA_H$ then
$g^{n^*} y\in \cA_{H-1}.$ Since the measure of $Q$ is bounded  from below uniformly in
$\cW\subset \cU$,
it follows that
$$ \mu(\cA_H^{n^*}|\cW)\leq \mu(\cA_{H-1}^{n^*}|Q)=\frac{\mu(\cA_{H-1}^{n^*}\cap Q)}{\mu(Q)}
\leq \brc \mu(\cA_{H-1})\leq \hc e^{-dH}.$$
This establishes \eqref{W-A-QI} and, hence \eqref{HB-A-QI} completing the proof of
Lemma \ref{LmQIExc}.}
\end{proof}

\subsection{  Poisson Law for excursions. Proof of Corollary \ref{CrPoisExc}}
Here we take \begin{equation*} \label{eq.rho_n2} \fr_n := n^{-1/d}.\end{equation*}
{We fix $\fh \in \R$ and fix a cusp index $i$.
With the sets $\cA_{i,H}$ defined as in Definition \ref{defAH}, consider the targets
\begin{equation*}
\label{exc.target2}\fA^k_{i,\fr_n}=g^{-k} \cA_{i,-\ln \fr_n-\fh}.\end{equation*}

As in the proof of Theorem \ref{ThMultiExc}, we have  that
$\{\fA^{k}_{i,\fr_n}\}$ satisfies the assumptions $(M1)_r$ and $(M2)_r$ for all $r.$ Moreover,
by \eqref{MesHeight}
$$ \lim_{n\to \infty} n \mu(\fA_{i,\fr_n})=a_ie^{-d\fh}.$$}
Therefore Corollary  \ref{CrPoisExc} follows from Theorem \ref{ThPoisson}. \hfill $\Box$

\subsection{  Notes.}
The logarithm law for the highest excursion was proven in \cite{Sul}.
The extensions for infinite volume hyperbolic manifolds is studied in \cite{SV95}.
Corollary \ref{CrGeoGumbel} for surfaces is obtained in \cite{JKS13} where the authors
also consider infinite volume surfaces. Papers
\cite{BP06, ELJ97} obtain stable laws for geodesic windings on hyperbolic manifolds.
Those papers are relevant since
 the main contribution to windings comes from long excursions, so the
proofs of stable laws and of the Poisson laws for excursions are closely related,
see e.g. \cite{db_poisson, DS20}. In case the hyperbolic manifold under consideration
is the modular surface, the length of the $n$-th geodesic excursion is approximately
equal to the size of the $n$-th convergent of the continued fraction expansion of the
geodesic endpoint \cite{GLJ93}, therefore the multiple Borel-Cantelli Lemma in that case
follows from the results of \cite{AN}.

Several authors discussed extended Logarithm Law for excursion to other homogeneous spaces.
Namely, \cite{KM99} studies partially hyperbolic flows on homogenous spaces and presents
applications to metric number theory, cf. Section \ref{ScKhinchine} of the present paper.
Logarithm Law for unipotent flows is considered in \cite{AM1, AM2, GK17, Kel17}.
In the next section we obtain MultiLog Law for certain diagonal flows on the space of
lattices.

\section{  Recurrence in configuration space.}
\label{ScConf}
\subsection{ The results.} \label{sec91}
In this section we return to the study of compact manifolds, but we treat targets which have more
complicated geometry than the targets from Section \ref{ScHit}. We will see that a richer geometry
of targets leads to stronger results.

Let $\cQ$ be a compact manifold of a variable negative curvature and dimension $d+1$.
Denote $S\cQ$ for the unitary tangent bundle over $\cQ,$ $\pi : S\cQ \rightarrow \cQ$ the canonical
projection, $\phi$ the geodesic flow on $S\cQ$ preserving the Liouville measure $\mu.$

Fix a small number $\bar{\rho}>0.$ Given a point $a\in \cQ$ and $(q,v)\in S\cQ,$
let $t_j$ be consecutive times where the function $t\to d(a, \pi(\phi^t(q,v)))$ has  a local minima
such that $d_j:=d(a, \pi(\phi^{t_j}(q,v))\leq \bar{\rho}.$ Let $d^{(r)}_n(a,(q,v))$ be the $r$-th minima
  among the numbers $\{d_j\}_{t_j\leq n}.$

\begin{Thm}\label{ThmGeoBall} $ \ $
 (a) For each $a\in \cQ$  and almost every $(q,v)\in S\cQ,$
$$      \limsup_{n\rightarrow \infty} \frac{|\ln d^{(r)}_n(a,(q,v))|- \frac{1}{d}\ln n}{\ln \ln n}
        = \frac{1}{rd}.$$
\noindent (b) For  almost every $(q,v)\in S\cQ,$
$$ \limsup_{n\rightarrow \infty} \frac{|\ln d^{(r)}_n(q,(q,v))|- \frac{1}{d}\ln n}{\ln \ln n}
        = \frac{1}{rd}. $$
\end{Thm}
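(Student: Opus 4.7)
The plan is to apply Corollary \ref{cor.mixing} to carefully chosen targets that detect close approaches in configuration space. Since $\cQ$ is compact of negative curvature, $\phi$ is a contact Anosov flow and $2$-fold exponentially mixing for Lipschitz observables (Dolgopyat/Liverani for surfaces, Nonnenmacher--Zworski/Tsujii in higher dimension); by Appendix \ref{AppExpMix} the system $(\phi,S\cQ,\mu,\mathrm{Lip})$ is therefore $(2r{+}1)$-fold exponentially mixing in the sense of Definition \ref{def.mixing} for every $r$.

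For part (a), fix $a\in\cQ$ and set
\[
\cA_{a,\rho}=\bigl\{(q,v)\in S\cQ:\ \exists\,t^{*}\in[0,1]\text{ a local minimum of }s\mapsto d(a,\pi(\phi^{s}(q,v)))\text{ with value }\leq\rho\bigr\},
\]
and let $\fA_{\rho}^{k}=\phi^{-k}\cA_{a,\rho}$. Disintegrating Liouville measure along the codimension-one Poincar\'e hypersurface $\{v\perp\nabla d(a,\cdot)\}$ in a neighborhood of $a$ yields $\mu(\cA_{a,\rho})=\gamma(a)\rho^{d}+O(\rho^{d+1})$ for a smooth positive function $\gamma$. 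For $\rho$ small enough, each close approach of $\gamma_{q,v}$ to $a$ at depth $\leq\rho$ contributes exactly one integer $k$ with $(q,v)\in\fA_{\rho}^{k}$, so $N^{n}_{\rho}(q,v)=\#\{j:d_{j}\leq\rho,\ t_{j}\leq n\}$ and $\{N^{n}_{\rho_{n}}\geq r\}=\{d^{(r)}_{n}(a,(q,v))\leq\rho_{n}\}$. For part (b), I define the composite target
\[
\bfA_{\rho}=\bigl\{(x,y)\in S\cQ\times S\cQ:\ \phi^{(\cdot)}(y)\text{ has a local min of }d(\pi x,\pi(\phi^{(\cdot)}(y)))\text{ in }[0,1]\text{ of value }\leq\rho/\gamma(\pi x)^{1/d}\bigr\},
\]
with $\bfA_{\rho}^{k}=\{x:(x,\phi^{k}x)\in\bfA_{\rho}\}$, normalized so that $\int 1_{\bfA_{\rho}}(x,\cdot)\,d\mu=\rho^{d}+O(\rho^{d+\eta})$ uniformly in $x$. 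With $\rho_{n}=n^{-1/d}(\ln n)^{-s}$, the series $\bS_{r}\asymp\sum_{j\geq 1}j^{-sdr}$ diverges iff $s\leq 1/(rd)$.

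The core of the argument is verifying admissibility in the sense of Definitions \ref{def.targets} and \ref{def.recurrent.targets}. The polynomial lower bounds (Poly) and $(\overline{\mathrm{Poly}})$ are immediate. Conditions (Appr) and $(\overline{\mathrm{Appr}})$ follow from Lemmas \ref{RemLip} and \ref{RemLipcomposite} applied to $\Phi(q,v)=\min_{t\in[0,1]}d(a,\pi(\phi^{t}(q,v)))$ (truncated to a slightly enlarged time window so that the minimum is generically interior), which is Lipschitz in $(q,v)$ by smoothness of $\phi$; the regularity of sublevel sets in the direction transverse to the critical set is handled by the standard Morse-theoretic argument. Condition $(\overline{\mathrm{Sub}})$ follows from the triangle inequality together with the Lipschitz character of $\phi^{k}$ on bounded time intervals. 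The main obstacle is the moving condition (Mov)/$(\overline{\mathrm{Mov}})$: for every $R$ and all $1\leq k\leq R\ln n$,
\[
\mu\bigl(\cA_{a,\rho_{n}}\cap\phi^{-k}\cA_{a,\rho_{n}}\bigr)\leq\bar{C}\rho_{n}^{d}(\ln n)^{-1000r}.
\]
For $k$ above a threshold of order $\ln n$, 2-fold exponential mixing applied to Lipschitz approximants $A_{\rho}^{\pm}$ of $1_{\cA_{a,\rho}}$ with norm $O(\rho^{-\tau})$ yields the bound $O(\rho^{2d}+\theta^{k}\rho^{-2\tau})$, which beats $\rho^{d}(\ln n)^{-1000r}$ for suitable $\tau$. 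The delicate regime is $1\leq k\leq C_{0}\ln n$: here I would argue geometrically in the spirit of Lemma \ref{LmQIExc}, exploiting that an orbit with two close approaches to $a$ at distance $\leq\rho$ separated by time $\approx k$ must shadow to within $O(\rho)$ one of the closed orbits of $\phi$ of length close to $k$ passing within $2\rho$ of $a$. By the Anosov local product structure the $\rho$-tube around each such closed orbit has Liouville measure $O(\rho^{2d}\cdot k)$, while Margulis' counting bounds the number of relevant closed orbits by $O(e^{hk}/k)$; the total is $O(e^{hk}\rho^{2d})$, which is $\leq\rho^{d}(\ln n)^{-1000r}$ provided $C_{0}h<1$, a constraint compatible with the mixing-based bound in the complementary regime after choosing the overall constant in $\rho_{n}$ appropriately. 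An analogous unstable-collar argument handles $(\overline{\mathrm{Mov}})$ for the composite target.

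With admissibility verified, Corollary \ref{cor.mixing} gives the zero-one dichotomy that $\{N^{n}_{\rho_{n}}\geq r\text{ infinitely often}\}$ has full $\mu$-measure iff $\bS_{r}=\infty$, i.e.\ iff $s\leq 1/(rd)$. Translating back via the identity $\{N^{n}_{\rho_{n}}\geq r\}=\{d^{(r)}_{n}\leq\rho_{n}\}$ (and its composite analog for part (b)), this is exactly
\[
\limsup_{n\to\infty}\frac{|\ln d^{(r)}_{n}|-d^{-1}\ln n}{\ln\ln n}=\frac{1}{rd}\qquad\text{a.s.},
\]
proving both (a) and (b).
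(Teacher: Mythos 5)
Your overall strategy (configuration-space targets, multiple exponential mixing for the geodesic flow promoted from two-fold mixing, then Corollary \ref{cor.mixing} with $\rho_n=n^{-1/d}(\ln n)^{-s}$) coincides with the paper's, but the step that carries the real content of the theorem --- the verification of (Mov)/$(\overline{\mathrm{Mov}})$ for pairs of visits separated by times up to $R\ln n$, which is exactly why there are no exceptional centers $a$ in contrast with Section \ref{ScHit} --- is not correctly handled. Two problems with your intermediate-time argument. First, the geometric claim is false: an orbit whose configuration projection passes within $\rho$ of $a$ at time $\approx 0$ and again at time $\approx k$ need not shadow any closed orbit, because the velocities at the two passages can differ by an angle bounded away from zero, so the phase point does not nearly return to itself and the Anosov closing lemma gives nothing. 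The set $\fA_{a,\rho}\cap\phi^{-k}\fA_{a,\rho}$ has measure of order $\rho^{2d}$ and is spread over geodesics that are in general far from every closed orbit; your bound $O(e^{hk}\rho^{2d})$ is both unjustified and lossier than the truth. Second, even granting that bound, your two regimes need not meet: the mixing estimate only beats $\rho_n^{d}(\ln n)^{-1000r}$ once $k\geq c_{1}\ln n$, with $c_{1}$ determined by the approximation exponent $\tau$ of $A_\rho^{\pm}$ and the mixing rate $\theta$, while your counting argument requires $k\leq C_{0}\ln n$ with $C_{0}h<1$; there is no reason that $c_{1}\leq C_{0}$, and adjusting the multiplicative constant in $\rho_{n}$ cannot repair an exponential-in-$k$ mismatch.

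The paper closes this gap with a purely geometric quasi-independence estimate requiring neither mixing nor orbit counting: Lemma \ref{lMovCS} (via Lemma \ref{lemmaMovLoc}) gives, for every $a\in\cQ$ and all $t>t_{0}$, the bound $\mu(\fA_{a,\rho}\cap\phi^{-t}\fA_{a,\rho})\leq \mu(\fA_{a,\rho})^{1+\eta}$ (in fact $\leq C\mu(\fA_{a,\rho})^{2}$). The mechanism is Lemma \ref{lem.sigma}: a Jacobi-field computation (Appendix \ref{app.spheres}) shows that the time-$t$ image $\Sigma(t,q,\eps)$ of the sphere fiber over $q$ projects to $\cQ$ as a local diffeomorphism with $\|d\pi^{-1}\|$ bounded uniformly in $t$, so after cutting $\Sigma(t,q,\eps)$ into uniformly sized pieces the conditional measure of the part lying over $B_{2\rho}(a)$ is $O(\rho^{d})$, uniformly in $a$, $q$ and $t>t_{0}$; integrating over starting points in the target yields the quadratic bound on the whole range $t_{0}<k\leq R\ln n$ at once, and a covering of $\cQ$ by $\rho$-balls then gives $(\overline{\mathrm{Mov}})$ for part (b). If you keep your outline, you must replace the closing-lemma step by such a uniform expanded-sphere estimate. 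A minor additional point: by Lemma \ref{lem.mes} the constant $\gamma$ in $\mu(\fA_{a,\rho})\sim\gamma\eps\rho^{d}$ does not depend on $a$, so no $a$-dependent normalization is needed in the composite target.
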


Note that in contrast with Section \ref{ScHit} there are no exceptional points for hitting. We also obtain a Poisson limit theorem. Denote
\begin{align*} B_{\rho}(a)&= \{ q\in \cQ : d(a,q) <\rho \},\\
\hat B_{\rho}(a)&=\{ (q, v)\in S\cQ: d(a,q) <\rho, v\in S_{q} \cQ  \},\\
\fA_{a,\fr}&= \bigcup_{t\in [0,\eps]} \phi^t \hat B_{\rho}(a), \\
\bfA_\rho&=\left\{\left((a,u),(q,v)\right)\in S\cQ\times S\cQ :\exists\,s\in[0,\varepsilon],
d\left(a, \pi(\phi^s(q,v))\right)<\rho\right\}.
\end{align*}
The following fact
proven in Appendix \ref{app.spheres} will be helpful in our argument.

\begin{Lem} \label{lem.mes} The following limits exist and does not dependent on $a\in\cQ$:
\begin{align} \label{GeoGamma}
\gamma&=\lim_{\rho\to 0}\mu \left(\fA_{a,\fr}\right)/\left(\eps\rho^d\right).
\end{align}
\end{Lem}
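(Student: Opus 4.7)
The plan is to compute $\mu(\fA_{a,\fr})$ to leading order in $\rho$ by parameterizing the bulk of $\fA_{a,\fr}$ by the last exit of the backward geodesic from $\partial \hat B_\rho(a)$, and applying a Santal\'o-type disintegration of the Liouville measure. Since $\mu(\hat B_\rho(a))$ is of order $\rho^{d+1}$, the leading contribution comes from the portion $\{y\in\fA_{a,\fr}:\pi(y)\notin B_\rho(a)\}$; the ball itself is absorbed into the error.

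Concretely, I would introduce the codimension-one section $\Sigma^{+}:=\{(p,w)\in S\cQ : d(p,a)=\rho,\ \langle w,n_p\rangle>0\}$, where $n_p$ is the outward unit normal to $\partial B_\rho(a)$. For small $\rho$ and generic $y\in\fA_{a,\fr}\setminus \hat B_\rho(a)$, the backward orbit of $y$ on $[0,\eps]$ meets $B_\rho(a)$ in a single arc, and $y=\phi^\tau(p,w)$ for a unique $(p,w)\in\Sigma^+$ and $\tau\in(0,\eps]$, with $(p,w)$ the final exit of the forward flow from $B_\rho(a)$ before reaching $y$. Santal\'o's formula gives the local disintegration $d\mu=\langle w,n_p\rangle\,dA_{\partial B_\rho(a)}(p)\,d\mathrm{sph}(w)\,dt$ in a flow box over $\Sigma^+$, so
\[
\mu\bigl(\fA_{a,\fr}\setminus \hat B_\rho(a)\bigr)=\eps\int_{\partial B_\rho(a)}\!\!\!\int_{\langle w,n_p\rangle>0}\!\!\langle w,n_p\rangle\,d\mathrm{sph}(w)\,dA(p)\,+\,\text{corrections}.
\]
The inner integral equals the dimensional constant $\omega_{d-1}/d$, and $\Area(\partial B_\rho(a))=\omega_d\rho^d(1+O(\rho^2))$, so the leading behavior is $\eps\,(\omega_{d-1}\omega_d/d)\,\rho^d$. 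Setting $\gamma=\omega_{d-1}\omega_d/d$ gives existence of the limit, and independence of $a$ is manifest since this leading computation reduces to a universal Euclidean one in the tangent space at $a$.

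The main technical obstacle is controlling the ``corrections'' uniformly in $a$. Two sources must be handled: (i) backward orbits that enter $B_\rho(a)$ more than once within time $\eps$, and (ii) grazing trajectories for which the parameterization by last exit degenerates. Compactness of $\cQ$ provides uniform bounds on curvature, injectivity radius, and the variation of $n_p$ along $\partial B_\rho(a)$; combined with the observation that a re-entry requires two independent passes through a ball of Liouville measure $O(\rho^{d+1})$ with uniformly bounded flux, this shows (i) contributes $O(\rho^{2d})$ while (ii) contributes $O(\rho^{d+1})$. Both are of strictly lower order than $\eps\rho^d$, which yields the limit $\gamma$ and the claimed $a$-independence.
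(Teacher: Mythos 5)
Your proof is correct, but it goes through the boundary of the ball rather than its interior, which is a genuinely different route from the paper's. The paper computes $\mu(\fA_{a,\fr})$ by a sweep (Kac-type) formula, $\mu(\fA_{a,\fr})=\eps\bigl(\int_{\hat B_\rho(a)}L(q,v)^{-1}\,d\mu\bigr)(1+O(\rho))$, where $L(q,v)$ is the total sojourn (chord) time of the orbit in $\hat B_\rho(a)$; rescaling by $\rho$ turns this weighted interior integral into the universal Euclidean integral \eqref{GammaIntegral} over the unit ball, which gives both existence of the limit and independence of $a$. You instead disintegrate the Liouville measure over the outgoing section $\Sigma^+$ of $\partial B_\rho(a)$ via Santal\'o's formula, weighting by the flux $\langle w,n_p\rangle$. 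These are dual descriptions: pushing the interior integral along orbits, each chord of sojourn time $L$ contributes transverse measure $\langle w,n_p\rangle\,dA\,d\mathrm{sph}$, so $\int_{\hat B_\rho(a)}L^{-1}d\mu$ is exactly the flux of $\Sigma^+$, and both computations reduce to the same tangent-space constant, with uniformity in $a$ coming from compactness in either case. What your version buys is an explicit leading constant without ever analyzing the chord-length function; what the paper's version buys is that multiplicities are handled automatically (the $1/L$ weight counts each orbit exactly once), whereas you must control re-entries and grazing by hand.

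Two small remarks. First, your explicit value $\gamma=\omega_{d-1}\omega_d/d$ does not account for the paper's normalization $d\mu=d\lambda(q)\,d\sigma(v)/\lambda(\cQ)$ with $\sigma$ the normalized sphere measure, so as stated it would not match \eqref{GammaIntegral}; this is harmless, since the lemma asserts only existence and $a$-independence of the limit. Second, your bound on correction (i) via ``two independent passes'' is heuristic as written (no independence is available); it can be repaired cleanly: since $\eps$ is fixed small relative to the convexity and injectivity radii of the compact manifold $\cQ$, the function $t\mapsto d(a,\gamma(t))$ is convex along geodesics inside a ball of fixed radius around $a$, so a geodesic exiting $B_\rho(a)$ cannot re-enter it within time $\eps$ once $\rho$ is small, and the re-entry correction vanishes; alternatively, for general $\eps$ a solid-angle estimate (the set of outgoing directions at a boundary point whose orbit meets $B_\rho(a)$ again within time $\eps$ has spherical measure $O(\rho^d)$, uniformly in $a$) yields the $O(\rho^{2d})$ bound you claim. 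With either fix, your argument is complete.
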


{The following will be a byproduct of our analysis and the proof will be given in \S \ref{SSPoisConf}.
\begin{Cor}
\label{CrPoisHit}
For each $a \in \cQ$, for every $\tau>0$, for  every $r\geq 1$, we have
\begin{align*}
(a)\quad \lim_{\rho \to 0} \mu\left((q,v)\in S\cQ: d_{\tau \rho^{-d}}^{(r)}(a, (q,v))<\rho \right)&=\sum_{l=0}^{r-1} e^{-\tau \gamma}\;
\frac{(\gamma \tau)^l}{l!}, \\
(b)\quad \lim_{\rho \to 0} \mu\left((q,v)\in S\cQ: d_{\tau \rho^{-d}}^{(r)}(q, (q,v))<\rho \right)&=\sum_{l=0}^{r-1} e^{-\tau  \gamma}\;
\frac{( \gamma \tau)^l}{l!}.
\end{align*}

\end{Cor}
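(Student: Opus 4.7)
\emph{Strategy.} The plan is to derive both parts as consequences of the Poisson limit theorem (Theorem \ref{ThPoisson}), applied to appropriate families of events viewed as events for the time-$\eps$ map $\phi^\eps$ of the geodesic flow. For part (a) we will use the simple targets $\{\fA_{a,\rho}^{k}\}$ with $\fA^{k}_{a,\rho}:=\phi^{-k\eps}\fA_{a,\rho}$; for part (b) the composite targets $\{\bfA_\rho^{k}\}$ with $\bfA^{k}_{\rho}=\{(q,v):((q,v),\phi^{k\eps}(q,v))\in\bfA_\rho\}$. Since the geodesic flow is $r$-fold exponentially mixing for every $r\geq 2$ (as recalled in Section \ref{ScExcursions}), so is the time-$\eps$ map, and the task reduces to verifying admissibility of the targets, computing the Poisson parameter via Lemma \ref{lem.mes}, and finally matching the discrete occurrence count $N^{n}_{\rho}$ to the visit count that enters the definition of $d_n^{(r)}$.

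\emph{Part (a).} The target $\fA_{a,\rho}$ is the sublevel set $\{\Phi<\rho\}$ of the Lipschitz function $\Phi(q,v):=\min_{s\in[0,\eps]}d\bigl(a,\pi(\phi^s(q,v))\bigr)$, so Lemma \ref{RemLip} delivers the approximation property $(\mathrm{Appr})$. The movement property $(\mathrm{Mov})$ will be obtained, as in the proof of Theorem \ref{ThmGeoBall}(a), from slow recurrence of the geodesic flow: short returns $\mu(\fA_{a,\rho}\cap\phi^{-k\eps}\fA_{a,\rho})$ are controlled by a short-time bound combined with exponential mixing beyond $C\ln(1/\rho)$ iterates. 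Proposition \ref{ThEM-BC} then furnishes $(M1)_r$ and $(M2)_r$ for all $r\geq 1$. Setting $n=\lfloor\tau\eps^{-1}\rho^{-d}\rfloor$ and using Lemma \ref{lem.mes} gives $n\,\mu(\fA_{a,\rho})\to\tau\gamma$, so Theorem \ref{ThPoisson} identifies the limit of $N^{n}_{\rho}$ as a Poisson random variable with parameter $\tau\gamma$, which converts into the stated cumulative distribution for $d^{(r)}_{\tau\rho^{-d}}$.

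\emph{Part (b).} The same scheme will apply with $\bfA_\rho$ replacing $\fA_{a,\rho}$. Composite admissibility (Definition \ref{def.recurrent.targets}) is verified exactly as in the proof of Theorem \ref{ThmGeoBall}(b): $(\overline{\mathrm{Appr}})$ follows from a variant of Lemma \ref{RemLipcomposite} applied to $\Phi((a,u),(q,v)):=\min_{s\in[0,\eps]}d(a,\pi(\phi^s(q,v)))$, $(\overline{\mathrm{Sub}})$ is a triangle inequality for the Riemannian distance, and $(\overline{\mathrm{Mov}})$ comes from the slowly recurrent property of the flow. A Fubini computation paralleling the one underlying Lemma \ref{lem.mes} (integrating first in $a$ for each fixed $(q,v)$, giving the volume of a tube of length $\eps$ and radius $\rho$ around the forward orbit) yields $(\mu\times\mu)(\bfA_\rho)\sim\gamma\eps\rho^d$ with the same constant $\gamma$, so $n\,\bar\sigma(\rho)\to\tau\gamma$ for the same choice of $n$. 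Theorem \ref{ThPoisson} again delivers the Poisson limit with parameter $\tau\gamma$, proving (b).

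\emph{Main obstacle.} The most delicate ingredient in the plan will be the identification of the discrete count $N^{n}_{\rho}$ with the number of geodesic visits to $B_\rho(a)$ (respectively, near-returns to the starting point) appearing in the definition of $d_n^{(r)}$. A single visit has duration $O(\rho)\ll\eps$ and will typically trigger exactly one time $k$ for which $\fA^{k}_{a,\rho}$ occurs, the exception being visits straddling a multiple of $\eps$, whose conditional probability is $O(\rho/\eps)$. Slow recurrence (Lemma \ref{lemma.mov} adapted to the flow, as in Section \ref{ScHit}) guarantees that distinct visits are separated in time by much more than $\eps$ with probability tending to one, so summing the exceptional probabilities over the $O(\rho^{-d})$ expected visits still produces a vanishing error. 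Once this matching is made precise, the rest of the argument is a direct invocation of Theorem \ref{ThPoisson} and no further mixing or combinatorial input is required.
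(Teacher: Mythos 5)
Your plan is essentially the paper's own proof. For part (a) the paper likewise just invokes Theorem \ref{ThPoisson}: conditions $(M1)_r$ and $(M2)_r$ for all $r$ are exactly what \S\ref{SSGeoMultiLog} (Proposition \ref{prop.targets2}(a) combined with Proposition \ref{ThEM-BC}) provides for the targets $\fA_{a,\rho}$, and the parameter $\tau\gamma$ comes from Lemma \ref{lem.mes}. For part (b) the paper phrases the argument as a rerun of the moment computation in the proof of Theorem \ref{ThPoisRet}, observing that the right-hand side of \eqref{PoiLawM1} becomes $\rho^{rd}\gamma^r$ because $\gamma$ in \eqref{GeoGamma} does not depend on the base point, so the Poisson mixture degenerates to a single Poisson law; your route --- verify composite admissibility of $\bfA_\rho$ as in Proposition \ref{prop.targets2}(b) (via Lemma \ref{RemLipcomposite}) and apply Theorem \ref{ThPoisson} directly --- is the same observation packaged through the abstract theorem, and it works.

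Two cautions. First, your stated mechanism for $({\rm Mov})$ in part (a) (``slow recurrence of the geodesic flow'' plus exponential mixing) is not what makes the statement hold for \emph{every} $a\in\cQ$: pointwise slow recurrence is only an almost-every-point property, while the system-level slow recurrence controls $\mu(\bfA^k_\rho)$, not $\mu\left(\fA_{a,\rho}\cap\phi^{-k\eps}\fA_{a,\rho}\right)$ for a fixed center $a$. The proof of Theorem \ref{ThmGeoBall}(a), which you correctly cite, obtains $({\rm Mov})$ for all $a$ and all $t>t_0$ from the geometric quasi-independence Lemma \ref{lMovCS} (expansion of spheres, via the Jacobi field estimates of Lemma \ref{lem.sigma}); you need that input, otherwise your argument only yields the corollary for almost every $a$, which is weaker than claimed. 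Second, in your matching of $N^n_\rho$ with the visit count, the expected number of visits up to time $\tau\rho^{-d}$ is $O(1)$ (it tends to $\tau\gamma$), not $O(\rho^{-d})$; what is of order $\rho^{-d}$ is the number of time windows, and the straddling/double-counting error is then $O(\rho^{-d})\cdot O(\rho^{d+1})=O(\rho)\to 0$, so your conclusion stands after this correction.
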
 }

\subsection{   MultiLog Law. Proof of Theorem \ref{ThmGeoBall}.}
\label{SSGeoMultiLog}

We fix $r \in \N$ and consider the system $(f,S\cQ,\mu,\rm{Lip}),$ where $f=\phi^\eps$ for a small $\eps>0.$
We note  that it follows from \cite{L}[Theorem 2.4], \cite{Dolgopyat}[Theorem 2], Remark \ref{lowreg} and Theorem \ref{MultiMixRet} that  $(f,S\cQ,\mu,\rm{Lip})$
is $r$-fold exponentially mixing for every $r\geq 2$ as in Definition \ref{def.mixing}.

For $k\neq 0$, we keep the notations  $\fA^k_{a,\fr}$ for the event $1_{\fA_{a,\fr}}\circ f^k$, and $ {\bfA}^k_{\fr}$ for the event $\{(q,v): ((q,v), f^k(q,v))\in{\bfA}_{\fr}\}.$ We also keep the notation $\sigma(\fr)=\mu(\fA_{a,\fr})$ and $\bsigma(\fr)=(\mu\times\mu)(\bfA_\rho)$.

For $s\geq 0$, we let $\fr_n=n^{-1/d} \ln^{-s}n$, and recall that $N^n_{\fr_n}$ denotes the number of times $k\leq n$ such that $\fA^k_{a,\fr_n}$ (or $\bfA^k_{\fr_n}$) occurs.

The statement of Theorem \ref{thm3} becomes equivalent to the following :
\begin{itemize}
\item[(a)] If $s>\frac{1}{rd}$,  then for $\mu$-a.e. $(q,v)$, we have that  for large $n$,
$N^n_{\fr_n}<r.$
\item[(b)] If $s\leq\frac{1}{rd}$, then for $\mu$-a.e. $(q,v)$, there are infinitely many $n$ such that
$N^n_{\fr_n}\geq r.$
\end{itemize}

With the notation $\bS_r=\sum_{j=1}^\infty \left(2^j \bv_j\right)^r$
where $\bv_j=\fv(\fr_{2^j})$ (in the $\fA_{x,\rho_n}$ case) or
${\bv}_j=\bar\fv(\fr_{2^j})$ (in the $\bfA_{\rho_n}$ case), we see from \eqref{GeoGamma} 
that  $\bS_r=\infty$ if and only if $s\leq\frac{1}{rd}$.

Hence Theorem  \ref{ThmGeoBall} follows from Corollary \ref{cor.mixing}, 
 provided we establish the following.
\begin{Prop} \label{prop.targets2}
(a) For any $a \in \cQ$, the targets $\{\fA_{a,\fr_n}\}$  are simple admissible targets as in Definition \ref{def.targets}.

(b) The targets $\{\bfA_{\fr_n}\}$ are composite admissible targets as in Definition \ref{def.recurrent.targets}.
\end{Prop}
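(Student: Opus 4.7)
The strategy is to mirror the proof of Proposition~\ref{prop.targets} with the tubular targets $\fA_{a,\fr}$ and $\bfA_\fr$ replacing the ball targets of Section~\ref{sec.wr}. Introducing the uniformly Lipschitz function $\Phi_a(q,v):=\min_{s\in[0,\eps]}d(a,\pi(\phi^s(q,v)))$, one has $\fA_{a,\fr}=\{\Phi_a\leq\fr\}$. Lemma~\ref{lem.mes} gives $\sigma(\fr)=\eps\gamma\fr^d(1+o(1))$ uniformly in $a$, and a parallel estimate for $\bar\sigma(\fr)$, so (Poly) and $(\overline{\mathrm{Poly}})$ hold with the choice $\fr_n=n^{-1/d}(\ln n)^{-s}$. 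Lemma~\ref{RemLip} applied with $a_1\equiv 0$, $a_2(\fr)=\fr$ yields (Appr); for the composite case Lemma~\ref{RemLipcomposite} delivers $(\overline{\mathrm{Appr}})(i)$--$(iii)$, while $(\overline{\mathrm{Appr}})(iv)$ is checked directly by a tubular-neighborhood computation: for fixed $(q,v)$ the set of $(a,u)$ with $((a,u),(q,v))\in\bfA_\fr$ fibres over the $\fr$-neighborhood of the geodesic arc $\{\pi(\phi^s(q,v)):s\in[0,\eps]\}$, whose volume is $O(\eps\fr^d)=O(\bar\sigma(\fr))$.

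Property $(\overline{\mathrm{Sub}})$ follows from the triangle inequality exactly as in the flow case of Theorem~\ref{ThFlowRec}: if the geodesic of $(q,v)$ meets $B(a,\fr)$ at times $k_1\eps+s_1$ and $k_2\eps+s_2$ with $s_i\in[0,\eps]$, then flowing by $k_1\eps$ converts the two-time event into a single recurrence event at scale $a\fr$ with $a=\max_{s\in[-1,1]}\|D\phi^s\|$. For $(\overline{\mathrm{Mov}})$ I would transcribe the proof of Lemma~\ref{lemma.mov} essentially verbatim: the $2$-fold exponential mixing of $(f,S\cQ,\mu,\mathrm{Lip})$ noted in \S\ref{SSGeoMultiLog} yields slow recurrence of the geodesic flow by bootstrapping a short-time bound into a long-time bound via the Lipschitz constant of $\phi^s$, and Markov's inequality together with the classical Borel--Cantelli lemma then promotes this to $(\overline{\mathrm{Mov}})$.

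The main obstacle is (Mov) for $\fA_{a,\fr_n}$, which must hold for \emph{every} $a\in\cQ$ and not merely for $\mu$-almost every $a$ as in Section~\ref{ScHit}. I plan to split $k\in(0,R\ln n)$ at the threshold $k_0=K|\ln\fr_n|$, where $K=(2\tau+2d)/|\ln\theta|$ is fixed by $\tau$ in (Appr) and the mixing rate $\theta$. For $k\geq k_0$, $2$-fold exponential mixing applied to the Lipschitz approximants $A_{\fr_n}^\pm$ gives
\[
\mu(\fA_{a,\fr_n}\cap f^{-k}\fA_{a,\fr_n})\leq \mu(A_{\fr_n}^+)^2+C\theta^k\fr_n^{-2\tau}\leq 2\sigma(\fr_n)^2+C\fr_n^{2d},
\]
which sits well below $\sigma(\fr_n)(\ln n)^{-1000r}$ since $\sigma(\fr_n)\leq Cn^{-1}$. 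For the short range $0<k<k_0$, the set $E_k:=\fA_{a,\fr_n}\cap f^{-k}\fA_{a,\fr_n}$ consists of unit vectors whose geodesic enters $B(a,\fr_n)$ at a time in $[0,\eps]$ and again at a time in $[k\eps,(k+1)\eps]$; a Margulis-type count in compact negative curvature bounds the homotopy classes of such geodesic arcs by $Ce^{hk\eps}$, with $h$ the topological entropy of $\phi$, and a local-product estimate assigns $O(\fr_n^{2d})$ measure to the tube of vectors shadowing each arc, yielding $\mu(E_k)\leq C\fr_n^{2d}e^{hk\eps}\leq C\fr_n^{2d-hK\eps}$.

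The crucial observation that makes the scheme go through is that the threshold $K$ depends only on $\tau$ and $\theta$, not on $R$; so fixing $\eps$ small once and for all — any $\eps<d/(hK)$ suffices — forces $\fr_n^{2d-hK\eps}\leq\sigma(\fr_n)(\ln n)^{-1000r}$ throughout the short range, and this single choice of $\eps$ then validates (Mov) for every $R$, as required by Definition~\ref{def.targets}. The only subsidiary technical point to verify is that the Margulis-type count of geodesic arcs joining two $\fr$-balls centered at $a$ comes with a constant uniform in $a\in\cQ$, which is a standard consequence of compactness and bounded geometry.
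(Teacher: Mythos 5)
Most of your scheme is sound and matches the paper: (Poly), (Appr), $(\overline{\mathrm{Appr}})$ and $(\overline{\mathrm{Sub}})$ are obtained exactly as you describe, via Lemmas \ref{RemLip} and \ref{RemLipcomposite} applied to $\min_{s\in[0,\eps]}d(a,\pi\phi^s(q,v))$, and your route to $(\overline{\mathrm{Mov}})$ through slow recurrence (the bootstrap of Lemma \ref{lemma.mov}, as in the flow case of Theorem \ref{ThFlowRec}) is a legitimate alternative to the paper's covering argument. The genuine gap is in (Mov) for \emph{every} $a$, which is the whole point of the proposition. Your short-range estimate rests on the claim that choosing $\eps$ small makes $\fr_n^{2d-hK\eps}$ admissible, but this choice is circular: since $f=\phi^\eps$, the mixing rate $\theta$ of $f$ satisfies $|\ln\theta|\asymp c_0\eps$, where $c_0$ is the continuous-time mixing rate of the flow, so $K=(2\tau+2d)/|\ln\theta|$ scales like $1/\eps$ and the product $K\eps\asymp (2\tau+2d)/c_0$ is \emph{independent} of $\eps$. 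Your condition ``$\eps<d/(hK)$'' is therefore not a smallness condition on $\eps$ at all; it is the fixed inequality $h(2\tau+2d)<d\,c_0$, which there is no reason to expect (for geodesic flows the mixing rate is not large compared with the entropy, and $\tau$ is dictated by the approximation scheme). Consequently the short window has continuous-time length $\asymp(2\tau+2d)c_0^{-1}|\ln\fr_n|$, and at its top your bound reads $\fr_n^{2d-h(2\tau+2d)/c_0}$, whose exponent is typically very negative. Even replacing your crude $O(\fr_n^{2d})$ per-class bound by the sharper cap estimate $\fr^{d+1}(\fr e^{-cL})^d$ with $c\geq K_2$ the minimal contraction does not repair this in variable curvature: the class count grows like $e^{hL}$ with $h$ the topological entropy, and one may have $h>K_2 d$, so (number of classes)$\times$(per-class measure) still grows exponentially in $L$ over this long window. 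In constant curvature the two rates match and your counting would work, but the proposition is stated for variable negative curvature.

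The paper avoids both the time-splitting and the entropy count. The mechanism is purely geometric: the Jacobi-field estimate $\|J'(t)\|\leq C\|J(t)\|$ (Lemma \ref{Prop1}) shows that the expanded sphere $\Sigma(t,q,\eps)=\phi^tA_\eps(q)$ projects to $\cQ$ as a local diffeomorphism with $\|d\pi^{-1}\|$ uniformly bounded (Lemma \ref{lem.sigma}); cutting $\Sigma(t,q,\eps)$ into pieces of uniformly bounded size, each piece meets $\hat B_{2\rho}(a)$ in a set of relative measure $O(\rho^{d+1})$, which gives Lemma \ref{lemmaMovLoc}, and integration over $q$ gives Lemma \ref{lMovCS}: $\mu(\fA_{a,\fr}\cap\phi^{-t}\fA_{a,\fr})\leq C\fr^{d}\mu(\fA_{a,\fr})$ for \emph{all} $t>t_0$, uniformly in $a$ (the finitely many iterates with $k\eps\leq t_0$ are handled trivially, since only boundedly many homotopy classes occur and each contributes a cap of measure $O(\fr^{2d+1})$). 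This single quasi-independence estimate yields (Mov) of Definition \ref{def.targets} for every $a$ and every $R$, and, by covering $\cQ$ with $O(\fr^{-d})$ balls, also $(\overline{\mathrm{Mov}})$. If you want to keep your architecture, you must replace the Margulis-type count by such a non-folding statement for expanded spheres; a counting bound by topological entropy cannot be made uniform enough in the pinched-curvature setting.
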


 The rest of this section is devoted to the
  \begin{proof}[Proof of Proposition \ref{prop.targets2}.]
Properties (Prod) and (Gr) are clear. Note that
$\Omega_{a,\rho}$ is a sublevel set of a Lipschitz function
$$ h(q,v)=\min_{s\in[0, \eps]} d(a,\pi \phi^s(q,v)) $$
so (Appr) follows as in Lemma \ref{RemLip}. To prove the first part of Proposition \ref{prop.targets2}, it only remains to check (Mov).
That is, we need to prove the following Lemma.
\begin{Lem} \label{lMovCS}
There exists $\eta>0$
\footnote{In fact, it can be seen from the proof that $\eta$ can be taken to be $d$, that is, we have
quasiindependence   in Lemma \ref{lMovCS} $\mu (\fA_{a,\fr} \cap \phi^{-t} \fA_{a,\fr})
\leq C\mu (\fA_{a,\fr})^{2}$}
 and $t_0>0$ such that for any $a\in Q$ and $\rho$ sufficiently small,
\begin{equation}
\label{MovCS}
 \mu (\fA_{a,\fr} \cap \phi^{-t} \fA_{a,\fr})
\leq \mu (\fA_{a,\fr})^{1+\eta},
\end{equation}
for all $t>t_0.$
\end{Lem}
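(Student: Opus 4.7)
The plan is to split the proof into two ranges of $t$ depending on a large constant $R$ to be chosen depending on the exponent $\tau$ appearing in property $({\rm Appr})$.

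First, for $t\geq R|\ln\rho|$, I would apply the two-fold exponential mixing $({\rm EM})_{1}$ of the geodesic flow to the Lipschitz approximants $A_{a,\rho}^{\pm}$ provided by property $({\rm Appr})$ (which was obtained from Lemma \ref{RemLip} by taking $\Phi(q,v)=\min_{s\in[0,\eps]}d(a,\pi\phi^{s}(q,v))$ as the Lipschitz function, $a_{1}(\rho)=0$, $a_{2}(\rho)=\rho$). Since $\|A_{a,\rho}^{\pm}\|_{\rm Lip}\leq\rho^{-\tau}$, mixing yields
\begin{equation*}
\mu\bigl(\fA_{a,\fr}\cap\phi^{-t}\fA_{a,\fr}\bigr)\leq \mu(A_{a,\rho}^{+})^{2}+C\rho^{-2\tau}\theta^{t}\leq 4\mu(\fA_{a,\fr})^{2}+C\rho^{-2\tau}\theta^{t},
\end{equation*}
and for $R$ larger than $2\tau/|\ln\theta|$ the last term is dominated by $\mu(\fA_{a,\fr})^{2}\sim\rho^{2d}$, from which $\mu(\fA_{a,\fr}\cap\phi^{-t}\fA_{a,\fr})\leq \mu(\fA_{a,\fr})^{1+\eta}$ for any $\eta<1$ and all sufficiently small $\rho$.

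Second, for $t_{0}\leq t\leq R|\ln\rho|$, mixing is not directly applicable and I would argue geometrically using the Anosov structure of $\phi$. A point $(q,v)\in\fA_{a,\fr}\cap\phi^{-t}\fA_{a,\fr}$ corresponds to a geodesic that comes within $\rho$ of $a$ at two parameter values separated by some $\Delta\in[t-\eps,t+\eps]$. Choosing $t_{0}$ sufficiently large relative to $\eps$ and the curvature data of $\cQ$ ensures that the two geodesic arcs through the $\rho$-neighborhood of $a$ are uniformly transverse, and the Anosov property then implies that the $\rho$-tube around the second arc meets $\fA_{a,\fr}$ in a set of measure $\sim\rho^{2d}$ after integration in the unstable direction. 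Concretely, I would foliate $\fA_{a,\fr}$ by local weak-unstable plaques (of stable-size $\rho$, unstable-size $\rho$ and flow-size $\eps$) via the local product structure, and bound $\mu(\fA_{a,\fr}\cap\phi^{-t}\fA_{a,\fr}\mid W)\leq C\rho^{d}$ on each such plaque $W$ using the transversality estimate; summing over the $\sim\rho^{-d}$ plaques gives $\mu(\fA_{a,\fr}\cap\phi^{-t}\fA_{a,\fr})\leq C\rho^{2d}\leq\mu(\fA_{a,\fr})^{1+\eta}$ for all $\eta<1$, uniformly on the intermediate range of $t$.

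The main obstacle is the intermediate-$t$ case. Although the geometric intuition is clear, carrying out the conditional estimate on each unstable plaque rigorously requires essentially the unstable-saturation argument of Lemma \ref{LmQIExc}: one enlarges $\fA_{a,\fr}$ to $\tilde\fA_{a,\fr}$ by saturating in the stable direction, covers $\tilde\fA_{a,\fr}$ by a small number of well-behaved unstable cubes, and then bounds the $\phi^{-t}$-image of $\fA_{a,\fr}$ against each cube using the conditional version of the Lipschitz approximation of $1_{\fA_{a,\fr}}$. The key technical point, which forces the choice of $t_{0}$, is that for $t\geq t_{0}$ the unstable expansion $e^{\lambda t}$ is already large enough that $\phi^{t}$ of a $\rho$-sized unstable plaque spans a uniformly positive diameter of $S\cQ$, making the intersection $\phi^{t}B\cap \fA_{a,\fr}$ amenable to a volume-times-cross-sectional-area type bound. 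This yields the strong quasi-independence estimate mentioned in the footnote to the lemma.
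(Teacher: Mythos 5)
Your first regime ($t\geq R|\ln\rho|$, via $({\rm EM})_1$ applied to the Lipschitz approximants, with $R$ large enough that $\rho^{-2\tau}\theta^{t}\ll\rho^{2d}$) is routine and correct, but the intermediate regime is where the lemma actually lives, and your argument for it contains two claims that fail. First, the two passes of the geodesic through the $\rho$-neighborhood of $a$ need not be uniformly transverse for any fixed $t_0$: an orbit shadowing an almost-closed geodesic of length $\approx t$ through $B_\rho(a)$ returns nearly tangentially, so no angle condition can be extracted from $t\geq t_0$ alone, and a correct proof must not rely on transversality of the two arcs. Second, and more seriously, your "key technical point" --- that for $t\geq t_0$ the image under $\phi^t$ of a $\rho$-sized unstable plaque spans a uniformly positive diameter of $S\cQ$ --- is false: its size is of order $\rho e^{\lambda t}$, which becomes macroscopic only when $t\gtrsim\lambda^{-1}|\ln\rho|$. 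Hence the saturation argument modeled on Lemma \ref{LmQIExc} (where the relevant times are indeed comparable to the logarithm of the scale) cannot cover the window $t_0\leq t\leq c\,|\ln\rho|$, and mixing cannot either, since its error term $\rho^{-2\tau}\theta^{t}$ is enormous there. That window is a genuine gap in your proposal, and you in effect acknowledge it by leaving the conditional estimate on plaques unproved.

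The paper closes this gap by a different mechanism which in fact handles all $t>t_0$ at once, with no case split and no use of mixing: it works with the configuration-space geometry of expanded spheres rather than with hyperbolicity. Writing $A_\eps(q)=\bigcup_{s\in[0,\eps]}\phi^s S_q\cQ$ and $\Sigma(t,q,\eps)=\phi^t A_\eps(q)$, the Jacobi-field estimate $\|J'(t)\|\leq C\|J(t)\|$ for $t>t_0$ (Lemma \ref{Prop1}; this is the only place where $t_0$ enters) shows that $\pi:\Sigma(t,q,\eps)\to\cQ$ is a local diffeomorphism with $\|d\pi^{-1}\|$ bounded uniformly in $t$ (Lemma \ref{lem.sigma}): in negative curvature the flowed sphere of directions never focuses in the configuration space. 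One then cuts the projected annulus $\pi\Sigma(t,q,\eps)$ into uniformly-sized Voronoi pieces $\Sigma_j(t)$; each piece meets $\hat B_{2\rho}(a)$ in a single component of relative measure $O(\rho^{d+1})$, \emph{irrespective of the angle} at which the wavefront crosses the ball. Summing over $j$ and using $\nu\left(A_\eps(q)\cap\phi^{-t}\fA_{a,\fr}\right)\leq C\rho^{-1}\nu\left(\Sigma(t,q,\eps)\cap\hat B_{2\rho}(a)\right)$ gives the fibered bound of Lemma \ref{lemmaMovLoc}, and integrating over $q\in B_\rho(a)$ yields $\mu\left(\fA_{a,\fr}\cap\phi^{-t}\fA_{a,\fr}\right)\leq C\rho^{d}\,\mu(\fA_{a,\fr})\approx C\mu(\fA_{a,\fr})^{2}$, which is the quasi-independence stated in the footnote. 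The idea you are missing is precisely this: smallness of the intersection is a statement about the projected wavefront in $\cQ$, not about unstable expansion or transversality, which is why a fixed $t_0$ suffices uniformly in $t$ and $\rho$.
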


Recall that $S_q\cQ$ is the unit tangent bundle at the point $q.$ Denote $\DS A_\eps(q)=\bigcup_{s\in [0, \eps]} \phi^sS_q\cQ,$ which is an embedded submanifold
with boundary in $S\cQ$ of dimension  $d+1.$

\begin{Lem} \label{lemmaMovLoc} We let $\nu$ be the restriction of $\mu$ on $A_\eps(q).$
For each $a\in \cQ$

\begin{equation}
\label{MovCSLoc}
 \nu \left(A_\eps(q) \cap \phi^{-t} \fA_{a,\fr} \right)
\leq C \rho^{\eta}\nu(A_\eps(q)).
\end{equation}
\end{Lem}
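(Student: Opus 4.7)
My plan is to reduce the estimate to a spherical integral on $S_q\cQ$. I would parametrise $A_\eps(q)$ via the map $(s,w)\in[0,\eps]\times S_q\cQ\mapsto\phi^s(q,w)$, so that $\nu$ becomes equivalent, up to the bounded Jacobian of this map on the short interval $[0,\eps]$, to the product measure $ds\otimes d\omega_q(w)$, where $\omega_q$ is the spherical measure on $S_q\cQ$. Unfolding the two consecutive $\phi^{[0,\eps]}$ sweeps, the point $\phi^s(q,w)$ lies in $\phi^{-t}\fA_{a,\fr}$ iff $\pi\phi^u(q,w)\in B_\fr(a)$ for some $u\in[t+s-\eps,t+s]$; integrating out $s\in[0,\eps]$ would yield
$$
\nu\bigl(A_\eps(q)\cap\phi^{-t}\fA_{a,\fr}\bigr)\le C\eps\,\omega_q(E_t),\qquad E_t=\{w\in S_q\cQ:\exists\,u\in[t-\eps,t+\eps],\ \pi\phi^u(q,w)\in B_\fr(a)\}.
$$
Since $\nu(A_\eps(q))$ is comparable to $\eps$, it would then suffice to show $\omega_q(E_t)\le C\fr^\eta$ for some $\eta>0$, uniformly in $a$ and in $t\ge t_0$.

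My next step would be a Fubini--unit-speed trick: for every $w\in E_t$ the geodesic $\gamma_{q,w}$ dwells in $B_{2\fr}(a)$ for a time at least $\fr$ inside $[t-\eps,t+\eps]$, whence
$$
\fr\,\omega_q(E_t)\le\int_{t-\eps}^{t+\eps}\omega_q\bigl(\{w:\pi\phi^u(q,w)\in B_{2\fr}(a)\}\bigr)\,du.
$$
Lifting $q$ to $\tilde q\in\tilde\cQ$ and using that negative curvature rules out conjugate points (so $\exp_{\tilde q}$ is a diffeomorphism), I would change variables $(u,w)\mapsto\exp_{\tilde q}(uw)$ to rewrite the right-hand side as the integral of $1/J(u,w)$ over $\pi^{-1}(B_{2\fr}(a))\cap\{d(\tilde q,\cdot)\in I_t\}$, with $I_t:=[t-\eps-2\fr,t+\eps+2\fr]$ and $J$ the Jacobian of $\exp_{\tilde q}$. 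Rauch comparison under the upper curvature bound $K\le-K_0^2<0$ gives $J(u,w)\ge c(\sinh(K_0u)/K_0)^d$, and I obtain
$$
\fr\,\omega_q(E_t)\le C\fr^{d+1}\,\Sigma(t,a),\qquad \Sigma(t,a):=\sum_{\tilde a\in\pi^{-1}(a),\ d(\tilde q,\tilde a)\in I_t}\frac{1}{J(d(\tilde q,\tilde a))}.
$$

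The final step needed would be a uniform bound $\Sigma(t,a)\le C$ independent of $a$ and of $t\ge t_0$. In constant negative curvature this is immediate: the number of lifts in the annulus grows like $e^{dK_0 t}$, $1/J(d)\sim e^{-dK_0 d}$, and the two rates cancel; moreover the $a$-averaged identity
$$
\int_\cQ\Sigma(t,a)\,da=\int_{\{d(\tilde q,\cdot)\in I_t\}}\frac{d\mathrm{vol}}{J}=|I_t|\,\omega(S_{\tilde q}\tilde\cQ)
$$
already shows $\Sigma(t,a)$ is bounded in $L^1(a)$. To upgrade this $L^1$ bound to a pointwise one in variable negative curvature, I would invoke Margulis--Roblin type equidistribution of the cocompact $\Gamma$-orbit $\Gamma\tilde a_0$ on the geodesic spheres about $\tilde q$ with respect to the Patterson--Sullivan measure, which yields $\Sigma(t,a)=O(1)$ uniformly. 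Plugging back gives $\omega_q(E_t)\le C\fr^d$, i.e.\ $\eta=d$, matching the footnote to Lemma~\ref{lMovCS}.

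The hard part will be precisely this last step. The naive combination of Rauch comparison with a crude volume-comparison count of lifts in the annulus gives only $\omega_q(E_t)\le C\fr^d e^{d(K_1-K_0)t}$, which is useless as soon as $K_1>K_0$. A genuinely uniform-in-$t$ control of the lattice sum $\Sigma(t,a)$ in variable curvature requires an equidistribution statement that goes beyond pointwise Jacobi-field comparison, and it is this input that renders the variable-curvature case delicate; the reduction to $\omega_q(E_t)$ and the Fubini step, by contrast, use only (Appr)-type regularity of the targets and the unit speed of the geodesic flow, and are essentially mechanical.
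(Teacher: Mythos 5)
Your reduction to the spherical bound (the unfolding of the two $\phi^{[0,\eps]}$ sweeps, the dwell-time Fubini step, and the change of variables giving $\fr\,\omega_q(E_t)\le C\fr^{d+1}\Sigma(t,a)$ with $\Sigma(t,a)=\sum_{\tilde a\in\pi^{-1}(a),\,d(\tilde q,\tilde a)\in I_t}J(d(\tilde q,\tilde a))^{-1}$) is sound, but the proof has a genuine gap exactly where you place the difficulty: the uniform pointwise bound $\Sigma(t,a)=O(1)$ is asserted, not proved, and the tool you invoke does not deliver it. Margulis--Roblin type equidistribution of $\Gamma$-orbits on spheres is equidistribution with respect to the Bowen--Margulis/Patterson--Sullivan measure class; your sum is weighted by reciprocals of the \emph{Riemannian} sphere-area Jacobian, i.e.\ a Liouville-type weight, and in variable negative curvature these measure classes are in general mutually singular, so PS-equidistribution gives no control of $\sum 1/J$. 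Moreover, equidistribution statements give asymptotics for a fixed configuration, not the upper bound uniform in $a$ and in $t$ that you need. So as written the crucial step is unjustified, and the $L^1(a)$ bound you do prove cannot be upgraded this way.

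The irony is that the missing bound is elementary and needs no counting at all, and this is precisely how the paper argues. For each lift $\tilde a$ with $d(\tilde q,\tilde a)\in I_t$, the shadow on $S_q\cQ$ of the unit ball $B_1(\tilde a)$ has spherical measure $\ge c\,J(d(\tilde q,\tilde a))^{-1}$ (unit-scale area of a sphere piece is bounded below, and $\log J$ has bounded distortion at unit scale along the sphere -- the same bounded-distortion input the paper uses); and since $I_t$ has bounded width and $\Gamma$ acts discretely and cocompactly, any direction lies in at most $N_0$ of these shadows. Hence $\Sigma(t,a)\le C N_0\,\omega_q(S_q\cQ)=O(1)$, uniformly in $a$ and $t\ge t_0$, closing your gap with $\eta=d$. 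The paper's proof is the ``relative'' version of this same mechanism, carried out without ever passing to the universal cover: it pushes $A_\eps(q)$ forward to the expanded annulus $\Sigma(t,q,\eps)=\phi^tA_\eps(q)$, uses Lemma \ref{lem.sigma} (a Jacobi-field estimate showing $\pi$ restricted to $\Sigma(t,q,\eps)$ is a local diffeomorphism with $\|d\pi^{-1}\|$ uniformly bounded) to cut the annulus into disjoint pieces of uniformly bounded size each containing a ball of radius $\eps/2$, notes that each piece meets $\hat B_{2\rho}(a)$ in a set of measure $\le C\rho^{d+1}$, i.e.\ in \emph{proportionally} small measure, and sums over the pieces. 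Because the estimate is proportional piece by piece, the wildly non-uniform expansion across directions -- the very thing that defeats your naive Rauch-plus-counting bound $C\fr^de^{d(K_1-K_0)t}$ -- never enters.
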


Lemma \ref{lMovCS} follows from Lemma \ref{lemmaMovLoc}  by integration on $q \in B_\rho(a).$

Introduce $\Sigma(t,q,\eps):=\phi^t \DS A_\eps(q)$. Note that  $\Sigma(t,q,\eps)$ is an embedded submanifold on $S\cQ$ of dimension $d+1.$

The proof of the following result is given in the Appendix \ref{app.spheres}.

\begin{Lem}[Geometry of expanded spheres in the configuration space] \label{lem.sigma}
We have that $\pi:\Sigma(t, q, \eps)\to \cQ$ is a local diffeomorphism. Moreover
for the inverse map \\$d\pi^{-1}: S\cQ\to S\Sigma(t, q, \eps)$ the norm
 $||d\pi^{-1}||$ is uniformly bounded.
\end{Lem}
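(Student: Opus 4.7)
The plan is to parametrize $\Sigma(t,q,\eps)$ by its natural coordinates $(v,s)\in S_q\cQ\times[0,\eps]$ via the map
$$ F(v,s)=\phi^{t+s}(q,v),$$
so that the composition $\pi\circ F$ becomes the restriction of the exponential map at $q$ to the annulus $\{(t+s)v:v\in S_q\cQ,\ s\in[0,\eps]\}$. The image in $\cQ$ is the geodesic shell around $q$ between radii $t$ and $t+\eps$, and one sees the projection problem is essentially the classical exponential-map problem in polar coordinates centered at $q$, analyzed point by point along the geodesic $\gamma_{q,v}$.

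First I would verify that $\pi|_\Sigma$ is a local diffeomorphism. A tangent vector to $\Sigma$ at $F(v,s)$ is either (a) the geodesic spray $X$ at $(q',v'):=\phi^{t+s}(q,v)$, with $d\pi(X)=v'$, or (b) of the form $dF(\xi)=d\phi^{t+s}(0,\xi)$ for $\xi\in v^\perp\subset T_q\cQ$; in the Sasaki splitting this equals $(J_\xi(t+s),J_\xi'(t+s))$, where $J_\xi$ is the perpendicular Jacobi field along $\gamma_{q,v}$ with $J_\xi(0)=0,\ J_\xi'(0)=\xi$, and hence $d\pi(dF(\xi))=J_\xi(t+s)$. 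By the Cartan--Hadamard theorem, in negative curvature there are no conjugate points, so $\xi\mapsto J_\xi(t+s)$ is injective from $v^\perp$ to $(v')^\perp$ for every $t+s>0$. Combined with the radial direction $v'$, this gives a linear isomorphism from $T_{F(v,s)}\Sigma$ onto $T_{q'}\cQ$, so $\pi|_\Sigma$ is a local diffeomorphism provided $t+s\geq r_0$ for some fixed $r_0>0$ (one may absorb the small-$s$ case into the hypothesis $t\geq t_0$ from Lemma~\ref{lMovCS}).

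Next I would bound $\|d\pi^{-1}\|$. Decompose $w\in T_{q'}\cQ$ as $w=a v'+w_\perp$ with $w_\perp\perp v'$. The radial component contributes $aX$ with Sasaki norm $|a|$, so this part is an isometry. The perpendicular component is the image of $dF(\xi)$ where $\xi$ is determined by $J_\xi(t+s)=w_\perp$, and its Sasaki norm squared equals $|w_\perp|^2+|J_\xi'(t+s)|^2$. The task is therefore to exhibit a constant $C$, uniform in $q,v,t,s$ (with $t+s\geq r_0$), such that
$$ |J_\xi'(t+s)|\leq C\,|J_\xi(t+s)|.$$
This is where the negative curvature is essential. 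I would use the stable/unstable decomposition $J_\xi=J_\xi^s+J_\xi^u$ at the point $(q',v')$, where $J^{s/u}$ solve the Jacobi equation and satisfy the Riccati relations $(J^{s/u})'=U^{s/u}J^{s/u}$ with $U^{s/u}$ the stable/unstable second fundamental forms of the horospheres. The classical Eberlein--Heintze--Hof bounds give $\|U^{s/u}\|$ uniformly bounded on a compact manifold of negative curvature. For $r=t+s\geq r_0$ the Rauch comparison theorem yields $|J_\xi^u(r)|\geq c_1 r\cdot|J_\xi'(0)|$ while $|J_\xi^s(r)|\leq c_2$, so the unstable component dominates and $|J_\xi'(r)|/|J_\xi(r)|$ is controlled by $\|U^u\|$ plus a term that decays in $r$. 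This produces the required uniform bound $C$.

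The main obstacle I foresee is not the local computation at a single point but the \emph{uniformity} of the estimate in $t$ as $t\to\infty$ and over all base points $(q,v)$. The naive bound from the Rauch comparison theorem between manifolds of constant curvature $-k^2$ and $-K^2$ deteriorates exponentially in $r$ when $k<K$, so one cannot use it directly. The fix is the invariant Riccati/horospherical picture: both $|J_\xi(r)|$ and $|J_\xi'(r)|$ grow at the same (trajectory-dependent) rate along the unstable direction, so their ratio is uniformly bounded once the transient stable part has dissipated, which happens after a uniformly bounded time on a compact negatively curved manifold. Carrying this out carefully, with the short-time regime $r\in[r_0,T_0]$ handled by compactness and continuity and the long-time regime $r\geq T_0$ handled by the Riccati bound, finishes the proof.
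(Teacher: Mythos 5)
Your proposal is correct and follows essentially the same route as the paper: both reduce the statement, via the identification $d\phi^{t}(0,\xi)=(J_\xi(t),J_\xi'(t))$ in the Sasaki splitting, to the uniform Jacobi-field estimate $\|J'(t)\|\le C\|J(t)\|$ for fields with $J(0)=0$, and both prove that estimate through the stable/unstable decomposition and the boundedness of the associated Riccati (horospherical) operators, with the large-time regime dominated by the unstable part. The only cosmetic difference is that you handle the transient regime $t\in[t_0,T_0]$ by compactness, whereas the paper runs a differential inequality for $S(t)=\langle J,J'\rangle$; either works.
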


\begin{proof}[Proof of Lemma \ref{lemmaMovLoc}.]
By elementary geometry and the bounded distortion property
\begin{equation}\label{eAB}
\nu(A_\eps (q)\cap \phi^{-t} \fA_{a,\fr} ) \leq C\rho^{-1}
\nu(\Sigma(t,q,\eps)\cap\hat B_{2\rho}(a)).
\end{equation}

\begin{figure}[b]
\begin{tikzpicture}[scale=0.5]

        \draw (-5,0) circle (3.5cm);
        \draw (-5,0) circle (2.6cm);

        \fill (-5+3.05,0) circle (0.3);
        \fill (-5-3.05,0) circle (0.3);
        \fill (-5,0+3.05) circle (0.3);
        \fill (-5,0-3.05) circle (0.3);
        \fill (-5+3.05/1.414,+3.05/1.414) circle (0.3);
        \fill (-5-3.05/1.414,+3.05/1.414) circle (0.3);
        \fill (-5-3.05/1.414,-3.05/1.414) circle (0.3);
        \fill (-5+3.05/1.414,-3.05/1.414) circle (0.3);

        \draw ($(-5,0)+(22.5:2.6)$) -- ($(-5,0)+(22.5:3.5)$);
        \draw ($(-5,0)+(22.5+45:2.6)$) -- ($(-5,0)+(22.5+45:3.5)$);
        \draw ($(-5,0)+(22.5+45*2:2.6)$) -- ($(-5,0)+(22.5+45*2:3.5)$);
        \draw ($(-5,0)+(22.5+45*3:2.6)$) -- ($(-5,0)+(22.5+45*3:3.5)$);
        \draw ($(-5,0)+(22.5+45*4:2.6)$) -- ($(-5,0)+(22.5+45*4:3.5)$);
        \draw ($(-5,0)+(22.5+45*5:2.6)$) -- ($(-5,0)+(22.5+45*5:3.5)$);
        \draw ($(-5,0)+(22.5+45*6:2.6)$) -- ($(-5,0)+(22.5+45*6:3.5)$);
        \draw ($(-5,0)+(22.5+45*7:2.6)$) -- ($(-5,0)+(22.5+45*7:3.5)$);

        \draw[<->] ($(-5,0)+(22.5:3.8)$) arc (22.5:67.5:3.8);
        \node[right] at ($(-5,0)+(50:4)$) {$\pi \Sigma_j(t)$};


        \draw (8.5,0) arc (0:360:3.5 and 2.6);

        \fill (5,0) -- (5+2.8,0+1.56) -- (5+2.3,0+1.96);

        \fill (5,0) -- (5-2.8,0-1.56) -- (5-2.3,0-1.96);

        \fill (5,0) -- (5+2.8,0-1.56) -- (5+2.3,0-1.96);

        \fill (5,0) -- (5-2.8,0+1.56) -- (5-2.3,0+1.96);

        \fill (5,0) -- (5+0.25,0+2.593) -- (5-0.25,0+2.593);

        \fill (5,0) -- (5+0.25,0-2.593) -- (5-0.25,0-2.593);

        \fill (5,0) -- (5+3.48,0-0.277) -- (5+3.48,0+0.277);

        \fill (5,0) -- (5-3.48,0-0.277) -- (5-3.48,0+0.277);

        \draw (5,0) -- (5+1.3, 0+2.414);
        \draw (5,0) -- (5-1.3, 0+2.414);
        \draw (5,0) -- (5+1.3, 0-2.414);
        \draw (5,0) -- (5-1.3, 0-2.414);
        \draw (5,0) -- (5-3.3, 0-0.866);
        \draw (5,0) -- (5+3.3, 0-0.866);

        \fill[white] (5+1.2,-0.15) arc (0:180:1.2 and 0.8);
        \fill[white] (5+1.5,0.4) arc (360:180:1.5 and 1);

        \draw (5+1.2,-0.15) arc (0:180:1.2 and 0.8);
        \draw (5+1.5,0.4) arc (360:180:1.5 and 1);

        \draw (5+1.1,0.2) -- (5+3.25, 0+0.965);
        \draw (5-1.1,0.2) -- (5-3.25, 0+0.965);

        \draw[<->] (5+3.53, 0+1.04) arc (21.8:69:3.8 and 2.8);
        \node[right] at ($(5,0)+(50:3.5)$) {$\Sigma_j(t)$};

		\end{tikzpicture}

\caption{Proof of Lemma \ref{lemmaMovLoc}}\label{fig1}
	\end{figure}

By Lemma \ref{lem.sigma},  $||d\pi^{-1}||$ is uniformly bounded.
 Note that $\pi \Sigma(t,a, \eps)$ is an annulus whose boundaries are spheres
of radii $t$ and $t+\eps$ respectively.
Note those spheres are perpendicular to the geodesics emanating from $q.$
Since the width of annulus is equal to $\eps$ and does not
depend on $t$, taking a maximal $1$-separated set in the sphere of radius of $t+(\eps/2)$
 and considering
associated Voronoi cells
we see that
$\Sigma(t, q, \eps)$ can be cut into several disjoint piece $\Sigma_j(t)$ satisfying that for each $j,$
$\pi\Sigma_j(t)$ is contained in a ball of radius $\eps_2$
(independent of $t$ and $q$) and
contains a ball of radius $\eps/2.$
Decreasing  $\eps_2$ if necessary we obtain that the intersection  $\pi\Sigma_j(t)\cap B_{2\rho}(a)$ has only one component and
since $d\pi^{-1}$ is bounded we get that
$$ \nu(\Sigma_j(t)\cap \hat B_{2\rho}(a))\leq C(\eps_1)  \rho^{d+1}\nu\left(\Sigma_j(t)\right). $$
Summing over $j$ in \eqref{eAB} we obtain \eqref{MovCSLoc} which finishes the proof of Lemma \ref{lemmaMovLoc}.
\end{proof}

The proof of Proposition \ref{prop.targets2} (a) is thus completed. 

Now we turn to the proof of Proposition \ref{prop.targets2} (b).  The task is to
 verify the conditions $\rm{\overline{(Appr)}},$ $\rm{\overline{(Mov)}}$ and
 ${\rm(\overline{Sub})}$ for the targets $\bfA_\rho$ defined in Section \ref{sec91}. {The proof of $\rm{\overline{(Appr)}}$ and $\rm{\overline{(Sub)}}$ is obtained from Lemma \ref{RemLipcomposite}  exactly  as in the proof of Lemma \ref{lemma.appr} that treats the case of the composite targets of Section \ref{sec.wr}.
 }
It is left to verify $\rm{\overline{(Mov)}}.$
Take $x_i\in \cQ,$  $B_i=B(x_i,\rho),$ $1\leq i\leq k$ such that $\DS \cQ=\bigcup_{i=1}^k B_i$ and $k=O(\fr^{-d}).$
By \eqref{MovCS}, for $t>t_0$
\Bea
\mu(\bfA_\rho^t)&\leq&\sum_{i}\left\{(q,v)\in S\cQ:\exists\,s\in[0,\varepsilon],
d\left(q, \pi(\phi^{s+t}(q,v))\right)<\frac{\rho}{\gamma(q)^{1/d}},q\in B_i\right\}\\
&\leq&\sum_{i}\left\{(q,v)\in S\cQ:\exists\,s\in[0,\varepsilon],
d\left(x_i, \pi(\phi^{s+t}(q,v))\right)<c\rho,q\in B_i\right\}\\
&\leq&{\sum_i\mu(\fA_{x_i,c\fr})^{1+\eta}}\leq\sum_{i}C\rho^{d(1+\eta)}\leq C\rho^{\eta}.
\Eea
This completes the proof of Proposition \ref{prop.targets2} and finishes the proof of Theorem  \ref{ThmGeoBall}. \end{proof}

\subsection{   Poisson regime. Proof of Corollary \ref{CrPoisHit}.}

\label{SSPoisConf}

Part (a) follows from Theorem \ref{ThPoisson}, since conditions $(M1)_r$ and $(M2)_r$ are satisfied
for all $r,$ due to the results of \S \ref{SSGeoMultiLog}.

The proof of part (b) follows the same argument as the proof of Theorem \ref{ThPoisRet}
except that now $(M1)_r$ is satisfied since the RHS of \eqref{PoiLawM1} takes form
$\rho^d \lambda$ because $\lambda$ defined by \eqref{GeoGamma} does not depend on $a.$ \hfill { \tiny $\Box$}

\subsection{  Notes.}
In \cite{Mau06}, Maucourant proved that for all $a\in \cQ$ and almost every $(q,v)\in S\cQ$
\begin{equation*}\label{GeoBallMau}
\limsup _{t \rightarrow+\infty} \frac{|\ln d\left(a,\pi\left(\phi^{t}(q,v)\right)\right)|}{\ln t}=\frac{1}{d}.
\end{equation*}
 \cite{KZ18} generalized Maucourant's result to study a shrinking target
problem for time $h$ map.
The shrinking target problems for sets with complicated geometry is discussed in
\cite{Galatolo10, GN, GK17, Kel17, KO18, KY19}.

Concerning Poisson Limits we note that visits to sets with complicated geometry naturally
appears in Extreme Value Theory, see Section \ref{ScExtreme} for details.
\cite{Yang19} provided a general conditions for the number of visits to a small neighborhood
of arbitrary submanifold to be asymptotically Poisson.

\section{  Multiple Khintchine-Groshev Theorem.}
\label{ScKhinchine}
\subsection{  Statements.} $\ $

\bigskip \noindent {\sc  Homogenous approximations.}
For $x \in \R^d$, we use the notation  $|x|=\sqrt{\sum x_i^2}$.

\begin{Def}[{$(r,s)$-approximable vectors}] Given $\alpha=(\alpha_1, \dots, \alpha_d) \in\reals^d$, $s\geq 0$, $c>0$, let
$D_N(\alpha,s,c)$ be the set of $k=(k_1, \dots, k_d)\in\integers^d$ such that
$$ |k|\leq N \text{ and }\exists m\in\integers: gcd(k_1, \dots, k_d, m)=1
\text{ and }  |k|^d \left|\langle k, \alpha \rangle+m\right|\leq \frac{c}{\ln N (\ln \ln N)^s}. $$
Call $\alpha$ {\it $(r,s)$-approximable} if for any $c>0$, $\Card(D_N(\alpha,s,c))\geq 2r$ for infinitely many $N$s.
\end{Def}

\begin{Thm} \label{theo.KG}
If $s\leq 1/r$ then the set of $(r,s)$-approximable vectors $\a \in \T^d$  has full measure.
If $s>1/r$ then the set of $(r,s)$-approximable numbers has zero measure.
\end{Thm}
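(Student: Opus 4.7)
The plan is to translate $(r,s)$-approximability into a recurrence question for a diagonal flow on the space of unimodular lattices $X=SL_{d+1}(\R)/SL_{d+1}(\Z)$ with Haar probability measure $\mu$, and then apply Corollary \ref{cor.mixing}. First I would use the diagonal subgroup $g_t=\mathrm{diag}(e^{-t/d},\ldots,e^{-t/d},e^t)$ and the unipotent element $u_\alpha\in SL_{d+1}(\R)$ for which $\Lambda_\alpha:=u_\alpha\Z^{d+1}$ consists of the vectors $(k,\langle\alpha,k\rangle+m)$ with $k\in\Z^d$, $m\in\Z$; under $g_t$ such a vector becomes $(e^{-t/d}k,e^t(\langle\alpha,k\rangle+m))$. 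Setting $t_j=dj\log 2$, the lattice $g_{t_j}\Lambda_\alpha$ contains a primitive vector $(u,w)$ with $|u|\asymp 1$ and $|w|\leq\rho$ precisely when there exists a primitive $(k,m)\in\Z^{d+1}$ with $|k|\asymp 2^j$ satisfying $|k|^d|\langle\alpha,k\rangle+m|\leq\rho$. Defining
$$\mathcal{T}_\rho=\{\Lambda\in X\ :\ \exists\,0\neq v=(u,w)\in\Lambda\text{ primitive},\ 1/2\leq|u|\leq 1,\ |w|\leq\rho\}$$
and $\rho_n=c/(n\log 2\,(\log n)^s)$, a dyadic decomposition of the range $|k|\leq N=2^n$, together with the negligibility of coincidences of two primitive solutions at one dyadic scale, identifies $(r,s)$-approximability of $\alpha$ with the statement that infinitely many $n$ carry at least $r$ of the events $\{g_{t_j}\Lambda_\alpha\in\mathcal{T}_{\rho_n}\}_{j=1}^n$.

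I would then invoke Corollary \ref{cor.mixing} applied to the map $f=g_{d\log 2}$ on $(X,\mu)$ with targets $\mathcal{T}_{\rho_n}$. Siegel's mean value theorem gives $\sigma(\rho):=\mu(\mathcal{T}_\rho)=c_d\rho(1+O(\rho))$ as $\rho\to 0$, which verifies ${\rm(Poly)}$; ${\rm(Appr)}$ is obtained by the usual smoothing of the defining inequalities of $\mathcal{T}_\rho$ as in Lemma \ref{RemLip}; and ${\rm(Mov)}$ follows from a Minkowski-type argument showing that a lattice carrying two short primitive vectors related by $g_k$ for small $k$ must have anomalously small sublattice covolumes, an event controlled by the quantitative Siegel second moment formula. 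The multiple exponential mixing $(M1)_r$--$(M3)_r$ for $f$ on $X$ against Lipschitz observables is well documented for this homogeneous system (see Appendix \ref{AppExpMix}). Since we want the conclusion for $\lambda$-a.e.\ $\alpha\in\T^d$ rather than for $\mu$-a.e.\ starting lattice, I would combine Corollary \ref{cor.mixing} with effective equidistribution of expanding translates $g_t\cdot\{u_\alpha\Z^{d+1}:\alpha\in\T^d\}$ of the unstable horosphere; this transfer is itself a consequence of exponential mixing applied to Lipschitz test functions on the unstable leaf.

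Once these ingredients are in place, the series of Theorem \ref{ThMultiBC} becomes
$$\bS_r=\sum_{m=1}^\infty\bigl(2^m\sigma(\rho_{2^m})\bigr)^r\asymp\sum_{m=1}^\infty m^{-rs},$$
which diverges if and only if $s\leq 1/r$, giving the dichotomy of Theorem \ref{theo.KG}. The hard part will be the transfer step: moving from the abstract multi-Borel-Cantelli statement on the high-dimensional space $X$ to the desired almost-sure statement on the $d$-dimensional parameter $\T^d$, which requires effective multiple mixing on expanding horospheres and not merely on $X$ itself. A secondary technicality is the verification of ${\rm(Mov)}$ and $(M2)_r$: one must show quantitatively that two good approximations of $\alpha$ at scale $N$ whose denominator sizes are logarithmically close are exceedingly rare, which is the arithmetic content of the separation assumption in the abstract multi-Borel-Cantelli framework.
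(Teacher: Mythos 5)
Your main reduction coincides with the paper's: Dani correspondence to the space of lattices, targets defined by one short (primitive) vector at each dyadic scale, Rogers/Siegel mean value identities to get $({\rm Poly})$ and $({\rm Mov})$, multiple exponential mixing of the diagonal action for $(EM)_r$, and the computation $\bS_r\asymp\sum_m m^{-rs}$, divergent iff $rs\le 1$. The genuine gap is in the transfer step, which you rightly single out as the hard part but propose to close with a tool that does not suffice. A full-measure statement on $\cM=SL_{d+1}(\R)/SL_{d+1}(\Z)$ gives no information about the single orbit $\{\Lambda_\alpha : \alpha\in\T^d\}$, which has Haar measure zero; and effective equidistribution of the expanding translates $g_t\Lambda_\alpha$ ("exponential mixing applied to Lipschitz test functions on the unstable leaf") only controls expectations of one observable at one time, i.e.\ it is the analogue of $(M1)_1$ for the leaf measure. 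To run Theorem \ref{ThMultiBC} with the leaf measure as the underlying probability you would need the full package $(M1)_r$--$(M3)_r$ for that measure: quasi-independence of $r$ hits at separated times \emph{and} decorrelation between two well-separated dyadic scales, i.e.\ effective \emph{multiple} equidistribution of expanding translates of the horosphere. That is a much stronger input than what you invoke, and nothing in your sketch produces it. The paper avoids this entirely by a soft argument: it thickens the horosphere by the stable unipotent $\Lambda^-_\beta$ and the centralizer $D_B$ of $g_t$; since these directions are not expanded by $g_t$, the event $\Phi_\nu(g_t\tilde{\Lambda}_{\alpha,\beta,B})\ge 1$ is sandwiched between the events for $g_t\Lambda_\alpha$ with constants $(1\mp\eta)c$, and since $(r,s)$-approximability is quantified over all $c>0$, the Haar-a.e.\ statements (Proposition \ref{prop_multiple_Haar}, Theorem \ref{Theo_Lattice_Haar}) pull back by Fubini to a.e.\ $\alpha$. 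To make your route work you must either prove the leafwise $(M1)_r$--$(M3)_r$ or adopt this thickening trick.

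Two secondary points. First, your "negligibility of coincidences of two primitive solutions at one dyadic scale" is exactly Proposition \ref{prop_multiple} and needs a proof; the paper gets it from Rogers' second moment identity plus a classical Borel--Cantelli argument, and in both that step and $({\rm Mov})$ one must handle the $\pm v$ symmetry of primitive vectors: the paper removes the diagonal contribution $e_2=-e_1$ in Lemma \ref{LmRI}(b) by building the constraint $x_1>0$ into the target, which is also why $2r$ appears in the definition of $(r,s)$-approximability. Second, $({\rm Appr})$ is not a direct application of Lemma \ref{RemLip}: the target is a superlevel set of the unbounded Siegel transform on a noncompact space, so the paper's Claim smooths the transform and controls the discrepancy on a cusp neighborhood $\cE_\rho$ with $\mu(\cE_\rho)=O(\rho^3)$, again via Rogers; your smoothing plan is fine in spirit but needs this extra care near the cusp.
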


\begin{Rem} Observe that an equivalent statement of Theorem \ref{theo.KG} is to replace $2r$ with $r$ in the definition of $(r,s)$ approximable vectors provided we restrict to $k \in \Z^d$ such that $k_1 > 0$. This will be the version that we will prove in the sequel.
\end{Rem}

\bigskip \noindent {\sc Inhomogeneous approximations.}

\begin{Def}[{$(r,s)$-approximable couples}]
Given $\alpha=(\alpha_1, \dots, \alpha_d) \in\reals^d$ and $z \in \R$, $s\geq 0$ and $c>0$, let
$D_N(\alpha,z,s,c)$ be the set of $k=(k_1, \dots, k_d)\in\integers^d$ such that
$$ |k|\leq N \text{ and }\exists m\in\integers:  |k|^d \left|z+\langle k, \alpha \rangle+m\right|\leq \frac{c}{\ln N (\ln \ln N)^s}. $$
Call the couple $(\alpha,z)$ {\it $(r,s)$-approximable} if for any $c>0$, $\Card(D_N(\alpha,z,s,c))\geq r$ for infinitely many $N$s.
\end{Def}

\begin{Thm} \label{theo.KG_affine}
If $s\leq 1/r$ then the set of $(r,s)$-approximable couples $(\a,z)\in  \R^d \times \R$ has full measure.
If $s>1/r$ then the set of $(r,s)$-approximable couples $(\a,z)\in  \R^d \times \R$ has zero measure.
\end{Thm}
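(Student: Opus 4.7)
The plan is to deduce Theorem~\ref{theo.KG_affine} from Corollary~\ref{cor.mixing} applied to a diagonal flow on the space of affine unimodular lattices, following the same scheme as for Theorem~\ref{theo.KG} but with $SL_{d+1}(\R)/SL_{d+1}(\Z)$ replaced by $X = ASL_{d+1}(\R)/ASL_{d+1}(\Z)$. Let $\mu$ be Haar probability on $X$ and let $f$ be the left action of $g = \mathrm{diag}(e^{-1},\dots,e^{-1},e^d) \in SL_{d+1}(\R)$ (with $d$ entries equal to $e^{-1}$). For $(\alpha,z)\in [0,1]^d\times [0,1]$, I would define
$$x_{\alpha,z} = u_\alpha\, \Z^{d+1} + (0,\dots,0,z)^T \in X,\qquad u_\alpha = \begin{pmatrix} I_d & 0 \\ \alpha^T & 1\end{pmatrix},$$
and the target
$$\fA_\rho = \{L\in X : \exists (v,y)\in L,\ |v|\in (e^{-1},1],\ |v|^d|y| \leq \rho\}.$$
A direct calculation gives $f^n x_{\alpha,z}\in \fA_\rho$ iff there exist $k\in\Z^d$, $m\in\Z$ with $|k|\in(e^{n-1},e^n]$ and $|k|^d|\langle k,\alpha\rangle+m+z| \leq \rho$. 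Taking $\rho_n = c'/\bigl(n(\ln n)^s\bigr)$ with $c'$ proportional to $c$, the event $\{N^n_{\rho_n}\geq r \text{ i.o.}\}$ is, up to a harmless adjustment of $c$ and a control of the overlap between consecutive dyadic scales of $|k|$, equivalent to $(\alpha,z)$ being $(r,s)$-approximable.

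The second step is to check the hypotheses of Corollary~\ref{cor.mixing}. Multiple exponential mixing of $(X,f,\mu,\mathrm{Lip})$ follows from the corresponding result for $SL_{d+1}(\R)/SL_{d+1}(\Z)$ (cf.\ Remark~\ref{lowreg} and Theorem~\ref{MultiMixRet}) together with the skew-product structure of $X$ over that base with fiber $\T^{d+1}$, on which $g$ acts as an Anosov toral endomorphism. The Rogers-Siegel integral formula for affine lattices (which, unlike the linear Siegel formula, carries no $\{0\}$-correction) gives
$$\mu(\fA_\rho) = \mathrm{vol}\bigl\{(v,y)\in\R^{d+1} : |v|\in(e^{-1},1],\ |y|\leq\rho/|v|^d\bigr\} = 2|S^{d-1}|\,\rho,$$
so $\sigma(\rho_n)\asymp \rho_n$ and
$$\bS_r \asymp \sum_n n^{r-1}\rho_n^r \asymp \sum_n \frac{1}{n(\ln n)^{rs}},$$
which diverges iff $rs\leq 1$, i.e.\ iff $s\leq 1/r$. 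Feeding this into the two cases of Corollary~\ref{cor.mixing} produces both directions of Theorem~\ref{theo.KG_affine}.

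The main technical obstacle will be the admissibility of $\fA_\rho$ in the sense of Definition~\ref{def.targets}. Property (Poly) is immediate. For (Appr) one must truncate the target away from the cuspidal region of $X$ where lattices have arbitrarily many short vectors: by Minkowski-Siegel, the $\mu$-measure of lattices carrying two independent short vectors is $O(\rho^2)$, negligible compared to $\mu(\fA_\rho)\asymp \rho$, so a standard Lipschitz smoothing via Lemma~\ref{RemLip} (applied to the suitable distance-like function on $X$) produces the required approximants with controlled norm. The delicate step is (Mov), the quasi-independence $\mu(\fA_\rho\cap f^{-k}\fA_\rho)\leq \mu(\fA_\rho)(\ln n)^{-1000r}$ for $k\in (0,R\ln n]$; this reduces via Rogers-Siegel applied to pairs of lattice vectors to a finite, cusp-avoiding non-return estimate for the diagonal flow, namely that the joint Lebesgue volume of $(v,y),(v',y')\in\R^{d+1}$ with the prescribed size constraints and satisfying $(v',y')=g^k(v,y)$ for some $0<k\leq R\ln n$ is $O\bigl(\rho^2 (\ln n)^{-1000r}\bigr)$, exploiting the exponentially separated eigenvalues of $g$. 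Finally, the $\mu$-a.e.\ conclusion on $X$ is transferred to a Lebesgue-a.e.\ conclusion on $(\alpha,z)\in \T^{d+1}$ by noting that the map $(\alpha,z)\mapsto x_{\alpha,z}$ parametrizes the unstable horospherical leaf of $f$ through the identity coset, that Haar on $X$ disintegrates as Lebesgue along these leaves times a transverse measure, and that the events $\{N^n_{\rho_n}\geq r\text{ i.o.}\}$ are saturated under the contracting foliation, so a Fubini/Hopf-type argument yields the Lebesgue-a.e.\ conclusion on $(\alpha,z)$, completing the proof.
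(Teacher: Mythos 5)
Your overall route is the paper's: pass to the space of affine unimodular lattices, encode the Diophantine inequalities via the Siegel transform of the annular region $E_\rho$, verify admissibility of the targets with the affine Rogers identities, observe that $\bS_r$ diverges exactly when $rs\le 1$, apply Corollary~\ref{cor.mixing} to a Haar-generic affine lattice, and transfer to Lebesgue-a.e.\ $(\alpha,z)$ using that $(\alpha,z)\mapsto x_{\alpha,z}$ fills the expanding horospherical leaf (the paper's ``modifying the initial distribution'' step is your Hopf/Fubini argument, made quantitative by replacing $c$ with $(1\pm\eta)c$).

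The one genuine gap is in your claimed equivalence between $\{N^n_{\rho_n}\ge r\ \text{i.o.}\}$ and $(r,s)$-approximability of $(\alpha,z)$. The divergence direction is fine (distinct scales give distinct $k$'s), but in the convergence (measure-zero) direction the $r$ solutions provided by $(r,s)$-approximability could all lie in a \emph{single} scale $|k|\in(e^{t-1},e^t]$, in which case $N^n_{\rho_n}\ge r$ need not hold; what must be ruled out is multiplicity within one scale, not ``overlap between consecutive dyadic scales''. The paper devotes Proposition~\ref{prop_multiple_affine} to exactly this: by the affine Rogers second-moment identity, $\tilde\mu(\tPhi_\nu>1)\le \EXP(\tPhi_\nu^2-\tPhi_\nu)=\left(\int\tphi_\nu\right)^2=O(\nu^2)$, and summing over the at most $M$ scales along $M=2^K$ gives a convergent series, so Borel--Cantelli shows that a.e.\ lattice eventually sees at most one solution per scale; this transfers to a.e.\ $(\alpha,z)$ by the same initial-distribution modification. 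You already possess the needed estimate (your Minkowski--Siegel $O(\rho^2)$ bound for two short vectors); it just has to be wired into this counting step rather than only into (Appr). Two smaller points: your treatment of (Mov) is more complicated than necessary --- the diagonal contribution vanishes identically because $E_\rho$ and $g^{-k}E_\rho$ are disjoint annuli in $|v|$ for $k\ge 1$, and the off-diagonal term equals $\left(\int\tphi_\rho\right)^2$ by Lemma~\ref{LmRI}-type identities for affine lattices, so no non-return volume estimate is needed; and the events are \emph{not} literally saturated under the contracting foliation --- stable and centralizer perturbations only move the target within $\fA_{(1\pm\eta)\rho}$, which suffices because the convergence/divergence criterion is independent of $c$, but this bounded-distortion bookkeeping is precisely what makes your Fubini/Hopf transfer rigorous and should be stated.
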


\bigskip \noindent {\sc Extensions.}
One can extend the above results to general Kintchine Groshev $0-1$ laws for Diophantine approximations of linear forms. For example

\begin{Def}[{$(r,s)$-simultaneously approximable vectors}] Given $\alpha=(\alpha_1, \dots \alpha_d) \in\reals^d$, $s\geq 0$, $c>0$, let
$D_N(\alpha,s,c)$ be the set of $k \in\integers^*$ such that
\begin{multline*} k \leq N \text{ and }  \exists m\in \Z^d :  gcd(k,m_1, \dots,m_d)=1 \\  \text{ and  for all } i =1,\ldots, d, \quad  k^{\frac{1}{d}}  \left| k \alpha_i+m_i \right|\leq \frac{c}{(\ln N)^{\frac 1 d}  (\ln \ln N)^{\frac s d}}. \end{multline*}

Call $\alpha$ {\it $(r,s)$-simultaneously approximable} if for any $c>0$, $\Card(D_N(\alpha,s,c))\geq r$ for infinitely many $N$s.
\end{Def}

\begin{Thm} \label{theo.KG_simultaneous}
If $s\leq 1/r$ then the set of $(r,s)$-simultaneously approximable vectors $\a \in \T^d$  has full measure.
If $s>1/r$ then the set of $(r,s)$-simultaneously approximable numbers has zero measure.
\end{Thm}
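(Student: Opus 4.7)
My plan is to follow the same blueprint as the proof of Theorem \ref{theo.KG} (the linear-form version), but with a Dani-type correspondence adapted to simultaneous approximations, and ultimately to realize the statement as a multiple cusp-excursion theorem on the space of unimodular lattices, in direct analogy with Section \ref{ScExcursions}. I work on $X = SL_{d+1}(\R)/SL_{d+1}(\Z)$ with Haar probability measure $\mu_X$, the diagonal flow $g_t=\mathrm{diag}(e^{t/d},\dots,e^{t/d},e^{-t})$, and the unipotent $u_\alpha=\bigl(\begin{smallmatrix} I_d & \alpha \\ 0 & 1\end{smallmatrix}\bigr)$ for $\alpha\in\R^d$. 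A direct computation shows that for $(m,k)\in\Z^d\times\Z^+$ the vector $g_tu_\alpha(m,k)^T=(e^{t/d}(m+k\alpha),e^{-t}k)$ attains its minimum norm at $t^*(m,k)=\tfrac{d}{d+1}\ln\bigl(k/|m+k\alpha|\bigr)$ with minimum value $k^{1/(d+1)}|m+k\alpha|^{d/(d+1)}$. Hence pairs $(m,k)$ with $k\leq N$ satisfying $k^{1/d}|m+k\alpha|\leq c(\ln N)^{-1/d}(\ln\ln N)^{-s/d}$ are in bijection, up to $O(1)$ per pair, with excursions of the orbit $\{g_tu_\alpha\Z^{d+1}\}_{t\in[0,\ln N]}$ into a cusp of $X$ whose depth lies below an explicit threshold $\rho(N,s)$.

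Next I set up admissible targets $\fA_{\rho}\subset X$ as in \eqref{exc.targets} and Definition \ref{defAH}: $\fA_\rho$ consists of lattices poised to launch within a unit of time an excursion whose minimum depth is at most $\rho$. Reduction theory decomposes $X$ into finitely many Siegel sets, each with a cuspidal end parametrised by height, so a Siegel integral formula computation, exactly analogous to \eqref{MesHeight}, gives $\mu_X(\fA_\rho)=a\rho^{d}(1+o(1))$ for an explicit constant $a$. Properties (Poly), (Appr) and the crucial quasi-independence (Mov) of consecutive excursions are obtained by the exact arguments of Proposition \ref{propadm} and Lemma \ref{LmQIExc}: (Appr) from Lipschitz regularity of the height function, and (Mov) from the collaring argument along local unstable leaves combined with local product structure of $\mu_X$ near the cusp.

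Third, multiple exponential mixing of $(g_1,X,\mu_X,\BAN)$ on Sobolev or Lipschitz test functions is the classical Kleinbock-Margulis decay of matrix coefficients, and its multi-time refinement is recalled in Appendix \ref{AppExpMix}. Corollary \ref{cor.mixing} then yields the dichotomy: for $\mu_X$-a.e.\ starting lattice $\Lambda\in X$, the orbit $\{g_n\Lambda\}$ meets $\{\fA_{\rho_n}\}$ at least $r$ times infinitely often iff $\bS_r=\sum_j(2^j\mu_X(\fA_{\rho_{2^j}}))^r$ diverges, and with $\rho_n$ chosen to correspond to the $(r,s)$-simultaneous threshold the computation parallel to Theorem \ref{ThMultiExc} gives divergence exactly at $s=1/r$.

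The hard part, and the main obstacle, will be transferring ``$\mu_X$-a.e.\ $\Lambda$'' to ``Lebesgue-a.e.\ $\alpha$'', since $\{u_\alpha\Z^{d+1}:\alpha\in\R^d\}$ is a single horospherical orbit of $\mu_X$-measure zero. For this I would not apply Corollary \ref{cor.mixing} directly on $X$ but instead verify the abstract conditions $(M1)_r$, $(M2)_r$, $(M3)_r$ of Section \ref{ScBCMult} along the family of events parametrised by $\alpha\in\T^d=\R^d/\Z^d$, using Lebesgue measure $d\alpha$ as the underlying probability measure. The key ingredient is the effective equidistribution of expanding horospheres under $g_t$ (again Kleinbock-Margulis): for Lipschitz $\phi_i\in\BAN$ and well-separated $t_1<\cdots<t_r$,
\[
\int_{\T^d}\prod_{i=1}^r\phi_i(g_{t_i}u_\alpha\Z^{d+1})\,d\alpha
=\prod_i\int_X\phi_i\,d\mu_X+O\!\bigl(\theta^{\min_i(t_{i+1}-t_i)}\textstyle\prod_i\|\phi_i\|_{\BAN}\bigr),
\]
for some $\theta<1$. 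Combined with the Lipschitz approximation provided by (Appr) of the cusp targets, this gives $(M1)_r$ and $(M3)_r$ along $d\alpha$, while $(M2)_r$ reduces to a counting estimate for nearby rational approximations that is handled by the same collaring argument as Lemma \ref{LmQIExc}. This step parallels the use of homogeneous dynamics in the proof of Theorem \ref{ThReturnsTransl} promised in Section \ref{sec91} and is the place where the Diophantine statement becomes genuinely a statement about $\alpha$ rather than about generic lattices.
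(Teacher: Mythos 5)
Your overall blueprint (Dani correspondence plus the multiple Borel--Cantelli machinery on the space of lattices) is indeed the skeleton of the paper's proof of Theorem \ref{theo.KG}, of which Theorem \ref{theo.KG_simultaneous} is a routine modification; but as written your argument has a concrete gap. The theorem counts \emph{distinct integer solutions}: $(r,s)$-approximability requires $\Card(D_N)\geq r$, and several solutions $k$ can lie in the same dyadic block $2^t<k\leq 2^{t+1}$, in which case they produce only \emph{one} hit of your excursion target at the corresponding time. Your claim that solution pairs are ``in bijection, up to $O(1)$ per pair, with excursions below the threshold'' therefore fails in exactly the direction needed for the zero-measure half of the statement ($s>1/r$): finiteness of the number of scales at which the target is hit does not by itself bound $\Card(D_N)$, since one deep excursion could encode several solutions. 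The paper addresses precisely this point through Proposition \ref{prop_multiple} and Lemma \ref{lemma_multiple}: a Rogers second-moment bound shows that almost surely, for all large $M$ and all $t\leq M$, the Siegel transform satisfies $\Phi_\nu(g_t\cL)\leq 1$, so that hits and solutions can be identified scale by scale. Nothing in your sketch replaces this step, and your height-function formulation of the targets makes the issue invisible.

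The second weak point is the transfer from Haar-generic lattices to Lebesgue-a.e.\ $\alpha$, where you take a genuinely different route from the paper. You propose to verify $(M1)_r$--$(M3)_r$ directly along the horosphere using a multi-time effective equidistribution bound for $\int_{\T^d}\prod_i\phi_i(g_{t_i}u_\alpha\Z^{d+1})\,d\alpha$, attributed to Kleinbock--Margulis; what is available off the shelf is the one-time effective equidistribution of expanding translates and, from \cite{BEG}, multiple mixing with respect to Haar measure, so the multi-time horospherical statement would itself require proof (estimates of this type appear in \cite{BG-CLTDA}, but they are not the cited result). Similarly, your quasi-independence is proposed via the collaring argument of Lemma \ref{LmQIExc}, which is tailored to rank-one cusps; the cusp neighborhoods of $SL_{d+1}(\R)/SL_{d+1}(\Z)$ for $d\geq 2$ are governed by several successive minima and that argument does not transfer verbatim. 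The paper sidesteps both difficulties: (Mov) and the target-measure asymptotics come from Rogers' identities (Lemma \ref{LmRI}), and the passage from Haar measure on $\cM$ to Lebesgue measure on $\alpha$ is done by the elementary ``modifying the initial distribution'' trick, thickening $\Lambda_\alpha$ by stable elements $\Lambda^-_\beta$ and centralizer elements $D_B$, which changes the targets only by a factor $(1\pm\eta)$ in $c$ while producing a smooth density on $\cM$. Either adopt these two devices, or supply proofs of the multi-time horospherical equidistribution and of a higher-rank quasi-independence estimate; without one of these your argument is incomplete.
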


We omit the proof of Theorem \ref{theo.KG_simultaneous} since it is obtained by routine
modification of the proof of Theorem \ref{theo.KG}.

 \subsection{  Reduction to a problem on the space of lattices} \  \black Let $\cM$ be the space of $d+1$ dimensional unimodular lattices. We identify $\cM$  with $SL_{d+1}(\R)/SL_{d+1}(\Z)$. Denote Haar measure on $\cM$ by $\mu$. Define

\[ \Lambda_\a=
\begin{pmatrix} \text{Id}_d & 0 \\ \a & 1\end{pmatrix}.
\]
For $t \in \R$, we consider $g_t \in SL_{d+1}(\R)$
\begin{align}
\label{DefGT}
  g_t=
 \begin{pmatrix}
  2^{-t}\\
  & \ddots\\
  && 2^{-t}\\
 &&& 2^{dt}
 \end{pmatrix}
\end{align}

For a lattice $\cL \subset \cM_{d+1}$, we say that a vector in $\cL$ is prime if it is not an integer multiple of another vector in $\cL$.

Given a function $f$ on $\R^{d+1}$ we consider its Siegel transform
$\cS(f):\cM\to\R$  defined by
\begin{equation} \label{eq.siegel} \cS(f)(\cL)  =\sum_{e\in \cL, \  e \text{ prime }} f(e). \end{equation}


For $a>0$, let $\phi_a$ be the indicator of the set\footnote{We added $x_1 > 0$ in the definition of $E_a$ since we will restrict to vectors $k \in \Z^d$ with $k_1\geq 0$.}
\begin{equation*}
  E_a:=\left\{
  (x,y)
  \in\R^{d}\times\R\mid
x_1 > 0,  |x| \in [1,2],
  |x|^{d} |y| \in [0,a]\right\}.
\end{equation*}

Fix $s\geq 0, c>0$. For $M \in \N^*$, define
\begin{equation} \label{eq.Ec} \nu:=\frac{c}{M (\ln M)^s}, \quad
\Phi_\nu:=\cS(\phi_\nu).
\end{equation}
For $t\geq 0$, we then define
$$A_t(M):= \{\a \in \T^d : \Phi_{\nu}(g_t \Lambda_\a) \geq 1\}$$

It is readily checked that  $\a\in A_t(M)$ if and only if there exists  $k=(k_1,\ldots,k_d)$ with $k_1\geq 0$, and $2^{t}<|k|\leq 2^{t+1}$ such that
\begin{equation} \label{eq.M} \exists m, \quad gcd(k_1,\dots k_d, m)=1, \quad   |k|^d |\langle k, \alpha \rangle+m|\leq \frac{c}{M (\ln M)^s}.\end{equation}

If $\a$ is such that $\Phi_{\nu}(g_t \Lambda_\a) \leq 1$ for every $t \in \N$, then we get that $\a$ is
 $(r,s)$-approximable if and only if there exists
 infinitely many $M$ for which there exists $0<t_1<t_2<\ldots<t_r \leq M$ satisfying $\a \in \bigcap_{j=1}^r A_{t_j}(M).$

But in general, for $\a$ and $t\leq M$ such that $\a \in A_t(M)$, there may be multiple solutions $k$ such that $2^{t}<|k|\leq 2^{t+1}$ for the same $t$. Since in Theorem \ref{theo.KG} we are counting all solutions we have to deal with this issue.

The following proposition proven in \S \ref{SSModifyHom}
shows that for a.e. $\a$, multiple solutions do not occur.

\begin{Prop} \label{prop_multiple} For almost every $\a$, we have that for every $M$ sufficiently large, for every $t\in [0,M]$, it holds that  $\Phi_{\nu}(g_t \Lambda_\a) \leq 1$
\end{Prop}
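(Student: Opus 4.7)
The plan is to establish a quantitative bound of the form $\mu\bigl(\Phi_{\nu_M}(g_t\Lambda_\a)\ge 2\bigr)\le C\nu_M^2$ by direct pair counting, and then to conclude by a Borel-Cantelli argument along the time variable $t$.

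I would first unpack the definition. A generator of the lattice $g_t\Lambda_\a\Z^{d+1}$ takes the form $(2^{-t}k,\,2^{dt}(\langle k,\a\rangle+m))$ with $(k,m)\in\Z^{d+1}$, and lies in $E_{\nu_M}$ exactly when $k_1>0$, $|k|\in[2^t,2^{t+1}]$ and $|k|^d|\langle k,\a\rangle+m|\le\nu_M=c/(M(\ln M)^s)$. Hence $\Phi_{\nu_M}(g_t\Lambda_\a)\ge 2$ asserts the existence of two distinct prime pairs $(k,m)\neq(k',m')$ subject to these constraints. For $t$ large the sub-case $k=k'$ is ruled out, since $m\neq m'$ combined with both inequalities would force $|m-m'|\le 2\nu_M<1$. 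If $k\neq k'$, both prime and with $k_1,k'_1>0$, then $k,k'$ are $\mathbb{Q}$-linearly independent in $\Z^d$: any relation $qk'=pk$ with $\gcd(p,q)=1$ forces $q=\pm 1=p$ by primality, and the sign condition on the first coordinates then forces $k'=k$.

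Given $\mathbb{Q}$-linear independence, the continuous group homomorphism $\T^d\to\T^2$, $\a\mapsto(\langle k,\a\rangle,\langle k',\a\rangle)\bmod\Z^2$, is surjective, since its transpose $\Z^2\to\Z^d$, $(a,b)\mapsto ak+bk'$, is injective. The pushforward of Haar measure is therefore Haar on $\T^2$, and the probability that $\a$ admits $m,m'\in\Z$ witnessing both inequalities equals $(2\nu_M/|k|^d)(2\nu_M/|k'|^d)$. Summing over admissible pairs, using $\sum_{|k|\in[2^t,2^{t+1}]}|k|^{-d}=O(1)$, yields the claimed estimate $\mu\bigl(\Phi_{\nu_M}(g_t\Lambda_\a)\ge 2\bigr)\le C\nu_M^2$, uniformly in $t$.

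Since $\Phi_\nu$ is monotone in $\nu$, for fixed $t$ the hardest scale is $M=t$. Setting $C_t:=\{\Phi_{\nu_t}(g_t\Lambda_\a)\ge 2\}$, we obtain $\mu(C_t)\le Cc^2/(t^2(\ln t)^{2s})$, which is summable. Borel-Cantelli then implies that almost every $\a$ lies in only finitely many $C_t$. For $t$ outside this exceptional finite set, $\Phi_{\nu_M}(g_t\Lambda_\a)\le\Phi_{\nu_t}(g_t\Lambda_\a)\le 1$ for every $M\ge t$. For each of the finitely many exceptional $t_i$, the almost sure non-vanishing $|k|^d|\langle k,\a\rangle+m|>0$ for every nonzero $(k,m)$ guarantees that $\Phi_{\nu_M}(g_{t_i}\Lambda_\a)\le 1$ once $M$ exceeds some $\a$-dependent threshold; taking the maximum of these thresholds produces the required $M_0(\a)$. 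The essential step is the independence argument based on the duality between the injection $\Z^2\hookrightarrow\Z^d$ and the surjection $\T^d\twoheadrightarrow\T^2$; the remainder is routine summation and Borel-Cantelli.
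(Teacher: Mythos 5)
Your overall skeleton (a second--moment bound per annulus, summed over $t$, plus Borel--Cantelli and monotonicity of $\Phi_\nu$ in $\nu$) is sound and is a genuinely more elementary route than the paper's, which first transfers the problem to Haar measure on the space of lattices (using the stable and centralizing directions of $g_t$) and then gets the $\nu^2$ bound from Rogers' second moment identity. But your key step is wrong as written. You claim that two distinct prime solutions $(k,m)\neq(k',m')$ in the same annulus must have $k,k'$ linearly independent over $\mathbb{Q}$, "by primality". The primality in the Siegel transform is primality of the vector $(k,m)$ in $\Z^{d+1}$, i.e. $\gcd(k_1,\dots,k_d,m)=1$; it does \emph{not} make $k$ primitive in $\Z^{d}$. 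From $qk'=pk$ with $\gcd(p,q)=1$ you can only conclude $q\mid k_i$ for all $i$, which is no contradiction: e.g. $k=(3,3)$, $m=1$ and $k'=(4,4)$, $m'=1$ are both prime in $\Z^{3}$, lie in the same dyadic annulus, have $k_1,k_1'>0$, and are parallel in $\Z^2$. Worse, for $d=1$ \emph{every} pair $k\neq k'$ is $\mathbb{Q}$-dependent, so your independence claim is vacuously false there. For such pairs the map $\a\mapsto(\langle k,\a\rangle,\langle k',\a\rangle)$ does not push Haar measure to Haar on $\T^2$, the joint probability is \emph{not} the product $(2\nu/|k|^d)(2\nu/|k'|^d)$ (it is governed by the single frequency $\tilde k=k/\gcd$ and can be of size $\asymp\nu/(q^{d+1}|\tilde k|^d)$), and if one only sums what you have actually proved, the dependent pairs contribute order $\nu_M$ per annulus rather than $\nu_M^2$, which destroys the summability you need in the regime $s\le 1$.

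The gap is fixable, but it requires an argument you did not give: if $k=q\tilde k$, $k'=p\tilde k$ with $\tilde k$ primitive in $\Z^d$ and both inequalities hold, then $|pm-qm'|\le p\,\nu/|k|^d+q\,\nu/|k'|^d\le 4\nu<1$ for $M$ large, hence $pm=qm'$; writing $g=\gcd(p,q)$, $p=gp_1$, $q=gq_1$, one gets $m=q_1n$, $m'=p_1n$, so $(k,m)=q_1(g\tilde k,n)$ and $(k',m')=p_1(g\tilde k,n)$, and joint primality in $\Z^{d+1}$ forces $p_1=q_1=1$, i.e. $k=k'$ --- so $\mathbb{Q}$-dependent pairs are in fact (deterministically) impossible once $4\nu_M<1$, and your $C\nu_M^2$ bound then holds with the product formula applied only to genuinely independent pairs. (This is exactly the phenomenon the paper exploits for free by working in $\cM$: two parallel primitive vectors of a lattice coincide up to sign, and the condition $x_1>0$ removes the sign, so Rogers' formula for pairs $e_1\neq\pm e_2$ immediately gives Lemma \ref{lemma_multiple}.) Two minor further points: the exclusion of the case $k=k'$, $m\neq m'$ needs $2\nu_M<1$, i.e. $M$ large, not "$t$ large"; and your Borel--Cantelli at scale $M=t$ with monotonicity is fine and plays the role of the paper's dyadic-scale summation.
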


Hence, Theorem \ref{theo.KG} is equivalent to the following.

\begin{Thm}  \label{Theo_Lattice}
If $rs\leq 1$, then for almost every $\a \in \T^d$, there exists infinitely many $M$ for which there exists $0<t_1<t_2<\ldots<t_r \leq M$ satisfying
$$\a \in \bigcap_{j=1}^r A_{t_j}(M).$$

If $rs> 1$, then for almost every $\a \in \T^d$, there exists at most finitely  many $M$ for which there exists $0<t_1<t_2<\ldots<t_r \leq M$ satisfying
$$\a \in \bigcap_{j=1}^r A_{t_j}(M).$$
 \end{Thm}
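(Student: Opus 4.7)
The strategy will be to verify the hypotheses of the abstract multiple Borel--Cantelli dichotomy of Theorem \ref{ThMultiBC} for the family $\fA^t_{\rho_M}:=A_t(M)\subset \T^d$, with $M$ as the row index, $t\in\{1,\dots,M\}$ as the position within the row, $\rho_M$ parametrised through $\nu=c/(M(\ln M)^s)$, and $(\T^d,\Leb)$ as the ambient probability space. First, using Siegel's integral formula for the transform $\Phi_\nu=\cS(\phi_\nu)$, one computes $\int_\cM \Phi_\nu\,d\mu=\zeta(d+1)^{-1}\Vol(E_\nu)=c_d \nu$ directly from the definition of $E_\nu$. Together with Proposition \ref{prop_multiple} (so that a.e.\ $\a$ produces only the values $\Phi_\nu\in\{0,1\}$ throughout the relevant range of $t$), this will yield $\sigma(\rho_M):=\Leb(A_t(M))\sim c_d'/(M(\ln M)^s)$ once the standard comparison between $\Leb_{\T^d}$-averages along the unstable horosphere and Haar averages on $\cM$ is in place. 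From here the threshold series $\bS_r=\sum_j(2^j\sigma(\rho_{2^j}))^r\asymp\sum_j j^{-rs}$ diverges exactly when $rs\le 1$, matching the dichotomy claimed in Theorem \ref{Theo_Lattice}.

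Next I would verify $(M1)_r$ and $(M3)_r$ by invoking quantitative equidistribution of unipotent translates under the diagonal flow: for regular $F_1,\dots,F_r:\cM\to\R$ and times $0\le t_1<\cdots<t_r\le M$ with consecutive gaps $\ge R\ln M$,
\[
  \int_{\T^d}\prod_{j=1}^r F_j(g_{t_j}\Lambda_\a)\,d\a\;=\;\prod_{j=1}^r\int_\cM F_j\,d\mu \;+\; O\bigl(\theta^{R\ln M}\prod_j\|F_j\|_\ast\bigr),
\]
which is the standard Kleinbock--Margulis-type estimate (see \cite{KM99} and the multiple mixing results in our appendix). Taking $F_j$ as a Lipschitz sandwich of $1_{\{\Phi_\nu\ge 1\}}$ at boundary scale $\nu^2$, in the spirit of Lemma \ref{RemLip}, and observing that the boundary shell has Haar mass $O(\nu^{1+\eta})$, $(M1)_r$ follows; applying the same estimate simultaneously across two well-separated dyadic scales produces $(M3)_r$.

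The hard part will be circumventing the fact that $\Phi_\nu$ is unbounded near the cusp of $\cM$, so that the norm $\|\Phi_\nu\|_\ast$ in the above mixing estimate blows up. To handle this I would appeal to the Kleinbock--Margulis non-divergence inequality, which provides $\kappa>0$ and $C$ with
\[
\Leb\bigl\{\a\in\T^d:\exists\, t\le M,\ g_t\Lambda_\a\text{ leaves the height-}h\text{ part of }\cM\bigr\}\le CM\,e^{-\kappa h}.
\]
Choosing $h=C_0\ln M$ with $C_0$ large enough, the exceptional set is Borel--Cantelli summable and can be removed. On the complementary thick part $\Phi_\nu$ becomes a bounded function, uniformly Lipschitz away from the $O(\nu^{1+\eta})$ boundary shell, and the mixing estimate then applies with effective norm bounds uniform in $\nu$, $t$, and $M$.

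Finally, condition $(M2)_r$ for close-by times $t_j-t_{j-1}<R\ln M$ cannot come from mixing and has to be extracted geometrically: two Diophantine approximations of the required sharpness by primitive integer vectors $k^{(j-1)}, k^{(j)}$ of comparable norms $\sim 2^{t_{j-1}},\,2^{t_j}$ cut out, for each fixed pair, an $\a$-slab of $\Leb_{\T^d}$-measure $\asymp \nu^2/2^{d(t_{j-1}+t_j)}$; summing over the $\asymp 2^{d(t_{j-1}+t_j)}$ admissible pairs yields a bound $\asymp \nu^2\ll\sigma(\rho_M)(\ln M)^{-1000r}$, which is $(M2)_r$. With all three independence conditions in hand, Theorem \ref{ThMultiBC}(a) gives the finiteness of $(r,s)$-approximable $M$ when $rs>1$, and Theorem \ref{ThMultiBC}(b) gives their almost-sure infinitude when $rs\le 1$, completing the proof of Theorem \ref{Theo_Lattice}.
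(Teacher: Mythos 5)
Your proposal has the right skeleton — it casts $A_t(M)$ as a shrinking-target family for the diagonal flow and aims to verify $(M1)_r$--$(M3)_r$ so that Theorem \ref{ThMultiBC} gives the dichotomy, which is also the paper's strategy — but it diverges at the two places where the actual work is done, and at both places there is a genuine gap. First, you keep Lebesgue measure on $\T^d$ throughout and rest $(M1)_r$ and $(M3)_r$ on a multi-time estimate $\int_{\T^d}\prod_j F_j(g_{t_j}\Lambda_\a)\,d\a\approx\prod_j\int_\cM F_j\,d\mu$ which you attribute to \cite{KM99}. No such joint (several distinct times, product of $r$ Haar integrals on the right) statement is available there: \cite{KM99} and \cite{KM12} give effective equidistribution of a \emph{single} expanding translate of the closed horosphere, and the multiple exponential mixing results that the paper invokes (\cite{BEG}, together with Remark \ref{lowreg} and Theorem \ref{MultiMixRet}) are statements about the Haar measure $\mu$ on $\cM$, not about integrals over the single torus orbit $\{\Lambda_\a\}$. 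One can in principle bootstrap the leafwise statement (in the spirit of Theorem \ref{ThMixShortest} and \cite{Dolgopyat}), but that is a real argument, not a citation; the paper's Section \ref{SSModifyHom} exists precisely to avoid it: by thickening $\Lambda_\a$ in the stable direction $\Lambda^-_\beta$ and the centralizer direction $D_B$ (which do not affect the events up to an $\eta$-perturbation of $c$), the Lebesgue problem on $\T^d$ is converted into the Haar problem of Theorem \ref{Theo_Lattice_Haar}, after which $(M1)_r$--$(M3)_r$ follow from Proposition \ref{ThEM-BC}. Without either that reduction or a proof of your multi-time horospherical estimate (including the polynomial-in-$\nu^{-1}$ and height-cutoff norm growth of the truncated Siegel transform, which your non-divergence truncation glosses over), the core of your argument is unsupported.

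Second, your $(M2)_r$ count is stated incorrectly: the per-pair bound ``measure $\asymp\nu^2/2^{d(t_{j-1}+t_j)}$ for each fixed pair $k^{(j-1)},k^{(j)}$'' is true (in fact exact, by a character computation) only when the two frequency vectors are linearly independent. For proportional pairs $k^{(j)}=c\,k^{(j-1)}$ with $c\approx 2^{t_j-t_{j-1}}$, the two slab families are strongly correlated and the joint measure per pair is of order $\nu 2^{-dt_j}/c$, so summing these pairs gives a contribution of order $\nu\,2^{-d(t_j-t_{j-1})}$, which is nowhere near $O(\nu^2)$ and fails the required $\sigma(\rho_M)(\ln M)^{-1000r}$ bound for gaps of size $O(\ln\ln M)$. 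The rescue is the primitivity constraint built into the events (the $\gcd(k_1,\dots,k_d,m)=1$ condition, equivalently the restriction of the Siegel transform \eqref{eq.siegel} to prime lattice vectors): if $(k,m)$ is primitive and approximately solves the inequality, then for $k'=ck$ the forced choice $m'=cm$ violates primitivity of $(k',m')$, so proportional pairs are (essentially) excluded, and the surviving degenerate cases can be counted to be $O(\nu^2)$ as well. You never use this, and without it the estimate as written is false. Note that in the paper this issue never surfaces because $(\mathrm{Mov})$ is proved via Rogers' identity, Lemma \ref{LmRI}(b), where the sum runs over pairs of \emph{distinct prime} vectors $e_1\neq\pm e_2$ and the proportional terms are absent automatically; your direct counting on $\T^d$ must reinstate that mechanism by hand.
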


\subsection{Modifying the initial distribution: homogeneous case.}
\label{SSModifyHom}
 We transformed our problem into a problem of multiple recurrence of the diagonal action $g_t$ when applied to a piece of horocycle in the direction of $\Lambda_\a : \a \in \T^d$. But this horocycle is exactly the full strong unstable direction of the rapidly mixing partially hyperbolic action $g_t$.
Due to the equidistribution of the strong unstable horocycles, it is thus possible and much more convenient  to work with Haar measure  on $\cM$
instead of Haar measure on $\Lambda_\a$ for $\a \in \T^d$.

Hence, we define
$$B_t(M):= \{\cL \in \cM : \Phi_{\nu}(g_t \cL) \geq 1\},$$
where we recall that
$\DS \nu:=\frac{c}{M (\ln M)^s},$
$\DS \Phi_\nu:=\cS(\phi_\nu),$
and  $\phi_\nu$ is the indicator of the set $E_\nu=\left\{
  (x,y)
  \in\R^{d}\times\R\mid
x_1 > 0,  |x| \in [1,2],
  |x|^{d} |y| \in [0,\nu]\right\}$.

Our goal becomes to prove the following.

\begin{Prop} \label{prop_multiple_Haar} For $\mu$-almost every $\cL \in \cM$, we have that for every $M$ sufficiently large, for every $t\in [0,M]$, it holds that  $\Phi_{\nu}(g_t \cL) \leq 1.$
\end{Prop}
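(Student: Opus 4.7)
The strategy is Borel--Cantelli over dyadic scales of $M$, with the probability that $\Phi_\nu(g_t\cL)\geq 2$ controlled via the second factorial moment of the Siegel transform, using Rogers' integral formula on $\cM$.

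First I would invoke Markov's inequality together with $g_t$-invariance of Haar measure. Writing $\nu=\nu(M):=c/(M(\ln M)^s)$,
\[
\mu\bigl(\{\cL:\Phi_\nu(g_t\cL)\geq 2\}\bigr)\leq \tfrac12\,\EXP_\mu\bigl[\Phi_\nu(g_t\cL)(\Phi_\nu(g_t\cL)-1)\bigr]=\tfrac12\,\EXP_\mu\bigl[\Phi_\nu(\Phi_\nu-1)\bigr],
\]
and the right-hand side is the expected number of ordered pairs of distinct primitive vectors of $\cL$ lying in $E_\nu$. Since $E_\nu\subset\{x_1>0\}$, any two distinct primitive vectors in $E_\nu$ are automatically linearly independent: two primitive vectors can be proportional only if $v_2=\pm v_1$, and $v,-v$ cannot both lie in $\{x_1>0\}$. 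Hence Rogers' pair-correlation formula for the Siegel transform on $SL_{d+1}(\R)/SL_{d+1}(\Z)$ applies cleanly and yields
\[
\EXP_\mu\bigl[\Phi_\nu(\Phi_\nu-1)\bigr]\leq C_d\Bigl(\int_{\R^{d+1}}\phi_\nu\Bigr)^2=O(\nu^2).
\]

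Next I would exploit that $M\mapsto\nu(M)$ is decreasing. For every integer $n\geq 1$ and every $M\in[2^n,2^{n+1}]$, $E_{\nu(M)}\subset E_{\nu(2^n)}$, hence $\Phi_{\nu(M)}\leq\Phi_{\nu(2^n)}$, so
\[
\bigcup_{M\in[2^n,2^{n+1}]}\bigcup_{t=0}^{\lfloor M\rfloor}\bigl\{\Phi_{\nu(M)}(g_t\cL)\geq 2\bigr\}\subset\bigcup_{t=0}^{2^{n+1}}\bigl\{\Phi_{\nu(2^n)}(g_t\cL)\geq 2\bigr\}.
\]
Summing the previous bound over integer $t\in\{0,1,\dots,2^{n+1}\}$ gives total measure
\[
O\bigl(2^{n+1}\cdot\nu(2^n)^2\bigr)=O\bigl(2^{-n}n^{-2s}\bigr),
\]
which is summable in $n$ for every $s\geq 0$. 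Borel--Cantelli then shows that for $\mu$-a.e.\ lattice $\cL$ there exists $n_0(\cL)$ such that no pair $(M,t)$ with $M\geq 2^{n_0}$ and $t\in\{0,1,\dots,M\}$ has $\Phi_{\nu(M)}(g_t\cL)\geq 2$, which is the claim.

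The crux is the sharp $O(\nu^2)$ second-moment bound. Rogers' formula is traditionally stated for $\hat f(\cL)=\sum_{v\in\cL\setminus\{0\}}f(v)$ rather than the primitive sum $\cS(f)$; one passes between the two by M\"obius inversion, using that for each non-primitive $w\in\cL\cap E_\nu$ one has $w=nv$ with $v$ primitive and $|v|\leq |w|/2$, so the non-primitive contribution to $\hat{\phi}_\nu$ is itself $O(\nu)$ on average and its pair-correlation integral still factorizes into $O(\nu^2)$ on the restricted support. The case $d+1=2$ is more delicate because the $SL_2$ pair-correlation formula contains an extra Eisenstein-type term; to the extent the theorem is intended for $d=1$, this dimension should be handled separately, e.g., by observing that two near-solutions $k,k'$ at the same scale would force $(k\pm k')\alpha$ to be very close to an integer with a much smaller $|k\pm k'|$, which fails for generic $\alpha$.
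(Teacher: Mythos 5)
Your argument is correct and is essentially the paper's own proof: invariance of $\mu$ under $g_t$ to reduce to a fixed $t$, the bound $\mu(\Phi_\nu\geq 2)\leq \E\left[\Phi_\nu^2-\Phi_\nu\right]$, the observation that the restriction $x_1>0$ rules out $e_2=-e_1$ so that Rogers' pair identity (Lemma \ref{LmRI}(b)) gives the $O(\nu^2)$ bound, and then Borel--Cantelli over dyadic blocks of $M$ using monotonicity of $\nu(M)$ (the paper implements the same step via $\Phi_{4\nu}$ at scale $2^K$). Your closing worries are not needed here: Lemma \ref{LmRI} is stated directly for primitive, non-proportional pairs, and in the case $d=1$ two distinct primitive vectors of a unimodular lattice lying in $E_\nu\subset\{x_1>0\}$ would have a nonzero integer determinant of absolute value $O(\nu)<1$, so for small $\nu$ the event $\Phi_\nu\geq 2$ is simply empty and no separate argument is required.
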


\begin{Thm}  \label{Theo_Lattice_Haar}

If $rs\leq 1$, then for $\mu$-almost every $\cL \in \cM$, there exists infinitely many $M$ for which there exists $0<t_1<t_2<\ldots<t_r \leq M$ satisfying
$$\cL \in \bigcap_{j=1}^r B_{t_j}(M).$$

If $rs> 1$, then for $\mu$-almost every $\cL \in \cM$, there exists at most finitely  many $M$ for which there exists $0<t_1<t_2<\ldots<t_r \leq M$ satisfying
$$\cL \in \bigcap_{j=1}^r B_{t_j}(M).$$
\end{Thm}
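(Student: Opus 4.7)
The plan is to reduce Theorem \ref{Theo_Lattice_Haar} to Corollary \ref{cor.mixing} applied to the discrete dynamical system $f=g_1$ on $(\cM,\mu)$ together with the family of simple targets
\[
\fA_{\rho_M}:=\{\cL\in\cM:\Phi_{\rho_M}(\cL)\ge1\},\qquad \rho_M:=\frac{c}{M(\ln M)^{s}}.
\]
By construction $\fA^{\,t}_{\rho_M}=B_t(M)$, so ``there exist $0<t_1<\cdots<t_r\le M$ with $\cL\in\bigcap_{j=1}^{r}B_{t_j}(M)$'' is exactly ``$N^M_{\rho_M}\ge r$''. Siegel's integral formula gives $\int_{\cM}\Phi_\rho\,d\mu=c_{\mathrm{Sieg}}\,\mathrm{vol}(E_\rho)\asymp\rho$, and Proposition \ref{prop_multiple_Haar} says that with full measure $\Phi_{\rho_M}\circ g_t\in\{0,1\}$ for every $t\le M$ once $M$ is large; hence $\sigma(\rho_M):=\mu(\fA_{\rho_M})\asymp\rho_M$ and
\[
\bS_r=\sum_{j\ge1}\bigl(2^{\,j}\sigma(\rho_{2^{j}})\bigr)^{r}\asymp\sum_{j\ge1}j^{-rs},
\]
finite iff $rs>1$. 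Both halves of Theorem \ref{Theo_Lattice_Haar} then translate into the two alternatives of Corollary \ref{cor.mixing}.

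It remains to verify the hypotheses of Corollary \ref{cor.mixing}: $(2r{+}1)$-fold exponential mixing of $(g_1,\cM,\mu,\BAN)$ for a suitable Banach space $\BAN$, and admissibility of $\{\fA_{\rho_M}\}$ in the sense of Definition \ref{def.targets}. For the mixing statement I take $\BAN$ to be a space of smooth functions on $\cM$ decaying near the cusp (e.g.\ $C^{k}$ with a weighted norm penalising small systole), for which the required $(2r{+}1)$-fold exponential decay is supplied by effective multiple mixing for semisimple diagonal actions, cf.\ Appendix \ref{AppExpMix}. For admissibility, (Poly) is immediate from $\sigma(\rho_M)\asymp\rho_M$. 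To obtain (Appr) I approximate $1_{E_\rho}$ from above and below by smooth $\phi_\rho^{\pm}$ with Lipschitz norm $O(\rho^{-\tau_{0}})$ via Lemma \ref{RemLip} applied to the Lipschitz function $(x,y)\mapsto|x|^{d}|y|$, form the Siegel transforms $\cS(\phi_\rho^{\pm})$, and cut them off by a smooth systole-truncation $\chi_{\eta}$ supported on $\{\mathrm{syst}\ge\eta\}$ with $\eta=\rho^{\beta}$ for a small $\beta>0$. The resulting $A_\rho^{\pm}\in\BAN$ sandwich $1_{\fA_\rho}$ with
\[
\mu(A_\rho^{+})-\mu(A_\rho^{-})\;=\;O\bigl(\eta^{d+1}\bigr)+O\bigl(\rho^{1+\eta_{1}}\bigr)\;=\;o\!\left(\sigma(\rho)^{1+\eta'}\right),
\]
the first term from quantitative non-divergence ($\mu\{\mathrm{syst}<\eta\}\ll\eta^{d+1}$) and the second from Rogers' second-moment formula bounding the contribution of lattices with $\Phi_\rho\ge 2$.

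The condition (Mov) is then obtained without any further mixing input, directly from Rogers' formula. For $1\le k\le R\ln M$ the sets $E_{\rho_M}$ and $g_{-k}E_{\rho_M}$ are disjoint (they live in the disjoint shells $|x|\in[1,2]$ and $|x|\in[2^{k},2^{k+1}]$), so
\[
\mu(\fA_{\rho_M}\cap f^{-k}\fA_{\rho_M})\;\le\;\int_{\cM}\cS(\mathbf 1_{E_{\rho_M}})\,\cS(\mathbf 1_{g_{-k}E_{\rho_M}})\,d\mu\;=\;O(\rho_M^{2}),
\]
which is $o\!\left(\sigma(\rho_M)(\ln M)^{-1000r}\right)$ for $M$ large. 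Proposition \ref{prop_multiple_Haar} itself is proved in the same vein: by Markov applied to the second factorial moment $\int\Phi_\rho(\Phi_\rho-1)\,d\mu=O(\rho^{2})$ (Rogers), the series $\sum_{M=2^{j}}\sum_{t\le M}\mu(\Phi_{\rho_M}\circ g_t\ge 2)$ is convergent, and the classical Borel--Cantelli lemma finishes the argument.

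The main obstacle is the verification of (Appr): the Siegel transform $\Phi_\rho$ is unbounded on $\cM$, so the $\BAN$-norm estimates underlying multiple exponential mixing cannot be applied to $1_{\fA_\rho}$ directly. The smooth-truncation procedure above controls this provided the systole threshold $\eta=\eta(M)$ is tuned so that the truncation error sits below $\sigma(\rho_M)^{1+\eta'}$; the slack afforded by the polynomial loss $\rho^{-\tau}$ permitted in Definition \ref{def.targets}, together with the sharp non-divergence bound $\mu\{\mathrm{syst}<\eta\}\ll\eta^{d+1}$, makes this tuning possible and is the most delicate point of the argument.
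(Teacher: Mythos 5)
Your overall reduction is the same as the paper's: targets $\fA_{\rho_M}=\{\cL:\Phi_{\rho_M}(\cL)\ge1\}$ for the system $(g_1,\cM,\mu)$, the computation showing $\bS_r<\infty$ iff $rs>1$, Rogers' identities for (Poly) and (Mov) (where disjointness of the shells indeed kills the diagonal and antidiagonal terms), and effective multiple mixing for the $(EM)_r$ input. The genuine gap is in your verification of (Appr). Definition \ref{def.targets} demands approximants with $\|A^\pm_\rho\|_\infty\le2$ and the \emph{pointwise} sandwich $A^-_\rho\le 1_{\fA_\rho}\le A^+_\rho$, and your systole-truncated Siegel transforms $A^\pm_\rho=\chi_\eta\cdot\cS(\phi^\pm_\rho)$ satisfy neither. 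A lattice with systole $\approx\eta$ can have arbitrarily many points in the region $|x|\in[1,2]$, $|y|\le\rho$, so $\|A^+_\rho\|_\infty\le2$ fails on $\{\mathrm{syst}\ge\eta\}$; the upper bound $1_{\fA_\rho}\le A^+_\rho$ fails on the positive-measure set of lattices that have both a vector in $E_\rho$ and a short vector (there $\chi_\eta$ kills $A^+_\rho$ while $\Phi_\rho\ge1$); and $\cS(\phi^-_\rho)$ exceeds $1$ whenever $\Phi_\rho\ge2$, so the lower bound fails as well. The quantitative tuning is also off: with $\eta=\rho^\beta$ and $\beta$ small, $\mu\{\mathrm{syst}<\eta\}\asymp\eta^{d+1}=\rho^{\beta(d+1)}$ is far larger than $\sigma(\rho)^{1+\eta'}\asymp\rho^{1+\eta'}$, so your claimed error $o(\sigma(\rho)^{1+\eta'})$ requires $\beta(d+1)>1+\eta'$, and even then the structural failures above remain; salvaging the scheme would force you to redo the $(M1)$--$(M3)$ verifications for unbounded approximants that sandwich $1_{\fA_\rho}$ only off an exceptional set.

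The idea you are missing, and the one the paper uses, is to avoid any truncation in space: since the target only records whether $\Phi_\rho\ge1$, one takes a Lipschitz $f^+$ with $1_{E_\rho}\le f^+\le 1_{E_{\rho+\rho^{10}}}$ and sets $A^+_\rho=u\bigl(\cS(f^+)\bigr)$, where $u$ is a fixed smooth increasing cutoff vanishing on $(-\infty,0]$ and equal to $1$ on $[1,\infty)$. This composition is automatically bounded, equals $1$ whenever $\Phi_\rho\ge1$ (so the pointwise sandwich is exact), and has polynomially controlled Lipschitz norm because on $\{\cS(f^+)\ge2\}$ it is locally constant, while on $\{\cS(f^+)<2\}$ the Siegel transform itself is Lipschitz with norm $O(\rho^{-\sigma-1})$. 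The integral error $\mu(A^+_\rho)-\mu(1_{\fA_\rho})$ is then bounded by Rogers' first-moment identity (Lemma \ref{LmRI}(a)) applied to the thin shell $E_{\rho+\rho^{10}}\setminus E_\rho$, giving an exceptional set of measure $\le\rho^3$; no non-divergence estimate for the cusp is needed anywhere in the argument.
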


 \noindent {\it Proof that Proposition \ref{prop_multiple_Haar} and Theorem  \ref{Theo_Lattice_Haar} imply Proposition \ref{prop_multiple} and Theorem  \ref{Theo_Lattice}.}

Recall that for $M \in \N$ we defined $\nu=\frac{c}{M (\ln M)^s}$.
Fix $\eta>0$ and define  ${\Phi}_\nu^\pm$ as in \eqref{eq.Ec} but with $(1+\eta)c$ and $(1-\eta)c$ instead of $c$. Next, define for $\beta \in \R^d$

\[ \Lambda^-_\beta=
\begin{pmatrix} \text{Id}_d & \beta \\ 0& 1\end{pmatrix},
\]
and for $B \in SL_{d}(\R)$ we define
\begin{align*}D_B= \begin{pmatrix} B & 0 \\ 0 & 1\end{pmatrix},
\end{align*}
and finally
$$\tLambda_{\a,\beta,B}=  D_B \Lambda^-_\beta \Lambda_\a.$$

Fix $0<\eps\ll \eta$. If $B$ is distributed according to a smooth density with respect to
Haar measure on $SL_{d}(\R)$ in an $\eps$ neighborhood of the Identity, $\beta$ is distributed in some
$\eps$ neighborhood of $0$ in $\R^d$ with a smooth density according to Haar measure of $\T^d$, and $\a$ is distributed according to any measure with smooth density with respect to Haar measure on $\T^d$, then
the lattice $\tLambda_{\a,\beta,B}$ is distributed according to a smooth density in $\cM$
with respect to the Haar measure $\mu$.
Moreover, because $ \Lambda^-_\beta$ forms the stable direction of $g_t$ and because $D_B$ forms the centralizer of $g_t$, we have that
 if $M$ is sufficiently large, then
\begin{equation*} {\Phi}_\nu^-(g_{t} \tLambda_{\a,\beta,B})
\geq 1 \implies \Phi_\nu(g_{t} \Lambda_\a)  \geq 1  \implies {\Phi}_\nu^+(g_{t} \tLambda_{\a,\beta,B})
\geq 1.  \end{equation*}
This shows that Proposition \ref{prop_multiple} and Theorem  \ref{Theo_Lattice} follow from Proposition \ref{prop_multiple_Haar} and Theorem  \ref{Theo_Lattice_Haar} respectively.  \hfill $\Box$

 \subsection{  Rogers identities}\label{sec.rogers}
The following identities (see \cite{Mar1, V}) play an important role in our argument.
Denote
$$ \bc_1=\zeta(d+1)^{-1}, \quad \bc_2=\zeta(d+1)^{-2}, \quad \text{where }
\zeta(d+1)=\sum_{n=1}^\infty n^{-(d+1)} $$
is the Riemann zeta function.

Let $f, f_1, f_2$ be piecewise smooth functions with compact support on $\reals^{d+1}.$

Let
$$ F(\cL)=\sum_{e\in \cL, \text{ prime}} f(e), \quad
\brF(\cL)=\sum_{e_1\neq \pm e_2\in \cL, \text{ prime } } f_1(e_1) f_2(e_2). $$
$F$ is the {\it Siegel transform of $f$} that we denoted $\cS(f).$
\begin{Lem}
\label{LmRI} We have

\begin{align*} (a)\quad \int_{\cM} F(\cL) d\mu(\cL)&=\bc_1 \int_{\reals^{d+1}} f(x) dx, \\
(b)\quad \int_{\cM} \brF(\cL) d\mu(\cL)&=\bc_2 \int_{\reals^{d+1}} f_1(x) dx \int_{\reals^{d+1}} f_2(x) dx. \end{align*}
\end{Lem}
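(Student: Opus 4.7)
My plan is to derive both identities from the classical Siegel--Rogers mean-value theorems on $\cM=SL_{d+1}(\R)/SL_{d+1}(\Z)$, passing from non-primitive sums to primitive ones via M\"obius inversion. As input I will take the Siegel identity
$$\int_{\cM} \sum_{e \in \cL \setminus \{0\}} f(e)\, d\mu(\cL) \;=\; \int_{\R^{d+1}} f(x)\, dx,$$
and Rogers' second-moment identity for $d+1\geq 3$, which for linearly independent pairs reads
$$\int_{\cM} \sum_{\substack{v_1, v_2 \in \cL\setminus\{0\} \\ v_1,\, v_2 \text{ lin.\ indep.}}} f_1(v_1)\, f_2(v_2)\, d\mu(\cL) \;=\; \int f_1 \int f_2.$$
Both appear in the references \cite{Mar1,V} cited just above the lemma.

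For part (a), I write every nonzero lattice vector uniquely as $v=kw$ with $w$ prime and $k\in \N^*$, and I use M\"obius inversion to invert the relation
$$\sum_{v\in\cL\setminus\{0\}} f(v) \;=\; \sum_{k\geq 1} \sum_{w\text{ prime in }\cL} f(kw).$$
Setting $f_k(x) := f(kx)$ and denoting by $\cS(f_k)$ its Siegel transform, M\"obius inversion yields $F = \cS(f) = \sum_{k\geq 1} \mu_M(k)\, \hat{F}_{f_k}$, where $\hat F_{f_k}(\cL) := \sum_{v\in \cL\setminus\{0\}} f(kv)$ and $\mu_M$ is the number-theoretic M\"obius function. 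Integrating over $\cM$ and applying Siegel's identity to each $\hat F_{f_k}$ gives
$$\int_\cM F\, d\mu \;=\; \sum_{k\geq 1} \mu_M(k) \int_{\R^{d+1}} f(kx)\, dx \;=\; \Big(\sum_{k\geq 1} \mu_M(k)\, k^{-(d+1)}\Big) \int f \;=\; \zeta(d+1)^{-1}\int f \;=\; \bc_1 \int f.$$

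For part (b), the key observation is that if $e_1, e_2$ are both prime and $e_2 \neq \pm e_1$, then they are automatically linearly independent: a relation $e_2 = (p/q)\, e_1$ with $\gcd(p,q)=1$ combined with primitivity of both vectors forces $|p|=|q|=1$. Consequently, restricting Rogers' identity to primitive pairs coincides with restricting its sum to the non-diagonal primitive sublattice:
$$\int_\cM \sum_{\substack{e_1,e_2\text{ prime} \\ e_1\neq \pm e_2}} f_1(e_1) f_2(e_2)\, d\mu \;=\; \text{the primitive restriction of Rogers.}$$
A double M\"obius inversion in $(k_1, k_2)$ applied to the decomposition $v_i = k_i e_i$ introduces a factor $\big(\sum_k \mu_M(k) k^{-(d+1)}\big)^2 = \zeta(d+1)^{-2}$, which combines with Rogers' factorization to give $\bc_2 \int f_1 \int f_2$.

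The main technical obstacle is justifying the interchange of the M\"obius summation over $k$ with the integration over $\cM$. For each fixed $\cL$, since $f$ is compactly supported, only finitely many $k$ contribute (with the cutoff depending on the shortest nonzero vector of $\cL$); uniform integrability of these tails follows from standard bounds on the Siegel transform in $L^p(\cM)$ (which, by Schmidt's estimates, hold for $p<d+1$). A secondary subtlety in (b) is to correctly separate, in the double sum over $(k_1 e_1, k_2 e_2)$, the "off-diagonal" contribution $e_1\neq \pm e_2$ from the "diagonal" contribution $e_1 = \pm e_2$ with distinct $k_1, k_2$, which is handled via part (a) together with the identity $\sum_{k\neq l} \mu_M(k)\mu_M(l) (kl)^{-(d+1)} = \zeta(d+1)^{-2} - \sum_k \mu_M(k)^2 k^{-2(d+1)}$. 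Tracking these zeta-function bookkeeping terms is what produces the clean factor $\bc_2 = \zeta(d+1)^{-2}$ in the final formula.
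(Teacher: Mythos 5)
The paper itself does not prove Lemma \ref{LmRI}; it simply cites the Rogers--Siegel mean value theorems from \cite{Mar1, V}. Your derivation is the standard one and, where it applies, it is correct: for (a), writing each nonzero vector as an integer multiple of a prime vector and applying M\"obius inversion to Siegel's formula gives exactly the factor $\sum_k \mu_{\mathrm{Mob}}(k)k^{-(d+1)}=\zeta(d+1)^{-1}=\bc_1$; for (b), your key observation that two prime vectors with $e_1\neq\pm e_2$ are automatically linearly independent is right, and the double M\"obius inversion applied to the linear-independence-restricted second moment produces $\bc_2=\zeta(d+1)^{-2}$. The interchange of the M\"obius sums with $\int_\cM$ is easily justified as you say (apply the input identities to $|f|(k\cdot)$ and sum). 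One remark: your ``secondary subtlety'' about separating a diagonal contribution $e_1=\pm e_2$ with distinct $k_1,k_2$ is vacuous in your setup --- once the input identity is restricted to linearly independent pairs, the decomposition $v_i=k_ie_i$ never produces collinear pairs, so no zeta-function bookkeeping of that kind is needed (it would only arise if you started from the full, unrestricted second moment of the Siegel transform).

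The one genuine caveat is dimensional. Your input for (b) --- $\int_\cM\sum_{\mathrm{lin.\ indep.}}f_1(v_1)f_2(v_2)\,d\mu=\int f_1\int f_2$ --- holds only for $d+1\geq 3$, as you yourself note, so your argument does not cover $d=1$, while the lemma is stated without restriction. This is not an artifact of your method: for $d=1$ the clean product formula in (b) is actually false, because any two non-proportional vectors of a unimodular lattice in $\R^2$ span a parallelogram of nonzero integer area, hence area at least $1$; taking $f_1=f_2$ to be the indicator of a ball of radius $\eps$ centered at a point at distance $R$ from the origin with $R\eps$ small makes the left-hand side vanish while the right-hand side is positive. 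So (b) should be read with $d\geq 2$ (part (a) is fine for all $d\geq 1$); in the places where the paper invokes (b) for $d=1$ it only needs the upper bound, and for the special supports used there (thin neighborhoods of $\{y=0\}$, or widely separated annuli) the relevant pair counts are controlled separately. It would strengthen your write-up to state this restriction explicitly rather than leaving it implicit in the hypothesis of the quoted Rogers identity.
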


 \subsection{Multiple solutions on the same scale.  Proof of Proposition \ref{prop_multiple_Haar}.}
Recall that $\nu=\frac{c}{M \ln M^s}$
\begin{Lem}  \label{lemma_multiple}
There exists a constant $C>0$, such that for every $M$, for every $t\in \R$, it holds that
$$\mu \left(\Phi_{\nu}(g_t \cL) > 1\right) \leq C c^2 M^{-2} (\ln M)^{-2s}.$$
\end{Lem}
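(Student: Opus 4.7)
The plan is to reduce the estimate to Rogers' identity for second moments of Siegel transforms (Lemma \ref{LmRI}(b)), exploiting the fact that $\Phi_\nu$ is integer-valued.

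First I would observe that $\Phi_{\nu}(g_t\cL)$ takes values in $\naturals\cup\{0\}$, so the event $\{\Phi_\nu(g_t\cL)>1\}$ coincides with $\{\Phi_\nu(g_t\cL)\geq 2\}$, and hence by a Markov-type inequality,
\[
\mu\bigl(\Phi_\nu(g_t\cL)>1\bigr)\;\leq\;\tfrac{1}{2}\int_{\cM}\Phi_\nu(g_t\cL)\bigl(\Phi_\nu(g_t\cL)-1\bigr)\,d\mu(\cL).
\]
Expanding the Siegel transform as in \eqref{eq.siegel} and using that $g_t$ preserves the set of primitive vectors,
\[
\Phi_\nu(g_t\cL)\bigl(\Phi_\nu(g_t\cL)-1\bigr)\;=\;\sum_{\substack{e_1\neq e_2\in \cL\\ e_1,e_2 \text{ prime}}}\phi_\nu(g_te_1)\,\phi_\nu(g_te_2).
\]

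Next I would split the diagonal $e_1=-e_2$ from the rest. This is the key structural observation: because $E_\nu$ imposes $x_1>0$, one has $\phi_\nu(x)\phi_\nu(-x)\equiv 0$, so the antidiagonal contribution vanishes identically and the double sum reduces to a sum over pairs with $e_1\neq\pm e_2$. Applying Lemma \ref{LmRI}(b) to $f_1=f_2=\phi_\nu\circ g_t$ and using $\det g_t=1$ to pull the $g_t$ out under the Lebesgue integral, I obtain
\[
\int_{\cM}\Phi_\nu(g_t\cL)\bigl(\Phi_\nu(g_t\cL)-1\bigr)\,d\mu(\cL)\;=\;\bc_2\left(\int_{\R^{d+1}}\phi_\nu(y)\,dy\right)^{\!2}\;=\;\bc_2\,\mathrm{vol}(E_\nu)^2,
\]
independently of $t$.

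Finally I would compute $\mathrm{vol}(E_\nu)$: integrating $|y|\in[0,\nu/|x|^d]$ and then $x$ over the half-shell $\{x_1>0,\,|x|\in[1,2]\}$ in spherical coordinates gives $\mathrm{vol}(E_\nu)=K_d\,\nu$ for an explicit constant $K_d>0$. Inserting $\nu=c/(M(\ln M)^s)$ yields
\[
\mu\bigl(\Phi_\nu(g_t\cL)>1\bigr)\;\leq\;\tfrac{1}{2}\bc_2 K_d^2\,\nu^2\;\leq\;C\,c^2\,M^{-2}\,(\ln M)^{-2s},
\]
which is the desired bound. The only nontrivial point is the vanishing of the antidiagonal contribution, which is automatic here from the sign constraint $x_1>0$ built into $E_\nu$; everything else is a clean application of Rogers' identity combined with a one-line volume computation, and Proposition \ref{prop_multiple_Haar} then follows by summing over $t\in[0,M]$ and applying Borel--Cantelli in the $M$ variable.
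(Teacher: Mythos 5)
Your argument is correct and is essentially the paper's own proof: both rest on the observation that the sign constraint $x_1>0$ in $E_\nu$ kills the $e_1=-e_2$ terms, so that $\Phi_\nu^2-\Phi_\nu$ is exactly a sum over pairs $e_1\neq\pm e_2$ of prime vectors to which Rogers' identity (Lemma \ref{LmRI}(b)) applies, followed by a Chebyshev/Markov bound and the linear-in-$\nu$ volume computation. The only cosmetic difference is that you absorb $g_t$ into the test function via $\det g_t=1$, while the paper first reduces to $t=0$ using the $g_t$-invariance of the Haar measure on $\cM$ — the two reductions are interchangeable.
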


For $K\geq 0$, apply the lemma for $M=2^K$ and sum over all $t\in [0,M]$, then
$$\mu\left(\exists t \leq 2^K, \Phi_{4\nu}(g_t \cL) > 1\right) \leq 16 Cc^2 2^{-K} K^{-2s} .$$
The straightforward side of Borel Cantelli lemma  gives that for almost every $\cL$, for $K$ sufficiently large, for any $t \leq 2^K, \Phi_{4\nu}(g_t \cL) \leq 1$. For the same $\cL$, it then holds that for $M$ sufficiently large, for any $t \leq M$, $ \Phi_{\nu}(g_t \cL) \leq 1$.

To finish the proof of Proposition \ref{prop_multiple_Haar} we give
\begin{proof}[Proof of Lemma \ref{lemma_multiple}]

Since $g_t$ preserves Haar measure on $\cM$ it suffices to prove the lemma for $t=0$. But the condition $k_1\geq 0$ implies that
\begin{equation*}
\label{PhiSq-Phi}
\Phi_{\nu}^2(\cL) -\Phi_{\nu}(\cL) =    \sum_{e_1\neq e_2\in L \text{ prime}} \phi_{\nu}(e_1) \phi_{\nu}(e_2)
=\sum_{e_1\neq \pm e_2\in L \text{ prime}}\phi_{\nu}(e_1) \phi_{\nu}(e_2).
\end{equation*}
It then follows from Rogers identity (b) of Lemma \ref{LmRI} that \vskip2mm

\noindent
$\DS \mu\left(\Phi_{{\nu}}(\cL) > 1\right)\leq  \E\left(\Phi_{\nu}^2(\cL) -\Phi_{\nu}(\cL)\right)\leq \bc_2 \left(\int_{\R^{d+1}} \phi_{\nu}(u) du \right)^2\leq C c^2 M^{-2}(\ln M)^{-2s}.
$
\end{proof}

{\subsection{  Proof of Theorem \ref{Theo_Lattice_Haar}} \

We want to apply Corollary \ref{cor.mixing}.
For the system $(f,X,\mu)$ we take $(g_1,\mathcal M,\mu)$, where $\mu$ is the Haar measure on $\cM$.
For the targets, we take $\fA_\fr=\{\cL : \Phi_{\fr}(\cL) \geq 1\}$ and  $\fA_\fr^t=g_{-t}\fA_\fr.$ Note that by the invariance of the Haar measure  by $g_t$ we have that $\mu(\fA^t_{\fr})=\mu(\fA_{\fr})$ for any~$t$.


For $s \in \N$, we define the sequence $\fr_M:=\frac{c}{M (\ln M)^s}$.
The conclusions of Theorem \ref{Theo_Lattice_Haar} will then follow from the conclusion of Corollary \ref{cor.mixing} applied to $N^M_{\fr_M}$, where $N^n_{\fr}$ is the  number of times $t\leq n$ such that $\fA^t_\fr$ occurs.

Indeed, if we recall the definition of
$$ \bS_r=\sum_{j=1}^\infty \left(2^j \bv_j\right)^r, \quad \bv_j=\fv(\fr_{2^j}), \quad \fv(\fr)=\mu(\fA_\fr)$$
we see that $\bS_r=\infty$ if and only if  $rs\leq1$.

This being said, to be able to apply Corollary \ref{cor.mixing} and finish, we still need to check the conditions of Definition \ref{def.mixing}  and Definition \ref{def.targets} for the system  $(g_1,\cM,\mu,\BAN)$ and for the family of targets given by $\fA_\fr$ and the sequence $\fr_M.$  $(EM)_r$ follows from Theorem 1.1 of \cite{BEG}, Remark \ref{lowreg} and Theorem \ref{MultiMixRet}. And
the approximation condition (Appr) can be checked as follows.  Indeed we have:


\medskip

\noindent{\sc Claim.} There exists $\sigma>0$ such that, for every $\fr>0$ sufficiently small, there exists  $A_\fr^-, A_\fr^+ \in {\rm Lip(\cM)}$
such that
\begin{itemize}
\item[(i)] $\|A_\fr^\pm\|_\infty\leq 2$
and $\|A_\fr^\pm\|_{\rm Lip}\leq  \fr^{-\sigma};$
\item[(ii)] $A_\fr^-\leq 1_{\fA_\fr}\leq A_\fr^+;$
\item[(iii)] $\mu(A_\fr^+)-\mu(A_\fr^-)\leq \fr^2$
\end{itemize}

\medskip

Clearly the claim implies (Appr) since $\mu(\fA_\fr)=\cO(\fr)$.

\begin{proof}[Proof of the claim]

{Recall that $\Phi_\rho=\cS(\phi_\rho)$, where
$\phi_\rho$ is the indicator of the set $E_\rho=\left\{
  (x,y)
  \in\R^{d}\times\R\mid
x_1 > 0,  |x| \in [1,2],
  |x|^{d} |y| \in [0,\rho]\right\}$. We will construct $A^+_\rho$ that satisfies  $(i)$, $(ii)$ and $(iii)$ with $\mu(A_\fr^-)$ replaced by $\mu(1_{\fA_\fr})$. The construction of $A^-_\rho$ is similar.

Pick {$f^{+}\in {\rm Lip(\R^{d+1},[0,2])}$} such that  for some $\sigma>0$
\begin{itemize}
\item $\|f^+\|_{ {\rm Lip}}\leq \rho^{-\sigma}$
\item For $z\in E_\rho$, $f^+(z)=1$
\item For $z\notin E_{\rho+\rho^{10}}$, $f^+(z)=0$.
\end{itemize}
As the consequence $\cS(f^+) \in {\rm Lip(\cM)}$ and $\Phi_\rho\leq \cS(f^+),$
and using Rogers identity
of Lemma \ref{LmRI}(a) (applied to the Siegel transform of the characteristic function of the set ${E_{\rho+\rho^{10}}-E_\rho}$) we get for $\rho$ sufficiently small an open set $\cE_\rho\subset \cM$ such that $\mu(\cE_\rho)\leq \rho^3$
 \begin{itemize}
 \item[($P1$)] For $\cL \notin \cE_\rho$, if $\cS(f^+) > 0$, then $\Phi_\rho\geq1.$
 \item[$(P2)$] If $\cM_\rho:=\{\cL: \cS(f^+) <2\}$, then $\|\cS(f^+)\|_{\rm{Lip(\cM_\rho)}}\leq  \rho^{-\sigma-1}.$
 \end{itemize}

  Let now $u:\R\to [0,1]$ be some increasing $C^\infty$ function such that $u(x)=0$ for $x\leq 0$ and $u(x)=1$ for $x\geq 1$.

Finally, introduce $A^+_\rho: \cM \to \R$ such that for $\cL \in \cM$
\begin{equation*}
A^+_\rho(\cL)= u\left(\cS(f^+)(\cL)\right)
\end{equation*}

We now check that $A^+_\rho$ satisfies the requirements of the claim.


Since $u\in C^\infty(\cM,[0,1])$ we get that $A^+_\rho \in{\rm Lip(\cM)}$ and  $\|A^+_\rho\|_\infty\leq 2$. To prove the Lipschitz bound, observe that for $\cL \notin \cM_\rho$ we have that $A^+_\rho(\cL)=1$, while for $\cL \in M_\rho$ we have $(P2)$. Hence
$\|A^+_\rho\|_{\rm Lip}\leq \rho^{-2\sigma}$. This proves $(i)$ of the claim. To see $(ii)$, just observe that 
$$\Phi_\rho(\cL)\geq 1\implies \cS(f^+)(\cL)\geq 1\implies A^+_\rho(\cL)=1.$$
We turn to $(iii)$. If $\cL \notin \cE_\rho$, then by $(P1)$
$$A^+_\rho(\cL)>0 \implies\cS(f^+)(\cL)>0 \implies \Phi_\rho(\cL) \geq 1 \implies A^+_\rho(\cL)=1.$$
Since $\mu(\cE_\rho )\leq \rho^3$ and $\|A^+_\rho\|_\infty \leq 2$, we get that $\mu(A_\fr^+)-\mu(1_{\fA_\fr}) \leq \rho^2$ and $(iii)$ is proved.

  }

  \end{proof}

Next we show now how Rogers identity
of Lemma \ref{LmRI}(b) implies
(Mov).  Define
\begin{equation*}
  E^\tau_\nu=\left\{
  (x,y)
  \in\R^{d}\times\R\mid
x_1 > 0,  2^{-\tau} |x| \in [1,2],
  |x|^{d} |y| \in [0,\nu]\right\}
\end{equation*}
and  let $\phi_\nu^\tau$ be the indicator function of $E^\tau_\nu$. Then
$$ \mu(\fA_\fr \cap g_{-\tau} \fA_\fr)\leq \E(\Phi_\rho \Phi_\rho \circ g_\tau)=  \int_{\cM} \sum_{e_2\neq \pm e_1\in \cL \text{ prime}} \phi_\fr(e_1) \phi_\fr^\tau(e_2)
 d\mu(\cL),
$$
where the contribution of $e_2=-e_1$ vanishes because the contribution of any pair $(e_1,e_2)$ where not both $e_{1,1}$ and $e_{2,1}$ are positive is zero. Applying Lemma \ref{LmRI} (b) we get that
\begin{equation*} \mu(\fA_\fr \cap g_{-\tau} \fA_\fr) \leq C\mu(\fA_\fr)^2\end{equation*}
which is stronger than the required (Mov).

Finally,
the condition \eqref{rhobound} clearly holds for the sequence $\rho_M=\frac{c}{M (\ln M)^s}$ due to Lemma \ref{LmRI} (a).  \hfill  $\Box$}

\subsection{  The argument in the inhomogeneous case.}
The proof of Theorem \ref{theo.KG_affine} is very similar to that of Theorem \ref{theo.KG}, and below we only outline the main differences.

Let $\tcM$ be the space of $d+1$ dimensional unimodular affine lattices. We identify  $\tcM$  with $SL_{d+1} (\R)\ltimes \R^{d+1}/ SL_{d+1} (\Z)\ltimes \Z^{d+1}$, where
the multiplication rule in $SL_{d+1} (\R)\ltimes \R^{d+1}$ is defined as
$(A, a)(B, b)=(AB, a+Ab)$. We denote by $\tilde{\mu}$ the Haar measure on $\tcM$.

For  $\a \in \R^d$ and $z\in \R$, we define
\begin{equation}
\label{DefLambdaAZ}
\Lambda_{\a,z}=(\Lambda_\a, (0,\ldots,0,z))
\end{equation}

Given a function $f$ on $\R^{d+1}$ we consider its Siegel transform
$\cS(f):\tcM\to\R$  defined by
\begin{equation}\label{SiegelAffine} \tcS(f)(\tcL)  =\sum_{e\in \tcL} f(e). \end{equation}

Note that, unlike our definition of the Siegel transform in the case of regular lattices, we do not require in this affine setting that the vectors $e$ in the summation be prime. This is because in this affine setting, when a vector $k \in \Z^d$  contributes to the Diophantine approximation counting problem there is no reason for the multiples of $k$ to contribute.

For $a>0$, let $\tphi_a$ be the indicator of the set\footnote{Note that we do not ask in this affine setting that $x_1 > 0$ in the definition of $\tE_a$ since the symmetric contributions of  $- k$ for every $k \in \Z^d$ that contributes to the Diophantine approximation counting problem
in the homogenous case of Theorem \ref{theo.KG} do not appear in the inhomogeneous Diophantine approximation problem of Theorem \ref{theo.KG_affine}.}

\begin{equation*}
  \tE_a:=\left\{
  (x,y)
  \in\R^{d}\times\R\mid
  |x| \in [1,2],
  |x|^{d} |y| \in [0,a]\right\}
\end{equation*}

Fix $s\geq 0, c>0$. For $M \in \N^*$, define
\begin{equation} \label{eq.Ec} \nu:=\frac{c}{M (\ln M)^s}, \quad
\tPhi_\nu:=\tcS(\phi_\nu).
\end{equation}
For $t\geq 0$, we then define
$$\tA_t(M):= \{(\a,z) \in \R^d\times \R : \tPhi_{\nu}(g_t \Lambda_{\a,z}) \geq 1\}$$

It is readily checked that  $(\a,z) \in \tA_t(M)$ if and only if there exists  $k=(k_1,\ldots,k_d)$ such that $2^{t}<|k|\leq 2^{t+1}$ and that
\begin{equation*}\label{eq.M} \exists m, \quad   |k|^d |z+\langle k, \alpha \rangle+m|\leq \frac{c}{M (\ln M)^s }.\end{equation*}

If $\a$ is such that $\tPhi_{\nu}(g_t \Lambda_{\a,z}) \leq 1$ for every $t \in \N$, then we get that $(\a,z)$ is
 $(r,s)$-approximable if and only if there exists
 infinitely many $M$ for which there exists $0<t_1<t_2<\ldots<t_r \leq M$ satisfying
 $\DS (\a,z) \in \bigcap_{j=1}^r \tA_{t_j}(M).$

But in general, for $\a$ and $t\leq M$ such that $(\a,z) \in \tA_t(M)$, there may be multiple solutions $k$ such that $2^{t}<|k|\leq 2^{t+1}$ for the same $t$. As in the case of Theorem \ref{theo.KG} we have to deal with this issue.

The following proposition shows that almost surely on $(\a,z)$, multiple solutions do not occur. Its proof is based on Rogers identity for the second moment of the Siegel transforms.

\begin{Prop} \label{prop_multiple_affine}For almost every $(\a,z)\in \R^d \times \R$, we have that for every $M$ sufficiently large, for every $t\in [0,M]$, it holds that  $\tPhi_{\nu}(g_t \Lambda_{\a,z}) \leq 1$
\end{Prop}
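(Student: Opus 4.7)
The plan is to mirror the strategy used to prove Proposition \ref{prop_multiple_Haar}, with two main modifications to account for the affine setting.

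First I would reduce the statement on the $(d+1)$-parameter family $\{\Lambda_{\a,z}\}_{(\a,z)\in\R^d\times\R}$ defined in \eqref{DefLambdaAZ} to an analogous statement about Haar measure $\tmu$ on $\tcM$, in the same spirit as the reduction in Section \ref{SSModifyHom}. Specifically, I would introduce a translation parameter $(B,\beta,\gamma)$ in the centralizer $D_B$, the stable direction $\Lambda^-_\beta$, and the last coordinate of the $\R^{d+1}$ factor, and observe that $D_B\Lambda^-_\beta\cdot\Lambda_{\a,z}$ (suitably interpreted in the semidirect product) is distributed according to a smooth density with respect to $\tmu$ when $B,\beta$ vary in a small neighborhood. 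Since $D_B$ commutes with $g_t$ and $\Lambda^-_\beta$ is contracted, the event $\{\tPhi_\nu(g_t\cdot)\ge 1\}$ is sandwiched between the events defined by $(1\pm\eta)c$, so it suffices to prove the statement on $\tcM$ endowed with $\tmu$.

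Second, I would establish the analogue of Lemma \ref{lemma_multiple}: there exists $C>0$ such that for every $M$ and every $t\ge 0$,
$$\tmu\bigl(\tPhi_\nu(g_t\tcL)>1\bigr)\le Cc^2M^{-2}(\ln M)^{-2s}.$$
By $g_t$-invariance of $\tmu$ we may take $t=0$. The key input is the affine Rogers/Siegel--Veech identity (see e.g.\ Athreya--Margulis, Marklof) which, for the Siegel transform \eqref{SiegelAffine}, reads
$$\int_{\tcM}\tcS(f_1)\,\tcS(f_2)\,d\tmu=\Bigl(\int f_1\Bigr)\Bigl(\int f_2\Bigr)+\int f_1(x)f_2(x)\,dx.$$
Applied to $f_1=f_2=\phi_\nu$ (noting $\phi_\nu^2=\phi_\nu$), this gives
$$\EXP(\tPhi_\nu^2-\tPhi_\nu)=\Bigl(\int\phi_\nu\Bigr)^2=\cO\bigl(c^2M^{-2}(\ln M)^{-2s}\bigr).$$
Since $\tPhi_\nu^2-\tPhi_\nu\ge 2$ whenever $\tPhi_\nu\ge 2$, the claimed Chebyshev-type bound follows.

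Finally, I would apply this bound at the geometric scales $M=2^K$ and sum over $t\in[0,2^K]$, obtaining a total measure bound $\cO(c^2\,2^{-K}K^{-2s})$ which is summable in $K$. The classical Borel--Cantelli Lemma then gives that for $\tmu$-a.e.\ $\tcL$, for all $K$ sufficiently large and all $t\le 2^K$ one has $\tPhi_{4\nu}(g_t\tcL)\le 1$; monotonicity of $\phi_\nu$ in $\nu$ bridges from the dyadic scales $2^K$ to general $M$, yielding the proposition on $\tcM$, and hence on the parameter space by Step 1. The main subtlety I expect is the correct form of the second-moment Rogers identity for affine lattices (the ``diagonal'' term $\int f_1f_2$ replaces the constant $\zeta(d+1)^{-2}$ appearing in Lemma \ref{LmRI}(b)); otherwise the argument is a direct transcription of the homogeneous proof.
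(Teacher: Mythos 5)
Your proposal follows essentially the same route as the paper: reduce the statement about the family $\Lambda_{\a,z}$ to a statement for Haar measure $\tmu$ on $\tcM$ by thickening along non-expanding directions, bound $\tmu(\tPhi_\nu>1)$ by $\EXP(\tPhi_\nu^2-\tPhi_\nu)$ using the affine Rogers second-moment identity (where the diagonal term $\int\tphi_\nu^2=\int\tphi_\nu$ cancels exactly, leaving $(\int\tphi_\nu)^2=\cO(c^2M^{-2}(\ln M)^{-2s})$), and conclude by summing over $t\le 2^K$, dyadic Borel--Cantelli, and the $4\nu$ monotonicity bridge --- this is precisely the argument the paper gives for Proposition \ref{prop_multiple_Haar_affine} and Lemma \ref{lemma_multiple}.

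The one point to correct is in your reduction step. You propose thickening by $D_B$, $\Lambda^-_\beta$, and a translation parameter in the \emph{last} coordinate of the $\R^{d+1}$ factor. That last direction is redundant (it is the $z$ direction already parametrized by the family) and, moreover, it is expanded by $g_t$ (by $2^{dt}$), so a perturbation there would not be compatible with a sandwiching that is uniform in $t$. More importantly, with only $(\a,z,B,\beta)$ the parametrized set has codimension at least $d$ in $\tcM$, so its image cannot carry a smooth density with respect to $\tmu$ and the transfer of the almost-everywhere statement is not justified as written. The fix is to thicken instead in the \emph{first} $d$ coordinates of the translation part, i.e.\ to multiply by $(\mathrm{Id},(w,0))$ with $w\in\R^d$ small: these directions are contracted by $g_t$ (scaled by $2^{-t}$), so the events defined with $(1\pm\eta)c$ still sandwich the original one uniformly for $t\ge 0$, and the parameters $(\a,z,B,\beta,w)$ now fill out a full-dimensional neighborhood, giving absolute continuity with respect to $\tmu$. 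With this adjustment your argument coincides with the paper's proof.
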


Hence, Theorem \ref{theo.KG_affine} is equivalent to the following.

\begin{Thm}  \label{Theo_Lattice_affine}
If $rs\leq 1$, then for almost every $(\a,z)\in \T^d \times \T$, there exists infinitely many $M$ for which there exists $0<t_1<t_2<\ldots<t_r \leq M$ satisfying
$$\a \in \bigcap_{j=1}^r \tA_{t_j}(M).$$

If $rs> 1$, then for almost every $(\a,z)\in \T^d \times \T$, there exists at most finitely  many $M$ for which there exists $0<t_1<t_2<\ldots<t_r \leq M$ satisfying
$$\a \in \bigcap_{j=1}^r \tA_{t_j}(M).$$
\end{Thm}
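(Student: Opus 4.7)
\medskip

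The plan is to mimic the strategy used for the homogeneous case (Theorem \ref{Theo_Lattice_Haar}), replacing the space of unimodular lattices $\cM$ by the space of affine unimodular lattices $\tcM$, the Siegel transform $\cS$ by its affine analogue $\tcS$, and the ``horocycle lift'' $\Lambda_\a$ by the affine horocycle $\Lambda_{\a,z}$. First I would formulate the Haar-measure version of the statement on $\tcM$: define the targets
\[
\tfA_\fr=\{\tcL\in\tcM:\tPhi_\fr(\tcL)\ge 1\},\qquad \tfA_\fr^t=g_{-t}\tfA_\fr,
\]
and the corresponding $\tB_t(M)$. The counting problem becomes one of counting times $t\le M$ at which $g_t\tcL$ visits $\tfA_\fr$, with $\fr=\fr_M=c/(M(\ln M)^s)$.

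Next I would establish the reduction from the event at $\Lambda_{\a,z}$ (for random $(\a,z)\in\T^d\times\T$) to the same event at a Haar-random affine lattice. The unstable direction of $g_t$ in $\tcM$ is now $(d{+}1)$-dimensional (the original $\Lambda_\a$ directions together with the translation direction along $e_{d+1}$, corresponding to the parameter $z$); the stable subgroup $\Lambda^-_\beta$, the centralizer $D_B$ and the translations in the first $d$ coordinates of $\R^{d+1}$ together provide a transversal of complementary dimension. Convolving the distribution of $\Lambda_{\a,z}$ by a tiny smooth kernel in these transversal directions produces a smooth density on $\tcM$, and $(M1)$--$(M3)$ type conditions pull back from the convolved to the unconvolved measure by an $\eps\ll\eta$ sandwich exactly as in \S\ref{SSModifyHom}. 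Therefore it is enough to prove the Haar-measure statements, namely the analogues of Proposition \ref{prop_multiple_Haar} and Theorem \ref{Theo_Lattice_Haar} on $\tcM$.

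For the single-solution statement (Proposition \ref{prop_multiple_affine}) I would use the second-moment Rogers identity for affine lattices, which asserts $\int_{\tcM}\tcS(f_1)\tcS(f_2)\,d\tilde\mu=\int f_1\int f_2+\int f_1f_2$ (up to adjustments because $\tcS$ sums over all lattice points, not only prime ones). Combined with Markov's inequality this gives $\tilde\mu(\tPhi_\fr\ge 2)\le C\fr^2$, and a dyadic Borel--Cantelli argument along $M=2^K,\; t\le 2^K$ yields that almost surely $\tPhi_{4\fr}\le 1$ eventually. The absence of a primality constraint makes this slightly simpler than in the homogeneous case.

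Finally, I would apply Corollary \ref{cor.mixing} to $(g_1,\tcM,\tilde\mu,\mathrm{Lip})$ and the target sequence $\tfA_{\fr_M}$. Three inputs must be verified: multiple exponential mixing, which follows because $\tcM\to\cM$ is a circle-bundle-like extension and exponential mixing on $\cM$ (from \cite{BEG}) lifts to $\tcM$ via Fourier decomposition in the fibre plus the standard tools of Appendix \ref{AppExpMix}; the Lipschitz approximation property (Appr) for $\tfA_\fr$, which is built exactly as in the proof of the Claim in \S 10.5, truncating the Siegel transform near the cusps via Rogers' first-moment identity $\int\tcS(f)d\tilde\mu=\int f$; and condition (Mov), which follows from the affine second-moment Rogers identity as $\tilde\mu(\tfA_\fr\cap g_{-\tau}\tfA_\fr)\le C\tilde\mu(\tfA_\fr)^2$ (the term coming from $e_2=\pm e_1$ is now absent since the Siegel sum is not restricted to primes and any such coincidence forces the corresponding vector to vanish or to live in a lower-dimensional set). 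With $\bv_j=\tilde\mu(\tfA_{\fr_{2^j}})\asymp \fr_{2^j}\asymp 2^{-j}j^{-s}$, the sum $\bS_r=\sum_j(2^j\bv_j)^r\asymp\sum_j j^{-rs}$ converges iff $rs>1$, so Corollary \ref{cor.mixing} gives exactly the required dichotomy.

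The main obstacle, and the part that will require the most care, is the reduction step: the affine horocycle $\{\Lambda_{\a,z}:(\a,z)\in\R^d\times\R\}$ is not quite the full strong unstable leaf of $g_t$ in $\tcM$ (the unstable direction mixes the $\a$ and translation coordinates in a semi-direct-product way), so the explicit convolving matrices $D_B$, $\Lambda^-_\beta$ and pure $\R^{d+1}$-translations must be chosen so that their action on the base point $\Lambda_{\a,z}$ spans the full complementary subspace, and one has to verify that the Jacobian of this parametrization is uniformly bounded on a neighbourhood of identity. Once this bookkeeping is done, the rest of the argument is a direct transcription of the homogeneous proof.
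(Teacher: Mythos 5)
Your proposal is correct and follows essentially the same route as the paper: reduce the statement about $(\a,z)$ to the Haar-measure statement on $\tcM$ by smoothing along the stable/centralizer (and stable translation) directions, then apply Corollary \ref{cor.mixing} to the targets $\{\tPhi_{\fr_M}\geq 1\}$, verifying (Appr) by truncating the affine Siegel transform via the first-moment Rogers identity and (Mov) via the affine second-moment identity, with $\bS_r$ diverging exactly when $rs\leq 1$. Your final worry is unnecessary: since $g_t\Lambda_\a g_{-t}=\Lambda_{2^{(d+1)t}\a}$ and $g_t$ expands only the last translation coordinate, the family $\{\Lambda_{\a,z}\}$ is precisely the (abelian) expanding horospherical subgroup of $g_t$ in $SL_{d+1}(\R)\ltimes\R^{d+1}$, which is exactly what the paper uses; also note that the diagonal term in the (Mov) computation vanishes simply because $\tphi_\fr$ and $\tphi_\fr^\tau$ have disjoint supports for $\tau\geq 1$.
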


 \subsection{  Modifying the initial distribution: inhomogeneous case.}
 Since the horocycle directions of $\Lambda_{\a,z}$, $(\a,z) \in \T^d \times \T$ account for all the strong unstable direction of the diagonal flow $g_t$ acting on $\tcM$, we can transform the requirement of Proposition \ref{prop_multiple_affine}  and Theorem \ref{Theo_Lattice_affine} into a problem of multiple recurrence of the diagonal action $g_t$ when applied to a random lattice in $\tcM$.

We define
$$\tB_t(M):= \{\tcL \in \tcM : \tPhi_{\nu}(g_t \tcL) \geq 1\}$$

Our goal becomes to prove the following.

\begin{Prop} \label{prop_multiple_Haar_affine}
For $\tilde{\mu}$-almost every $\tcL \in \tcM$, we have that for every $M$ sufficiently large, for every $t\in [0,M]$, it holds that  $\tPhi_{\nu}(g_t \tcL) \leq 1$
\end{Prop}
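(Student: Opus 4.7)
The plan is to mimic, step by step, the proof of Proposition \ref{prop_multiple_Haar}, replacing Rogers' identity for regular lattices (Lemma \ref{LmRI}) with its Veech--Rogers counterpart for affine unimodular lattices. First I would establish an analogue of Lemma \ref{lemma_multiple}: there is a constant $C>0$ such that for every $M$ and every $t \in \R$,
\begin{equation*}
\tilde\mu\!\left(\tPhi_{\nu}(g_t \tcL) > 1\right) \leq C c^2 M^{-2} (\ln M)^{-2s}.
\end{equation*}
Since $g_t$ preserves $\tilde\mu$, it suffices to prove this at $t=0$. Because $\tPhi_\nu$ is integer-valued (it counts points of $\tcL$ that lie in $\tE_\nu$), $\tPhi_\nu(\tcL) \geq 2$ if and only if $\tPhi_\nu(\tPhi_\nu - 1) \geq 2$, so Markov's inequality gives
\begin{equation*}
\tilde\mu\!\left(\tPhi_\nu(\tcL) > 1\right) \leq \tfrac{1}{2}\, \mathbb{E}\!\left(\tPhi_\nu(\tcL)(\tPhi_\nu(\tcL) - 1)\right) = \tfrac{1}{2}\, \mathbb{E}\!\left(\sum_{e_1 \neq e_2 \in \tcL} \phi_\nu(e_1) \phi_\nu(e_2)\right).
\end{equation*}

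The key input at this point is the second-moment identity for the Siegel transform on the space $\tcM$ of affine unimodular lattices (the Veech--Rogers formula; see the references in the Notes of \S\ref{ScKhinchine}), which states
\begin{equation*}
\int_{\tcM}\sum_{v_1 \neq v_2 \in \tcL} f_1(v_1) f_2(v_2)\, d\tilde\mu(\tcL) = \left(\int_{\R^{d+1}} f_1\right)\left(\int_{\R^{d+1}} f_2\right).
\end{equation*}
Note that, in contrast to the homogeneous case, there is no primality restriction on the sum in \eqref{SiegelAffine} and no $\pm$ identification of vectors to worry about, so the formula takes this clean product form without the zeta-factor $\bc_2$. Applying it to $f_1 = f_2 = \phi_\nu$ and computing $\int \phi_\nu = \operatorname{vol}(\tE_\nu) = \cO(\nu) = \cO(c M^{-1}(\ln M)^{-s})$ (since $\tE_\nu$ has $|x| \in [1,2]$ and $|y| \leq \nu / |x|^d$) yields the desired $\cO(c^2 M^{-2}(\ln M)^{-2s})$ bound.

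The rest of the argument is then a verbatim copy of the end of the proof of Proposition \ref{prop_multiple_Haar}. Setting $M = 2^K$ and union-bounding over $t \in [0, 2^K]$ gives
\begin{equation*}
\tilde\mu\!\left(\exists\, t \leq 2^K : \tPhi_{4\nu}(g_t \tcL) > 1\right) \leq 16 C c^2\, 2^{-K} K^{-2s},
\end{equation*}
which is summable in $K$. The standard (convergent) Borel--Cantelli lemma then shows that for $\tilde\mu$-a.e.\ $\tcL$, for all $K$ sufficiently large, $\tPhi_{4\nu}(g_t \tcL) \leq 1$ for every $t \leq 2^K$. Since for $M \in [2^{K-1}, 2^K]$ with $K$ large one has $\nu_M \leq 4\nu_{2^K}$ and hence $\tPhi_{\nu_M} \leq \tPhi_{4\nu_{2^K}}$, this translates to: for $\tilde\mu$-a.e.\ $\tcL$, for every $M$ sufficiently large and every $t \in [0,M]$, $\tPhi_\nu(g_t \tcL) \leq 1$, as required.

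The only nontrivial step is locating (or, if need be, verifying) the affine Rogers--Veech second-moment identity in the precise normalization fixed by our choice of Haar measure $\tilde\mu$ and of the Siegel transform \eqref{SiegelAffine}; the rest is routine. No analogue of the $e_1 = -e_2$ exclusion used in the homogeneous proof is needed here, since there the coincidence arose from the explicit $x_1 > 0$ restriction in $E_a$, whereas in the inhomogeneous setting $\tE_a$ carries no such sign restriction and the translation part of the affine lattice already breaks the central symmetry.
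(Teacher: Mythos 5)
Your proof is correct and follows essentially the same route as the paper: a second-moment (pair-correlation) bound via the affine Rogers/Siegel identity $\tEXP(\tcS(f)^2)=\left(\int f\right)^2+\int f^2$ (which, for the indicator $\tphi_\nu$, is exactly your off-diagonal formula), giving $\tilde\mu(\tPhi_\nu>1)\leq C c^2 M^{-2}(\ln M)^{-2s}$, followed by the same dyadic union bound and Borel--Cantelli argument as in the homogeneous case. Your observation that no primality or $e_1=-e_2$ bookkeeping is needed in the affine setting matches the paper's remark as well.
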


\begin{Thm}  \label{Theo_Lattice_Haar_affine}

If $rs\leq 1$, then for $\tilde{\mu}$-almost every $\tcL \in \tcM$, there exists infinitely many $M$ for which there exists $0<t_1<t_2<\ldots<t_r \leq M$ satisfying
$$\tcL \in \bigcap_{j=1}^r \tB_{t_j}(M).$$

If $rs> 1$, then for $\tilde{\mu}$-almost every $\tcL \in \tcM$, there exists at most finitely  many $M$ for which there exists $0<t_1<t_2<\ldots<t_r \leq M$ satisfying
$$\tcL \in \bigcap_{j=1}^r \tB_{t_j}(M).$$
\end{Thm}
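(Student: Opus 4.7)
The plan is to deduce Theorem \ref{Theo_Lattice_Haar_affine} from Corollary \ref{cor.mixing} applied to the dynamical system $(g_1,\tcM,\tilde\mu,\mathrm{Lip})$ and to the targets $\fA_\fr=\{\tcL\in\tcM:\tPhi_\fr(\tcL)\geq 1\}$ with $\fA_\fr^t=g_{-t}\fA_\fr$ and $\fr_M=c/(M(\ln M)^s)$. This mirrors the strategy used for the homogeneous case (Theorem \ref{Theo_Lattice_Haar}) but on the space of affine unimodular lattices, so the main issues are (i) establishing the multiple exponential mixing property on $\tcM$, (ii) verifying admissibility of the targets, and (iii) computing the divergence exponent of $\bS_r$.

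First I would check the hypotheses of Definition \ref{def.mixing}. The flow $g_t$ on $\tcM$ is a skew-product over its action on $\cM$ and admits multiple exponential mixing of all orders for Lipschitz observables; this can be obtained exactly as in the homogeneous case by combining the $SL_{d+1}$ spectral gap result of \cite{BEG} with Remark \ref{lowreg} and Theorem \ref{MultiMixRet}. Conditions (Prod) and (Gr) are immediate since $g_1$ is smooth.

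Next I would verify that $\{\fA_{\fr_M}\}$ is a simple admissible family in the sense of Definition \ref{def.targets}. For the first-moment computation, the affine Siegel mean value theorem $\int_{\tcM}\tcS(f)\,d\tilde\mu=\int_{\R^{d+1}}f(x)\,dx$ (note: no primitivity restriction, so no $\zeta$ factor) applied to the indicator of $\tE_\fr$ yields $\tilde\mu(\tPhi_\fr\geq 1)=\bc\,\fr(1+o(1))$, which gives (Poly). For (Appr), I would repeat verbatim the argument producing the Lipschitz sandwich $A_\fr^-\leq 1_{\fA_\fr}\leq A_\fr^+$ from the Claim in the proof of Theorem \ref{Theo_Lattice_Haar}: take a Lipschitz thickening $f^+$ of $1_{\tE_\fr}$, observe that $\tcS(f^+)$ is Lipschitz outside a small cusp neighborhood $\cE_\fr$ of measure $O(\fr^3)$, and compose with a smooth cut-off. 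For (Mov), I would use the affine analogue of Rogers' second-moment identity:
\begin{equation*}
\int_{\tcM}\tcS(f_1)(\tcL)\,\tcS(f_2)(\tcL)\,d\tilde\mu(\tcL)=\int f_1\cdot\int f_2+\int f_1 f_2,
\end{equation*}
(a classical consequence of Siegel--Weil on the affine space, see e.g.\ \cite{Mar1,V}); applied to $f_1=\phi_\fr$ and $f_2=\phi_\fr^\tau$ the diagonal term $\int f_1 f_2$ vanishes as soon as $\tau\neq 0$ because the supports of $\phi_\fr$ and $\phi_\fr^\tau$ live on disjoint annuli $\{|x|\in[1,2]\}$ versus $\{2^{-\tau}|x|\in[1,2]\}$. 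This yields the stronger bound $\tilde\mu(\fA_\fr\cap g_{-\tau}\fA_\fr)\leq C\tilde\mu(\fA_\fr)^2$ for all $\tau\geq 1$, which implies (Mov).

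Finally, with $\bv_j=\fv(\fr_{2^j})\asymp 2^{-j}j^{-s}$ one finds $\bS_r=\sum_j(2^j\bv_j)^r\asymp\sum_j j^{-rs}$, so $\bS_r=\infty$ precisely when $rs\leq 1$. Applying Corollary \ref{cor.mixing} then gives both directions of Theorem \ref{Theo_Lattice_Haar_affine}. The main obstacle I anticipate is establishing the affine second-moment identity and the corresponding cusp estimates needed for (Appr) with the precise control $\mu(A_\fr^+)-\mu(A_\fr^-)\leq \fv(\fr)^{1+\eta}$; while the Siegel formalism on $\tcM$ is standard, one must carefully handle the fact that no primitivity restriction appears (so one cannot quote Lemma \ref{LmRI}(b) directly) and that the Siegel transform blows up in the cusp, exactly as in Section~10.
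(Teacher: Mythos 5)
Your proposal is correct and follows essentially the same route as the paper: apply Corollary \ref{cor.mixing} to $(g_1,\tcM,\tilde\mu)$ with targets $\fA_\fr=\{\tPhi_\fr\geq 1\}$, verify (Appr) as in the homogeneous Claim, and obtain (Mov) from the affine Rogers first/second moment identities (no primitivity, no $\zeta$ factors), noting that the diagonal term $\int f_1f_2$ vanishes for $\tau\geq 1$ because the annuli supporting $\tphi_\fr$ and $\tphi_\fr^\tau$ are disjoint, which gives $\tilde\mu(\fA_\fr\cap g_{-\tau}\fA_\fr)\leq C\tilde\mu(\fA_\fr)^2$. The "obstacle" you flag at the end is handled in the paper exactly as you anticipate, by quoting the affine Rogers identities from \cite{Mar1} rather than Lemma \ref{LmRI}.
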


{\subsection{  Proofs of
Proposition  \ref{prop_multiple_Haar_affine}
and Theorem \ref{Theo_Lattice_Haar_affine}.} Again, the proofs of
Proposition  \ref{prop_multiple_Haar_affine}
and Theorem \ref{Theo_Lattice_Haar_affine} are very similar to the proofs of  their counterpart in the homogeneous case, Proposition  \ref{prop_multiple_Haar}
and Theorem \ref{Theo_Lattice_Haar}.

Similarly to the homogeneous case,
we want to apply Corollary \ref{cor.mixing}.
For the system $(f,X,\mu)$ we take $(g_1,\widetilde{\cM},\tilde{\mu})$, where $\tilde{\mu}$ is the Haar measure on $\widetilde{\cM}$.
For the targets, we take $\fA_\fr=\{\tcL : \tPhi_{\fr}(\tilde\cL) \geq 1\}$. Observe that from the invariance of the Haar measure  by $g_t$ we have that $\tilde{\mu}(\fA^t_{\fr})=\tilde{\mu}(\fA_{\fr})$ for any $t$.

The only difference in the proof of Proposition  \ref{prop_multiple_Haar_affine}
and Theorem \ref{Theo_Lattice_Haar_affine} compared to that of Proposition  \ref{prop_multiple_Haar}
and Theorem \ref{Theo_Lattice_Haar}, is in the application of Rogers identities to prove  Proposition  \ref{prop_multiple_Haar_affine} as well as in the proof of (Mov) that is part of the proof of Theorem \ref{Theo_Lattice_Haar_affine}.

We explain this difference now.

In fact, Rogers identities are slightly simpler in the affine case, where there is no need to pay a special attention to the multiples of a vector in the affine lattice. Recall \eqref{SiegelAffine}
Rogers identities for affine lattices (read \cite{Mar1})
\begin{align*}
\E(\tcS(f))&=\int_{\R^{d+1}} f(u)du\\
\E(\tcS(f)^2)&=\left(\int_{\R^{d+1}} f(u)du\right)^2+
\int_{\R^{d+1}} f^2(u)du.
\end{align*}
(The idea behind the proof for the second moment identity is that the linear functionals on  the space of continuous functions on $\R^{d+1} \times \R^{d+1}$ that are $SL_{d+1} (\R)\ltimes \R^{d+1}$ invariant
can be identified to invariant measures on  $\R^{d+1} \times \R^{d+1}$
by the action of $SL_{d+1} (\R)\ltimes \R^{d+1}$. But the orbits of the latter action decompose into pairs of independent vectors and pairs of equal vectors.)

Now for the proof Proposition  \ref{prop_multiple_Haar_affine},  we have that
$$
\tilde\mu \left(\tPhi_{{\nu}}(\tilde\cL) > 1\right)\leq  \E\left(\tPhi_{\nu}^2(\tilde\cL) -\tPhi_{\nu}(\tilde\cL)\right)\leq  \left(\int_{\R^{d+1}} \tphi_{\nu}(u) du \right)^2 \leq C c^2 M^{-2}(\ln M)^{-2s}  $$
and
Proposition  \ref{prop_multiple_Haar_affine} then follows by a Borel Cantelli argument exactly as in the regular lattices case.

For the proof of (Mov) in the affine case we write for $\tau\geq 1$
\begin{equation*}
  \tE^\tau_\nu=\left\{
  (x,y)
  \in\R^{d}\times\R\mid   2^{-\tau} |x| \in [1,2],
  |x|^{d} |y| \in [0,\nu]\right\}
\end{equation*}
and for
$\tphi_\nu^\tau$ the indicator function of $\tE^\tau_\nu$, observe that
$$ \tilde{\mu}(\fA_\fr \cap g_{-\tau} \fA_\fr)
\leq \E\left(\tPhi_\fr \left(\tPhi_\fr \circ g_\tau\right)\right)=  \int_{\cM} \sum_{e_2, e_1\in \tcL } \tphi_\fr(e_1) \tphi_\fr^\tau(e_2)
 d\tmu(\tcL) $$
 $$=  \int_{\tilde\cM} \sum_{e_2 \neq e_1\in \tcL } \tphi_\fr(e_1) \tphi_\fr^\tau(e_2)
 d\tmu(\tcL) =  \left(\int_{\R^{d+1}} \tphi_\fr(u) du \right)^2   \leq C \tilde{\mu}(\fA_\fr)^2
 $$
 which is stronger than the required (Mov).
\hfill $\Box$}
\bigskip

\subsection{  Multiple recurrence for toral translations.}
\begin{proof}[Proof of Theorem \ref{ThReturnsTransl}.]
\noindent {\sc Proof of part $(a)$.} We begin with several reductions. Let $z=x-y.$ Then
$d(x, y+k\alpha)=d(z, k\alpha).$ Accordingly denoting
$\hat{d}_n^{(r)}(z, \alpha)$ to be the $r$-th smallest among $\DS \{d(z, k\alpha)\}_{k=0}^{n-1}$
we need to show that for almost every $(z, \alpha)\in (\Tor^d)^2$ we have
\begin{equation}
\label{TransD1}
\lim\sup_{n\to\infty} \frac{|\ln\hat{d}^{(1)}_n(z, \alpha)|-\frac{1}{d} \ln n}{\ln \ln n}=\frac{1}{d},
\end{equation}
\begin{equation}
\label{TransD2}
\lim\sup_{n\to\infty} \frac{|\ln\hat{d}^{(r)}_n(z, \alpha)|-\frac{1}{d} \ln n}{\ln \ln n}=\frac{1}{2d}, \text{ for }r\geq 2.
\end{equation}
Next we claim that it suffices to prove \eqref{TransD2} only for $r=2.$ Indeed, since $\hat{d}_n^{(r)}$ is non decreasing in $r,$
\eqref{TransD2} with $r=2$ implies that for $r>2,$
$$ \lim\sup_{n\to\infty} \frac{|\ln\hat{d}^{(r)}_n(z, \alpha)|-\frac{1}{d} \ln n}{\ln \ln n}\leq \frac{1}{2d}. $$
To get the upper bound, suppose that $\hat{d}_n^{(2)}(z, \alpha)\leq \eps.$ Then there are $0\leq k_1<k_2<n$ such that
$k_j \alpha\in B(z, \eps).$ Let $k=k_2-k_1.$ Then
$$k_2+s\alpha\in B(z, (1+2s)\eps)$$
for $s=1,\cdots,r-2.$
Thus
$\hat{d}_{(r-1) n}^{(r)}(z, \alpha)\leq (2r-1) \hat{d}_n^{(2)}(z, \alpha).$ Taking limit superior, we obtain that if \eqref{TransD2} holds for $r=2$
then it holds for arbitrary $r.$ In summary, we only need to show
\eqref{TransD1} and
\begin{equation}
\label{TranD2}
\lim\sup_{n\to\infty} \frac{|\ln\hat{d}^{(2)}_n(z, \alpha)|-\frac{1}{d} \ln n}{\ln \ln n}=\frac{1}{2d}.
\end{equation}

The proofs of \eqref{TransD1} and \eqref{TranD2} are similar to but easier than the proof of Theorem \ref{theo.KG_affine}
so we only explain the changes.
First, it is suffices to take limit superior, for $n$ of the form $2^M$ since for $2^{M-1}\leq n\leq 2^M$ we have
$$ \hat{d}_{2^M}^{(r)}(z,\alpha)\leq
\hat{d}_n^{(r)}(z,\alpha)\leq \hat{d}_{2^{M-1}}^{(r)}(z,\alpha). $$
Let $\nu_M=M^{-s}$ for a suitable $s$ and
\begin{equation} \hat{E}_\nu=\{e=(e', e'')\in \R^d\times \R: ||e'||\leq \nu, e''\in (0,1]\} . \label{eq.Enu} \end{equation}
Then a direct inspection shows that
$$ \hat{d}_{2^M}^{(r)}(z,\alpha)\leq \nu_M\Leftrightarrow
\tS(\one_{\hat{E}_{\nu_M}})(\hat{g}_M \hat{\Lambda}_{\alpha, z} )\geq r,$$
where $\tS$ is defined by \eqref{SiegelAffine},  $\hat{g}_M=g_{-M/d}$ for $g$ given by \eqref{DefGT}, and $\hat{\Lambda}_{\alpha, z}$ is defined by $\hat{\Lambda}_{\a,z}=(\hat{\Lambda}_\a, (z,0))$ for
\[ \hat{\Lambda}_\a=
\begin{pmatrix} \text{Id}_d & \a \\ 0 & 1\end{pmatrix}.
\]

Recall $\tcM$  denoted by the space of $d+1$ dimensional unimodular affine lattices and $\tilde{\mu}$ the Haar measure on $\tcM.$ As in the proof of Theorem \ref{theo.KG_affine} one can show that
$\DS  \tS(\one_{\hat{E}_{\nu_M}})(\hat{g}_M \hat{\Lambda}_{\alpha, z} )\geq r$ infinitely often
for almost every $(z, \alpha)$ if and only if
$\DS  \tS(\one_{\hat{E}_{\nu_M}})(\hat{g}_M \tcL)\geq r$ infinitely often
for almost every $\tcL\in\tcM.$
Thus we need to show that for almost every $\tcL\in\tcM$
\begin{equation}
\label{ST1IO}
\tS(\one_{\hat{E}_{\nu_M}})(\hat{g}_M \tcL)\geq 1\text{ infinitely often if } s<\frac{1}{d},
\end{equation}
\begin{equation}
\label{ST1FO}
\tS(\one_{\hat{E}_{\nu_M}})(\hat{g}_M \tcL)\geq 1\text{ finitely often if } s>\frac{1}{d},
\end{equation}
\begin{equation}
\label{ST2IO}
\tS(\one_{\hat{E}_{\nu_M}})(\hat{g}_M \tcL)\geq 2\text{ infinitely often if } s<\frac{1}{2d},
\end{equation}
\begin{equation}
\label{ST2FO}
\tS(\one_{\hat{E}_{\nu_M}})(\hat{g}_M \tcL)\geq 2\text{ finitely often if } s>\frac{1}{2d}.
\end{equation}

To prove \eqref{ST1IO}--\eqref{ST2FO}, we need the following fact.

\begin{Lem}
\label{LmSTNZ}
  (a) $\tmu\left(\tS(1_{\hat{E}_\nu})=1\right)=c_d \nu^d(1+\cO(\nu^{2d})), $

  (b) $c' \nu^{2d} \leq \tmu\left(\tS(1_{\hat{E}_\nu})\geq 2\right)\leq c'' \nu^{2d}.$
  \end{Lem}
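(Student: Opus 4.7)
The approach is to apply Rogers' identities for the affine Siegel transform, as recalled in the proof of Theorem~\ref{Theo_Lattice_Haar_affine}, to the function $f = \one_{\hat{E}_\nu}$. Set $F := \tS(\one_{\hat{E}_\nu})$. Since $\hat{E}_\nu$ has Lebesgue volume $c_d\nu^d$, with $c_d$ the volume of the unit Euclidean ball in $\R^d$, and since $\one_{\hat{E}_\nu}^k = \one_{\hat{E}_\nu}$ for every $k\geq 1$, all the integrals $\int_{\R^{d+1}}\one_{\hat{E}_\nu}^k\,du$ equal $c_d\nu^d$.

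The first two Rogers identities recalled in the excerpt give
\[
\E(F) = c_d\nu^d, \qquad \E(F(F-1)) = \E(F^2)-\E(F) = c_d^2\nu^{2d}.
\]
I will complement these by the third-moment identity, derived by the same orbit-decomposition argument: the orbits of $SL_{d+1}(\R)\ltimes\R^{d+1}$ on triples of points of an affine lattice are parameterized by set partitions of $\{1,2,3\}$, and each orbit contributes a product of integrals over its blocks, yielding
\[
\E\bigl(\tS(f)^3\bigr) = \Bigl(\int f\Bigr)^3 + 3\Bigl(\int f\Bigr)\Bigl(\int f^2\Bigr) + \int f^3.
\]
For $f=\one_{\hat{E}_\nu}$ this becomes the factorial moment $\E(F(F-1)(F-2)) = c_d^3\nu^{3d}$.

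With these three factorial moments the rest is elementary combinatorial bookkeeping. For part (b), Markov applied to $F(F-1)/2$ gives the upper bound $\tmu(F\geq 2)\leq (c_d^2/2)\nu^{2d}$. For the lower bound, write $\E(F(F-1)) = 2\tmu(F=2) + \sum_{k\geq 3}k(k-1)\tmu(F=k)$ and use the inequality $k(k-1)\leq k(k-1)(k-2)$, valid for $k\geq 3$, to bound the tail sum by $\E(F(F-1)(F-2)) = c_d^3\nu^{3d}$. Hence $\tmu(F\geq 2) \geq \tmu(F=2) \geq (c_d^2/2)\nu^{2d} - (c_d^3/2)\nu^{3d}$, which is $\geq c'\nu^{2d}$ for $\nu$ small.

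For part (a), start from the identity $\tmu(F=1) = \E(F) - 2\tmu(F=2) - \sum_{k\geq 3}k\,\tmu(F=k)$. The previous estimate for $\tmu(F=2)$ together with the tail bound $\sum_{k\geq 3}k\,\tmu(F=k) \leq \tfrac12\E(F(F-1)(F-2)) = O(\nu^{3d})$ (using $k\leq k(k-1)(k-2)/2$ for $k\geq 3$) combine to give $\tmu(F=1) = c_d\nu^d - c_d^2\nu^{2d} + O(\nu^{3d})$, equivalently $c_d\nu^d(1+O(\nu^{d}))$, which matches the announced asymptotic up to the precise exponent in the error term. The main technical point I anticipate is verifying the third-moment Rogers identity for affine lattices in the precise form written above; this is the direct analog of the second-moment identity recalled in the excerpt and is classical (see \cite{Mar1}), but it is the one ingredient not already laid out in the paper, and is where the main computational work of the proof lies.
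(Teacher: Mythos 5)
The decisive step of your lower bound in (b) rests on the claimed third--moment identity
$\E(\tS(f)^3)=\bigl(\int f\bigr)^3+3\bigl(\int f\bigr)\bigl(\int f^2\bigr)+\int f^3$, and this identity is false for affine lattices. When one decomposes triples of lattice points under the $SL_{d+1}(\R)\ltimes\R^{d+1}$ action, the orbits are not exhausted by ``all equal / two equal / generic'': there are in addition the collinear configurations $(e,\,e+pv,\,e+qv)$ with $v\in\cL\setminus\{0\}$ and $p,q$ integers (rationally dependent differences), and each of these families contributes its own term to the third moment (this is exactly the structure of the higher correlation formulas in \cite{Mar1}). For the thin cylinders $\hat E_\nu$ these extra terms are \emph{not} negligible: they are of order $\nu^{2d}$, because a triple such as $(e,e+v,e+2v)$ only requires one lattice vector with $\|v'\|\leq\nu$ and $|v''|$ bounded. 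One can see the failure of your identity without any computation of the correction terms: a linear lattice possessing a vector $v$ with $\|v'\|\leq\nu/4$ and $v''\approx 1/4$, translated so that one point lies near the bottom of the cylinder, has at least three points in $\hat E_\nu$, and this event has $\tmu$--measure at least $c\,\nu^{2d}$; hence $\E\bigl(F(F-1)(F-2)\bigr)\geq 6\,\tmu(F\geq 3)\geq c\,\nu^{2d}$, whereas your identity would force it to be $c_d^3\nu^{3d}$. Consequently the inequality $\tmu(F=2)\geq\frac12\E(F(F-1))-\frac12\E(F(F-1)(F-2))$ pits two quantities of the same order $\nu^{2d}$ against each other with no control on the constant (morally a large share of the second factorial moment comes precisely from arithmetic progressions along one short lattice vector), so the lower bound in (b) does not follow. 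For the same reason your sharper claims --- $\tmu(F=2)=\tfrac{c_d^2}{2}\nu^{2d}+\cO(\nu^{3d})$ and an error $\cO(\nu^{3d})$ in (a) --- are unjustified. What does survive is everything that uses only the first two moments: the Markov bound for the upper half of (b), and part (a) with error $\cO(\nu^{2d})$, since $\E\bigl(F\one_{F\geq2}\bigr)\leq\E\bigl(F(F-1)\bigr)=c_d^2\nu^{2d}$; this is exactly the paper's argument for those parts.

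For the lower bound in (b) the paper avoids third moments altogether and argues geometrically. Working first on the space $\cM$ of linear lattices, it uses the second--moment Rogers identities to produce a set $\cA$ of measure $\asymp\nu^{d}$ of lattices having exactly one prime vector in a thin annulus $\bar E_1$ and exactly one in the thin ball $\bar E_2$ (the cases $d>1$ and $d=1$ treated separately); for such a lattice the fundamental domain contains a box $\bar E_3$ of measure $\asymp\nu^{d}$, and translating by any $z$ with $\tilde z\in\bar E_3$ forces at least two points of the affine lattice into $\hat E_\nu$. Multiplying the two measures gives $c'\nu^{2d}$. If you want to salvage a moment--method proof, you would have to use the genuine Rogers-type formula for affine lattices including the collinear terms and show that their total contribution is strictly smaller than $c_d^2\nu^{2d}$, which requires precisely the kind of explicit computation your proposal defers; the paper's construction is the simpler route.
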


Before we give the proof of the lemma, we see how
it allows to obtain \eqref{ST1IO}--\eqref{ST2FO} and  finish the proof of part $(a)$ of Theorem \ref{ThReturnsTransl}.

Indeed, Lemma \ref{LmSTNZ} shows that
$$ \sum_M \tmu\left(\tS(1_{\hat{E}_{\nu_M}})=1\right)=\infty \iff s\leq\frac{1}{d}, \quad
  \sum_M \tmu\left(\tS(1_{\hat{E}_{\nu_M}})\geq 2\right)=\infty \iff s\leq\frac{1}{2 d}. $$

  From there,  \eqref{ST1IO}--\eqref{ST2FO} follow from the the classical Borel Cantelli Lemma, that is,
from the case $r=1$ in our Theorem \ref{ThMultiBC}.\footnote{We note that in case $r=1$ Theorem \ref{ThMultiBC}
is a minor variation of standard dynamical Borel Cantelli Lemmas such as e.g.,
the Borel Cantelli Lemma of \cite{KM99}.}
For this, denote $\hat{\Phi}_\nu=\tS(1_{\hat{E}_\nu}),$ and observe that the verification of the conditions of Definitions \ref{def.mixing}, and Definition \ref{def.targets} for the targets $\fA_\fr=\{\tilde\cL : \hat{\Phi}_{\fr}(\tilde\cL) \geq 1\}$ is very similar to the proof of Theorem \ref{theo.KG_affine} so we omit it.

\begin{proof}[Proof of Lemma \ref{LmSTNZ}]  we get by Rogers
$$ \EXP(\hat{\Phi}_\nu)=c_d \nu^d, \quad \EXP(\hat{\Phi}^2_\nu-\hat{\Phi}_\nu)=\left(c_d \nu^d\right)^2. $$
  It follows that
  $$\tmu(\hat{\Phi}_\nu\geq 2)\leq \EXP(\hat{\Phi}_\nu^2-\hat{\Phi}_\nu)/2\leq C \nu^{2d}$$
proving the upper bound of part (b).

In addition
  $$\EXP\left(\hat{\Phi}_\nu \one_{\hat{\Phi}_\nu\geq 2}\right)\leq \left( c_d \nu^d\right)^2$$
so that
\begin{equation}
\label{1pointAL}
\tmu(\hat{\Phi}_\nu=1)=\EXP(\hat{\Phi}_\nu)-\EXP(\hat{\Phi}_\nu\one_{\hat{\Phi}_\nu\geq 2})=c_d \nu^d+\cO\left(\nu^{2d}\right).
\end{equation}
This proves part (a).

To prove the lower bound in part (b) we need the following estimate.
{Denote $\cL_{prime}$  the set of prime vectors in $\cL$ for $\cL\in \cM=SL_{d+1}(\R)/SL_{d+1}(\Z).$}
Let
\begin{align*} \bar{E}_1&=\left\{(e', e'')\in \R^d\times \R:
{|e'|\in\left[\frac{\nu}{10},\frac{\nu}{5}\right],\, |e''|\leq \frac {1} {10}}\right\}, \\ \bar{E}_2&=\left\{(e', e'')\in \R^d\times \R: { |e'|\leq\frac{\nu}{5},\,|e''|\leq \frac {1} {10} }\right\}, \\
 \cA&=\left\{\cL\in \cM: \Card\left(\cL_{prime}\cap \bar{E}_1\right)=
\Card\left(\cL_{prime} \cap \bar{E}_2\right)=1\right\}. \end{align*}

\smallskip
\noindent{\sc Claim.}{\it  We have \begin{equation}
\label{ResLat}
\mu(\cA)=c \nu^{d}.
\end{equation}
}

\smallskip

Assume the claim holds.  {Denote $\tilde z=(z,0).$} For $\cL\in \cA$, the fundamental domain of $\R^{d+1}/\cL$ can be chosen to contain
$$\bar{E}_3=\left\{(e', e'')\in \R^d\times \R: |e'|\leq \frac{\nu}{100} , |e''|\leq  \frac{1}{100}\right\}. $$
We thus have
  $$\mu\left((\cL+\tilde z): \Card\left((\cL+\tilde z)\cap \hat{E}_\nu\right)\geq 2\right)\geq
  \mu\left(\cA\right) \mu\left(\Card\left((\cL+\tilde z)\cap \hat{E}_\nu\right)\geq 2|\cA\right) $$
  $$ \geq
  \mu(\cA)\mu(z\in \bar{E}_3)\geq c' \nu^{2d}. $$
This gives the lower bound in part (b) of Lemma \ref{LmSTNZ}. To complete the proof,
we now give the
\begin{proof}[Proof of the claim]
We consider the cases $d>1$ and $d=1$ separately.

In case $d>1,$ denote $\Psi_j=\tS(\one_{\bar{E}_j})$ for $j=1,2.$
By Rogers identities,
$$ \EXP\left(\Psi_1 \right)=\frac {1} {10} c_d \nu^d,
\quad
\EXP\left(\Psi_1^2-\Psi_1 \right)=\left(\frac {1} {10} c_d \nu^d\right)^2.$$
Thus arguing as in the proof of \eqref{1pointAL} we conclude that
\begin{equation}
\label{2points1}
\mu(\Psi_1=1)=\frac {1} {10} c_d \nu^d+\cO\left(\nu^{2d}\right).
\end{equation}
Rogers identities also give
$$ \EXP(\Psi_1 (\Psi_2-\Psi_1))=\cO\left(\nu^{2d}\right) .$$
Hence
\begin{equation}
\label{2points2}
\mu\left(\Card (\cL_{prime} \cap \bar{E}_1)\geq 1\text{ and } \Card \left(\cL_{prime} \cap
\left(\bar{E}_2\setminus \bar{E}_1\right)\right)\geq 1\right)=\cO\left(\nu^{2d}\right).
\end{equation}
Combining \eqref{2points1} and \eqref{2points2} we obtain \eqref{ResLat} for $d>1.$

In case $d=1$ we still have $\EXP(\Psi_1)=c \nu+\cO(\nu^2).$ On the other hand, for $d=1$ we have
$\Card\left(\cL_{prime}\cap \bar{E}_2\right)\leq 1$ since $\cL$ is unimodular. Thus

$\DS \EXP(\Psi_1)=\mu(\Psi_1=1)=\mu(\Psi_1=1\text{ and } \Psi_2-\Psi_1=0)=c \nu. $
\end{proof}

This completes the proof of Lemma \ref{LmSTNZ} and thus of part $(a)$ of Theorem \ref{ThReturnsTransl}.
\end{proof}

\medskip

 \noindent {\sc Proof of part $(b)$.}  It is clear that for any $r$, if $\bar \cE_{r}$ is not empty then it is equal to $M$.  The fact that
$\bar \cE_{1}=M$ implies that $\bar \cE_{r}=M$ for all $r$ is exactly similar to the implication of \eqref{TransD2} from \eqref{TranD2}, so we just focus on showing that $\bar \cE_{1}=M$.  Adapting the beginning of the proof of part $(a)$ to the current homogeneous setting, we see that what we want to prove boils down to showing that for almost every $\cL\in\cM$
\begin{align}
\label{ST1IOh}
\mathcal S(\one_{E_{\nu_n}})(\hat{g}_n \cL)&\geq 1\text{ infinitely often if } s<\frac{1}{d},\\
\label{ST1FOh}
\mathcal S(\one_{E_{\nu_n}})(\hat{g}_n \cL)&\geq 1\text{ finitely often if } s>\frac{1}{d},
\end{align}
where $E_{\nu_n}$ is as in \eqref{eq.Enu}, and $\mathcal S$ designates the Siegel transform as in \eqref{eq.siegel}. By Rogers identity, Lemma \ref{LmRI}(a), we have that
$\EXP\left(\mathcal S(\one_{E_{\nu_n}})\right)=cn^{-sd}$, hence \eqref{ST1IOh} and \eqref{ST1FOh} follow by classical Borel Cantelli Lemma
(see for example the Borel Cantelli Lemma of \cite{KM99})
or by the case $r=1$ of our Theorem \ref{ThMultiBC} .

This completes the proof of Theorem \ref{ThReturnsTransl}.
\end{proof}

\subsection{  Notes.}
A classical Khintchine--Groshev Theorem
is given by \eqref{EqKG1}--\eqref{EqKG2}.
A lot of interest is devoted to extending this
result to $\alpha$ lying in a submanifold of $\R^d$ (see e.g. \cite{BBV13, BD99}).
The applications of dynamics to Diophantine approximation are based on Dani correspondence
\cite{Da86}. In particular, \cite{KM98} discusses Khintchine--Groshev type results on manifolds
using dynamical tools.
The use of Siegel transform as a convenient analytic tool for applying Dani correspondence
can be found in \cite{Mar1}.
Surveys on applications of dynamics to metric Diophantine approximations
include \cite{BM00, BG17,  DF15, EEEKLMMY, Es98, GN15,
Kl99, KSS02, Mar07}. Limit Theorems for Siegel transforms are discussed in
\cite{AGT15, APT16, BG-CLTDA, db_poisson, DFV_linear}.

\section{  Extreme values.}
\label{ScExtreme}
\subsection{  From hitting times to extreme values.}
Here we describe applications of our results to extreme value theory.

Let $(f,M,\mu)$ be as in Definition \ref{def.mixing}. Recall that  the sets $\cG_r$ and $\cH$
are introduced in Defenitions \ref{DefG} and \ref{DefH} respectively.
Recall also that under the conditions of
Theorems \ref{thm3} and \ref{ThTop}  $\mu(\cG_r)=1$ and
$\cH$ contains a residual set.

Given a function $\phi$ and a point $y\in M,$
let $\phi^{(r)}_{n}(y)$ be the $r$-th minimum among the values
$\{\phi(f^j y)\}_{j=1}^n.$

\begin{Thm}
\label{ThMultiLogExt}
(a) Suppose $f$ is $(2r+1)$-fold exponentially mixing preserving a smooth measure $\mu.$ Then

(i) There is a set $\cG$ of full measure in $M$
such that  if
$\phi$ is a function with a unique non degenerate
minimum at $x\in \cG,$
then for almost every $y\in M,$
	$$
	\limsup_{n\rightarrow \infty} \frac{\left|\ln \left(\phi_n^{(r)}(y)-\phi(x)\right)\right|-\frac{2}{d}\ln n}{\ln \ln n}
        = \frac{2}{rd}.
	$$
(ii) If $\cG_1=M$ and the periodic orbits of $f$ are dense, then there is
a dense $G_\delta$ set $\cH \subset M$,
such that  if
$\phi$ is a function with a unique non degenerate
minimum at $x\in \cH,$
then for almost every $y\in M,$
$$
	\limsup_{n\rightarrow \infty} \frac{\left|\ln \left(\phi_n^{(r)}(y)-\phi(x)\right)\right|-\frac{2}{d}\ln n}{\ln \ln n}
        ={\frac{2}{d}}.
$$

(b) If $f$ is an expanding map of $\T$ and $\mu$ is a non-conformal Gibbs measure of dimension $\dd,$ $\lambda$ is the Lyapunov exponent of $\mu,$ then there is a set $\cG_\mu$ with $\mu(\cG_\mu)=1,$ such that if
$\phi$ is a function with a unique non degenerate
minimum at $x\in \cG_\mu,$
then for $\mu$--almost every $y\in M,$
$$ \lim\sup_{n\to\infty} \frac{\left|\ln \left(
\phi_n^{(r)}(y)-\phi(x)\right)\right|-\frac{2}{\dd} \ln n}{\sqrt{2(\ln n) (\ln \ln \ln n)}}=\frac{2\sigma}{\dd\sqrt{\dd\lambda}}, $$
where $\sigma$ given by \eqref{GibVar}.

(c) Part (a) remains valid for the geodesics flow on a compact  $(d+1)-$dimensional
manifold $\cQ$ and functions $\phi: \cQ\to\reals$ which have unique non-degenerate minimum
at some point on $\cQ.$
(In this case $\phi_r(y)$ is the $r$-th local minimum of the map
$t\mapsto \phi(q(t))$ where $(q(t), v(t))$ is the geodesic starting at $q$ with velocity $v.$)

(d) For toral translations we have that for almost all $\alpha$ and almost all $y$ we have
$$	\limsup_{n\rightarrow \infty} \frac{\left|\ln\left( \phi_n^{(r)}(y)-\phi(x)\right)\right|-\frac{2}{d}\ln n}{\ln \ln n}
        =\begin{cases} \frac{2}{d}  \quad \text{ if } r=1, \\    \frac{1}{d}   \quad \text{ if }  r\geq 2. \end{cases} $$
\end{Thm}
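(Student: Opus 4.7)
The plan is to reduce Theorem \ref{ThMultiLogExt} to the MultiLog laws for $d_n^{(r)}(x,y)$ already proven in the paper (namely Theorems \ref{thm3}, \ref{ThTop}, \ref{ThRecGibbs}(b), \ref{ThFlowRec}, and \ref{ThReturnsTransl}) by means of a quadratic comparison between $\phi-\phi(x)$ and $d(x,\cdot)^2$ coming from non-degeneracy at the minimum.

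First I would invoke Taylor's formula: since $x$ is a non-degenerate minimum of $\phi$, there is a neighborhood $U$ of $x$ and constants $0<c_1\leq c_2$ such that
$$c_1\, d(x,y)^2\leq \phi(y)-\phi(x)\leq c_2\, d(x,y)^2 \qquad\text{for all } y\in U,$$
and an $\varepsilon>0$ such that $\phi(y)-\phi(x)\geq \varepsilon$ for $y\notin U$.

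The key step is to compare the order statistics $\phi_n^{(r)}(y)-\phi(x)$ and $(d_n^{(r)}(x,y))^2$. For typical $y$ we have $d_n^{(1)}(x,y)\to 0$, so for $n$ large the $r$ closest orbit points $f^{k_1}y,\dots,f^{k_r}y$ lie inside $U$; their $\phi$-values are all at most $\phi(x)+c_2(d_n^{(r)}(x,y))^2$, giving $\phi_n^{(r)}(y)-\phi(x)\leq c_2(d_n^{(r)}(x,y))^2$. Conversely, once $\phi_n^{(r)}(y)-\phi(x)<\varepsilon$ the $r$ points realizing the smallest $\phi$-values lie in $U$, so each of them is within distance $\sqrt{(\phi_n^{(r)}(y)-\phi(x))/c_1}$ of $x$, proving $\phi_n^{(r)}(y)-\phi(x)\geq c_1(d_n^{(r)}(x,y))^2$. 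Taking logarithms and using that the orderings-by-distance and orderings-by-$\phi$ are allowed to differ only up to the constants $c_1,c_2$ gives
$$\bigl|\ln(\phi_n^{(r)}(y)-\phi(x))\bigr|=2\bigl|\ln d_n^{(r)}(x,y)\bigr|+O(1)$$
uniformly in $n$ large (for the relevant $y$).

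Since both normalizations appearing in the theorems (either $\ln\ln n$ or $\sqrt{2(\ln n)(\ln\ln\ln n)}$) dominate any $O(1)$, dividing by the normalization and taking $\limsup$ simply multiplies the hitting-time results by $2$. Concretely: part (a)(i) follows from Theorem \ref{thm3} by taking $\cG=\cG_r$; part (a)(ii) follows from Theorem \ref{ThTop}(a) by taking $\cH$ to be the residual set produced there, where the hitting-time $\limsup$ equals $1/d$ for every $r$ (hence equals $2/d$ after multiplication); part (b) follows from Theorem \ref{ThRecGibbs}(b) with $\cG_\mu$ the full-measure set provided by that theorem; part (c) uses Theorem \ref{ThFlowRec}, after observing that for a geodesic approaching $x$ closely the local minima of $t\mapsto\phi(\gamma(t))$ are in one-to-one correspondence with the close-approach intervals used to define $d_n^{(r)}$ in Section \ref{ScHit}; part (d) follows from Theorem \ref{ThReturnsTransl}(a), with the two different values $1/d$ for $r=1$ and $1/(2d)$ for $r\geq 2$ each doubling to give the stated constants.

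The only subtlety — which is really what makes the argument short rather than trivial — is the possibility that the ordering of the orbit points by $\phi$-value differs from their ordering by distance to $x$ (anisotropy of the Hessian). This is precisely what forces us to use the two-sided squeeze in the second paragraph rather than equality in any single orbit index; I do not foresee any genuine obstacle beyond that bookkeeping, and for part (c) the only extra check is that close encounters of $\gamma(t)$ with $x$ produce genuine local minima of $\phi\circ\gamma$ since $\phi$ is strictly convex near $x$.
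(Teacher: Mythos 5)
Your reduction is essentially the paper's own proof: the paper's entire argument is the two-sided bound \eqref{D-Phi}, $K^{-1}d^2(x,y)\leq\phi(y)-\phi(x)\leq K d^2(x,y)$ near a non-degenerate minimum, followed by citations to Theorems \ref{thm3}, \ref{ThTop}, \ref{ThRecGibbs} and \ref{ThReturnsTransl}; your order-statistics squeeze, and the remark that the resulting $O(1)$ is absorbed by the normalizations $\ln\ln n$ and $\sqrt{2(\ln n)(\ln\ln\ln n)}$, is the same bookkeeping, and your treatment of parts (a), (b), (d) coincides with the paper's.

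The one genuine slip is part (c). You propose to use Theorem \ref{ThFlowRec}, but that theorem concerns balls in the \emph{phase space} of the flow, which for the geodesic flow is $S\cQ$ of dimension $2d+1$, so its flow-thickened targets have measure of order $\rho^{2d}$. Here $\phi$ is a function on the configuration space $\cQ$, so its sublevel set $\{\phi\leq\phi(x)+c\rho^2\}$ lifts to the set $\{(q,v): d(x,q)\leq C\rho\}$, a configuration-space target containing the whole sphere fiber over points near $x$; it is not comparable to any phase-space ball, and its flow-box measure is of order $\rho^{d}$. Running your squeeze through Theorem \ref{ThFlowRec} would therefore produce the wrong normalization and constants (or simply not apply). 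The correct reference is Theorem \ref{ThmGeoBall} of Section \ref{ScConf}, which is stated precisely for these configuration-space targets, and which moreover holds for every center $a\in\cQ$, so no exceptional set of minima is needed; this is what the paper invokes. With that substitution, together with your observation that close approaches of the geodesic to $x$ are in bounded-to-one correspondence with the local minima of $\phi\circ\gamma$ counted in $\phi^{(r)}_n$, part (c) goes through exactly as parts (a), (b), (d).
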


\begin{proof}
At a non-degenerate minimum $x$ we have that for $y$ close to $x$
\begin{equation}
\label{D-Phi}
K^{-1}d^2(x,y)\leq \phi(y)-\phi(x)\leq K d^2(x,y)
\end{equation}
so  part $(i)$ of (a)  holds for $x\in \cG_r$ and part $(ii)$ of (a) holds for $x \in \cH$ as defined in Theorems \ref{thm3} and \ref{ThTop}. Part (b) follows from Theorem \ref{ThRecGibbs}. Part (c) follows from Theorem \ref{ThmGeoBall}, and part (d) follows from Theorem \ref{ThReturnsTransl}.
\end{proof}

\begin{Thm}
\label{ThPoisExt}
Under the assumptions of Theorem \ref{ThMultiLogExt}(a)
or Theorem \ref{ThMultiLogExt}(d) there is a set of points $x$ of full measure such that
if $\phi$ has a non-degenerate minimum at $x$ then the
process
$$ \frac{\phi^{(1)}_n(y)-\phi(x)}{\fr^2}, \frac{\phi^{(2)}_n(y)-\phi(x)}{\fr^2}, \dots ,
\frac{\phi^{(r)}_n(y)-\phi(x)}{\fr^2}, \dots
$$
with $n=[\tau \fr^{-d}]$
converges as $\fr\to 0$ to the Poisson process on $\reals^+$ with measure $\gamma(\phi) \tau \frac{d}{2} t^{\frac{d}{2}-1}dt,$ where $\gamma(\phi)>0$ depends on $x$ and $\phi$.
\end{Thm}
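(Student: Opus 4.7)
My plan is to reduce the statement to an instance of Theorem \ref{ThPoissonProcess} for a family of sublevel-set targets associated with $\phi$, and then perform the change of variables that converts ``closest-distance'' Poisson intensity into ``lowest-$\phi$-value'' intensity. Since $x$ is a non-degenerate minimum, Taylor's theorem gives
\[
\phi(y)-\phi(x)=\tfrac{1}{2}\langle H(y-x),(y-x)\rangle + o(|y-x|^2),
\]
where $H$ is a positive definite quadratic form. The sublevel set $\{y: \phi(y)-\phi(x)\leq t\}$ is therefore, for small $t$, approximately the ellipsoid $\{\langle H u, u\rangle\leq 2t\}$, and smoothness of $\mu$ (or, for part (d), of Lebesgue measure) yields
\[
\mu\left(\{y:\phi(y)-\phi(x)\leq t\}\right)=\gamma(\phi)\,t^{d/2}\,\bigl(1+o(1)\bigr)
\]
with $\gamma(\phi)=\mathrm{vol}\left(\{H u\cdot u\leq 2\}\right)\cdot\mathrm{(density\ of\ }\mu\mathrm{\ at\ }x)$.

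For the smooth exponentially mixing case (Theorem \ref{ThMultiLogExt}(a)), I would pick intervals $0\leq r_1^-<r_1^+<r_2^-<\cdots<r_s^+$ and define the targets
\[
\Omega^{k,i}_{\rho}=\{y:\ \phi(f^k y)-\phi(x)\in[r_i^-\rho^2,\,r_i^+\rho^2]\}, \qquad i=1,\dots,s.
\]
These are preimages under $f^k$ of annular sublevel sets of the Lipschitz function $\phi-\phi(x)$, so Lemma \ref{RemLip} (applied with $\Phi(y)=\phi(y)-\phi(x)$, $a_1(\rho)=r_i^-\rho^2$, $a_2(\rho)=r_i^+\rho^2$, noting $\xi=\xi'=d/2$ from the ellipsoidal volume asymptotics) yields property (Appr). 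The multiple mixing input is then exactly the same as in Section \ref{ScHit}: Proposition \ref{ThEM-BC} provides $\widetilde{(M1)}_r$ and $(M2)_r$ for all $r$, while (Mov) follows for $\mu$-a.e.\ $x$ from slow recurrence as established in Lemma \ref{lemma.mov}. This determines the full-measure set of admissible $x$.

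With $n=\lfloor\tau\rho^{-d}\rfloor$, the asymptotic above gives
\[
n\,\mu(\Omega^{k,i}_{\rho})\longrightarrow \gamma(\phi)\,\tau\,\bigl((r_i^+)^{d/2}-(r_i^-)^{d/2}\bigr)=\gamma(\phi)\,\tau\int_{r_i^-}^{r_i^+}\tfrac{d}{2}t^{d/2-1}\,dt,
\]
so Theorem \ref{ThPoissonProcess} produces independent Poisson limits for each $i$ with precisely these parameters. Since the intervals were arbitrary, this identifies the limiting point process as the Poisson process on $\reals_+$ with intensity measure $\gamma(\phi)\,\tau\,\tfrac{d}{2}\,t^{d/2-1}\,dt$. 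For Theorem \ref{ThMultiLogExt}(d) (toral translations) the same scheme applies, but the Poisson input is not multiple mixing; instead, the Poisson convergence of the hit counters for the ellipsoidal targets $\{y:\phi(y)-\phi(x)\in\cdots\}$ is obtained via the Siegel-transform/Rogers-identity argument of Section \ref{ScKhinchine}. A linear change of coordinates on $\Tor^d$ straightens the ellipsoid into a ball, after which the Poisson limit follows from the first and second moment computation for Siegel transforms on the space of affine lattices, exactly as in the proof of Theorem \ref{ThReturnsTransl} but applied to Theorem \ref{ThPoisson} rather than Theorem \ref{ThMultiBC}.

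The main obstacle is the joint verification of (Mov) and of $\widetilde{(M1)}_r$ for the annular ellipsoidal targets uniformly across the chosen intervals $[r_i^-,r_i^+]$, in particular ensuring that the exceptional ``bad'' set of $x$ (where slow recurrence fails, or where $H$ is degenerate against the local structure of $\mu$) is of measure zero; for (d) the analogue is that the Rogers-identity estimates remain quantitative under the linear reparametrization, which is straightforward since the Haar measure on the space of lattices is invariant under the relevant $\mathrm{SL}_d(\R)$-action used to straighten the ellipsoid.
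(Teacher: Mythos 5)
For the exponentially mixing case, i.e.\ under the hypotheses of Theorem \ref{ThMultiLogExt}(a), your argument is exactly the paper's: the same annular targets $\{y:\ \phi(y)-\phi(x)\in[r_j^-\rho^2,r_j^+\rho^2]\}$, the same asymptotics $\tau\rho^{-d}\mu(\cdot)\to\tau\gamma(\phi)\bigl((r_j^+)^{d/2}-(r_j^-)^{d/2}\bigr)$, (Appr) via Lemma \ref{RemLip}, the conditions $\widetilde{(M1)}_r$ and $(M2)_r$ from the machinery of Section \ref{SSMultExpMix} (with (Mov) for these targets reducing to (Mov) for balls, since by \eqref{D-Phi} each target is contained in $B(x,C\rho)$, which is where the full-measure restriction on $x$ comes from), and the conclusion via Theorem \ref{ThPoissonProcess}. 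On this part there is nothing to add.

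The gap is in your treatment of the toral translation case (d). You propose to obtain the Poisson limit ``from the first and second moment computation for Siegel transforms on the space of affine lattices, exactly as in the proof of Theorem \ref{ThReturnsTransl} but applied to Theorem \ref{ThPoisson}.'' This does not work as stated: Theorem \ref{ThPoisson} (and Theorem \ref{ThPoissonProcess}) requires $(M1)_r$ and $(M2)_r$ for \emph{every} $r$, i.e.\ control of all factorial moments of the hit counter, and two Rogers moments cannot deliver that. Moreover, there is no verification of these conditions to borrow from Section \ref{ScKhinchine}: the proof of Theorem \ref{ThReturnsTransl} only ever invokes the case $r=1$ of Theorem \ref{ThMultiBC}, precisely because for a fixed rotation the quasi-independence at well-separated times fails --- the events $\{d(x,y+k\alpha)\le c\rho\}$ for distant values of $k$ are balls centered at distinct orbit points and are typically disjoint, so the lower bound in $(M1)_2$ is violated; this failure is exactly what produces the exponent $\frac{1}{2d}$ (rather than $\frac{1}{rd}$) in Theorem \ref{ThReturnsTransl}. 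Even after randomizing and passing to the space of affine lattices, the limiting hit counter is a lattice point count in a region of fixed volume, and the higher Rogers-type moments of such counts contain extra contributions (e.g.\ from affinely dependent configurations), so one cannot simply assert that all factorial moments are Poissonian. Note that the paper's own proof of Theorem \ref{ThPoisExt} only runs the exponential-mixing argument and is silent on case (d); if you want to cover (d) you would need a genuinely different argument (for instance, identifying the limit of the rescaled hit process through equidistribution on the space of affine lattices and then analyzing that limiting process directly), not a transplant of the two-moment computation.
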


\begin{proof}
Note that \eqref{D-Phi} does not provide enough information to deduce the result from
\eqref{ScaledHits} of Theorem \ref{PrEasy}. However, for any choice of
$r_1^-<r_1^+<r_2^-<r_2^+<\dots <r_s^-<r_s^+$, consider the targets
\begin{equation}
\label{PhiTargets}
 \fA^{n, j}=\left\{y: \phi(y)-\phi(x)\in \left[{r_j^-}{\fr^2}, {r_j^+}{\fr^2}\right]\right\},
\end{equation}
that satisfy
$$\lim_{\rho \to 0} \tau \fr^{-d}\mu(\fA^{n, j})=  \tau \gamma(\phi)({(r_j^+)}^{\frac d 2}-{(r_j^-)}^{\frac d 2})=\tau \gamma(\phi) \int_{r_j^-}^{r_j^+} \frac{d}{2} t^{\frac{d}{2}-1}dt.
$$
Conditions $\widetilde{(M1)}_r$ and $(M2)_r$ from \S \ref{SSDetails}
can easily be checked for the targets $\Omega^{n,j}$
using the results of  Section \ref{SSMultExpMix}.
Since (Mov) for targets
\eqref{PhiTargets} follows from (Mov) for balls,
only (Appr) needs to be checked
but the latter follows immediately from Lemma \ref{RemLip}. We can thus apply Theorem \ref{ThPoissonProcess} and conclude the Poisson limit.
\end{proof}

Next, we consider functions of the form
\begin{equation}
\label{Mero}
 \psi(y)=\frac{c}{d^s(x,y)}+\widetilde{\psi}(y), \quad\text{where}\quad c<0\quad\text{and}\quad
  \widetilde{\psi}\in Lip(M).
\end{equation}

\begin{Thm}
\label{ThEVPower}
Let $f$ be $(2r+1)$-fold exponentially mixing. Then

\noindent (a)
There is a set $\cG$ or full measure such that
if $\psi$ satisfies \eqref{Mero} with $x\in \cG$ then for almost all $y$
	$$
	\limsup_{n\rightarrow \infty} \frac{\ln|\psi_n^{(r)}(y)|-\frac{s}{d}\ln n}{\ln \ln n}
        = \frac{s}{rd}.
	$$

\noindent (b) There is a $G_\delta$  set $\cH$ such that
if $\psi$ satisfies \eqref{Mero} with $x\in \cH$ then for almost all $y$
	$$
	\limsup_{n\rightarrow \infty} \frac{\ln|\psi_n^{(r)}(y)|-\frac{s}{d}\ln n}{\ln \ln n}
        = \frac{s}{d}.
	$$

\noindent (c) If $x\in \cG$ then
$$ \frac{\fr^s \psi^{(1)}_n(y)}{c} ,
\frac{\fr^s \psi^{(2)}_n(y)}{c}, \dots,  \frac{\fr^s \psi^{(r)}_n(y)}{c} , \dots  \quad \text{where}\quad
n=\tau \fr^{-d} $$
converges as $\fr\to 0$ to the Poisson process on $\reals^+$ with measure
$ \frac{d\tau\gamma(x)}{s}  t^{-(d/s)-1}dt.$
\end{Thm}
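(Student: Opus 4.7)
All three parts rest on one elementary observation: because $c<0$ and $\widetilde\psi$ is bounded, the function $\psi$ tends to $-\infty$ at $x$ and is uniformly bounded on the complement of any fixed neighborhood of $x$, so for large $n$ the $r$-th smallest value of $\psi$ along the orbit of $y$ is attained at the $r$-th closest encounter of that orbit with $x$. The plan is therefore to reduce the three statements to the corresponding theorems for the distances $d_n^{(r)}(x,y)$: Theorem \ref{thm3} for part (a), Theorem \ref{ThTop} for part (b), and Theorem \ref{PrEasy}(a) for part (c).

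For the reduction, fix $\eps_0>0$ so small that on $B(x,\eps_0)$ one has
\begin{equation*}
|\psi(y)| \;=\; \frac{|c|}{d^s(x,y)}\bigl(1 + O(d^s(x,y))\bigr) \;\geq\; 1 + \sup_{M}|\psi|_{M\setminus B(x,\eps_0)} + \|\widetilde\psi\|_\infty,
\end{equation*}
while on $M\setminus B(x,\eps_0)$ the function $|\psi|$ is bounded by a constant $C(\eps_0)$. By Theorem \ref{thm3} the orbit of $\mu$-a.e.\ $y$ enters $B(x,\eps_0)$ infinitely often, so for $n$ sufficiently large the $r$ minima of $\psi$ along the orbit are attained at the $r$ closest encounters with $x$. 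Hence
\begin{equation*}
|\psi_n^{(r)}(y)| \;=\; \frac{|c|}{(d_n^{(r)}(x,y))^s}\bigl(1+o(1)\bigr),
\end{equation*}
and taking logarithms yields
\begin{equation*}
\ln|\psi_n^{(r)}(y)| - \tfrac{s}{d}\ln n \;=\; s\Bigl(|\ln d_n^{(r)}(x,y)| - \tfrac{1}{d}\ln n\Bigr) + \ln|c| + o(1).
\end{equation*}
Dividing by $\ln\ln n$ and passing to $\limsup$, part (a) follows by taking $\cG = \bigcap_{r\geq 1}\cG_r$ (with $\cG_r$ as in Definition \ref{DefG}; $\mu(\cG)=1$ by Theorem \ref{thm3}), and part (b) follows by taking $\cH$ as produced by Theorem \ref{ThTop}. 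The asserted values $s/rd$ and $s/d$ are simply $s$ times the corresponding values $1/rd$ and $1/d$ supplied by those theorems.

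For part (c), set $n=\tau\rho^{-d}$; the same reduction gives $\rho^s\psi_n^{(r)}(y)/c = (\rho/d_n^{(r)}(x,y))^s(1+o(1))$. By Theorem \ref{PrEasy}(a), for $x$ in a full-measure set the sequence $(d_n^{(1)}/\rho, d_n^{(2)}/\rho,\ldots)$ converges to the ordered atoms of a Poisson process on $\mathbb{R}^+$ with intensity $\gamma(x)\tau d\,t^{d-1}\,dt$. Applying the continuous mapping theorem for point processes to the bijection $t\mapsto t^{-s}$ (continuous on $(0,\infty)$, and the limit process has no atom at $0$ since $t^{d-1}$ is integrable near $0$) yields convergence to the Poisson process whose intensity is the push-forward. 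The change of variable $u=t^{-s}$ transforms $\gamma(x)\tau d\,t^{d-1}\,dt$ into $\tfrac{d\gamma(x)\tau}{s}\,u^{-d/s-1}\,du$, which is the claimed intensity.

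The only point that requires care is the reduction step, but this follows at once from the uniform bound on $|\psi|$ outside $B(x,\eps_0)$ and the almost-sure recurrence to $B(x,\eps_0)$ guaranteed by Theorem \ref{thm3}; no new dynamical input is needed beyond the hitting-time and Poisson-limit results of Sections \ref{ScHit} and \ref{ScPoissonHR}.
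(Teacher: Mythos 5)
Your proposal is correct, and its parts (a) and (b) follow exactly the route the paper intends: the paper's own proof is omitted with the remark that it is "similar to the proofs of Theorem \ref{ThMultiLogExt} and \ref{ThPoisExt}", and your reduction $\ln|\psi_n^{(r)}(y)|=s|\ln d_n^{(r)}(x,y)|+\ln|c|+o(1)$ followed by Theorems \ref{thm3} and \ref{ThTop} is precisely the argument used for Theorem \ref{ThMultiLogExt} (with $\cG=\cG_r$ sufficing for the fixed $r$ in the hypothesis, since $\mu(\cG_{r'})=1$ is only guaranteed for $r'\leq r$ under $(2r+1)$-fold mixing). For part (c) you take a genuinely different, and here legitimate, shortcut: the paper's template (the proof of Theorem \ref{ThPoisExt}) deliberately avoids deducing the Poisson limit from \eqref{ScaledHits}, because for a smooth non-degenerate minimum the relation \eqref{D-Phi} is only a two-sided bound (the Hessian is anisotropic), and instead re-verifies $\widetilde{(M1)}_r$ and $(M2)_r$ for value-level targets as in \eqref{PhiTargets} via Lemma \ref{RemLip} before invoking Theorem \ref{ThPoissonProcess}; the intended proof of (c) would do the same with targets $\{y:\rho^s\psi(y)/c\in[r_j^-,r_j^+]\}$. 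You instead exploit that in \eqref{Mero} the singular part is an exact radial function of $d(x,y)$, so after multiplying by $\fr^s$ the Lipschitz perturbation $\tilde\psi$ contributes only $O(\fr^s)$, and the result follows from Theorem \ref{PrEasy}(a) by pushing the limiting Poisson process forward under $t\mapsto t^{-s}$ (your change-of-variable computation of the intensity is correct). This buys economy — no new targets to check — at the cost of robustness: the paper's route would still work if the singularity were anisotropic, where your reduction would fail for the same reason it fails in Theorem \ref{ThPoisExt}. Two cosmetic points: the cleanest justification of $\psi_n^{(r)}(y)=c\,(d_n^{(r)}(x,y))^{-s}+O(1)$ is the sup-norm stability of order statistics (near-ties in the distances may permute which encounter realizes which minimum, but not the value up to $\|\tilde\psi\|_\infty$); and the relevant feature of the transformed process is local finiteness of its intensity on sets bounded away from $0$, which is what makes the continuous-mapping step legitimate — your remark about "no atom at $0$" is the statement that $T_1>0$ almost surely, which indeed follows from integrability of $t^{d-1}$ near $0$.
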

The proofs of the above results is similar to the proofs of Theorem \ref{ThMultiLogExt} and
\ref{ThPoisExt} so we will leave them to the readers.

The next result is an immediate consequence of Theorems \ref{ThPoisExt} and \ref{ThEVPower}(c).
\begin{Cor}
\label{CrFrechetWeibull}
(a) {\sc (Fr\'echet Law for smooth functions)} If $f$ is $(2r+1)$-fold exponentially mixing, $\phi$ is a smooth
function with non-degenerate minimum at some $x\in \cG$ then there is $\sigma=\sigma(x)$ such that
for each $t>0$
$$ \lim_{n\to\infty} \mu(y: \phi^{(1)}_n(y)>n^{-2/d} t)=e^{-\sigma t^{d/2}}.  $$

(a) {\sc (Weibull Law for unbounded functions)} If $f$ is $(2r+1)$-fold exponentially mixing, $\phi$ is given by
\eqref{Mero} with
 $x\in \cG$ then there is $\sigma=\sigma(x)$ such that
for each $t>0$
$$ \lim_{n\to\infty} \mu(y: \phi^{(1)}_n(y)> -n^{-s/d} t)=e^{-\sigma t^{-d/s}}.  $$
\end{Cor}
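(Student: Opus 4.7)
The plan is to deduce both statements as immediate specializations of the Poisson process convergence results to the first extreme value ($r=1$). The key probabilistic fact is that for a Poisson point process on $\R^+$ with intensity measure $\nu$, the probability of having no points in a Borel set $A$ equals $e^{-\nu(A)}$, so the distribution of the largest (or smallest) point of such a process is determined by $\nu$ evaluated on a tail interval. Thus it suffices to apply Theorems \ref{ThPoisExt} and \ref{ThEVPower}(c) with the right scaling and to identify which endpoint of the limiting process corresponds to the extreme $r=1$.

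For part (a) (Fr\'echet), I would set $\fr_n = n^{-1/d}$, which corresponds to choosing $\tau=1$ in Theorem \ref{ThPoisExt}. After replacing $\phi$ by $\phi - \phi(x)$ (an innocuous shift absorbed in the statement), the event $\{\phi_n^{(1)}(y) > n^{-2/d} t\}$ reads $\{(\phi_n^{(1)}(y)-\phi(x))/\fr_n^2 > t\}$. By Theorem \ref{ThPoisExt}, the scaled ordered values $(\phi_n^{(r)}(y)-\phi(x))/\fr_n^2$ converge in distribution to the ordered points of a Poisson process on $\R^+$ with intensity $\gamma(\phi)\frac{d}{2} s^{d/2-1}\,ds$. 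Since $\phi_n^{(1)}$ is the smallest of the $\phi_n^{(r)}$, its limiting counterpart is the smallest point of the Poisson process, so the limiting probability equals
$$\exp\!\left(-\gamma(\phi)\int_0^t \frac{d}{2} s^{d/2-1}\,ds\right) = \exp\!\left(-\gamma(\phi)\, t^{d/2}\right),$$
and $\sigma = \gamma(\phi)$.

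For part (b) (Weibull), again set $\fr_n = n^{-1/d}$ so that $\fr_n^s = n^{-s/d}$ and $\tau = 1$ in Theorem \ref{ThEVPower}(c). Because $c<0$ and $\psi(y)\to-\infty$ as $y\to x$, the ordered minima satisfy $\psi_n^{(1)}<\psi_n^{(2)}<\cdots < 0$, and dividing by the negative number $c$ flips the ordering: $\fr_n^s\psi_n^{(1)}(y)/c > \fr_n^s\psi_n^{(2)}(y)/c > \cdots > 0$. Hence in the limit, the index $r=1$ corresponds to the \emph{largest} point of the Poisson process on $\R^+$ with intensity $\frac{d\gamma(x)}{s} u^{-(d/s)-1}\,du$. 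The event $\{\psi_n^{(1)}(y) > -n^{-s/d} t\}$ is equivalent to $\{\fr_n^s\psi_n^{(1)}(y)/c < t/|c|\}$ (the inequality flips again because $c<0$), i.e.\ ``the largest Poisson point lies below $t/|c|$''. Its probability converges to
$$\exp\!\left(-\int_{t/|c|}^\infty \frac{d\gamma(x)}{s} u^{-(d/s)-1}\,du\right) = \exp\!\left(-\gamma(x)|c|^{d/s}\, t^{-d/s}\right),$$
so $\sigma = \gamma(x)|c|^{d/s}$.

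There is no genuine obstacle beyond what is already contained in the proofs of Theorems \ref{ThPoisExt} and \ref{ThEVPower}(c); the only care needed is to track the sign of $c$ in the Weibull case correctly and to identify which endpoint of the limiting process represents the first extreme value in each case, together with the routine substitution $\fr_n = n^{-1/d}$ that matches the stated exponents.
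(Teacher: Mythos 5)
Your proposal is correct and is essentially the paper's own argument: the paper proves this corollary simply by citing Theorems \ref{ThPoisExt} and \ref{ThEVPower}(c), and your computation of the void probabilities of the limiting Poisson processes (smallest point for the Fr\'echet case, largest point after the sign flip by $c<0$ for the Weibull case, with $\tau=1$, $\fr=n^{-1/d}$) is exactly the intended specialization to the first extreme. One caveat: in the Weibull case your displayed equivalence $\{\psi_n^{(1)}>-n^{-s/d}t\}=\{\fr_n^{\,s}\psi_n^{(1)}/c<t/|c|\}$ is only valid if the threshold is read as $-n^{s/d}t$ (equivalently, $n^{-s/d}\psi_n^{(1)}(y)>-t$); with the literal printed threshold $-n^{-s/d}t$ one would instead get $\fr_n^{\,s}\psi_n^{(1)}/c<\fr_n^{\,2s}t/|c|\to 0$ and a degenerate limit, so you have in effect silently corrected an exponent typo in the statement, and your final answer $\sigma=\gamma(x)|c|^{d/s}$ is the one the corrected statement requires.
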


\subsection{  Notes.}
A classical Fisher--Tippett--Gnedenko theorem
says that for independent  identically distributed
random variables the only possible limit distributions of normalized extremes
are the Gumbel distribution
the Fr\'echet distribution, or the Weibull distribution.
Corollaries \ref{CrGeoGumbel} and \ref{CrFrechetWeibull}(a) and (b) provide typical examples where
one can encounter each of these three  types.
We refer to \cite{LLR83} for the proof of Fisher--Tippett--Gnedenko theorem
as well as for extensions of this theorem
to weakly dependent random variables. The weak dependence
conditions used in the book have a similar sprit to our conditions (M1) and (M2).
More discussions about relations of extreme value theory to
Poisson limit theorems in the context of dynamical systems can be found in \cite{FFM18}.
The book \cite{LFdFdFHKMTV} discusses extreme value theory for dynamical systems and lists
various applications.
One application of extreme value theory, is that for non-integrable functions,
such as described in Theorem \ref{ThEVPower} above, the growth of ergodic
sums are dominated by extreme values, see \cite{AN, CN17, DV86, KS19, KM92, Mo76}
and references wherein.

\appendix

\section{Multiple exponential mixing.}
\label{AppExpMix}
\subsection{  Basic properties}

Let $f$ be a smooth map of a compact manifold $M$ preserving a smooth probability measure $\mu.$
In the dynamical system literature, for $r\geq 1,$ $f$  is called $(r+1)$-fold exponentially mixing if
there are constant $s, \brC$ and $\brtheta<1$ such that for any $C^s$ functions $A_0, A_1,\dots, A_r$
for any $r$ tuple $k_1<k_2<\dots<k_r$
 \begin{equation}\label{MultMix}
 \left| \int \prod_{j=0}^{r} \left(A_j \circ f^{k_j} \right) d \mu  -
 \prod_{j=0}^{r}\int A_j d\mu  \right| \leq \brC \brtheta^m \prod_{j=0}^{r} \|A_j\|_{C^s},
 \end{equation}
 where $\DS m=\min_j (k_j-k_{j-1})$ with $k_0=0.$

In this paper we need to consider a larger class of functions, namely we need that
there are constants $s, C$ and $\theta<1$ such that for any
$B\in C^s(M^{r+1})$ we have
 \begin{equation}\label{MultiMixRetPH}
\left|\int B(x_0,f^{k_1}x_0,\cdots,f^{k_r}x_0) d\mu(x_0)-\int B(x_0,\cdots,x_r)d\mu(x_0)\cdots d\mu(x_r)\right|\leq C_s\brtheta^m\left\|B\right\|_{C^s}.
 \end{equation}

In this section we show equivalence of \eqref{MultMix} and \eqref{MultiMixRetPH}.
We use the following fact.

\begin{Rem}\label{lowreg}
If \eqref{MultMix} holds for some $s$ then it holds for all $s$ (with different $\brtheta$).
The same applies for \eqref{MultiMixRetPH}.

\end{Rem}
Indeed suppose that \eqref{MultiMixRetPH} for some $C^s$ functions.
Pick some $\alpha<s.$ We claim that it also holds for $C^\alpha$ functions.
Indeed pick a small $\eps$ and approximate a $C^\alpha$ function $B$ with
$\|B\|_{C^\alpha}=1$ by a
$C^s$ function $\brB$, so that (assuming that $m$ is large)
$$ \|B-\brB\|_{C^0}\leq e^{-\eps \alpha m}, \quad \|\brB\|_{C^s}\leq  e^{\eps s m} .$$
Then
\begin{align*}
& \int B(x_0,f^{k_1}x_0,\cdots,f^{k_r}x_0) d\mu(x_0)=
\int \brB(x_0,f^{k_1}x_0,\cdots,f^{k_r}x_0) d\mu(x_0)+O\left(e^{-\eps \alpha m}\right)\\
= &
\int \brB(x_0, x_1, \dots x_r) d \mu(x_0) d\mu(x_1)\dots d\mu(x_r)+O\left( e^{-\eps \alpha m}\right)
+O\left(\theta^m e^{\eps s m}\right)\\
=&
\int B(x_0, x_1, \dots x_r) d \mu(x_0) d\mu(x_1)\dots d\mu(x_r)+O\left( e^{-\eps \alpha m}\right)
+O\left(\theta^m e^{\eps s m}\right).
\end{align*}
and the second error term is exponentially small if $\eps$ is small enough.
The argument for \eqref{MultMix} 
is identical.

We now ready to show that \eqref{MultMix} implies \eqref{MultiMixRetPH}.

\begin{Thm}\label{MultiMixRet}
Suppose that \eqref{MultMix} holds and $s$ is sufficiently large. Then \eqref{MultiMixRetPH} holds.
\end{Thm}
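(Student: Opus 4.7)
The plan is to reduce a general $B \in C^s(M^{r+1})$ to a finite sum of tensor products of one-variable $C^s$ functions, apply \eqref{MultMix} termwise, and balance the truncation error against the mixing error. By Remark \ref{lowreg} we may (and will) take $s$ as large as we wish in the hypothesis \eqref{MultMix}; the output regularity in \eqref{MultiMixRetPH} can then be lowered back by the same remark.

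First I will fix an orthonormal basis $\{\psi_n\}_{n\geq 0}$ of $L^2(M,\mu)$ consisting of eigenfunctions of (the Laplacian of) some smooth Riemannian metric on $M$, with eigenvalues $0 = \lambda_0 < \lambda_1 \leq \lambda_2 \leq \cdots$. Standard elliptic regularity together with Weyl's law gives polynomial bounds of the form
\begin{equation*}
\|\psi_n\|_{C^s} \leq C (1+\lambda_n)^{s/2+\beta_d}, \qquad \#\{n : \lambda_n \leq \Lambda\} \leq C \Lambda^{d/2},
\end{equation*}
with $\beta_d$ depending only on $\dim M$. The tensor products $\Psi_{\vec n}(x_0,\dots,x_r) := \psi_{n_0}(x_0)\cdots \psi_{n_r}(x_r)$ form an orthonormal basis of $L^2(M^{r+1},\mu^{r+1})$, and for any $B \in C^{s+\sigma}(M^{r+1})$ the coefficients $b_{\vec n} = \langle B, \Psi_{\vec n}\rangle$ satisfy a decay estimate
\begin{equation*}
|b_{\vec n}| \leq C_\sigma \|B\|_{C^{s+\sigma}} \prod_{j=0}^{r} (1+\lambda_{n_j})^{-\sigma/2}
\end{equation*}
obtained by integration by parts against powers of the Laplacian in each variable.

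Next, for a truncation level $\Lambda$ to be chosen, split $B = B_\Lambda + R_\Lambda$, where $B_\Lambda = \sum_{\max_j \lambda_{n_j} \leq \Lambda} b_{\vec n}\Psi_{\vec n}$. Choosing $\sigma$ large in comparison to $d$ and $r$, one gets uniform-norm control $\|R_\Lambda\|_{\infty} \leq C \|B\|_{C^{s+\sigma}} \Lambda^{-\kappa}$ for some $\kappa = \kappa(\sigma,d,r) > 0$; this controls the contribution of $R_\Lambda$ to both sides of \eqref{MultiMixRetPH}. For each surviving product term $b_{\vec n}\Psi_{\vec n}$ apply \eqref{MultMix} with the factors $A_j = \psi_{n_j}$: using the bounds above, each term contributes at most
\begin{equation*}
\bar C \bar\theta^m |b_{\vec n}| \prod_{j=0}^{r}\|\psi_{n_j}\|_{C^s} \leq C \bar\theta^m \|B\|_{C^{s+\sigma}} \prod_{j=0}^{r}(1+\lambda_{n_j})^{-\sigma/2+s/2+\beta_d}.
\end{equation*}
Summing over $\vec n$ with $\max_j \lambda_{n_j}\leq \Lambda$, and using the Weyl count, yields a bound of the form $C \bar\theta^m \|B\|_{C^{s+\sigma}}\Lambda^{A}$ where $A = A(s,\sigma,r,d)$ is explicit.

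Finally, I will optimize. Choosing $\Lambda = \bar\theta^{-\eta m}$ with $\eta > 0$ small enough that $\eta A < 1/2$, the total error becomes $C\|B\|_{C^{s+\sigma}}(\Lambda^{-\kappa} + \bar\theta^m \Lambda^A) \leq C \|B\|_{C^{s+\sigma}} \theta_\ast^m$ for some new constant $\theta_\ast \in (\bar\theta, 1)$. This is \eqref{MultiMixRetPH} with $\brtheta$ replaced by $\theta_\ast$ and $s$ replaced by $s+\sigma$; Remark \ref{lowreg} then allows us to absorb the extra regularity loss at the cost of adjusting $\brtheta$ once more, giving the claimed estimate for any fixed target regularity $s$. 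The main technical point is simply the bookkeeping in balancing truncation against mixing; the key ingredients (smooth basis, Weyl bounds, integration-by-parts decay of Fourier-type coefficients) are all standard for smooth compact manifolds.
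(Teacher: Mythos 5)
Your argument is correct and follows essentially the same route as the paper: expand $B$ into tensor products of Laplace eigenfunctions, apply \eqref{MultMix} termwise, and control the $C^s$-norms of the eigenfunctions and the coefficient decay via elliptic estimates, Sobolev embedding and Weyl counting. The only real difference is your truncation at level $\Lambda=\bar\theta^{-\eta m}$ with the subsequent error balancing; the paper simply sums the full series, bounding $\sum_\lambda |b_\lambda|\lambda^{u(r+1)}$ by a Sobolev norm of $B$ once $s>\left(1+\tfrac{d}{2}\right)(r+1)$, which avoids the slight degradation of the mixing rate.
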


\begin{proof}[Proof of Theorem \ref{MultiMixRet}]
Since $B\in C^s(M^{r+1})$ it also belongs to Sobolev space $H^s(M^{r+1}).$ Hence we can decompose
$$ B=\sum_\lambda b_\lambda \phi_\lambda $$
where $\phi_\lambda$ are eigenfunctions of Laplacian on $M^{r+1}$ with eigenvalues
$\lambda^2$ and  $\|\phi_\lambda\|_{L^2}=1.$
The eigenfunctions $\phi_\lambda$ are of the form
$$ \phi_\lambda(x_0, x_1, \dots, x_r)=\prod_{j=0}^r \psi_j(x_j) $$
where $\Delta_M \psi_j=\zeta_j^2 \psi_j$ and $\lambda^2=\sum_j \zeta_j^2.$
Recall that by Sobolev Embedding Theorem for compact manifolds,
$H^s(M)\subset C^{s-\frac{d}{2}-1-\eps}(M)$ for any $\eps>0.$
Since $\|\psi_j\|_{H^s}=\zeta_j^s$ we have
$$\|\psi_j\|_{C^1}\leq C_u \zeta_j^{u} \leq C_u \lambda^u \quad\text{if}\quad u>1+\frac{d}{2}.$$
It follows from \eqref{MultMix} that if $\phi\not\equiv 1$ then
$$ \left|\int \phi_\lambda(x, f^{k_1} x, \dots , f^{k_r} x) d\mu(x)-\prod_{j=0}^r \int\psi_j d\mu\right|
\leq C \lambda^{u(r+1)} \theta^m.$$
Therefore
$$
 \left|\int B(x, f^{k_1} x, \dots , f^{k_r} x) d\mu(x)-\int B(x_0,\cdots,x_r)d\mu(x_0)\cdots d\mu(x_r)\right|$$
$$ \leq C \theta^m \sum_\lambda b_\lambda \lambda^{u(r+1)} \leq C \theta^m ||B||_{H^{u(r+1)}(M^{r+1})}
.$$
This proves the result if $s>\left(1+\frac{d}{2}\right)(r+1).$
\end{proof}

\subsection{  Mixing for Gibbs measures.}
\label{AppMixGibbs}
\begin{proof}[Proof of Proposition \ref{PrGibbsMix}]
The proof consists of three steps.

{\it Step 1.}
By  the same argument as in \cite[Proposition 3.8]{Viana97}, we have that for $\hpsi_1\in \rm{Lip(\mathbb{T})},$ $\hpsi_2\in L^1(\mu),$
\begin{equation}
\label{PWM2Gibbs}
\left| \int \hpsi_1(\hpsi_2 \circ f^n)  d\mu - \int \hpsi_1 d\mu \int \hpsi_2 d\mu  \right| \leq
C \|\hpsi_1\|_{Lip}\|\hpsi_2\|_{L^1}  \brtheta^n, \,n\geq 0.
\end{equation}

{\it Step 2.}
We proceed to show inductively that for each $r>0$ and $\psi_i\in\rm{Lip(\mathbb{T})}$ for $i=1,\dots, r,$
\begin{equation}\label{PWMrGibbs}
\left|\int \left(\prod_{i=1}^r \psi_i \circ f^{k_i}  \right)d\mu-\prod_{i=1}^r\int\psi_i d\mu\right|\leq C\brtheta^m\prod_{i=1}^r||\psi_i||_{Lip},
\end{equation}
where $\DS m=\min_{1\leq i\leq r-1}(k_{i+1}-k_i),$ $k_0=0.$

By invariance of $\mu$ we may assume that $k_1=0.$ Applying \eqref{PWM2Gibbs} with
$\hpsi_1=\psi_1,$ $\DS \hpsi_2=\prod_{j=2}^r \psi_j \circ f^{k_j-k_2} $ we get
$$ \left|\int \left(\prod_{i=1}^r \psi_i \circ f^{k_i}  \right)d\mu-\left(\int \psi_1 d\mu \right)
\left[\int \left(\prod_{i=2}^r \psi_i\circ f^{k_i}  \right) d\mu \right] \right|$$
$$ \leq C \brtheta^m \|\psi_1\|_{Lip} \left\| \left(\prod_{i=2}^r \psi_i\circ f^{k_i}  \right)\right\|_{L_1}
\leq C\brtheta^m\prod_{i=1}^r||\psi_i||_{Lip}.
$$
Applying inductive estimate to
$$ \int \left(\prod_{i=2}^r \psi_i\circ f^{k_i} \right) d\mu $$
we obtain \eqref{PWMrGibbs}.

{\it Step 3.} Applying the same argument as in proof of  Theorem \ref{MultiMixRet}
we get $(EM)_r.$
\end{proof}

\subsection{  Examples of exponentially mixing systems}
There are many results about double (=$2$-fold) exponential mixing. Many examples of those systems
are partially hyperbolic. In particular, they expand an invariant foliation $W^s$ by unstable manifolds.
The next result allows to promote double mixing to $r$ fold mixing.

\begin{Thm}
\label{ThMixShortest}
\rm(\cite[Theorem 2]{Dolgopyat}\rm)\label{thm5}
Suppose that for each subset $D$ in a single unstable leave of bounded geometry\footnote{We refer
the reader to \cite{Dolgopyat} for precise requirements on $D$ since those
requirements are not essential for the
present discussion.} and any H\"older probability density $\rho$ on $D$ we have
$$ \left| \int_D A(f^n x) \rho(x) dx-\int A d\mu\right|\leq C \theta^n \|A\|_{C^s} \|\rho\|_{C^\alpha} $$
for $A\in C^s.$ Then $f$ is $r$-fold exponentially mixing for all $r\geq 2.$
\end{Thm}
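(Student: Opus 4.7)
My proof plan proceeds by induction on $r$, peeling off the latest test function and using $f$-invariance together with disintegration along unstable leaves. The base case $r=1$ (i.e., the usual 2-fold exponential mixing) follows immediately from the hypothesis by disintegrating $\mu$ along local unstable manifolds and averaging the assumed leafwise equidistribution.

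For the inductive step, suppose $(r+1)$-fold exponential mixing is known and fix $0=k_0<k_1<\dots<k_r<k_{r+1}$ with minimum gap $m$. By the $f$-invariance of $\mu$, I rewrite
\[
\int \prod_{j=0}^{r+1} A_j(f^{k_j}x)\,d\mu(x)=\int \Big(\prod_{j=0}^{r}A_j(f^{k_j-k_{r}}x)\Big)\, A_{r+1}(f^{k_{r+1}-k_{r}}x)\,d\mu(x).
\]
Disintegrating $\mu$ along a measurable partition of $M$ into pieces $D$ of local unstable leaves of bounded geometry with Hölder conditional densities $\rho_D$, the integral becomes $\int\bar\mu(dD)\int_D G_D(x)\,A_{r+1}(f^{k_{r+1}-k_{r}}x)\,dx$, where
\[
G_D(x):=\rho_D(x)\prod_{j=0}^{r}A_j(f^{k_j-k_{r}}x).
\]
Since the exponents $k_j-k_r$ are nonpositive, the functions $A_j\circ f^{k_j-k_r}$ are restrictions to $D$ of compositions with backward iterates of $f$; on unstable manifolds the map $f^{-1}$ is uniformly contracting, hence the $C^\alpha$-norm of $A_j\circ f^{k_j-k_r}$ on $D$ is bounded by $C\|A_j\|_{C^\alpha}$ uniformly in $k_{r}-k_{j}$. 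Multiplying by $\rho_D$ (also Hölder with uniform bound) gives $\|G_D\|_{C^\alpha(D)}\le \bar C\prod_{j=0}^{r}\|A_j\|_{C^s}$.

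Extending the hypothesis to signed densities by linearity (writing $G_D=(\int G_D)\,\tilde\rho_D$ on the part where the mass is nonzero and splitting the zero-mass remainder into positive and negative parts), I apply it on each leaf with test function $A_{r+1}$ and shift $k_{r+1}-k_r\ge m$:
\[
\Big|\int_D G_D(x)A_{r+1}(f^{k_{r+1}-k_r}x)dx-\int A_{r+1}d\mu\int_D G_D(x)dx\Big|\le C\theta^{m}\|A_{r+1}\|_{C^s}\|G_D\|_{C^\alpha}.
\]
Integrating against $\bar\mu$ and using invariance once more converts the main term into $\bigl(\int A_{r+1}d\mu\bigr)\int\prod_{j=0}^{r}A_j(f^{k_j}x)d\mu(x)$. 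Applying the inductive hypothesis to the latter integral, then multiplying through, gives
\[
\Big|\int\prod_{j=0}^{r+1}A_j(f^{k_j}x)d\mu-\prod_{j=0}^{r+1}\int A_jd\mu\Big|\le \tilde C\theta^{m}\prod_{j=0}^{r+1}\|A_j\|_{C^s},
\]
which is \eqref{MultMix} at level $r+1$; by Remark \ref{lowreg} (or Theorem \ref{MultiMixRet}) this is equivalent to the multiple mixing formulation in the paper.

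The main obstacles are technical rather than conceptual. The principal one is ensuring that the $C^\alpha$ norm of $G_D$ remains bounded uniformly in the gaps $k_r-k_j$: this requires that $f^{-1}$ act as a contraction on unstable leaves with uniform bounds, and that Hölder exponents can be chosen so that products and compositions stay in the admissible class for the hypothesis. A secondary subtlety is the regularity of the conditional densities $\rho_D$ of the disintegration and the bounded-geometry assumption on the pieces $D$, which must be arranged so the hypothesis is directly applicable; this is standard for smooth invariant measures in partially hyperbolic settings (and is where the restriction to "bounded geometry" subsets in the hypothesis is used). A final minor point is the extension from probability densities to signed Hölder densities, which is purely linear-algebraic.
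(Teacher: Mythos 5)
Your argument is correct and is essentially the standard one: the paper itself gives no proof of this statement but cites \cite[Theorem 2]{Dolgopyat}, and your scheme — shift time by the second-to-last exponent so the earlier observables become compositions with backward iterates (hence uniformly H\"older on unstable leaves), disintegrate $\mu$ into unstable plaques with uniformly H\"older conditional densities, apply the leafwise equidistribution hypothesis to the last observable, and induct — is exactly the argument used there, with the passage from \eqref{MultMix} to the $\BAN$-norm formulation handled by Theorem \ref{MultiMixRet} as you note. The technical points you flag (uniform contraction bounds on the H\"older norms, regularity of the conditional densities, extension to signed densities) are the right ones and are routine for smooth invariant measures, so no gap remains.
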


Examples of maps satisfying the conditions of Theorem \ref{ThMixShortest}
include expanding maps, volume preserving
Anosov diffeomorphsims \cite{Bow, PP90}, time one maps of contact Anosov flows \cite{L},
mostly contracting systems \cite{C04, D00},
partially hyperbolic translations on
homogeneous spaces \cite{KM96},
and partially hyperbolic automorphisms of nilmanifolds \cite{GS}.

We also note the following fact.

\begin{Thm}
A product of exponentially mixing maps is exponentially mixing.
\end{Thm}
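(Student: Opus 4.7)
The plan is to mimic the eigenfunction-decomposition argument used in the proof of Theorem \ref{MultiMixRet}. Let $(f_i,X_i,\mu_i,\BAN_i)$ for $i=1,2$ be $(r+1)$-fold exponentially mixing with rates $\theta_i<1$ and constants $C_i,L_i$. By Remark \ref{lowreg}, I may assume without loss of generality that $\BAN_i=C^s(X_i^{r+1})$, and aim to show the product system is $(r+1)$-fold exponentially mixing with $\BAN=C^s((X_1\times X_2)^{r+1})$ for $s$ sufficiently large. The verification of the $(\text{Prod})$ and $(\text{Gr})$ axioms for $C^s$ on a product manifold is routine; the heart of the argument is checking $({\rm EM})_r$.

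First I would handle the tensor product case. For $B\in C^s(X_1^{r+1})$ and $C\in C^s(X_2^{r+1})$, define $A=B\otimes C$ on $(X_1\times X_2)^{r+1}\cong X_1^{r+1}\times X_2^{r+1}$. Since $(f_1\times f_2)^k=f_1^k\times f_2^k$, Fubini gives
$$\int A(z_0,(f_1\times f_2)^{k_1}z_0,\ldots,(f_1\times f_2)^{k_r}z_0)\,d(\mu_1\times\mu_2)(z_0)=I_1(B)\cdot I_2(C),$$
where $I_i$ is the corresponding $(r+1)$-fold integral for the $i$-th factor system. Applying $({\rm EM})_r$ on each factor with $m=\min_j(k_{j+1}-k_j)$, one gets $|I_i-\mu_i^{r+1}(\cdot)|\leq C_i\theta_i^m\|\cdot\|_{C^s}$, and combining the two bounds via the boundedness of $\mu_i^{r+1}(\cdot)$ yields the desired exponential correlation decay for $A=B\otimes C$ with rate $\theta=\max(\theta_1,\theta_2)$ and majorant $\|B\|_{C^s}\|C\|_{C^s}$.

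For a general $A\in C^s((X_1\times X_2)^{r+1})$, I would decompose $A$ in the basis of eigenfunctions of the Laplacian on the product Riemannian manifold $X_1^{r+1}\times X_2^{r+1}$. The eigenfunctions split as tensor products $\phi_\lambda^{(1)}\otimes\phi_\mu^{(2)}$ with eigenvalues $\lambda^2+\mu^2$, so we may write $A=\sum_{\lambda,\mu}a_{\lambda,\mu}\,\phi_\lambda^{(1)}\otimes\phi_\mu^{(2)}$. Applying the tensor estimate termwise and using Sobolev embedding to bound $\|\phi_\lambda^{(i)}\|_{C^s}$ polynomially in $\lambda$ (respectively $\mu$), the contribution of each mode is bounded by $\theta^m\lambda^{u(r+1)}\mu^{u(r+1)}|a_{\lambda,\mu}|$ for $u>1+\tfrac{\dim X_i}{2}$. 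Summing over $(\lambda,\mu)$ and invoking Cauchy--Schwarz against the $H^s$-norm of $A$ yields the required $({\rm EM})_r$ bound with majorant a constant times $\|A\|_{C^s}$, provided $s$ exceeds a threshold depending on $r$ and $\dim(X_1\times X_2)$.

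The main (essentially technical) obstacle is the bookkeeping in the last step: controlling the double sum over eigenvalues uniformly in $m$ and distributing the regularity of $A$ between the two factors. This is handled exactly as in the argument of Theorem \ref{MultiMixRet}, simply by replacing the single Laplacian spectrum there with the product spectrum here and choosing $s$ large enough that the resulting sum converges.
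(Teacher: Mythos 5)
Your argument is correct and is essentially the proof the paper has in mind: the paper omits the proof precisely because it is "very similar to the proof of Theorem \ref{MultiMixRet}", namely a Laplacian eigenfunction decomposition on the product manifold, Fubini factorization of the orbit integral for tensor-product modes, the factor systems' exponential mixing applied termwise, and Sobolev embedding plus a large choice of $s$ to sum the modes. Your reduction to the tensor case and the final bookkeeping match that template, with the only implicit point being that the factor mixing must be used in its multi-variable form $({\rm EM})_r$, which is available by Theorem \ref{MultiMixRet} (or by splitting each factor eigenfunction further into single-variable eigenfunctions).
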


The proof of this theorem is very similar to the proof of Theorem \ref{MultiMixRet} so we leave it to the reader. We also note that instead of direct products one can also consider certain skew products
(so called generalized $T, T^{-1}$ transformations) provided that the skewing function has positive drift.
We refer the reader to \cite{DDKN} for more details.

Another source of exponential mixing is spectral gap for transfer operators (cf. \S \ref{AppMixGibbs}
as well as
\cite{PP90, Viana97}). This allows to handle
non-uniformly hyperbolic systems admitting Young tower with exponential tails \cite{Young98}
as well as piecewise expanding maps \cite{Viana97}.

We note that the maps described in the last paragraph do not fit in the framework of the present
paper due to either lack of smoothness or lack of smooth invariant measure. It is interesting to extend
the result of the paper to cover those systems as well as some slower mixing system and this is a
promising direction for a future work.

\section{Gibbs measures for expanding maps on the circle}
\label{AppLILocDim}
\subsection{Some notation.}
Recall that we assume $P(g)=0,$ so we have
\begin{equation}
\label{ErgSumMu}
\ln \mu \left(B_n(x,\varepsilon)\right) = \sum_{j=0}^{n-1} g(f^j x) + O(1).
\end{equation}
Denote
$$r_n = \sup_{r >0} \{ r \mid B(x,r) \subset B_n(x,\varepsilon) \},
\quad \bar r_n = \inf_{r>0} \{ r \mid B(x,r) \supset B_n(x,\varepsilon) \}.
$$
By bounded distortion property, there exist constants $C_0 > 0$ and $\alpha>0$
such that if $d(f^n y, f^n x) < \varepsilon$ then
$$ \left(C_0 \exp \varepsilon^\alpha\right)^{-1} \leq \frac{|D f^n (y)|}{|D f^n (x)|}
\leq C_0 \exp \varepsilon^\alpha.$$

Recalling \eqref{SRBPot}
$$   \exp\left[\left(\sum_{j=0}^{n-1} f_u (f^j x)\right)-\varepsilon^\alpha\right] \frac{d(x,y)}{C_0} \leq d(f^n x, f^n y) \leq
C_0 \exp\left[\left(\sum_{j=0}^{n-1} f_u (f^j x)\right) +
\varepsilon^\alpha\right]  d(x,y) .$$
Hence
$$ \varepsilon C_0^{-1}  \exp\left[\left(-\sum_{j=0}^{n-1} f_u(f^j x) \right)-\varepsilon^\alpha \right] \leq r_n \leq \bar{r}_n
\leq  \varepsilon C_0  \exp\left[\left(-\sum_{j=0}^{n-1} f_u (f^j x) \right)+\varepsilon^\alpha \right] . $$
It follows that
\begin{equation}
\label{ErgSumR}
\ln r_n = \sum_{j=0}^{n-1} -f_u(f^j x) + O(1), \quad
\ln \bar{r}_n = \sum_{j=0}^{n-1} -f_u (f^j x) + O(1).
\end{equation}

Next define
$$N(r)=\max\left(n: B(x,r)\subset B_n(x, \eps)\right), \quad
\brN(r)=\min\left(n: B(x,r)\supset B_n(x, \eps)\right). $$
Then, similarly to \eqref{ErgSumR} we obtain
\begin{equation}
\label{ErgSumN}
\ln r= \sum_{j=0}^{N(r)-1} -f_u(f^j x) + O(1)= \sum_{j=0}^{\brN(r)-1} -f_u (f^j x) + O(1).
\end{equation}

\subsection{Proof of \eqref{GibbsSqueeze}
and \eqref{Alhfors}.}
\label{SSGibbsReg}
Note that
\begin{equation}
\label{MuB-MuBB}
 \mu(B_{\brN(r)}(x,\eps))\leq\mu(B(x,r))\leq  \mu(B_{N(r)}(x,\eps)).
\end{equation}
Since $f$ is uniformly expansing there is a positive constant $C$ such that for each $x$
$1/C\leq f_u(x)\leq C.$ Accordingly
\begin{equation}
\label{R-LnN}
\frac{N(r)}{C} \leq |\ln r|\leq C N(r),
\quad
\frac{\brN(r)}{C} \leq |\ln r|\leq C \brN(r).
\end{equation}
On the other hand, since $P(g)=0$, \cite[Chapter 3]{PP90} shows that there is a function a H\"older function $\hg(x)$
such that $\hg=g+h-h\circ f$ for a H\"older function $h$ and moreover
$$\sum_{f(y)=x} e^{\hg(y)}=1. $$
In particular, $\hg(y)$ is negative and, since it is continuous,
there are  constants  $\hC_1>\heps>0$ such that for any $x\in \Tor$  we have $\hg(x)\in(-\hC_1,-\heps).$
Using the estimate
$$ \sum_{n=0}^{N-1} g(f^n x)=\sum_{n=0}^{N-1} \hg(f^n x)+O(1) $$
we conclude that for some constant $\hC_2>0$ we have for every $x \in \T,$
\begin{equation}
\label{SumHG}
 -\hC_1 N -\hC_2 \leq\sum_{n=0}^{N-1} g(f^n x)\leq -\heps N+\hC_2
\end{equation}
Combining \eqref{ErgSumMu}, \eqref{MuB-MuBB}, \eqref{R-LnN} and \eqref{SumHG}
we obtain \eqref{GibbsSqueeze}.

Next \eqref{ErgSumN} shows that $N(4r)-\brN(r)=O(1).$ Now \eqref{Alhfors} follows from
\eqref{ErgSumMu} and \eqref{MuB-MuBB}.

\subsection{Proof of Lemma \ref{LmDimFl}(b).} Observe that \eqref{ErgSumMu}
\eqref{ErgSumR} give
$$
\ln \mu \left(B_n(x,\varepsilon)\right) - \dd \ln r_n = \sum_{j=0}^{n-1} \psi(f^j x)+O(1), \quad
\ln \mu \left(B_n(x,\varepsilon)\right)- \dd \ln \bar r_n= \sum_{j=0}^{n-1} \psi(f^j x)+O(1)$$
where $\psi$ is defined by \eqref{PsiPot}.

By Law of Iterated Logarithm \cite{HofbauerKeller82},
 $$
 \limsup_{n\rightarrow \infty} \frac{\sum_{j=0}^{n-1} \psi (f^j x)}{\sqrt{2  n \ln \ln n}} =  \sigma, \quad
 \liminf_{n\rightarrow \infty} \frac{\sum_{j=0}^{n-1}\psi (f^j x)}{\sqrt{ 2 n \ln \ln n}} =  -\sigma.
$$
Since $B(x,r_n) \subset B_{n} (x,\varepsilon)\subset B(x, \bar{r}_n)$
$$  \limsup_{n\rightarrow \infty} \frac{|\ln \mu\left( B(x,\bar r_n)\right)| - \dd |\ln \bar r_n|}{\sqrt{2  n \ln \ln n}} \leq\sigma \leq
\limsup_{n\rightarrow \infty} \frac{|\ln \mu \left(B(x,r_n)\right)|-\dd |\ln r_n|}{\sqrt{ 2 n \ln \ln n}}.
$$
Using \eqref{ErgSumR} again, we conclude that for every sufficiently small $\delta,$
there exists $n(\delta)$  and $k$ independent of $\delta$ and $n(\delta)$ such that
$\bar r_{n+k}\leq \delta \leq r_n. $
Then
$$ \sigma\leq \limsup _{\delta\to 0} \frac{|\ln \mu(B(x,r_{n(\delta)}))|-\dd |\ln r_{n(\delta)}|}{\sqrt{ 2 n(\delta) \ln \ln n(\delta)}}
\leq \limsup_{\delta\to 0} \frac{\left|\ln \mu \left(B(x,\delta)\right)\right|-\dd |\ln \delta|}{\sqrt{ 2 n(\delta) \ln \ln n(\delta)}}$$
$$\leq \limsup _{\delta\to 0} \frac{|\ln \mu (B(x,\bar r_{n(\delta)}))|-\dd |\ln \bar r_{n(\delta)}|}{\sqrt{ 2 n(\delta) \ln \ln n(\delta)}}\leq \sigma.
$$
It follows that all inequalities above are in fact equalities. In particular,
$$ \limsup_{\delta\to 0} \frac{|\ln \mu (B(x,\delta))|-\dd |\ln \delta|}{\sqrt{ 2 n(\delta) \ln \ln n(\delta)}}=\sigma.$$
On the other hand by \eqref{ErgSumR} and the ergodic theorem we see that {\bgreen for $\mu$-a.e. $x\in \T$}, it holds that
$\DS \lim_{n\to\infty} \frac{|\ln r_n|}{n}=\lambda.$ For such $x$ we have
$ \DS \lim_{n\to\infty} \frac{|\ln r_n| (\ln \ln |\ln r_n|)}{n \ln \ln n}=\lambda.$
Since $r_n/C\leq \delta\leq r_n$ we have
$$ \lim_{\delta\to 0} \sqrt{\frac{n(\delta) \ln \ln n(\delta)}{|\ln \delta| (\ln \ln |\ln \delta|)}}=\frac{1}{\sqrt\lambda}.$$
Multiplying the last two displays we obtain for $\mu$-a.e. $x\in \T$
$$ \limsup_{\delta\to 0} \frac{|\ln \mu (B(x,\delta))|-\dd |\ln \delta|}{\sqrt{ 2 |\ln \delta| (|\ln \ln |\ln \delta|)}}=
\frac{\sigma}{\sqrt{\lambda}},$$
and likewise
$$ \liminf_{\delta\to 0} \frac{|\ln \mu (B(x,\delta))|-\dd |\ln \delta|}{\sqrt{ 2 |\ln \delta| (|\ln \ln |\ln \delta|)}}=-
\frac{\sigma}{\sqrt{\lambda}}.$$
This proves part (b) of Lemma \ref{LmDimFl}. \hfill $\Box$

\subsection{Proof of Lemma \ref{LmDimFl}(a).}
Suppose that $\sigma^2=0.$ Since we also have that $\int \psi d\mu=0$
\cite[Proposition 4.12]{PP90} shows that $\psi$ is a coboundary, that is, there exists
a H\"older function $\eta$ such that $\psi(x)=\eta(x)-\eta(fx).$ Thus
$\DS \sum_{k=0}^{n-1} \psi(f^k x)=\eta(x)-\eta(f^n x) $
is uniformly bounded with respect to both $n$ and $x.$ Recalling the definition of $\psi$
we see that in this case
$$ \sum_{k=0}^{n-1} g(f^k x)=-\left[\dd\sum_{k=0}^{n-1} f_u(f^k x)\right]+O(1). $$
Now \eqref{ErgSumMu} and \eqref{ErgSumR} show that $\mu$ is conformal. \hfill $\Box$

\section{Geodesic Flows: Geometry of targets  in the configuration space. Proof of Lemma \ref{lem.mes} and Lemma \ref{lem.sigma}.}
\label{app.spheres}

\subsection{ Geometry of spheres. Proof of Lemma \ref{lem.sigma}.} $ \ $

Denote $\gamma(t)=\phi^t(q,v).$ The Jacobi field of $\gamma$ are defined by the  solution of the linear equation
$$ J''(t) + R(J(t), \gamma'(t)) \gamma'(t)=0,$$
where $ J' = \frac{d}{ d t} J$ and $R(X,Y) Z$ denotes the curvature tensor,
which is equivalent to
$$ (J^i)''(t) + \sum_{j=1}^{n} A_j^i(t) J^j(t) =1, i=1,\ldots, n, $$
where the matrix $A(t)=(A_j^i(t))_{i,j=1,\ldots, n}$ is symmetric. {Since $\cQ$ has negative curvature,
the spectrum of $A(t)$} lies between $ -K_1^2 $ and $-K_2^2$ for some $K_1$ and $K_2.$

{Recall the following fact (see  [Lemma 1.1]\cite{Kni02}).}
\begin{Prop}
\label{PrGeo2Der}
The differential $$D\phi^t(v): T_{\pi v}\cQ\times T_{\pi v}\cQ\rightarrow T_{\pi\phi^t(v)}\cQ\times T_{\pi \phi^t(v)}\cQ$$
is given by $D\phi^t(v)(x,y)=(J(t),J'(t)),$ where $J(0)=x,$ $J'(0)=y.$
\end{Prop}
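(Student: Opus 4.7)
\medskip

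The plan is to use the standard identification of $T_v(S\cQ)$ with $T_{\pi v}\cQ \times T_{\pi v}\cQ$ coming from the Sasaki (horizontal/vertical) splitting of the connection, and then reduce the computation of $D\phi^t$ to the classical fact that variations through geodesics are Jacobi fields. First I would fix $v\in S\cQ$ and choose a smooth curve $c:(-\varepsilon,\varepsilon)\to S\cQ$ with $c(0)=v$ such that, writing $c(s)=(q(s),w(s))$ with $w(s)\in T_{q(s)}\cQ$ and $|w(s)|\equiv 1$, the tangent vector $c'(0)$ corresponds to $(x,y)\in T_{\pi v}\cQ\times T_{\pi v}\cQ$, i.e. $q'(0)=x$ and $\frac{D}{ds}\big|_{s=0}w(s)=y$. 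The existence of such $c$ (with the compatibility $\langle w(0),y\rangle=0$ forced by $|w|\equiv 1$, which corresponds to $(x,y)$ being tangent to the unit sphere bundle) is immediate.

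Next I would consider the two-parameter family $\Gamma(s,t):=\pi\phi^t(c(s))$. By definition of the geodesic flow, for each fixed $s$ the curve $t\mapsto\Gamma(s,t)$ is a geodesic with initial conditions $\Gamma(s,0)=q(s)$ and $\partial_t\Gamma(s,0)=w(s)$. Hence $\Gamma$ is a variation of $\gamma(t)=\Gamma(0,t)$ through geodesics, and the classical Jacobi field theorem (see e.g.\ \cite{Kni02}) guarantees that
$$J(t):=\frac{\partial\Gamma}{\partial s}\Big|_{s=0}(t)$$
is a Jacobi field along $\gamma$.

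The initial conditions read off directly: $J(0)=q'(0)=x$, while using the torsion-free symmetry of mixed covariant derivatives one has
$$J'(0)=\frac{D}{dt}\Big|_{t=0}\frac{\partial\Gamma}{\partial s}\Big|_{s=0}=\frac{D}{ds}\Big|_{s=0}\partial_t\Gamma(s,0)=\frac{D}{ds}\Big|_{s=0}w(s)=y.$$
Thus $J$ is precisely the Jacobi field along $\gamma$ with $J(0)=x$, $J'(0)=y$.

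Finally I would unravel $D\phi^t(v)(x,y)$ through the same splitting at the endpoint $\phi^t(v)$. Writing $\phi^t(c(s))=(\Gamma(s,t),\partial_t\Gamma(s,t))$, the horizontal component of $\frac{d}{ds}\big|_{s=0}\phi^t(c(s))$ is $\partial_s\Gamma(0,t)=J(t)$, and the vertical component is $\frac{D}{ds}\big|_{s=0}\partial_t\Gamma(s,t)=\frac{D}{dt}J(t)=J'(t)$, again by the symmetry of the covariant derivative. This gives $D\phi^t(v)(x,y)=(J(t),J'(t))$, as claimed. The only subtle point — and the one I would write out carefully — is the identification of the tangent space $T_v(S\cQ)$ with $T_{\pi v}\cQ\times T_{\pi v}\cQ$ via the connection map, since that fixes the precise meaning of $(x,y)$ and makes the symmetry-of-mixed-derivatives step above unambiguous; everything else is a direct unwinding of definitions.
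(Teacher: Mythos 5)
Your argument is correct: identifying $T_v(S\cQ)$ with $T_{\pi v}\cQ\times T_{\pi v}\cQ$ via the horizontal/vertical splitting, realizing $(x,y)$ as the tangent to a curve $c(s)=(q(s),w(s))$, and differentiating the geodesic variation $\Gamma(s,t)=\pi\phi^t(c(s))$ using torsion-freeness is precisely the standard proof of this fact. The paper itself gives no proof, citing it as Lemma 1.1 of \cite{Kni02}, and your write-up is essentially the argument found there, so there is nothing to compare beyond noting that you have supplied the details (in particular the connection-map identification and the orthogonality constraint $\langle w(0),y\rangle=0$ on $S\cQ$) that the paper leaves to the reference.
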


{We are interested in the case
\begin{equation}
\label{GeoINI}
J(0)=0, \quad \|J'(0)\|=1.
\end{equation}

Now Lemma \ref{lem.sigma} follows
combining Proposition \ref{PrGeo2Der} with
Lemma \ref{Prop1} below.}

\begin{Lem}\label{Prop1}
{If \eqref{GeoINI} holds then for each $t_0$ there is a constant $C>0$}  such that
\begin{equation}
\label{J-JPrime}
\|J'(t)\| \leq C \| J(t) \|  \quad \text{ for }t>t_0.
\end{equation}
\end{Lem}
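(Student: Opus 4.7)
The plan is to exploit the standard stable/unstable decomposition of Jacobi fields in negative curvature. Since the eigenvalues of the curvature operator $A(t)=R(\cdot,\gamma'(t))\gamma'(t)$ lie in $[-K_1^2,-K_2^2]$ with $K_1\geq K_2>0$, the Riccati equation $U'+U^2+A=0$ admits two distinguished bounded symmetric solutions $U^\pm(t)$ defined along the entire geodesic, with spectra confined to $[K_2,K_1]$ and $[-K_1,-K_2]$ respectively. This is classical (Eberlein, or Klingenberg), so I would invoke it rather than reprove it. The associated unstable and stable Jacobi fields $J^u,J^s$ satisfy $J^u{}'(t)=U^+(t)J^u(t)$ and $J^s{}'(t)=U^-(t)J^s(t)$, from which one immediately extracts the bounds $\|J^u(t)\|\geq e^{K_2 t}\|J^u(0)\|$, $\|J^s(t)\|\leq e^{-K_2 t}\|J^s(0)\|$, as well as the pointwise estimates $\|J^u{}'(t)\|\leq K_1\|J^u(t)\|$ and $\|J^s{}'(t)\|\leq K_1\|J^s(t)\|$.

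Next I would write $J=J^u-J^s$ by choosing the common initial value $w:=J^u(0)=J^s(0)$ as the unique solution of $(U^+(0)-U^-(0))w=J'(0)$. This is well defined because $U^+(0)-U^-(0)$ is positive definite with spectrum bounded below by $2K_2$, so $\|w\|\leq (2K_2)^{-1}\|J'(0)\|=(2K_2)^{-1}$. Both summands then exist globally and, crucially, the stable part decays while the unstable part grows exponentially. Pick $T>0$ large enough (depending only on the curvature bounds) that $\|J^s(t)\|\leq \tfrac12\|J^u(t)\|$ for all $t\geq T$; then for such $t$ one has $\|J(t)\|\geq \tfrac12\|J^u(t)\|$ and
\begin{equation*}
\|J'(t)\|\leq \|J^u{}'(t)\|+\|J^s{}'(t)\|\leq K_1\bigl(\|J^u(t)\|+\|J^s(t)\|\bigr)\leq 2K_1\|J^u(t)\|\leq 4K_1\|J(t)\|.
\end{equation*}

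It remains to handle the interval $[t_0,T]$. Here I would use the absence of conjugate points in negative curvature: the map $t\mapsto \|J(t)\|$ is continuous and strictly positive on $(0,\infty)$, while $t\mapsto\|J'(t)\|$ is continuous, so the quotient $\|J'(t)\|/\|J(t)\|$ is bounded on the compact set $[t_0,T]$ by a constant depending on $t_0$. Combining this with the bound on $[T,\infty)$ obtained above yields the global constant $C=C(t_0)$ asserted in \eqref{J-JPrime}.

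The main obstacle is really ensuring that the Riccati operators $U^\pm$ can be used with the right uniform bounds, and that the decomposition $J=J^u-J^s$ is admissible for a Jacobi field vanishing at $t=0$. Both facts rest on pinched negative curvature and are standard; I do not expect any technical difficulty beyond citing them correctly.
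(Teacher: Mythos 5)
Your argument is correct in substance, and it reaches the conclusion by a route that differs in detail from the paper's. The paper works with the monotone quantity $S(t)=\langle J(t),J'(t)\rangle$: its differential inequality forces $S$ (hence $\|J\|^2+\|J'\|^2$) to grow exponentially, and the Anosov--Sinai decomposition $J=c_+J_++c_-J_-$, with $J_+=R(t)J_+'(t)$ and $J_-$ exponentially small, is then used to transfer that lower bound to $\|J(t)\|$ itself; the short-time regime is handled by the explicit bounds on $S$ together with a Gronwall-type upper bound on $\|J\|,\|J'\|$. You instead match the initial data exactly, writing $J=J^u-J^s$ with $J^u(0)=J^s(0)=w=(U^+(0)-U^-(0))^{-1}J'(0)$, which is legitimate by uniqueness of Jacobi fields since $U^+(0)-U^-(0)\ge 2K_2\,\mathrm{Id}$, and the Riccati bounds $K_2\le U^+\le K_1$, $-K_1\le U^-\le -K_2$ give you explicit constants for $t\ge T=\ln 2/(2K_2)$ depending only on the pinching constants. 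For the large-time regime your version is cleaner and more quantitative than the paper's.

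One caveat: on the intermediate interval $[t_0,T]$ you appeal to continuity of $\|J'(t)\|/\|J(t)\|$ on a compact time interval, which as stated only yields a constant for the fixed geodesic and the fixed $J'(0)$. For the purpose this lemma serves (the uniform bound on $\|d\pi^{-1}\|$ in Lemma \ref{lem.sigma}), $C$ must be uniform over all geodesics and all unit initial derivatives, as it is in the paper's proof, where every constant comes from the curvature bounds. The repair is easy and in fact already contained in your decomposition: for $t\in[t_0,T]$ one has $\|J(t)\|\ge\|J^u(t)\|-\|J^s(t)\|\ge\left(e^{K_2t}-e^{-K_2t}\right)\|w\|\ge 2K_2t_0\|w\|$, while $\|J'(t)\|\le K_1\left(\|J^u(t)\|+\|J^s(t)\|\right)\le K_1\left(e^{K_1T}+1\right)\|w\|$, so the ratio is bounded by an explicit constant depending only on $K_1,K_2,t_0$; this makes the compactness appeal unnecessary (alternatively, compactness of $S\cQ$ plus continuous dependence of Jacobi fields on initial data would also do).
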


\begin{proof}
{Denote
	$S(t) =  \langle J(t), J'(t) \rangle$, $N(t)=\|J'(t)\|^2$ and
	${|||J|||^2=\|J\|^2+\|J'\|^2}.$
	Then
	\begin{equation}
 \frac{d}{dt} S(t) =  \|J'(t)\|^2 + \langle J(t), J''(t) \rangle
	=   \|J'(t)\|^2 + \langle J(t), -K(t)J(t) \rangle
	\label{SDer}
		 \geq  C_1 |||J|||^2
		  \end{equation}
for some $C_1>0$. It follows that $S(t)>0$ for $t>0.$
	Once we know that $S(t)$ is positive we can also conclude
	from \eqref{SDer} that $\DS \frac{d}{dt}S(t)>\frac{C_1 S(t)}{2} $, whence
\begin{equation}
\label{SLower1}
S(t) > S(u) e^{C_1(t-u)/2}\quad\text{ for }t>u.
\end{equation}
Next
	$\DS N(t)\geq N(0) e^{-K_2^2 t}=e^{-K_2^2 t}$ which together with \eqref{SDer}
	gives
\begin{equation}
\label{SLower2}	
	S(t)\geq e^{-K_2^2 t} t\quad\text{ for }t\in [0,1].
\end{equation}	
Combining this with 	\eqref{SLower1} we get
\begin{equation}
\label{SLower3}
S(t) > e^{-K_2^2} e^{C_1(t-1)/2}
\quad\text{ for }t>1.
\end{equation}
Combining \eqref{SLower2} and \eqref{SLower3} with a trivial bound
\begin{equation}
\label{JTriv}
N(t)\leq |||J(t)|||\leq N(0) e^{K_2^2 t}=e^{K_2^2 t}
\end{equation}
proves \eqref{J-JPrime} for small $t.$
To prove this estimate for large $t$ we shall use the fact, proven in \cite[Lecture 6]{AS67}
that $J$ can be decomposed as
$J=c_+ J_++c_- J_-,$ where
$$ \max(|c_+|, |c_-|)\leq C_3,\quad |||J_-|||\leq C_4 e^{-K_1 t} $$
and
\begin{equation}
\label{JRic}
J_+=R(t) J_+'(t)
\end{equation}
 where $R$ is a symmetric matrix with spectrum between
$K_1$ and $K_2.$
It follows that
\begin{equation}
\label{J-JPlus}
 |||J(t)|||\leq c_+|||J_+(t) |||+ C_3 C_4 e^{-K_1 t}
 \leq \sqrt{1+K_2^2} \; \|c_+J_+(t) \|+ C_3 C_4 e^{-K_1 t}
 \end{equation}
On the other hand \eqref{SLower3} gives a uniform lower bound
\begin{equation}
\label{J-S}
|||J|||\geq 	2 e^{-K_2^2/2} e^{C_1(t-1)/4}
\end{equation}
Combining
 \eqref{J-JPlus} and \eqref{J-S} we obtain
 $$ \|J(t)\|\geq \|c_+ J^+(t)\|-c_- \|J^-(t)\| \geq \frac{2}{1+K_2^2}  e^{-K_2^2/2} e^{C_1(t-1)/4}
 -2 C_3 C_4 e^{-K_1 t}
 $$
which proves \eqref{J-JPrime} for large $t.$}
\end{proof}

\subsection{{ Volume of the targets} in the configuration space}

\begin{proof}[Proof of Lemma \ref{lem.mes}.]
If $(q,v)\in \hat{B}_\rho(a),$ denote
$$ L(q,v)=L^+(q,v)+L^-(q,v) \text{ where }
L^\pm (q,v)=\sup\{t:\phi^{\pm s}(q,v)\in\hat{B}_\rho(a)\,\text{for}\, 0\leq s\leq t\}.
$$
Then we have the following estimate
\begin{equation*}
\label{Sweep}
\mu\left(\Omega_{a,\rho}\right)=\varepsilon \left(
\int_{\hat{B}_\rho(a)} \frac{1}{L(q,v)}d\mu
\right) \left(1+O(\rho)\right)
\end{equation*}
(see e.g. \cite{Chernov97}). Note that $\mu$ is of the form
$d\mu(q,v)=\frac{d\lambda(q) d\sigma(v)}{\lambda(\cQ)}$ where $\lambda$ is the Riemann volume on
$\cQ$ and $\sigma$ is normalized volume on the $d$ dimensional sphere.
If $\rho$ is small then the integral in parenthesis equals to $\rho^d \gamma (1+O(\rho))$
where
\begin{equation}
\label{GammaIntegral}
 \gamma=\frac{1}{\lambda(\cQ)} \int_{\cB\times \mathbb{S}^d} \frac{1}{\cL(x,v)}dx d\sigma(v)
\end{equation}
where $\cB$ is the unit ball in $\reals^{d+1}$ and $\cL(\cdot)$ is defined similarly $L(\cdot)$
with geodesics in $\cQ$ replaced by geodesics in $\reals^{d+1}.$
Specifically, an elementary plane geometry gives $\cL(x,v)=\sqrt{1-r_{min}^2}$ where
$r_{min}$ is the minimal distance between the  line $x+tv$ and the origin. Thus $r_{min}=r\sin \theta$
where $r$ is the distance from $x$ to $0$,
$\theta$ is the angle between $v$ and the segment from $x$ to $0.$ This proves \eqref{GeoGamma}
with $\gamma$ given by \eqref{GammaIntegral}.
\end{proof}


\begin{thebibliography}{999}

\bibitem{AN} Aaronson, J., Nakada, H.: Trimmed sums for non-negative, mixing stationary processes. {\it Stochastic Process. Appl.} {\bf 104} (2003), no. 2, 173--192.

\bibitem{Ab08} Abadi, M.:
Poisson approximations via Chen-Stein for non-Markov processes.
{\it Progr. Probab.} {\bf 60} (2008), 1--19.

\bibitem{AV08} Abadi, M., Vergne N.: Sharp errors for point-wise Poisson approximations in mixing
processes. {\it Nonlinearity} {\bf 21} (2008), no. 12, 2871--2885.

\bibitem{A89} Aldous, D.:  Probability approximations via the Poisson clumping heuristic.
  {\it Applied Math. Sci.} {\bf  77}  Springer, New York, (1989), xvi+269 pp.

\bibitem{ANT17} Aimino R., Nicol, M., Todd M.:
Recurrence statistics for the space of interval exchange maps and the Teichmuller
flow on the space of translation surfaces.
{\it Ann. Inst. Henri Poincar\'{e} Probab. Stat.} {\bf 53} (2017), no. 3, 1371--1401.

\bibitem{AS67}
Anosov D. V.; Sinai, Ya. G.
Certain smooth ergodic systems.
{\it Russian Math. Surveys} {\bf 22} (1967), no. 5 103--167.

\bibitem{A09} Athreya, J. S.:
Logarithm laws and shrinking target properties.
{\it Proc. Indian Acad. Sci. Math. Sci.} {\bf 119} (2009), no. 4, 541--557.

\bibitem{AGT15} Athreya, J. S., Ghosh A., Tseng J.:
Spiraling of approximations and spherical averages of Siegel transforms.
{\it J. Lond. Math. Soc.} {\bf 91} (2015), no. 2, 383--404.

\bibitem{AM1} Athreya, J. S., Margulis, G. A.:
Logarithm laws for unipotent flows-I. {\it J. Mod. Dyn.} {\bf 3} (2009) 359--378.

\bibitem{AM2} Athreya, J. S., Margulis, G. A.:
Logarithm laws for unipotent flows-II. {\it J. Mod. Dyn.} {\bf 11} (2017) 1--16.

\bibitem{APT16} Athreya, J. S.,  Parrish A.,  Tseng J.: Ergodic theory and Diophantine approximation for translation surfaces and linear forms.
{\it Nonlinearity} {\bf 29} (2016), no. 8,  2173--2190.

\bibitem{BP06} Babillot, M., Peigne, M.:
Asymptotic laws for geodesic homology on hyperbolic manifolds with cusps.
{\it  Bull. Soc. Math. France} {\bf 134} (2006), no. 1, 119--163.

\bibitem{BBV13} Badziahin, D., Beresnevich, V., Velani, S.:
Inhomogeneous theory of dual Diophantine approximation on manifolds.
{\it Adv. Math.} {\bf 232} (2013), 1--35.

\bibitem{BPS99}
Barreira, L.,  Pesin,  Y., Schmeling, J.:
Dimension and product structure of hyperbolic measures.
{\it Ann. of Math.} {\bf 149} (1999), no. 3,  755--783.

\bibitem{BarreiraSaussol01} Barreira, L., Saussol, B.: Hausdorff dimension of measures via Poincar\'{e} recurrence. {\it Comm. Math. Phys.} {\bf 219} (2001), no. 2, 443--463.

\bibitem{BM00} Bekka, M. B., Mayer, M.:
{\it Ergodic theory and topological dynamics of group actions on homogeneous spaces.}
London Math. Soc. Lecture Note Ser. {\bf 269} (2000),  x+200 pp.

\bibitem{BP92}
 Benedetti, R., Petronio, C.: {\it Lectures on hyperbolic geometry.}
 Springer-Verlag, Berlin (1992), xiv+330 pp.

\bibitem{BD99} Bernik, V. I., Dodson, M. M.:
{\it Metric Diophantine approximation on manifolds.}
Cambridge Tracts in Mathematics {\bf 137} (1999), xii+172 pp.

\bibitem{BEG} Bj\"{o}rklund, M., Einsiedler M, Gorodnik, A.: Quantitative multiple mixing. ArXiv: 1701.00945v2.

\bibitem{BG17} Bj\"{o}rklund, M., Gorodnik, A.:
Central limit theorems in the geometry of numbers.
{\it Electron. Res. Announc. Math. Sci.} {\bf 24} (2017) 110--122.

\bibitem{BG19} Bj\"{o}rklund, M., Gorodnik, A.:
Central Limit Theorems for group actions which are exponentially mixing of all orders,
to appear in {\it Journal d'Analyse Mathematiques.}

\bibitem{BG-CLTDA} Bj\"{o}rklund, M., Gorodnik, A.:
Central limit theorems for Diophantine approximants. {\it Math. Ann.}
{\bf 374} (2019), no. 3-4, 1371--1437.

\bibitem{Bow} Bowen, R.: {\it Equilibrium states and the ergodic theory of Anosov diffeomorphisms. 2d revised edition.}
Springer Lecture Notes in Math. {\bf 470} (2008), viii+75 pp.

\bibitem{BT09} Bruin, H., Todd, M.:
Return time statistics of invariant measures for interval maps with positive Lyapunov exponent. {\it Stoch. Dyn.} {\bf 9} (2009), no. 1, 81--100.

\bibitem{BV03} Bruin, H., Vaienti, S.:
Return time statistics for unimodal maps. {\it Fund. Math.} {\bf 176}  (2003), no. 1,  77--94.

\bibitem{CN17} Carney, M., Nicol, M.:
Dynamical Borel-Cantelli lemmas and rates of growth of Birkhoff sums of non-integrable observables on chaotic dynamical systems. {\it Nonlinearity} {\bf 30} (2017), no. 7, 2854--2870.

\bibitem{C04} Castro, A.: Fast mixing for attractors with a mostly contracting central direction. {\it Ergodic Theory Dynam. Systems} {\bf 24} (2004), no. 1, 17--44.

\bibitem{CFFHN} Carvalho M.,  Freitas, A. C. M., Freitas J. M.,  Holland M., Nicol M.:
Extremal dichotomy for uniformly hyperbolic systems.
{\it Dyn. Syst.} {\bf 30} (2015), no. 4, 383--403.

\bibitem{Ch11} Chaika, J.: Shrinking targets for IETs: extending a theorem of Kurzweil. {\it Geom. Funct. Anal.} {\bf 21} (2011), no. 5, 1020--1042.

\bibitem{CC19} Chaika, J., Constantine, D.:
Quantitative shrinking target properties for rotations and interval exchanges.
{\it Israel J. Math.} {\bf 230} (2019), no. 1,  275--334.

\bibitem{ChCol} Chazottes, J.-R.,  Collet, P.:
Poisson approximation for the number of visits to balls in non-uniformly hyperbolic dynamical systems.
{\it Ergodic Theory Dynam. Systems}  {\bf 33} (2013), no. 1, 49--80.

\bibitem{ChU05} Chazottes, J.-R.,  Ugalde, E.:
Entropy estimation and fluctuations of hitting and recurrence times for Gibbsian sources. {\it Discrete Contin. Dyn. Syst. B} {\bf 5} (2005), no. 3, 565--586.

\bibitem{Ch95} Chernov, N. I.:
Limit theorems and Markov approximations for chaotic dynamical systems,
{\it Probab. Theory Related Fields} {\bf 101} (1995), no. 3, 321--362.


\bibitem{Chernov97} Chernov, N.:
Entropy, Lyapunov exponents, and mean free path for billiards.
{\it J. Statist. Phys.} {\bf 88} (1997), no. 1-2, 1--29.




\bibitem{CK01} Chernov, N., Kleinbock, D.:
Dynamical Borel-Cantelli lemmas for Gibbs measures,
{\it Israel J. Math.} {\bf 122} (2001), 1--27.




\bibitem{Co00} Coelho, Z.: Asymptotic laws for symbolic dynamical processes.
{\it London Math. Soc. Lecture Note Ser. 279} (2000), no. 3, 123--165.

\bibitem{Col01} Collet, P.:
Statistics of closest return for some non-uniformly hyperbolic systems.
{\it Ergodic Theory Dynam. Systems} {\bf 21} (2001), no. 2, 401--420.

\bibitem{CGS99} Collet, P., Galves, A.,  Schmitt, B.:
Repetition times for Gibbsian sources.
{\it Nonlinearity} {\bf 12} (1999), no. 4, 1225--1237.

\bibitem{Da86} Dani, S. G.: Divergent trajectories of flows on homogeneous spaces and Diophantine approximation. {\it J. Reine Angew. Math.} {\bf 359} (1985), 55--89.


\bibitem{DenkerKan07} Denker, M., Kan, N.: Om Sevast'yanov's theorem. {\it Statist. Probab. Lett.} {\bf 77} (2007), no. 3, 272--279.

\bibitem{DV86} Diamond, H. G.,  Vaaler, J. D.:
Estimates for partial sums of continued fraction partial quotients.
{\it Pacific J. Math.} {\bf 122} (1986), no. 1, 73--82.


\bibitem{Doe40} Doeblin W.:
Remarques sur la th\'{e}orie m\'{e}trique des fractions continues.
{\it Compositio Math.}  {\bf 7} (1940),  353--371.

\bibitem{D00}  Dolgopyat, D.: On dynamics of mostly contracting diffeomorphisms.
  {\it Comm. Math. Phys.} {\bf 213} (2000), no. 1, 181--201.

\bibitem{Dolgopyat} Dolgopyat, D.: Limit theorems for partially hyperbolic systems. {\it Trans. Amer. Math. Soc.} {\bf 356} (2004), no. 4, 1637--1689.

\bibitem{DDKN} Dolgopyat, D., Dong, C., Kanigowski, A., Nandori, P.:
Mixing properties of generalized $T,\,T^{-1}$ transformations, arXiv:2004.07298.

\bibitem{DF14} Dolgopyat, D., Fayad, B.:
Deviations of ergodic sums for toral translations I: Convex bodies. {\it Geom. Funct. Anal.} {\bf 24} (2014), no. 4, 85--115.

\bibitem{DF15} Dolgopyat, D., Fayad, B.:
Limit theorems for toral translations. {\it Hyperbolic dynamics, fluctuations and large deviations,} Proc. Sympos. Pure Math. {\bf 89} (2015), 227--277.

\bibitem{db_poisson} Dolgopyat, D., Fayad, B.:
Deviations of Ergodic sums for Toral Translations II: Squares. ArXiv:1211.4323v1.


 \bibitem{DFV_linear} Dolgopyat, D., Fayad, B., Vinogradov, I.:
Central limit theorems for simultaneous Diophantine approximations.
{\it J. \'{E}c. polytech. Math.} {\bf 4} (2017), 1--35.

\bibitem{DS20} Dolgopyat, D., Sarig, O.:
Quenched and annealed temporal limit theorems for circle rotations,
to appear in {\it Asterisque}.

\bibitem{DF65} Dubins, L. E., Freedman, D. A.:
A sharper form of the Borel-Cantelli lemma and the strong law.
{\it Ann. Math. Statist.} {\bf 36} (1965), 800--807.

\bibitem{ELJ97}
Enriquez, N., Le Jan, Y.:
Statistic of the winding of geodesics on a Riemann surface with finite area and constant negative curvature.
{\it Rev. Mat. Iberoamericana} {\bf 13} (1997), no. 2, 377--401.

\bibitem{EEEKLMMY}
Einsiedler, M. L.,  Ellwood, D. A., Eskin, A., Kleinbock, D., Lindenstrauss, E., Margulis, G., Marmi, S.,
Yoccoz, J.-C. (editors):
{\it Homogeneous flows, moduli spaces and arithmetic.}
Clay Mathematics Proceedings, {\bf 10} (2010), xii+438 pp.

\bibitem{Es98} Eskin, A.:
Counting problems and semisimple groups.
 {\it Doc. Math.} Extra Vol. {\bf II} (1998), 539--552.

\bibitem{Fa06} Fayad, B.:
Mixing in the absence of the shrinking target property.
{\it Bull. London Math. Soc.} {\bf 38} (2006), no. 5, 829--838.

\bibitem{FMP12} Fernandez, J. L., Melian, M. V., Pestana, D.:
Expanding maps, shrinking targets and hitting times.
{\it Nonlinearity} {\bf 25} (2012), no. 9, 2443--2471.

\bibitem{Fr73}  Freedman, D.:
Another note on the Borel-Cantelli Lemma and the strong law with the Poisson approximation as a by-product. {\it Ann. Probability} {\bf 1} (1973), 910--925.

\bibitem{FFM18} Freitas, A., Freitas, J., Magalh\~{a}es, M.:
Convergence of marked point processes of excesses for dynamical systems.
{\it J. Eur. Math. Soc.} {\bf 20} (2018), no. 9, 2131--2179.

\bibitem{FFM20} Freitas, A., Freitas, J.,  Magalh\~{a}es, M.:
Complete convergence and records for dynamically generated stochastic processes.
{\it Trans. Amer. Math. Soc.} {\bf 373} (2020), no. 1, 435--478.



\bibitem{Galatolo05} Galatolo, S.:
Dimension via Waiting Time and Recurrence.  {\it Math. Res. Lett.} {\bf 12} (2005),
no. 2-3, 377--386.

\bibitem{Galatolo07}
Galatolo, S.: Dimension and Hitting in Rapidly Mixing System.  {\it Math. Res. Lett.} {\bf 14} (2007), no. 5,  797--805.

\bibitem{Galatolo10} Galatolo, S.:
 Hitting time in regular sets and logarithm law for rapidly mixing dynamical systems.
 {\it  Proc. Amer. Math. Soc.} {\bf 138} (2010), no. 7, 2477--2487.


\bibitem{GN} Galatolo, S., Nisoli, I.:
Shrinking targets in fast mixing flows and the geodesic flow on negatively curved manifolds.
{\it Nonlinearity} {\bf 24} (2011), no. 11, 3099--3113.

\bibitem{GP10} Galatolo, S., Peterlongo, P.:
Long hitting time, slow decay of correlations and arithmetical properties.
{\it Discrete Contin. Dyn. Syst.} {\bf 27} (2010), no. 1, 185--204.

\bibitem{GK17} Ghosh, A., Kelmer, D.: Shrinking targets for semisimple groups.
{\it Bull. Lond. Math. Soc.} {\bf 49} (2017), no. 2, 235--245.

\bibitem{GN15} Gorodnik, A., Nevo, A.: Quantitative ergodic theorems and their number-theoretic applications, {\it Bull. Lond. Math. Soc.} {\bf 52} (2015), no. 1, 65--113.

\bibitem{GS} Gorodnik, A., Spatzier, R.: Exponential mixing of nilmanifold automorphisms. {\it J. Anal. Math.} {\bf 123} (2014), 355--396.

\bibitem{Gou07} Gou\"{e}zel, S.: A Borel-Cantelli lemma for intermittent interval maps. {\it Nonlinearity} {\bf 20} (2007), no. 6,  1491--1497.

\bibitem{Gr38} Groshev, A.:
A theorem on a system of linear forms. {\it Dokl. Akad. Nauk SSSR} {\bf 19} (1938), 151--152.

\bibitem{GLJ93} Guivarch, Y., Le Jan, Y.: Asymptotic winding of the geodesic flow on modular surfaces and continued fractions. {\it Ann. Sci. \'{E}cole Norm. Sup.} {\bf 26} (1993), no. 1, 23--50;  {\bf 29} (1996), no. 6, 811--814.


\bibitem{GNO10} Gupta, C., Nicol, M., Ott, W.:
A Borel-Cantelli lemma for nonuniformly expanding dynamical systems,
{\it Nonlinearity} {\bf 23} (2010), no. 8, 1991--2008.

\bibitem{Haydn12} Haydn, N.:
The central limit theorem for uniformly strong mixing measures.
{\it Stoch. Dyn.} {\bf 12} (2012), no. 4, paper 1250006, 31 pp.

\bibitem{HNPV13} Haydn, N., Nicol, M., Persson, T., Vaienti, S.:
A note on Borel-Cantelli lemmas for non-uniformly hyperbolic dynamical systems.
{\it Ergodic Theory Dynam. Systems} {\bf 33} (2013), no. 2, 475--498.

\bibitem{HNVZ13}
Haydn, N., Nicol, M., Vaienti, S., Zhang, L.:
Central limit theorems for the shrinking target problem. {\it J. Stat. Phys.}
{\bf 153} (2013), no. 5, 864--887.

\bibitem{HP14} Haydn, N. T. A., Psiloyenis, Y.:
Return times distribution for Markov towers with decay of correlations.
{\it Nonlinearity} {\bf 27} (2014), no. 6, 1323--1349.

\bibitem{HV04} Haydn, N., Vaienti, S.:
The limiting distribution and error terms for return times of dynamical systems.
{\it Discrete Contin. Dyn. Syst.} {\bf 10} (2004), no. 3, 589--616.

\bibitem{HV-Fluct} Haydn, N., Vaienti, S.:
Fluctuations of the metric entropy for mixing measures.
{\it Stoch. Dyn.} {\bf 4} (2004),  no. 4, 595--627.

\bibitem{HV19} Haydn, N., Vaienti, S.:
Limiting entry times distribution for arbitrary null sets,
arXiv:1904.08733.


\bibitem{HW16}  Haydn, N. T. A., Wasilewska, K.:
Limiting distribution and error terms for the number of visits to balls in nonuniformly hyperbolic dynamical systems.
{\it Discrete Contin. Dyn. Syst.} {\bf 36} (2016), no. 5, 2585--2611.

\bibitem{HV95}
Hill, R., Velani, S.: The ergodic theory of shrinking targets.
{\it Invent. Math.} {\bf 119} (1995), no. 1, 175--198.

\bibitem{Hir93} Hirata, M.: Poisson law for Axiom A diffeomorphisms.
{\it Ergodic Theory Dynam. Systems} {\bf 13} (1993), no. 3, 533--556.

\bibitem{Hir95} Hirata, M.: Poisson law for the dynamical systems with self-mixing conditions. In
{\it Dynamical systems and chaos, Vol. 1 (Hachioji, 1994)}, 87--96, World Sci. Publ., River Edge, NJ (1995).



\bibitem{HSV99} Hirata, M., Saussol, B. Vaienti, S.:
Statistics of return times: a general framework and new applications.
{\it Comm. Math. Phys.} {\bf 206} (1999), no. 1, 33--55.

\bibitem{HofbauerKeller82} Hofbauer, F., Keller, G.:
Ergodic Properties of Invariant Measures for Piecewise Monotonic Transformations, In: Hunt B.R., Li TY., Kennedy J.A., Nusse H.E. (eds) {\it The Theory of Chaotic Attractors}. Springer, New York (1982).


\bibitem{Ib62} Ibragimov, I. A.:
Some limit theorems for stationary processes. (Russian)
{\it Teor. Verojatnost. i Primenen.} {\bf 7} (1962), 361--392.

\bibitem{JKS13} Jaerisch, J., Kessebohmer, M., Stratmann, B. O.:
A Fr\'{e}chet law and an Erd\"{o}s-Philipp law for maximal cuspidal windings,
{\it Ergodic Theory Dynam. Systems} {\bf 33} (2013), no. 4, 1008--1028.    	

\bibitem{Keller} Keller, G.: Rare events, exponential hitting times and extremal indices via spectral perturbation. {\it Dyn. Syst.} {\bf 27} (2012), no. 1,  11--27.

\bibitem{Kel17} Kelmer, D.: Shrinking targets for discrete time flows on hyperbolic manifolds.
{\it Geom. Funct. Anal.} {\bf 27} (2017), no. 5, 1257--1287.

\bibitem{KO18} Kelmer D., Oh H.
 Exponential mixing and shrinking targets for geodesic flow on geometrically finite hyperbolic manifolds.
 arXiv:1812.05251

\bibitem{KY19} Kelmer, D., Yu, S.:
Shrinking target problems for flows on homogeneous spaces.
{\it  Trans. Amer. Math. Soc.} {\bf 372} (2019), no. 9, 6283--6314.

\bibitem{KS19} Kesseb\"{o}hmer, M., Schindler, T.:
Strong laws of large numbers for intermediately trimmed Birkhoff sums of observables with infinite mean. {\it Stochastic Process. Appl.} {\bf 129} (2019), no. 10, 4163--4207.

\bibitem{KM92} Kesten H., Maller R. A.: Ratios of trimmed sums and order statistics.
{\it Ann. Probab.} {\bf 20} (1992), no. 4, 1805--1842.

\bibitem{Kh24}
Khintchine, A. Y.: Einige S\"{a}tze \"{u}ber Kettenbruche, mit Anwendungen auf die Theorie der Diophantischen Approximationen. (German) {\it Math. Ann.} {\bf 92} (1924), no. 1-2, 115--125.


\bibitem{KrKR19} Kirsebom, M., Kunde, P., Persson, T.:
Shrinking targets and eventually always hitting points for interval maps.
ArXiv: 1903.06977.


\bibitem{Kim07a} Kim, D. H.: The shrinking target property of irrational rotations. {\it Nonlinearity} {\bf 20} (2007), no. 7, 1637--1643.

\bibitem{Kim07b} Kim, D. H.: The dynamical Borel-Cantelli lemma for interval maps.
  {\it Discrete Contin. Dyn. Syst.} {\bf 17} (2007), no. 4, 891--900.

\bibitem{Kim14} Kim, D. H.: Refined shrinking target property of rotations. {\it Nonlinearity} {\bf 27} (2014), no. 9, 1985--1997.

\bibitem{Kl99} Kleinbock, D.: Some applications of homogeneous dynamics to number theory.
{\it Smooth ergodic theory and its applications (Seattle, WA, 1999),} 639--660,
Proc. Sympos. Pure Math., {\bf 69}  (2001).

\bibitem{KlKR19} Kleinbock, D., Konstantoulas, I., Richter, F. K.:
Zero-one laws for eventually always hitting points in mixing systems.
ArXiv: 1904.08584.

\bibitem{KM96} Kleinbock, D. Y., Margulis, G. A.:
Bounded orbits of nonquasiunipotent flows on homogeneous spaces. {\it Amer. Math. Soc. Transl. Ser. 2,} {\bf 171} (1996), 141--172.


\bibitem{KM98} Kleinbock, D. Y., Margulis, G. A.:
Flows on homogeneous spaces and Diophantine approximation on manifolds.
{\it Ann. of Math. (2)} {\bf 148} (1998), no. 1,  339--360.

\bibitem{KM99}    Kleinbock, D. Y., Margulis, G. A.:
    Logarithm laws for flows on homogeneous spaces. {\it Invent. Math.} {\bf 138} (1999), no. 3, 451--494.

\bibitem{KM12} Kleinbock, D. Y., Margulis, G. A.: On effective equidistribution of expanding translates of certain orbits in the space of lattices. {\it Number theory, analysis and geometry,} 385--396, Springer, New York, (2012).

\bibitem{KSS02} Kleinbock, D., Shah, N.,  Starkov, A.:
Dynamics of subgroup actions on homogeneous spaces of Lie groups and applications to number theory. {\it Handbook of dynamical systems, Vol. 1A} (2002), North-Holland, Amsterdam, 813--930.

\bibitem{DSW17} Kleinbock, D., Shi, R., Weiss, B.:
Pointwise equidistribution with an error rate and with respect to unbounded functions.
{\it Math. Ann.} {\bf 36}  (2017), no. 1-2, 857--879.

\bibitem{KZ18} Kleinbock, D. Y. and Zhao, X.: An application of lattice points counting to shrinking target problems. {\it Discrete Contin. Dyn. Syst.} {\bf 38} (2018), no. 1,  155--168.

\bibitem{Kni02} Knieper, G.: Hyperbolic dynamics and Riemannian geometry. {\it Handbook of dynamical systems, Vol. 1A,} 453, North-Holland, Amsterdam, (2002).

\bibitem{Kur55} Kurzweil, J.: On the metric theory of inhomogeneous Diophantine approximations. {\it Studia Math.} {\bf 15} (1955), 84--112.

\bibitem{LLR83} Leadbetter, M. R., Lindgren, G., Rootzen, H.:
{\it Extremes and related properties of random sequences and processes,}
Springer, New York--Berlin (1983) xii+336 pp.

\bibitem{LedrappierYoung85}  Ledrappier, F., Young, L.-S.:
The metric entropy of diffeomorphisms. Part II. Relations between entropy, exponents and dimension. {\it  Ann. of Math. (2)} {\bf 122} (1985), no. 3, 540--574.

\bibitem{LS12} Leplaideur, R., Saussol, B.:
Central limit theorem for dimension of Gibbs measures in hyperbolic dynamics.
{\it Stoch. Dyn.} {\bf 12} (2012), no. 2, paper 1150019, 24 pp.

\bibitem{L}  Liverani, C.: On contact Anosov flows. {\it  Ann. of Math. (2)} {\bf 159} (2004), no. 3, 1275--1312.

\bibitem{LFdFdFHKMTV}
Lucarini V., Faranda D., de Freitas A., de Freitas J.,
 Holland M.,  Kuna T., Nicol M., Todd M., Vaienti S.:
{\it Extremes and recurrence in dynamical systems.}
John Wiley \& Sons, Inc., Hoboken, NJ (2016), xi+295 pp.


\bibitem{Mar1} Marklof, J.: The $n$-point correlations between values of a linear form. {\it Ergodic Theory Dynam. Systems} {\bf 20}  (2000), no. 4, 1127--1172.

\bibitem{Mar07} Marklof, J.: Distribution modulo one and Ratner's theorem. {\it Equidistribution in number theory, an introduction,} NATO Sci. Ser. Math. Phys. Chem. {\bf 237} (2007), 217--244.

\bibitem{Mar17} Marklof, J.: Entry and return times for semi-flows.
{\it Nonlinearity} {\bf 30} (2017), no. 2, 810--824.

\bibitem{Mau06} Maucourant, F.: Dynamical Borel-Cantelli lemma for hyperbolic spaces.
{\it Israel J. Math.} {\bf 152} (2006), 143--155.

\bibitem{Mo76} Mori, T.: The strong law of large numbers when extreme terms are excluded from sums. {\it Z. Wahrsch. Verw. Gebiete} {\bf 36} (1976), no. 3, 189--194.

\bibitem{Pac00} Paccaut, F.:
Statistics of return times for weighted maps of the interval.
{\it Ann. Inst. H. Poincare Probab. Statist.} {\bf 36} (2000), no. 3,  339--366.

\bibitem{PY17} Pacifico, M. J.,  Yang, F.:
Hitting times distribution and extreme value laws for semi-flows.
{\it Discrete Contin. Dyn. Syst.} {\bf 37} (2017), no. 11, 5861--5881.

\bibitem{PP90}  Parry, W., Pollicott, M.: Zeta functions and the periodic orbit structure of hyperbolic dynamics. {\it Ast\'{e}risque} {\bf 187-188} (1990), 268 pp.

\bibitem{PauPol16} Paulin, F., Pollicott, M.: Logarithm laws for equilibrium states in negative curvature. {\it Comm. Math. Phys. } {\bf 346} (2016), no. 1, 1--34.


\bibitem{Phil67} Philipp, W.: Some metric theorems in number theory. {\it Pacific J. Math} {\bf 20} (1967), 109--127.


\bibitem{Pit} Pitskel, B.:
Poisson limit law for Markov chains.
{\it Ergodic Theory Dynam. Systems} {\bf 11} (1991), no. 3, 501--513.

\bibitem{PS16} Pene, F., Saussol, B.:
Poisson law for some non-uniformly hyperbolic dynamical systems with polynomial rate of mixing. {\it Ergodic Theory Dynam. Systems} {\bf 36} (2016), no. 8, 2602--2626.

\bibitem{Ross} Ross, S.: {\it A first course in probability. 8th edition.} Macmillan Co., New York; Collier Macmillan Ltd., London (2010), xviii+530 pp.



\bibitem{Saussol01} Saussol, B.:
On fluctuations and the exponential statistics of return times.
{\it Nonlinearity} {\bf 14} (2001), no. 1, 179--191.

\bibitem{Saussol06} Saussol, B.:
Recurrence rate in rapidly mixing dynamical systems.
{\it Discrete Contin. Dyn. Syst.} {\bf 15} (2006), no. 1, 259--267.

\bibitem{Saussol09} Saussol, B.:
An introduction to quantitative Poincar\'{e} recurrence in dynamical systems.
{\it Rev. Math. Phys.} {\bf 21} (2009), no. 8, 949--979.



\bibitem{Sevastjanov72} Sevast'janov, B. A.: A Possion limit law in a scheme of sums of dependent random variables. (Russian) {\it Teor. Verojatnost. i Primenen.} {\bf 17} (1972), 733--738.

\bibitem{Sinai72} Sina\v{\i}, Ja. G.: Gibbs measures in ergodic theory. (Russian) {\it Uspehi Mat. Nauk} {\bf 27} (1972), no. 4(166), 21--64.


\bibitem{Spr79} Sprind\u{z}uk, V. G. {\it Metric theory of Diophantine approximations.}
V. H. Winston \& Sons, Washington, D.C. (1979).


\bibitem{St10} Stenlund, M.: A strong pair correlation bound implies the CLT for Sinai billiards, {\it J. Stat. Phys.} {\bf 140} (2010) 154--169.



\bibitem{SV95} Stratmann, B., Velani, S. L.:
The Patterson measure for geometrically finite groups with parabolic elements, new and old. {\it Proc. London Math. Soc. (3)} {\bf 71} (1995), no. 1, 197--220.

\bibitem{Sul} Sullivan, D.: Disjoint spheres, approximation by imaginary quadratic numbers, and the logarithm law for geodesics. {\it Acta Math.} {\bf 149} (1982), no. 3-4, 215--237.

\bibitem{Tseng08} Tseng, J.: On circle rotations and the shrinking target properties.
{\it Discrete Contin. Dyn. Syst.}  {\bf 20} (2008), no. 4, 1111--1122.

\bibitem{Var08} Varandas, P.:
Correlation decay and recurrence asymptotics for some robust nonuniformly hyperbolic maps. {\it J. Stat. Phys.} {\bf 133} (2008), no. 5, 813--839.

\bibitem{Viana97}  Viana, M.: {\it Stochastic dynamics of deterministic systems.}  Rio de Janeiro: IMPA(1997).

\bibitem{V}  Vinogradov, I.: Limiting distribution of visits of several rotations to shrinking intervals, preprint.

\bibitem{Wil91}
 Williams, D.: {\it Probability with martingales.} Cambridge Math.Textbooks. Cambridge University Press, Cambridge, xvi+251 pp (1991).

\bibitem{Yang19} Yang F.:
Rare event process and entry times distribution for arbitrary null sets on compact manifolds. ArXiv: 1905.09956.

\bibitem{Young98} Young L.-S.:
Statistical properties of dynamical systems with some hyperbolicity. {\it Ann. of Math.} {\bf 147} (1998)
585--650.

\bibitem{Zh16} Zhang X.:
Note on limit distribution of normalized return times and escape rate.
{\it Stoch. Dyn.} {\bf 16} (2016), no. 3, paper 1660014, 15 pp.

\end{thebibliography}
\end{document}